\numberwithin{equation}{section}
\newtheorem{definition}{Definition}[section]
\newtheorem{remark}[definition]{Remark}
\newtheorem{example}[definition]{Example}
\newtheorem{theorem}[definition]{Theorem}
\newtheorem{proposition}[definition]{Proposition}
\newtheorem{lemma}[definition]{Lemma}
\newtheorem{corollary}[definition]{Corollary}
\theoremstyle{remark}
\newcommand{\mbS}{\mathbb{S}}
\newcommand{\mbZ}{\mathbb{Z}}
\newcommand{\mcA}{\mathcal{A}}
\newcommand{\mcB}{\mathcal{B}}
\newcommand{\mcC}{\mathcal{C}}
\newcommand{\mcE}{\mathcal{E}}
\newcommand{\mcF}{\mathcal{F}}
\newcommand{\mcG}{\mathcal{G}}
\newcommand{\mcI}{\mathcal{I}}
\newcommand{\mcL}{\mathcal{L}}
\newcommand{\mcP}{\mathcal{P}}
\newcommand{\mcS}{\mathcal{S}}
\newcommand{\mcX}{\mathcal{X}}
\newcommand{\mcY}{\mathcal{Y}}
\newcommand{\mcGP}{\mathcal{GP}}
\newcommand{\mcDP}{\mathcal{DP}}
\newcommand{\mcGI}{\mathcal{GI}}
\newcommand{\mcDI}{\mathcal{DI}}
\newcommand{\mfC}{\mathfrak{C}}
\newcommand{\mfF}{\mathfrak{F}}
\newcommand{\mfP}{\mathfrak{P}}
\newcommand{\mfGF}{\mathfrak{GF}}
\newcommand{\Mod}{\mathsf{Mod}}
\newcommand{\fgMod}{\mathsf{mod}}
\newcommand{\Ch}{\mathsf{Ch}}
\newcommand{\Qcoh}{\mathfrak{Qcoh}}
\newcommand{\Thick}{\mathsf{Thick}}
\newcommand{\resdim}{{\rm resdim}}
\newcommand{\coresdim}{{\rm coresdim}}
\newcommand{\pd}{{\rm pd}}
\newcommand{\id}{{\rm id}}
\newcommand{\Gpd}{{\rm Gpd}}
\newcommand{\Gid}{{\rm Gid}}
\newcommand{\Dpd}{{\rm Dpd}}
\newcommand{\Did}{{\rm Did}}
\newcommand{\Hom}{{\rm Hom}}
\newcommand{\Ext}{{\rm Ext}}
\newcommand{\Ker}{{\rm Ker}}
\newcommand{\Coker}{{\rm CoKer}}
\newcommand{\lfpone}{$\mathsf{(lFp1)}$}
\newcommand{\lfptwo}{$\mathsf{(lFp2)}$}
\newcommand{\lfpthree}{$\mathsf{(lFp3)}$}
\newcommand{\GPaone}{$\mathsf{(GPa1)}$}
\newcommand{\GPatwo}{$\mathsf{(GPa2)}$}
\newcommand{\GPathree}{$\mathsf{(GPa3)}$}
\newcommand{\GPafour}{$\mathsf{(GPa4)}$}
\newcommand{\lccpone}{$\mathsf{(lccp1)}$}
\newcommand{\lccptwo}{$\mathsf{(lccp2)}$}
\newcommand{\lccpthree}{$\mathsf{(lccp3)}$}
\newcommand{\rccpone}{$\mathsf{(rccp1)}$}
\newcommand{\rccptwo}{$\mathsf{(rccp2)}$}
\newcommand{\rccpthree}{$\mathsf{(rccp3)}$}
\newcommand{\lcfpone}{$\mathsf{(lcFp1)}$}
\newcommand{\lcfptwo}{$\mathsf{(lcFp2)}$}
\newcommand{\lcfpthree}{$\mathsf{(lcFp3)}$}
\newcommand{\lcfpfour}{$\mathsf{(lcFp4)}$}
\newcommand{\lABone}{$\mathsf{(lABc1)}$}
\newcommand{\lABtwo}{$\mathsf{(lABc2)}$}
\newcommand{\lABthree}{$\mathsf{(lABc3)}$}
\newcommand{\lcABone}{$\mathsf{(lcABc1)}$}
\newcommand{\lcABtwo}{$\mathsf{(lcABc2)}$}
\newcommand{\lcABthree}{$\mathsf{(lcABc3)}$}
\def\@seccntformat#1{%
  \protect\textup{\protect\@secnumfont
    \ifnum\pdfstrcmp{section}{#1}=0 \scshape\bfseries\fi% section # in \scshape and \bfseries
    \ifnum\pdfstrcmp{subsection}{#1}=0 \bfseries\fi% subsection # in \bfseries
    \csname the#1\endcsname
    \protect\@secnumpunct
  }%
}
\begin{document}

\title{Cut cotorsion pairs}
\thanks{2020 MSC: 18G25 (18G10; 18G20; 18G35; 18G80; 16E65)}
\thanks{Key Words: cut cotorsion pairs, cut Frobenius pairs, cut Auslander-Buchweitz contexts.}

\author{Mindy Huerta}
\address[M. Huerta]{Instituto de Matem\'aticas. Universidad Nacional Aut\'onoma de M\'exico. Circuito Exterior, Ciudad Universitaria. CP04510. Mexico City, MEXICO}
\email{mindy@matem.unam.mx}

\author{Octavio Mendoza}
\address[O. Mendoza]{Instituto de Matem\'aticas. Universidad Nacional Aut\'onoma de M\'exico. Circuito Exterior, Ciudad Universitaria. CP04510. Mexico City, MEXICO}
\email{omendoza@matem.unam.mx}

\author{Marco A. P\'erez}
\address[M. A. P\'erez]{Instituto de Matem\'atica y Estad\'istica ``Prof. Ing. Rafael Laguardia''. Facultad de Ingenier\'ia. Universidad de la Rep\'ublica. CP11300. Montevideo, URUGUAY}
\email{mperez@fing.edu.uy}

\maketitle

\begin{abstract}
We present the concept of cotorsion pairs cut along subcategories of an abelian category. This provides a generalization of complete cotorsion pairs, and represents a general framework to find approximations restricted to certain subcategories. We also exhibit some connections between cut cotorsion pairs and Auslander-Buchweitz approximation theory, by considering relative analogs for Frobenius pairs and Auslander-Buchweitz contexts. Several applications are given in the settings of relative Gorenstein homological algebra, chain complexes and quasi-coherent sheaves, but also to characterize some important results on the Finitistic Dimension Conjecture, the existence of right adjoints of quotient functors by Serre subcategories, and the description of cotorsion pairs in triangulated categories as co-$t$-structures.   
\end{abstract}

%\setcounter{tocdepth}{2}
%\tableofcontents

\pagestyle{myheadings}
\markboth{\rightline {\scriptsize M. Huerta, O. Mendoza and M. A. P\'{e}rez}}
         {\leftline{\scriptsize Cut cotorsion pairs}}

%%%%%%%%%%%%%%%%%%%%%%%%%%%%%%%%%%%%%
%%%%%%%%%%%%%%%%%%%%%%%%%%%%%%%%%%%%%
%%%%%%%%%%%%%%%%%%%%%%%%%%%%%%%%%%%%%
%%%%%%%%%%%%%%%%%%%%%%%%%%%%%%%%%%%%%

\section*{\textbf{Introduction}}

Given two classes of objects $\mcA$ and $\mcB$ in an abelian category $\mcC$, it is not always possible for these classes to form a complete cotorsion pair $(\mcA,\mcB)$ in $\mcC$. For example, if $\mcA = \mcGP(R)$ denotes the class of Gorenstein projective modules over a ring $R$, and $\mcB = \mcP(R)^\wedge$ the class of $R$-modules with finite projective dimension, under some assumptions (for instance, if $R$ is an $n$-Iwanaga-Gorenstein ring) we can guarantee that $(\mcGP(R),\mcP(R)^\wedge)$ is a complete cotorsion pair, and so $\mcGP(R) = {}^{\perp_1}(\mcP(R)^\wedge)$ and $(\mcGP(R))^{\perp_1} = \mcP(R)^\wedge$. In general the pair $(\mcGP(R),\mcP(R)^\wedge)$ is not necessarily a complete cotorsion pair over an arbitrary ring $R$. However, by using Auslander-Buchweitz approximation theory, it is known that every $R$-module with finite Gorenstein projective dimension has a Gorenstein projective precover whose kernel has finite projective dimension (see \cite{ABtheory,BMPS}). Moreover, the equalities $\mcGP(R) = {}^{\perp_{1}}(\mcP(R)^{\wedge}) \cap  \mcGP(R)^{\wedge}$ and $\mcP(R)^\wedge = (\mcGP(R))^{\perp_1} \cap \mcGP(R)^\wedge$ also hold true. Hence, along the class $\mcGP(R)^\wedge$ of $R$-modules with finite Gorenstein projective dimension, $(\mcGP(R),\mcP(R)^\wedge)$ can be regarded, in some sense, as a complete cotorsion pair.

The first main goal of the present article is to specify a meaning under which two classes $\mathcal{A}$ and $\mathcal{B}$ of objects in an abelian category $\mathcal{C}$ form a complete cotorsion pair restricted to another class $\mathcal{S}$ of objects in $\mathcal{C}$. Specifically, orthogonality relations between $\mathcal{A}$ and $\mathcal{B}$, and the existence of special $\mathcal{A}$-precovers and special $\mathcal{B}$-preenvelopes, will be restricted to objects in $\mathcal{S}$. These ``local'' properties will be formally presented in the concept of \emph{complete cotorsion pairs cut along $\mathcal{S}$} (or \emph{cut cotorsion pairs}, for short). Many properties of this concept derive in a general language for cotorsion theory and relative homological algebra, which in particular covers some well-known results on complete cotorsion pairs in abelian categories.

A recent approach to the idea of relativizing cotorsion pairs with respect to a class of objects $\mcS \subseteq \mcC$ was proposed in \cite{BMPS}, under the name of \emph{$\mcS$-cotorsion pairs}, where the authors consider cotorsion pairs $(\mcA,\mcB)$ relative to a thick subcategory $\mcS \subseteq \mcC$, where also one needs $\mcA, \mcB \subseteq \mcS$. In this work, it is established an interplay between relative cotorsion pairs, left Frobenius pairs and left weak Auslander-Buchweitz contexts. Specifically, the latter two concepts are in one-to-one correspondence, while left weak AB-contexts coincide with the class of cotorsion pairs $(\mathcal{F,G})$ relative to the smallest thick subcategory containing $\mathcal{F}$, and where $\mcG$ is injective relative to $\mcF$. On the other hand, the cut cotorsion pairs proposed in the present article are a generalization of $\mcS$-cotorsion pairs, in the sense that for the former concept it is not required that $\mcS$ is thick or $\mcA, \mcB \subseteq \mcS$ either. So it is natural to think of a more general version of the just mentioned interplay. Our second main goal will be to present relative versions of Frobenius pairs and weak AB-contexts, which we shall call \emph{cut Frobenius pairs} and \emph{cut weak AB-contexts}, so that the previous interplay can be extended to the context of cut cotorsion pairs.

Cut cotorsion pairs, cut Frobenius pairs and cut weak AB-contexts are useful to describe several situations related to approximation theory. We shall support this claim presenting several examples in the context of relative Gorenstein homological algebra, in part motivated by the behavior of Gorenstein projective and projective modules mentioned at the very beginning, but also for a better understanding of the new concepts and results. More complex examples are exhibited in the end of this article, for particular abelian categories such as chain complexes and quasi-coherent sheaves. Moreover, some applications are given with the purpose to describe some well-known results in the study of finitistic dimensions of rings, right adjoints of Serre quotients, and cotorsion pairs and co-t-structures in  triangulated categories.

%%%%%%%%%%%%%%%%%%%%%%%%%%%%%%%%%%%%%
%%%%%%%%%%%%%%%%%%%%%%%%%%%%%%%%%%%%%

\subsection*{Organization}

The present article is organized as follows. In Section \ref{sec:preliminaries} we recall some preliminary notions from relative homological algebra. Among these, the most important are the concepts of Frobenius pairs, cotorsion pairs and Gorenstein objects relative to GP-admissible pairs. 

Section \ref{sec:cut_cotorsion} is devoted to present the main concept of the present article: complete left and right cotorsion pairs $(\mcA,\mcB)$ cut along subcategories $\mcS$ of an abelian category $\mcC$ (see Definition \ref{def:cut_cotorsion_pair}). We give in Proposition \ref{prop:cuts_from_Frobenius_pairs} some examples of such pairs coming from left Frobenius pairs. These will cover several complete cut cotorsion pairs made up of relative Gorenstein modules, such as Gorenstein projective, Ding projective and AC-Gorenstein projective modules (Examples \ref{ex:GP} and \ref{ex:GP2}). An equivalent description of complete cut cotorsion pairs is given in Proposition \ref{prop:characterization_ccp}. Some properties more focused on the cuts $\mcS$ than the classes $\mcA$ and $\mcB$ are proven in Proposition \ref{prop:properties_of_cuts}. In particular, the union property (that is, the union of cotorsion cuts for a pair $(\mcA,\mcB)$ is a cotorsion cut for $(\mcA,\mcB)$) allows us to define the maximal cotorsion cut for a pair $(\mcA,\mcB)$ (see Definition \ref{def:maximal_cut}). An explicit description of maximal cuts is shown in Theorem \ref{theo:maximal_S}, for the case where the orthogonality relation $\Ext^1_{\mcC}(\mcA,\mcB) = 0$ is satisfied. The  properties appearing in Proposition \ref{prop:properties_of_cuts} basically constitute a method to obtain new cotorsion cuts from old ones. On the other hand, in Proposition \ref{prop:a_new_cotorsion_cut} we show how to construct a complete cut cotorsion pair from classes $\mcA$, $\mcB$ and $\mcS$ satisfying a series of mild conditions. One of these conditions will be important to motivate and understand the definition of cut Frobenius pairs and cut weak AB contexts. 

The concepts of Frobenius pairs and weak AB contexts cut along subcategories are the main topic of Section \ref{sec:cut_Frobenius_and_cut_AB} (Definitions \ref{def:relative_Frobenius_pair} and \ref{def:cut_AB_context}). GP-admissible pairs $(\mcX,\mcY)$ satisfying certain conditions are the main source to obtain Frobenius pairs cut along  Gorenstein objects relative to $(\mcX,\mcY)$, as we show in Proposition \ref{prop:induced_cut_cotorsion_pair}. On the other hand, in the case where $\mcX$ and $\mcY$ have some closure properties, we see in Example \ref{ex:From_GP_to_AB} that it is possible to obtain three different types of weak AB contexts cut along the class of objects with finite $\mcX$-resolution dimension. The most important result in this section is Theorem \ref{theo:GP_admissible_weak_AB}, where it is shown that it is not possible to obtain non-trivial weak AB contexts from Gorenstein objects relative to hereditary complete cotorsion pairs. This points out the need of a cut version of weak AB contexts, more suitable for Gorenstein objects relative to GP-admissible pairs \cite{BMS}.  

In Section \ref{sec:versus} we prove two correspondence theorems between cut Frobenius pairs, cut weak AB contexts and certain complete cut cotorsion pairs. More specifically, in Theorem \ref{theo:correspondence_1} we establish a one-to-one correspondence between Frobenius pairs and weak AB contexts cut along a class $\mcS$ closed under kernels of epimorphisms and cokernels of monomorphisms. In order for this correspondence to be bijective and well defined, we shall need to consider cut Frobenius pairs and cut AB contexts under certain equivalence relations (see Definition \ref{def:Frobenius_and_AB_relations}). If in addition $\mcS$ is closed under extensions and direct summands, we also obtain in Theorem \ref{theo:correspondence_2} another bijective correspondence between (equivalence classes of) weak AB contexts cut along $\mcS$ and complete cotorsion pairs $(\mcF,\mcG)$ cut along the smallest thick subcategory containing $\mcF$, with $\mcG$ injective relative to $\mcF$, and such that $\mcF \cap \mcG \cap \mcS$ is both a relative generator and cogenerator of $\mcF \cap \mcG$. 

Finally, in Section \ref{sec:applications} we present more detailed examples of complete cut cotorsion pairs, cut Frobenius pairs and cut weak AB contexts. Our first example shows how to induce certain complete cut cotorsion pairs in chain complexes from complete cut cotorsion pairs in an abelian category, in a similar way as occurs with complete cotorsion pairs (as it appears in Gillespie's \cite{GillespieFlat}). We show in Propositions \ref{cutcomplexes} and \ref{cutcomplexes2} that, under certain conditions, $(\mcA,\mcB)$ is a complete cotorsion pair cut along $\mcS$ if, and only if, the class of exact complexes with cycles in $\mcS$ is a cotorsion cut for a certain pair of classes of complexes involving $\mcA$ and $\mcB$. One good point of our results is that the ambient category in which we consider cut cotorsion pairs is not required to have enough projective or injective objects. So in particular, the theory of cut cotorsion pairs can be applied in such settings like the category $\Qcoh(X)$ of quasi-coherent sheaves over a scheme $X$. Motivated by a recent result by Christensen, Estrada and Thompson \cite{CET}, where they show that Gorenstein cotorsion sheaves over a semi-separated noetherian scheme $X$ are the right half of a hereditary complete cotorsion pair, we give an example of a complete right cut cotorsion pair in $\Qcoh(X)$ that has to do with the description of Gorenstein cotorsion sheaves along certain subcategories of $\Qcoh(X)$. The present article closes with several applications in the field of representation theory of algebras. The first one shown in Propositions \ref{prop:findimR} and \ref{findimR-mod} characterizes the finiteness of the (small) finitistic dimension of a ring $R$ in terms of the existence of a certain complete cut cotorsion pair built up of (finitely generated) $R$-modules with finite projective dimension. Another application follows a recent work by Ogawa \cite{Ogawa}, and shows in Theorem \ref{theo:Serre} that for certain Serre subcategories $\mcS$, the quotient functor $\mcC \longrightarrow \mcC / \mcS$ admits a right adjoint if, and only if, the right orthogonal complement of $\mcS$ relative to the Hom functor is a right cotorsion cut for a certain pair completely determined by $\mcS$. Consequences of this equivalence will constitute other applications in the settings of modules over (ex)triangulated categories and co-t-structures (Propositions \ref{prop:defect}, \ref{prop:fp-defect} and \ref{prop:triangle_defect}).

%%%%%%%%%%%%%%%%%%%%%%%%%%%%%%%%%%%%%
%%%%%%%%%%%%%%%%%%%%%%%%%%%%%%%%%%%%%

\subsection*{Conventions}

Throughout, $\mcC$ will always denote an abelian category (not necessarily with enough projective and injective objects), unless otherwise specified. The main examples of such categories considered in this article will be:
\begin{itemize}
\item $\Mod(R)$ = left $R$-modules over an associative ring with identity $R$. For simplicity, all modules over $R$ will be left $R$-modules. 

\item $\fgMod(\Lambda)$ = finitely generated modules over an Artin algebra $\Lambda$. 

\item $\Ch(\mcC)$ = chain complexes of objects in $\mcC$. For the case where $\mcC = \Mod(R)$, the corresponding category of chain complexes of $R$-modules will be denoted by $\Ch(R)$. Objects in $\Ch(\mcC)$ are denoted as $X_\bullet$, $X_m$ denotes the $m$-th component of $X_\bullet$ in $\mcC$, and $Z_m(X_\bullet)$ denotes the $m$-th cycle of $X_\bullet$ in $\mcC$. 

\item $\Mod(\mathfrak{A}^{\rm op})$ = right $\mathfrak{A}$-modules. Here, $\mathfrak{A}$ is a skeletally small additive category. A right $\mathfrak{A}$-module is a contravariant additive functor $\mathfrak{A} \longrightarrow \mathsf{Ab}$, where $\mathsf{Ab} = \Mod(\mathbb{Z})$ denotes the category of abelian groups. 

\item $\Qcoh(X)$ = quasi-coherent sheaves over a semi-separated scheme $X$.
\end{itemize}

Subcategories of $\mcC$ are always assumed to be full, and classes of objects of $\mcC$ are regarded as (full) subcategories of $\mcC$. In any case, we write $\mcS \subseteq \mcC$ to denote that $\mcS$ is a subcategory of $\mcC$ or a class of objects in $\mcC$. If $S$ is an object of $\mcS$, we write $S \in \mcS$. Given two objects $X, Y \in \mcC$, we denote by $\Hom_{\mcC}(X,Y)$ the group of morphisms $X \to Y$. In case $X$ and $Y$ are isomorphic, we write $X \simeq Y$. The notation $F \cong G$, on the other hand, is reserved to denote the existence of a natural isomorphism between two functors $F$ and $G$. Monomorphisms and epimorphisms in $\mcC$ may sometimes be denoted by using the arrows $\rightarrowtail$ and $\twoheadrightarrow$, respectively. 

We shall refer to commutative diagrams whose rows and columns are exact sequences as \emph{solid diagrams}. 

Finally, we point out that the definitions and results presented in this article have their corresponding dual statements, which will be omitted for simplicity. Moreover, although the new concepts in Sections \ref{sec:cut_cotorsion}, \ref{sec:cut_Frobenius_and_cut_AB} and \ref{sec:versus} below will be stated for abelian categories, they carry over to any extriangulated category.

%%%%%%%%%%%%%%%%%%%%%%%%%%%%%%%%%%%%%
%%%%%%%%%%%%%%%%%%%%%%%%%%%%%%%%%%%%%
%%%%%%%%%%%%%%%%%%%%%%%%%%%%%%%%%%%%%
%%%%%%%%%%%%%%%%%%%%%%%%%%%%%%%%%%%%%

\section{\textbf{Preliminaries}}\label{sec:preliminaries}

Let us recall some preliminary notions from relative homological algebra and Auslander-Buchweitz approximation theory.

%%%%%%%%%%%%%%%%%%%%%%%%%%%%%%%%%%%%%
%%%%%%%%%%%%%%%%%%%%%%%%%%%%%%%%%%%%%

\subsection*{Resolution dimensions}

Let $\mcB \subseteq \mcC$ be a class of objects of $\mcC$. Given an object $C \in \mcC$ and a nonnegative integer $m \geq 0$, a \emph{$\mcB$-resolution of $C$ of length $m$} is an exact sequence
\[
0 \to B_m \to B_{m-1} \to \cdots \to B_1 \to B_0 \to C \to 0
\]
where $B_k \in \mcB$ for every integer $0 \leq k \leq m$. The \emph{resolution dimension of $C$ with respect to $\mcB$} (or the \emph{$\mcB$-resolution dimension of $C$}, for short), denoted $\resdim_{\mcB}(C)$, is defined as the smallest nonnegative integer $m \geq 0$ such that $C$ has a $\mcB$-resolution of length $m$. If such $m$ does not exist, we set $\resdim_{\mcB}(C) := \infty$. Dually, we have the concepts of \emph{$\mcB$-coresolutions of length $m$} and of \emph{coresolution dimension with respect to $\mcB$}, denoted by $\coresdim_{\mcB}(C)$. With respect to these two homological dimensions, we shall frequently consider the following two classes of objects in $\mcC$:
\begin{align*}
\mcB^\wedge_m & := \{ C \in \mcC \text{ : } \resdim_{\mcB}(C) \leq m \}, & & \text{and} & \mcB^\wedge & := \bigcup_{m \geq 0} \mcB^\wedge_m, \\
\mcB^\vee_m & := \{ C \in \mcC \text{ : } \coresdim_{\mcB}(C) \leq m \}, & & \text{and} & \mcB^\vee & := \bigcup_{m \geq 0} \mcB^\vee_m.
\end{align*}

%%%%%%%%%%%%%%%%%%%%%%%%%%%%%%%%%%%%%
%%%%%%%%%%%%%%%%%%%%%%%%%%%%%%%%%%%%%

\subsection*{Orthogonality with respect to extension functors} 

In any abelian category $\mcC$, we can define the extension bifunctors $\Ext^i_{\mcC}(-,-) \colon \mcC^{\rm op} \times \mcC \longrightarrow \mathsf{Ab}$, with $i \geq 1$, in the sense of Yoneda. We shall also identify $\Ext^0_{\mcC}(-,-)$ with the hom bifunctor $\Hom_{\mcC}(-,-)$. The reader can check for instance  \cite{Sieg} for a detailed treatise on this matter.

Given two classes of objects $\mcA, \mcB \subseteq \mcC$ and $i \geq 0$, the notation $\Ext^i_{\mcC}(\mcA,\mcB) = 0$ will mean that $\Ext^i_{\mcC}(A,B) = 0$ for every $A \in \mcA$ and $B \in \mcB$. In the case where $\mcA = \{ M \}$ or $\mcB = \{ N \}$, we shall write $\Ext^i_{\mcC}(M,\mcB) = 0$ and $\Ext^i_{\mcC}(\mcA,N) = 0$, respectively. Recall that the \emph{right $i$-th orthogonal complement of $\mcA$} is defined by
\[
\mcA^{\perp_i} := \{ N \in \mcC \mbox{ : } \Ext^i_{\mcC}(\mcA,N) = 0 \},
\]
and the \emph{total right orthogonal complement of $\mcA$} by 
\[
\mcA^\perp := \bigcap_{i \geq 1} \mcA^{\perp_i}.
\]
Dually, we have the \emph{$i$-th} and \emph{the total left orthogonal complements ${}^{\perp_i}\mcB$ and ${}^{\perp}\mcB$ of $\mcB$}, respectively.

%%%%%%%%%%%%%%%%%%%%%%%%%%%%%%%%%%%
%%%%%%%%%%%%%%%%%%%%%%%%%%%%%%%%%%%

\subsection*{Relative homological dimensions}

Given a class $\mcX \subseteq \mcC$ and $M \in \mcC$, the \emph{relative projective dimension of $M$ with respect to $\mcX$} is defined as
\[
\pd_{\mcX}(M) := \mbox{min}\{ n \geq 0 \text{ : } \Ext_{\mcC}^i(M,\mcX) = 0 \mbox{ for every } i > n \}.
\]
Furthermore, the \emph{relative projective dimension of a class $\mcY \subseteq \mcC$ with respect to $\mcX$} is defined as
\begin{align*}
\pd_{\mcX}(\mcY) & := \mbox{sup}\{ \pd_{\mcX}(Y) \text{ : } Y \in \mcY \}.
\end{align*}
Dually, we denote by $\id_{\mcX}(M)$ and $\id_{\mcX}(\mcY)$ the \emph{relative injective dimension of $M$} and \emph{$\mcY$}, respectively, \emph{with respect to $\mcX$}. It can be seen that $\pd_{\mcX}(\mcY) = \id_{\mcY}(\mcX)$. If $\mcX = \mcC$, we just write $\pd(M)$, $\pd(\mcY)$, $\id(M)$ and $\id(\mcY)$, for the (absolute) projective and injective dimensions.

%%%%%%%%%%%%%%%%%%%%%%%%%%%%%%%%%%%
%%%%%%%%%%%%%%%%%%%%%%%%%%%%%%%%%%%

\subsection*{Resolving and coresolving classes}

Let $\mcP$ and $\mcI$ denote the classes of projective and injective objects in $\mcC$, respectively. It is said that a class $\mcX$ is \emph{resolving} if $\mcP \subseteq \mcX$ and if it is closed under extensions and under epi-kernels (that is, under taking kernels of epimorphisms between objects in $\mcX$). If the dual properties hold true, then we get \emph{coresolving} classes. A class is \emph{left thick} if it is closed under extensions, epi-kernels and direct summands. \emph{Right thick classes} are defined dually. Finally, a class is \emph{thick} if it is both left and right thick. For a class $\mcX$ of objects in $\mcC$, we shall denote by $\Thick(\mcX)$ the smallest thick subcategory of $\mcC$ containing $\mcX$.

%%%%%%%%%%%%%%%%%%%%%%%%%%%%%%%%%%%
%%%%%%%%%%%%%%%%%%%%%%%%%%%%%%%%%%%

\subsection*{Precovers and preenvelopes}

Let $\mcX$ be a class of objects in $\mcC$ and $C \in \mcC$ be an object. An \emph{$\mcX$-precover of $C$} is a morphism $f \colon X \to C$ with $X \in \mcX$ such that the induced group homomorphism $\Hom_{\mcC}(X',f) \colon \Hom_{\mcC}(X',X) \to \Hom_{\mcC}(X',C)$ is epic for every $X' \in \mcX$. An $\mcX$-precover $f \colon X \to C$ is \emph{special} if it is epic and $\Ker(f) \in \mcX^{\perp_1}$. The dual concept is called (\emph{special}) \emph{$\mcX$-preenvelope}.

%%%%%%%%%%%%%%%%%%%%%%%%%%%%%%%%%%%
%%%%%%%%%%%%%%%%%%%%%%%%%%%%%%%%%%%

\subsection*{Cotorsion pairs}

Two classes $\mcX$ and $\mcY$ of objects in $\mcC$ form a cotorsion pair $(\mcX,\mcY)$ if they are complete with respect to the orthogonality relation defined by the vanishing of the functor $\Ext^1_{\mcC}(-,-)$ (see for instance \cite{EJ1,EJ2,GillespieFlat,GT}). For the purpose of this article, it comes handy to split this concept as follows.

\begin{definition}\label{def:cotorsion_pair}
Let $\mcX$ and $\mcY$ be two classes of objects in $\mcC$. The pair $(\mcX,\mcY)$ is a \textbf{left cotorsion pair} in $\mcC$ if $\mcX = {}^{\perp_1}\mcY$. If in addition, for every $C \in \mcC$ there exists a short exact sequence $0 \to Y \to X \to C \to 0$ with $X \in \mcX$ and $Y \in \mcY$, then $(\mcX,\mcY)$ is a \textbf{complete left cotorsion pair}. Dually, we have the notions of (\textbf{complete}) \textbf{right cotorsion pairs} in $\mcC$. Finally, $(\mcX,\mcY)$ is a (\textbf{complete}) \textbf{cotorsion pair} in $\mcC$ if it is both a (complete) left and right cotorsion pair. A pair $(\mathcal{X,Y})$ is called \textbf{hereditary} if $\Ext^i_{\mcC}(\mathcal{X,Y}) = 0$ for every $i \geq 1$. 
\end{definition}

\begin{remark}\label{rem:cotorsion_pair} \
\begin{enumerate}
\item If $(\mcX,\mcY)$ is a complete left cotorsion pair in $\mcC$, then every object of $\mcC$ has a special $\mcX$-precover. 

\item $(\mcP,\mcC)$ is a complete left cotorsion pair if, and only if, $\mcC$ has enough projective objects. Dually, $(\mcC,\mcI)$ is a complete right cotorsion pair if, and only if, $\mcC$ has enough injective objects. 

\item If $(\mcX,\mcY)$ is a hereditary cotorsion pair, then $\mcX$ is resolving and $\mcY$ is coresolving. Moreover, in an abelian category $\mcC$ with enough projective (resp., injective) objects, $(\mcX,\mcY)$ is hereditary if, and only if, $\mcX$ is resolving (resp., $\mcY$ is coresolving).
\end{enumerate}
\end{remark}

\begin{example}
There are some well-known important examples of hereditary complete cotorsion pairs:
\begin{enumerate}
\item The \textbf{flat} or \textbf{Enochs' cotorsion pair} in $\Mod(R)$ given by $(\mcF(R),(\mcF(R))^{\perp_1})$, where $\mcF(R)$ denotes the class of all flat $R$-modules. 

\item From \cite[Corollary 4.2]{EE}, \cite[Lemma 4.25]{SaorinStovicek} and \cite[Lemma A.1]{EfimovPositselski}, we have the non-affine version of the previous example, for quasi-compact and semi-separated schemes $X$, given by the pair $(\mathfrak{F}(X),(\mathfrak{F}(X))^{\perp_1})$ in $\Qcoh(X)$, where $\mathfrak{F}(X)$ denotes the class of quasi-coherent flat sheaves over $X$. 

\item From \cite{EJ1}, if $R$ is an Iwanaga-Gorenstein ring, we have the pairs $(\mcGP(R),\mcP(R)^\wedge)$ and $(\mcI(R)^\wedge,\mcGI(R))$ in $\Mod(R)$, where $\mcGP(R)$ and $\mcGI(R)$ denote the classes of Gorenstein projective and Gorenstein injective $R$-modules.

Similar assertions hold for the classes $\mcDP(R)$ and $\mcDI(R)$ of Ding projective and Ding injective $R$-modules, provided that $R$ is a Ding-Chen ring (see Gillespie's \cite{GillespieDC} for details).
\end{enumerate}
\end{example}

In this article we shall propose a relative version for Definition \ref{def:cotorsion_pair}, and so it is important that the reader is familiar with the notion and properties of hereditary complete cotorsion pairs in abelian categories.

%%%%%%%%%%%%%%%%%%%%%%%%%%%%%%%%%%%
%%%%%%%%%%%%%%%%%%%%%%%%%%%%%%%%%%%

\subsection*{Frobenius pairs}

The concept of left and right Frobenius pairs was introduced in \cite[Definition 2.5]{BMPS} from the notion of (co)generators in Auslander-Buchweitz approximation theory. Given two classes $\mcX$ and $\omega$ of objects in $\mcC$, recall that $\omega$ is said to be a \emph{relative cogenerator in $\mcX$} if $\omega \subseteq \mcX$ and if for every $X \in \mcX$ there exists a short exact sequence $0 \to X \to W \to X' \to 0$  where $W \in \omega$ and $X' \in \mcX$.

\begin{definition}\label{def:Frobenius_pair}
A pair $(\mcX,\omega)$ of classes of objects in $\mcC$ is a \textbf{left Frobenius pair} if the following conditions hold true:
\begin{enumerate}
\item[\lfpone] $\mcX$ is left thick.

\item[\lfptwo] $\omega$ is closed under direct summands.

\item[\lfpthree] $\omega$ is an $\mcX$-injective (that is, $\id_{\mcX}(\omega) = 0$) relative cogenerator in $\mcX$. 
\end{enumerate}
The notions of \textbf{relative generator} and \textbf{right Frobenius pair} in $\mcC$ are dual.
\end{definition}

%%%%%%%%%%%%%%%%%%%%%%%%%%%%%%%%%%%
%%%%%%%%%%%%%%%%%%%%%%%%%%%%%%%%%%%

\subsection*{Relative Gorenstein objects}

Most of our examples in this article will be built from Gorenstein objects relative to certain pairs $(\mcX,\mcY)$ of classes of objects in $\mcC$ (see Definition \ref{def:GP-admissible}). Before specifying how these Gorenstein objects are defined, recall that a chain complex $X_\bullet = (X_m)_{m \in \mbZ} \in \Ch(\mcC)$ is said to be \emph{$\Hom_{\mcC}(-,\mcY)$-acyclic} if the induced complex of abelian groups $\Hom_{\mcC}(X_\bullet,Y) = (\Hom_{\mcC}(X_m,Y))_{m \in \mbZ}$ is exact for every $Y \in \mcY$. \emph{$\Hom_{\mcC}(\mcY,-)$-acyclic complexes} are defined dually. The following concept is due to \cite[Definition 3.2]{BMS}.

\begin{definition}\label{def:relative_Gorenstein_objects}
Let $(\mcX,\mcY)$ be a pair of classes of objects in $\mcC$. An object $C \in \mcC$ is \textbf{$\bm{(\mcX,\mcY)}$-Gorenstein projective} if $C$ is the 0-th cycle of an exact and $\Hom_{\mcC}(-,\mcY)$-acyclic complex $X_\bullet \in \Ch(\mcC)$ where $X_m \in \mcX$ for every $m \in \mbZ$. In this case, we write $C = Z_0(X_\bullet)$. Dually \textbf{$\bm{(\mcX,\mcY)}$-Gorenstein injective objects} are defined as 0-cycles of exact and $\Hom_{\mcC}(\mcX,-)$-acyclic complexes with components in $\mcY$.
\end{definition}

Following \cite{BMS}, let us denote by $\mcGP_{(\mcX,\mcY)}$ and $\mcGI_{(\mcX,\mcY)}$ the classes of $(\mcX,\mcY)$-Goren-stein projective and $(\mcX,\mcY)$-Gorenstein injective objects of $\mcC$, respectively. For example, $\mcGP_{(\mcP,\mcP)}$ and $\mcGI_{(\mcI,\mcI)}$ are precisely the classes of Gorenstein projective and Gorenstein injective objects of $\mcC$, which we shall write as $\mcGP$ and $\mcGI$, for simplicity. Moreover, Definition \ref{def:relative_Gorenstein_objects} also covers the following examples of relative Gorenstein projective and injective objects:
\begin{itemize}
\item Ding projective and Ding injective modules, in the sense of \cite[Definitions 3.2 and 3.7]{GillespieDC}, by setting the pairs $(\mcX,\mcY) = (\mcP(R),\mcF(R))$ and $(\mcX,\mcY) = (\text{FP-}\mcI(R),\mcI(R))$, respectively. Here, $\text{FP-}\mcI(R)$ stands for the class of FP-injective (or absolutely pure) $R$-modules. 

\item Gorenstein AC-projective and Gorenstein AC-injective modules, in the sen-se of \cite[Sections 5 and 8]{BGH}, by setting the pairs $(\mcX,\mcY) = (\mcP(R),\mcL(R))$ and $(\mcX,\mcY) = (\text{FP}_\infty\text{-}\mcI(R),\mcI(R))$, respectively. Here, $\mcL(R)$ and $\text{FP}_\infty\text{-}\mcI(R)$ denote the classes of level and $\text{FP}_\infty$-injective (or absolutely clean) $R$-modules (see \cite[Definition 2.6]{BGH}). These classes of relative Gorenstein modules will be denoted by $\mcGP_{\text{AC}}(R)$ and $\mcGI_{\text{AC}}(R)$, for simplicity. 

\item Gorenstein flat sheaves over a noetherian and semi-separated scheme $X$, by setting $(\mcX,\mcY) = (\mathfrak{F}(X),\mathfrak{F}(X) \cap (\mathfrak{F}(X))^{\perp_1})$. See Murfet and Salarian's \cite[Theorem 4.18]{MurfetSalarian}. The class of Gorenstein flat sheaves over $X$ will be denoted by $\mathfrak{GF}(X)$. In particular, the latter holds in the affine case $X = \text{Spec}(R)$ provided that $R$ is a commutative noetherian ring. 
\end{itemize}

Many useful properties of Gorenstein objects relative to $(\mcX,\mcY)$ are obtained in the case where $(\mcX,\mcY)$ is a GP-admissible or a GI-admissible pair \cite[Definitions 3.1 and 3.6]{BMS}. We recall this notion for further referring.

\begin{definition}\label{def:GP-admissible}
A pair $(\mcX,\mcY)$ of classes of objects in $\mcC$ is \textbf{GP-admissible} if the following conditions are satisfied:
\begin{enumerate}
\item[\GPaone] $\pd_{\mcY}(\mcX) = 0$.

\item[\GPatwo] $\mcC$ has enough $\mcX$-objects, that is, for every object $C \in \mcC$ there exists an epimorphism $X \twoheadrightarrow C$ with $X \in \mcX$. 

\item[\GPathree] $\mcX$ and $\mcY$ are closed under finite coproducts, and $\mcX$ is closed under extensions. 

\item[\GPafour] $\mcX \cap \mcY$ is a relative cogenerator in $\mcX$. 
\end{enumerate}
A pair $(\mcX,\mcY)$ satisfying the dual conditions is called \textbf{GI-admissible}.
\end{definition}

\begin{example}\label{ex:GP_admissible_pairs} \
\begin{enumerate}
\item Every hereditary complete cotorsion pair $(\mcX,\mcY)$ is a GP-admissible pair, and also induces the GP-admissible pair $(\mcX,\mcX \cap \mcY)$.

\item The pairs $(\mcP(R),\mcF(R))$, $(\mcP(R),\mcL(R))$ and $(\mathfrak{F}(X),(\mathfrak{F}(X))^{\perp_1})$ are GP-admissible for any ring $R$ and any scheme $X$. Dually, the pairs $(\text{FP-}\mcI(R),\mcI(R))$ and $(\text{FP}_\infty\text{-}\mcI(R),\mcI(R))$ are clearly GI-admissible. 
\end{enumerate}
\end{example}

%%%%%%%%%%%%%%%%%%%%%%%%%%%%%%%%%%%%%
%%%%%%%%%%%%%%%%%%%%%%%%%%%%%%%%%%%%%
%%%%%%%%%%%%%%%%%%%%%%%%%%%%%%%%%%%%%
%%%%%%%%%%%%%%%%%%%%%%%%%%%%%%%%%%%%%

\section{\textbf{Complete cut cotorsion pairs and cotorsion cuts}}\label{sec:cut_cotorsion}

In this section we present a relative notion of cotorsion pair, which depends on subcategories of $\mcC$. In our context, ``relative'' will mean that for two classes of objects $\mcA$ and $\mcB$ in $\mcC$, we can regard the pair $(\mcA,\mcB)$ as a cotorsion pair \emph{cut along} a subcategory $\mcS \subseteq \mcC$. This concept will cover left and right cotorsion pairs in Definition \ref{def:cotorsion_pair} as particular cases. We shall also see how some well-known properties of complete cotorsion pairs are transferred to the relative context resulting from the following definition.

\begin{definition}\label{def:cut_cotorsion_pair}
Let $\mcS$, $\mcA$ and $\mcB$ be classes of objects in $\mcC$. We say that $(\mcA,\mcB)$ is a \textbf{left cotorsion pair cut along $\bm{\mcS}$} if the following conditions are satisfied:
\begin{enumerate}
\item[\lccpone] $\mcA$ is closed under direct summands.

\item[\lccptwo] $\mcA \cap \mcS = {}^{\perp_1}\mcB \cap \mcS$.
\end{enumerate}
A left cotorsion pair $(\mcA,\mcB)$ cut along $\mcS$ is \textbf{complete} if in addition the following holds:
\begin{enumerate}
\item[\lccpthree] For every $S \in \mcS$, there exists an exact sequence $0 \to B \to A \to S \to 0$ such that $A \in \mcA$ and $B \in \mcB$. 
\end{enumerate}
Dually, we say that $(\mcA,\mcB)$ is a (\textbf{complete}) \textbf{right cotorsion pair cut along $\bm{\mcS}$} if it satisfies the dual conditions, labeled as \rccpone, \rccptwo \ (and \rccpthree). Finally, $(\mcA,\mcB)$ is a (\textbf{complete}) \textbf{cotorsion pair cut along $\bm{\mcS}$} if it is both a (complete) left and right cotorsion pair cut along $\mcS$. 

In case there is no need to refer to the class $\mcS$, we shall simply say that $(\mcA,\mcB)$ is a (complete) left and/or right \textbf{cut cotorsion pair}. 

If $(\mcA,\mcB)$ is a complete (left or right) cotorsion pair cut along $\mcS$, we may sometimes refer to $\mcS$ as a (\textbf{left or right}) \textbf{cotorsion cut for $\bm{(\mcA,\mcB)}$}. If $\mcA$ is a class of objects in $\mcC$ closed under direct summands, we shall denote by ${\rm lCuts}(\mcA,\mcB)$ the class of left cotorsion cuts for $(\mcA,\mcB)$. Similarly, we shall denote by ${\rm rCuts}(\mcA,\mcB)$ and ${\rm Cuts}(\mcA,\mcB)$ the classes of  right cotorsion cuts and cotorsion cuts for $(\mcA,\mcB)$, respectively, provided that $\mcA$ and $\mcB$ are closed under direct summands.  
\end{definition}

\begin{remark} \
\begin{enumerate}
\item Notice that the previous definition coincides with Definition \ref{def:cotorsion_pair} by taking $\mcS = \mcC$.  Furthermore, in case $\mcA$ and $\mcB$ are subclasses of a thick subcategory $\mcS \subseteq \mcC$, $(\mcA,\mcB)$ is a complete left cotorsion pair cut along $\mcS$ if, and only if, $(\mcA,\mcB)$ is a left $\mcS$-cotorsion pair in the sense of \cite[Definition 3.4]{BMPS}. This implies that several of our results proved below will recover some facts from the theory of (relative) cotorsion pairs \cite{BMPS}.

\item Let $\mcA$ be a class of objects in $\mcC$ closed under direct summands. It is possible to find another class $\mcB$ of objects in $\mcC$ such that ${\rm lCuts}(\mcA,\mcB) = \emptyset$. Consider for instance $\mcA$ the class of all objects in $\mcC$ isomorphic to $0$, and $\mcB := \mcC - \mcA$ the class of nonzero objects in $\mcC$. Notice that there is no class $\mcS \subseteq \mcC$ satisfying condition \lccpthree. Dually, one can find a pair $(\mcA,\mcB)$ with $\mcB$ closed under direct summands for which ${\rm rCuts}(\mcA,\mcB) = \emptyset$. Nevertheless, for any two classes $\mcA$ and $\mcB$ of objects in $\mcC$ closed under direct summands, one has ${\rm Cuts}(\mcA,\mcB) \neq \emptyset$. Indeed, if $\mcA$ is a class closed under direct summands, a sufficient condition to have ${\rm lCuts}(\mcA,\mcB) \neq \emptyset$ is that $\mcB$ is a pointed subcategory of $\mcC$ (that is, $0 \in \mcB$). Similarly, ${\rm rCuts}(\mcA,\mcB) \neq \emptyset$ if $\mcA$ is pointed and $\mcB$ is closed under direct summands. It suffices to take $\mcS := \{ 0 \}$. 
\end{enumerate}
\end{remark}

Now let us give some examples of complete cut cotorsion pairs which are not necessarily complete cotorsion pairs. Frobenius pairs and relative Gorenstein objects will be the main source to construct our first examples. More complicated examples will be displayed and detalied in Section \ref{sec:applications}.

\begin{proposition}\label{prop:cuts_from_Frobenius_pairs}
Let $(\mcX,\omega)$ be a left Frobenius pair in $\mcC$. The following assertions hold:
\begin{enumerate}
\item $(\mcX,\omega^\wedge)$ is a complete cotorsion pair cut along $\mcX^\wedge$ and $\omega^\wedge$.

\item $(\omega,\mcX^{\perp_1})$ is a complete cotorsion pair cut along $\omega^\wedge$.

\item $(\omega,\mcX^{\perp_1})$ is a complete left cotorsion pair cut along $\mcX^\wedge$ if, and only if, $\mcX^\wedge = \omega^\wedge$.
\end{enumerate}
\end{proposition}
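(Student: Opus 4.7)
The plan rests on two staples of Auslander--Buchweitz theory for a left Frobenius pair $(\mcX,\omega)$: (a) every $C\in\mcX^\wedge$ admits both an exact sequence $0\to Y_C\to X_C\to C\to 0$ with $X_C\in\mcX$, $Y_C\in\omega^\wedge$ and an exact sequence $0\to C\to Y^C\to X^C\to 0$ with $Y^C\in\omega^\wedge$, $X^C\in\mcX$; and (b) dimension shifting together with $\id_{\mcX}(\omega)=0$ yields $\omega^\wedge\subseteq\mcX^\perp$. I will also freely use that $\omega^\wedge$ is closed under direct summands, extensions, and cokernels of monomorphisms between its own objects, which are standard consequences of (a)--(b) and the closure properties imposed on $(\mcX,\omega)$.

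For part (1), conditions \lccpone\ and \rccpone\ follow from \lfpone\ for $\mcX$ and from summand-closure of $\omega^\wedge$. The inclusion $\mcX\subseteq{}^{\perp_1}(\omega^\wedge)$ in \lccptwo\ with $\mcS=\mcX^\wedge$ is immediate from (b); conversely, if $S\in{}^{\perp_1}(\omega^\wedge)\cap\mcX^\wedge$, then (a) produces $0\to Y_S\to X_S\to S\to 0$ with $\Ext^1_{\mcC}(S,Y_S)=0$, so the sequence splits and $S$ is a summand of $X_S\in\mcX$, whence $S\in\mcX$ by \lfpone. The verification of \rccptwo\ is dual, using summand-closure of $\omega^\wedge$, while \lccpthree\ and \rccpthree\ are precisely the two approximations in (a). The case $\mcS=\omega^\wedge$ is then an immediate restriction since $\omega^\wedge\subseteq\mcX^\wedge$.

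For part (2), the inclusions $\omega^\wedge\subseteq\mcX^{\perp_1}\subseteq\omega^{\perp_1}$ force $\mcX^{\perp_1}\cap\omega^\wedge=\omega^{\perp_1}\cap\omega^\wedge=\omega^\wedge$, which gives \rccptwo; and \rccpthree\ is witnessed by the trivial sequence $0\to S\to S\to 0\to 0$ for $S\in\omega^\wedge$. For \lccpthree, truncating any $\omega$-resolution of $S$ at its first step yields $0\to K\to W_0\to S\to 0$ with $W_0\in\omega$ and $K\in\omega^\wedge\subseteq\mcX^{\perp_1}$. The same sequence also settles \lccptwo: the inclusion $\omega\subseteq{}^{\perp_1}(\mcX^{\perp_1})$ is immediate, and conversely if $S\in{}^{\perp_1}(\mcX^{\perp_1})\cap\omega^\wedge$, then $\Ext^1_{\mcC}(S,K)=0$ splits it, so $S$ is a summand of $W_0\in\omega$ and hence lies in $\omega$ by \lfptwo.

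For part (3), the implication $(\Leftarrow)$ is immediate from part (2) applied to $\mcS=\mcX^\wedge=\omega^\wedge$. For $(\Rightarrow)$, I fix $S\in\mcX^\wedge$ and invoke \lccpthree\ to obtain $0\to B\to A\to S\to 0$ with $A\in\omega$ and $B\in\mcX^{\perp_1}$. Since $A\in\mcX$ and $S\in\mcX^\wedge$, left-thickness of $\mcX^\wedge$ puts $B\in\mcX^\wedge$; part (1) then forces $B\in\mcX^{\perp_1}\cap\mcX^\wedge=\omega^\wedge\cap\mcX^\wedge=\omega^\wedge$. As $A$ and $B$ both lie in $\omega^\wedge$, closure of $\omega^\wedge$ under cokernels of monomorphisms delivers $S\in\omega^\wedge$, giving $\mcX^\wedge\subseteq\omega^\wedge$. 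The hard part of the whole proposition is exactly this last closure property: it is the only step that goes beyond a single AB approximation, and it is established by splicing an $\omega$-resolution of $B$ with $0\to B\to A\to S\to 0$ in a horseshoe-type construction, using $\Ext^1_{\mcC}(\omega,\omega^\wedge)=0$ to lift maps compatibly at each step.
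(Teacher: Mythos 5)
Your argument is correct and follows essentially the same route as the paper: the Auslander--Buchweitz approximation/coapproximation sequences, the containment $\omega^\wedge\subseteq\mcX^{\perp_1}$, and splitting arguments carry all three parts, and your reduction in (3) to the identity $\mcX^{\perp_1}\cap\mcX^\wedge=\omega^\wedge$ is exactly what the paper does (via \cite[Theorem 2.16]{BMPS}). One small remark: the last step of (3) needs no horseshoe construction or lifting of maps, since $A$ lies in $\omega$ itself (not merely in $\omega^\wedge$), so concatenating an $\omega$-resolution of $B$ with $0\to B\to A\to S\to0$ already produces an $\omega$-resolution of $S$ of length $\resdim_\omega(B)+1$.
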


\begin{proof} 
First, note by \lfpone, \lfptwo \ and \cite[Theorem 2.11 and Proposition 2.13]{BMPS} that $\mcX$, $\omega$ and $\omega^\wedge$ are closed under direct summands. Also, it is clear that the same holds for $\mcX^{\perp_1}$. Moreover, the class $\mcX^\wedge$ is thick by \cite[Theorem 2.11]{BMPS}. 
\begin{enumerate}
\item By the previous comments, we have that the pair $(\mcX,\omega^\wedge)$ satisfies \lccpone \ and \rccpone. Moreover, from \cite[Theorem 2.8]{BMPS} we clearly obtain \lccpthree \ and \rccpthree. Finally, conditions \lccptwo \ and \rccptwo \ follow from \cite[Part 1. of Proposition 2.7]{BMPS}, \lccpone, \rccpone, \lccpthree \ and \rccpthree. Hence, $\mcX^\wedge \in {\rm Cuts}(\mcX,\omega^\wedge)$. The assertion $\omega^\wedge \in {\rm Cuts}(\mcX,\omega^\wedge)$ can be easily deduced from the previous.

\item We already have conditions \lccpone \ and \rccpone \ for $(\omega,\mcX^{\perp_1})$. On the one hand, for every object $C \in \omega^\wedge$ it is clear the existence of a short exact sequence $0 \to K \to W \to C \to 0$, where $W \in \omega$ and $K \in \omega^\wedge$. Since $\omega^\wedge \subseteq \mcX^{\perp_1}$ by \cite[Part 1. of Proposition 2.7]{BMPS}, we have that $K \in \mcX^{\perp_1}$. Thus, \lccpthree \ follows. On the other hand, \rccpthree \ is immediate. 

Regarding \lccptwo, note that the containment $\omega \cap \omega^\wedge \subseteq {}^{\perp_1}(\mcX^{\perp_1}) \cap \omega^\wedge$ follows by \lfpthree. The converse containment follows by \lccpthree \ and the fact that $\omega$ is closed under direct summands. Finally, condition \rccptwo \ follows by \lfpthree \ and \rccpthree. 

\item The implication $(\Leftarrow)$ follows from part 2. For the direct implication, suppose that $\mcX^\wedge \in {\rm lCuts}(\omega,\mcX^{\perp_1})$. Then, for every $C \in \mcX^\wedge$ there exists a short exact sequence $0 \to K \to W \to C \to 0$ with $W \in \omega$ and $K \in \mcX^{\perp_1}$. By \lfpone, \lfpthree \ and \cite[Theorem 2.16]{BMPS}, we can note that $K \in \mcX^{\perp_1} \cap \mcX^\wedge = \omega^\wedge$. It then follows that $C \in \omega^\wedge$.  
\end{enumerate}
\end{proof}

\begin{remark}
Part (3) of Proposition \ref{prop:cuts_from_Frobenius_pairs} suggests that there should exist a left Frobenius pair $(\mcX,\omega)$ in $\mcC$ for which $\mcX^\wedge \not\in {\rm lCuts}(\omega,\mcX^{\perp_1})$. This is for instance the case of the left Frobenius pair $(\mcGP(R),\mcP(R))$ \cite[Proposition 6.1]{BMPS}. Indeed, if $R$ is an Iwanaga-Gorenstein ring with infinite global dimension, and $M \in \mcGP(R) - \mcP(R)$, then it is not possible to construct a short exact sequence $0 \to K \to P \to M \to 0$ with $P$ projective and $K \in \mcGP(R)^{\perp_1}$.
\end{remark}

Recall from \cite[Definition 3.3]{BMS} that, for a pair $(\mcX,\mcY)$ of classes of objects of $\mcC$, the \emph{$(\mcX,\mcY)$-Gorenstein projective dimension} of an object $C \in \mcC$, which we denote by $\Gpd_{(\mcX,\mcY)}(C)$, is defined as the $\mcGP_{(\mcX,\mcY)}$-resolution dimension of $C$:
\[
\Gpd_{(\mcX,\mcY)}(C) := \resdim_{\mcGP_{(\mcX,\mcY)}}(C).
\]
Note that setting $(\mcX,\mcY) = (\mcP(R),\mcP(R))$ and $(\mcX,\mcY) = (\mcP(R),\mcF(R))$ yields the \emph{Gorenstein projective} and the \emph{Ding projective dimensions} of an $R$-module $C$, which we denote by $\Gpd(C)$ and $\Dpd(C)$ for simplicity. The \emph{$(\mcX,\mcY)$-Gorenstein injective}, \emph{Gorenstein injective} and \emph{Ding injective dimensions} $\Gid_{(\mcX,\mcY)}(C)$, $\Gid(C)$ and $\Did(C)$, are defined dually.

\begin{example}\label{ex:GP}
We know from the previous remark that $(\mcGP(R),\mcP(R))$ is a left Frobenius pair over any ring $R$. So it follows by parts (1) and (2) of Proposition \ref{prop:cuts_from_Frobenius_pairs} that $(\mcGP(R),\mcP(R)^\wedge)$ is a complete cotorsion pair cut along $\mcGP(R)^\wedge$ and $\mcP(R)^\wedge$, and that $(\mcP(R),\mcGP(R)^{\perp_1})$ is a complete cotorsion pair cut along $\mcP(R)^\wedge$. Similar results hold for the left Frobenius pairs $(\mcDP(R),\mcP(R))$ and $(\mcGP_{\rm AC}(R),\mcP(R))$ (see \cite[Corollary 6.11 and Proposition 6.12]{BMPS}).
\end{example}

\begin{remark}
There are important differences between the notions of $\mcS$-cotorsion pairs $(\mcA,\mcB)$ \cite[Definition 3.4]{BMPS} and complete cotorsion pairs $(\mcA,\mcB)$ cut along $\mcS$. In the former, $\mcS$ is taken as a thick subcategory of $\mcC$ and $\mcA,\mcB \subseteq \mcS$. The latter containments do not occur for instance in the previous example, since Gorenstein projective $R$-modules may have infinite projective dimension. 

More examples of complete cut cotorsion pairs which are not relative cotorsion pairs are given below in Proposition \ref{prop:GPXY_cotorsion_pair}, Corollary \ref{coro:GPXY_cotorsion_pair} and Example \ref{ex:GPXY_cotorsion_pair}. 
\end{remark}

We shall mention a couple of extra properties for the previous example after showing the following general result.

\begin{proposition}\label{prop:cuts_from_Frobenius_and_GP-admissoble_pairs}
The following hold for every left Frobenius pair $(\mcX,\omega)$ in $\mcC$:
\begin{enumerate}
\item The following conditions are equivalent:
\begin{enumerate}
\item[(a)] $(\mcX,\omega^\wedge)$ is a complete left cotorsion pair in $\mcC$.

\item[(b)] $(\mcX,\omega^\wedge)$ is a complete cotorsion pair in $\mcC$. 

\item[(c)] $\mcC = \mcX^\wedge$.
\end{enumerate}

\item For every $n \geq 0$, $(\mcX,\mcX^\perp)$ is a complete cotorsion pair cut along $\mcX^\wedge_n$.
\end{enumerate}
\end{proposition}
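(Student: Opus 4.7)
The plan is to prove part (1) by a short cycle of implications and part (2) by directly verifying the four defining conditions of Definition \ref{def:cut_cotorsion_pair}, using the Auslander-Buchweitz approximations inherent to any left Frobenius pair (compare with \cite[Theorems 2.8, 2.11 and 2.16]{BMPS}).

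For part (1): the implication (b) $\Rightarrow$ (a) is immediate from Definition \ref{def:cotorsion_pair}. For (c) $\Rightarrow$ (b), I invoke part (1) of Proposition \ref{prop:cuts_from_Frobenius_pairs}, which gives $\mcX^\wedge \in {\rm Cuts}(\mcX, \omega^\wedge)$; if $\mcC = \mcX^\wedge$ the cut becomes the whole category, and $(\mcX, \omega^\wedge)$ is a complete cotorsion pair in $\mcC$. For the key implication (a) $\Rightarrow$ (c), given any $C \in \mcC$ the hypothesis supplies an exact sequence $0 \to Y \to X \to C \to 0$ with $X \in \mcX \subseteq \mcX^\wedge$ and $Y \in \omega^\wedge \subseteq \mcX^\wedge$; since $\mcX^\wedge$ is thick by \cite[Theorem 2.11]{BMPS}, it is closed under cokernels of monomorphisms, whence $C \in \mcX^\wedge$.

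For part (2), fix $n \geq 0$ and set $\mcS := \mcX^\wedge_n$. Conditions \lccpone \ and \rccpone \ hold because $\mcX$ is closed under direct summands (it is left thick by \lfpone) and $\mcX^\perp$ trivially is. The Auslander-Buchweitz theorem \cite[Theorem 2.8]{BMPS} provides, for each $S \in \mcS \subseteq \mcX^\wedge$, two exact sequences
\[
0 \to Y_S \to X_S \to S \to 0 \quad \text{and} \quad 0 \to S \to H_S \to X'_S \to 0,
\]
with $X_S, X'_S \in \mcX$ and $Y_S, H_S \in \omega^\wedge$. A dimension-shift induction on the $\omega$-resolution length, based on $\omega \subseteq \mcX^\perp$ (which comes from \lfpthree), yields $\omega^\wedge \subseteq \mcX^\perp$; the two displayed sequences then witness \lccpthree \ and \rccpthree \ with respect to $\mcB := \mcX^\perp$.

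For \lccptwo, the inclusion $\mcX \cap \mcS \subseteq {}^{\perp_1}(\mcX^\perp) \cap \mcS$ is tautological. For the reverse, take $T \in {}^{\perp_1}(\mcX^\perp) \cap \mcS$ and apply the first approximation to obtain $0 \to Y \to X \to T \to 0$ with $X \in \mcX$ and $Y \in \omega^\wedge \subseteq \mcX^\perp$; since $\Ext^1_{\mcC}(T, Y) = 0$, this sequence splits and $T$ is a direct summand of $X$, hence $T \in \mcX$ by \lccpone. For \rccptwo, the task reduces to showing $\mcX^{\perp_1} \cap \mcS \subseteq \mcX^\perp$, which follows from the identity $\mcX^{\perp_1} \cap \mcX^\wedge = \omega^\wedge$ (already used in the proof of Proposition \ref{prop:cuts_from_Frobenius_pairs} part (3), relying on \cite[Theorem 2.16]{BMPS}) together with $\omega^\wedge \subseteq \mcX^\perp$. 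The only delicate verification is this dimension shift $\omega^\wedge \subseteq \mcX^\perp$; everything else is a direct translation of the Auslander-Buchweitz data into the language of cut cotorsion pairs.
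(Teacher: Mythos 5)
Your proof is correct. For part (2) it is essentially the paper's argument: direct summand closure, the Auslander--Buchweitz approximation sequences from \cite[Theorem 2.8]{BMPS} for \lccpthree\ and \rccpthree, the splitting argument for the nontrivial inclusion in \lccptwo, and the identity $\mcX^{\perp_1} \cap \mcX^\wedge = \omega^\wedge$ together with $\omega^\wedge \subseteq \mcX^\perp$ for \rccptwo\ (the paper instead splits the coapproximation sequence and uses closure of $\omega^\wedge$ under direct summands, but the content is the same). For part (1) you take a genuinely different, and shorter, cycle: the paper proves (a) $\Rightarrow$ (b) directly, which requires a pushout construction to manufacture the right-completeness sequences and a separate verification that $\omega^\wedge = \mcX^{\perp_1}$; you instead go (a) $\Rightarrow$ (c) $\Rightarrow$ (b), where (a) $\Rightarrow$ (c) needs only left completeness plus thickness of $\mcX^\wedge$, and (c) $\Rightarrow$ (b) is outsourced to the already-established Proposition \ref{prop:cuts_from_Frobenius_pairs}(1) (the paper's (c) $\Rightarrow$ (a) similarly leans on \cite[Theorem 3.6]{BMPS}). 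Your route buys economy by reusing earlier results; the paper's route has the minor virtue of exhibiting the right-approximation sequences explicitly. Both are valid, and there is no circularity since Proposition \ref{prop:cuts_from_Frobenius_pairs} precedes this statement.
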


\begin{proof}  
In the first part, let us first assume condition (a). We need to verify that $\omega^\wedge = \mcX^{\perp_1}$ and that for every $C \in \mcC$ there exists a short exact sequence of the form $0 \to C \to H \to X \to 0$ with $H \in \omega^\wedge$ and $X \in \mcX$. For the latter, let $C \in \mcC$ and consider a short exact sequence $0 \to K \to X \to C \to 0$ with $X \in \mcX$ and $K \in \omega^\wedge$. On the other hand, since $\omega$ is a relative cogenerator in $\mcX$, there is a short exact sequence $0 \to X \to W \to X' \to 0$ with $W \in \omega$ and $X' \in \mcX$. Taking the push-out of $W \leftarrow X \to C$ yields the following solid diagram:
\begin{equation}\label{eqsolid}
\parbox{1.75in}{
\begin{tikzpicture}[description/.style={fill=white,inner sep=2pt}] 
\matrix (m) [ampersand replacement=\&, matrix of math nodes, row sep=2.5em, column sep=2.5em, text height=1.25ex, text depth=0.25ex] 
{ 
K \& X \& C \\
K \& W \& H \\
{} \& X' \& X' \\
}; 
\path[->] 
(m-1-2)-- node[pos=0.5] {\footnotesize$\mbox{\bf po}$} (m-2-3) 
; 
\path[>->]
(m-1-1) edge (m-1-2)
(m-2-1) edge (m-2-2)
(m-1-2) edge (m-2-2)
(m-1-3) edge (m-2-3)
;
\path[->>]
(m-1-2) edge (m-1-3)
(m-2-2) edge (m-2-3)
(m-2-2) edge (m-3-2)
(m-2-3) edge (m-3-3)
;
\path[-,font=\scriptsize]
(m-1-1) edge [double, thick, double distance=2pt] (m-2-1)
(m-3-2) edge [double, thick, double distance=2pt] (m-3-3)
;
\end{tikzpicture} 
}
\end{equation} 
Note that $H \in \omega^\wedge$. Then, the right-hand column in \eqref{eqsolid} is the desired short exact sequence for the right completeness of $(\mcX,\omega^\wedge)$. In order to show $\omega^\wedge = \mcX^{\perp_1}$, note that the containment $\omega^\wedge \subseteq \mcX^{\perp_1}$ is clear since $\id_{\mcX}(\omega^\wedge) = \id_{\mcX}(\omega) = 0$. The converse containment follows from the right completeness and the fact that $\omega^\wedge$ is closed under direct summands (see \cite[Proposition 2.13]{BMPS}). Hence, the implication (a) $\Rightarrow$ (b) follows. 

The implication (b) $\Rightarrow$ (c) follows by the left completeness of the cotorsion pair $(\mcX,\omega^\wedge)$ and the containment $\omega^\wedge \subseteq \mcX^\wedge$. On the other hand, by \cite[Theorem 3.6]{BMPS} we know that $(\mcX,\omega^\wedge)$ is a $\mcX^\wedge$-cotorsion pair (that is, a complete cotorsion pair in the exact subcategory $\mcX^\wedge$). Thus, the implication (c) $\Rightarrow$ (a) is clear. 

Now for the assertion $\mcX^\wedge_n \in {\rm Cuts}(\mcX,\mcX^{\perp})$, we already know that $\mcX$ is closed under direct summands. In order to show \lccptwo, note that the containment $\mcX \cap \mcX^\wedge_n \subseteq {}^{\perp_1}(\mcX^\perp) \cap \mcX^\wedge_n$ is clear. For the converse, if we take $C \in {}^{\perp_1}(\mcX^\perp) \cap \mcX^\wedge_n$, then by \cite[Theorem 2.8]{BMPS} there exists a short exact sequence $0 \to K \to X \to C \to 0$ with $X \in \mcX$ and $K \in \omega^\wedge_{n-1}$\footnote{Note that for the case $n = 0$ we simply take $K = 0$.}. Note also that $K \in \mcX^\perp$ since $\id_{\mcX}(\omega^\wedge_{n-1}) = \id_{\mcX}(\omega) = 0$. Then, the previous sequence splits and so $C \in \mcX$. On the other hand, for \rccptwo \ $\mcX^\perp \cap \mcX^\wedge_n = \mcX^{\perp_1} \cap \mcX^\wedge_n$, the containment ($\subseteq$) is clear. Now if $C \in \mcX^{\perp_1} \cap \mcX^\wedge_n$, by \cite[Theorem 2.8]{BMPS} we can find a short exact sequence $0 \to C \to H \to C' \to 0$ where $H \in \omega^\wedge_n$ and $C' \in \mcX$, which is split and so $C$ is a direct summand of $H$. This in turn implies that $C \in \mcX^\perp$. The previous arguments also show \lccpthree \ and \rccpthree. 
\end{proof}

\begin{corollary}\label{coro:cuts_from_Frobenius_and_GP-admissoble_pairs}
The following hold for every GP-admissible pair $(\mcX,\mcY)$ in $\mcC$ with $\omega := \mcX \cap \mcY$ closed under direct summands: 
\begin{enumerate} 
\item The following conditions are equivalent:
\begin{enumerate}
\item[(a)] $(\mcGP_{(\mcX,\mcY)},\omega^\wedge)$ is a complete left cotorsion pair in $\mcC$.

\item[(b)] $(\mcGP_{(\mcX,\mcY)},\omega^\wedge)$ is a complete cotorsion pair in $\mcC$.

\item[(c)] $\mcC = \mcGP^\wedge_{(\mcX,\mcY)}$. 
\end{enumerate}

\item For every $n \geq 0$, $(\mcGP_{(\mcX,\mcY)},(\mcGP_{(\mcX,\mcY)})^\perp)$ is a complete cotorsion pair cut along $(\mcGP_{(\mcX,\mcY)})^\wedge_n$. 
\end{enumerate}
\end{corollary}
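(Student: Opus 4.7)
The plan is to reduce the corollary to Proposition \ref{prop:cuts_from_Frobenius_and_GP-admissoble_pairs} by checking that the pair $(\mcGP_{(\mcX,\mcY)},\omega)$ is itself a left Frobenius pair in $\mcC$. Once this reduction is in place, both parts (1) and (2) follow by applying the corresponding parts of Proposition \ref{prop:cuts_from_Frobenius_and_GP-admissoble_pairs} with $\mcX$ replaced by $\mcGP_{(\mcX,\mcY)}$ and $\omega$ unchanged, and noting that $\mcGP^\wedge_{(\mcX,\mcY)}$ plays the role of $\mcX^\wedge$.

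To verify the left Frobenius pair axioms for $(\mcGP_{(\mcX,\mcY)},\omega)$, I would proceed as follows. For \lfpone, I need $\mcGP_{(\mcX,\mcY)}$ to be left thick, i.e.\ closed under extensions, epi-kernels and direct summands. These closure properties for the class of $(\mcX,\mcY)$-Gorenstein projective objects have been established in \cite{BMS} precisely under the GP-admissibility hypothesis, so I can cite those results directly. For \lfptwo, the assumption in the corollary is exactly that $\omega$ is closed under direct summands. For \lfpthree, I must show $\omega \subseteq \mcGP_{(\mcX,\mcY)}$, that $\omega$ is a relative cogenerator in $\mcGP_{(\mcX,\mcY)}$, and that $\id_{\mcGP_{(\mcX,\mcY)}}(\omega) = 0$. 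The containment $\omega \subseteq \mcGP_{(\mcX,\mcY)}$ follows from \GPaone \ and \GPafour, since any $W \in \omega = \mcX \cap \mcY$ can be placed as the zero cycle of an unbounded exact complex with components in $\mcX$ that is automatically $\Hom_{\mcC}(-,\mcY)$-acyclic by \GPaone. For $\omega$ being a relative cogenerator in $\mcGP_{(\mcX,\mcY)}$: given $C \in \mcGP_{(\mcX,\mcY)}$, the defining exact and $\Hom_{\mcC}(-,\mcY)$-acyclic complex $X_\bullet$ with $C = Z_0(X_\bullet)$ and $X_m \in \mcX$ produces a short exact sequence $0 \to C \to X_{-1} \to C' \to 0$ with $C' \in \mcGP_{(\mcX,\mcY)}$; applying \GPafour \ to $X_{-1}$ and taking the push-out yields a short exact sequence $0 \to C \to W \to C'' \to 0$ with $W \in \omega$ and $C'' \in \mcGP_{(\mcX,\mcY)}$. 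Finally, $\id_{\mcGP_{(\mcX,\mcY)}}(\omega) = 0$ follows by dimension shifting along such defining complexes, using \GPaone \ and the $\Hom_{\mcC}(-,\mcY)$-acyclicity.

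With $(\mcGP_{(\mcX,\mcY)},\omega)$ confirmed to be a left Frobenius pair, part (1) is immediate from Proposition \ref{prop:cuts_from_Frobenius_and_GP-admissoble_pairs}(1), since $\omega^\wedge$ is the same class on both sides and $\mcGP^\wedge_{(\mcX,\mcY)}$ replaces $\mcX^\wedge$. Similarly, part (2) is immediate from Proposition \ref{prop:cuts_from_Frobenius_and_GP-admissoble_pairs}(2) applied to this Frobenius pair, yielding that $(\mcGP_{(\mcX,\mcY)},(\mcGP_{(\mcX,\mcY)})^\perp)$ is a complete cotorsion pair cut along $(\mcGP_{(\mcX,\mcY)})^\wedge_n$ for every $n \geq 0$.

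The main obstacle I anticipate is the verification that $\omega$ is a relative cogenerator in $\mcGP_{(\mcX,\mcY)}$, since this is where one must genuinely combine the GP-admissibility axioms with the definition of relative Gorenstein projective objects (in particular \GPafour \ and the unbounded acyclic resolutions guaranteed by the definition). The other Frobenius pair conditions reduce to either direct citations of results from \cite{BMS} or straightforward orthogonality arguments; it is this cogenerator property that requires assembling a short exact sequence of the correct form, and therefore constitutes the nontrivial technical core underlying the corollary.
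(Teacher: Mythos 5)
Your proposal is correct and follows exactly the paper's route: the paper also proves the corollary by applying Proposition \ref{prop:cuts_from_Frobenius_and_GP-admissoble_pairs} to the pair $(\mcGP_{(\mcX,\mcY)},\omega)$. The only difference is that the paper simply cites \cite[Corollary 4.10]{BMS} for the fact that this pair is left Frobenius, whereas you re-verify the Frobenius axioms by hand (correctly, and in line with the results of \cite{BMS} the paper relies on).
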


\begin{proof}
It follows after applying Proposition \ref{prop:cuts_from_Frobenius_and_GP-admissoble_pairs} to the pair $(\mcGP_{(\mcX,\mcY)},\omega)$, which is left Frobenius by \cite[Corollary 4.10]{BMS}.
\end{proof}

\begin{remark}
Although in all of our examples of GP-admissible pairs $(\mcX,\mcY)$, the class $\omega := \mcX \cap \mcY$ is closed under direct summands, another proof of Corollary \ref{coro:cuts_from_Frobenius_and_GP-admissoble_pairs} can be obtained without assuming this property. Indeed, consider the pair $(\mcGP_{(\mcX,\mcY)},(\mcGP_{(\mcX,\mcY)})^\perp)$. The closure under direct summands of $(\mcGP_{(\mcX,\mcY)})^\perp$ is clear, and the same property holds for $\mcGP_{(\mcX,\mcY)}$ due to \cite[Corollary 3.33]{BMS}. Also, the following containments are clear:
\begin{align*}
\mcGP_{(\mcX,\mcY)} \cap (\mcGP_{(\mcX,\mcY)})^\wedge_n & \subseteq {}^{\perp_1}((\mcGP_{(\mcX,\mcY)})^\perp) \cap (\mcGP_{(\mcX,\mcY)})^\wedge_n, \\
(\mcGP_{(\mcX,\mcY)})^\perp \cap (\mcGP_{(\mcX,\mcY)})^\wedge_n & \subseteq (\mcGP_{(\mcX,\mcY)})^{\perp_1} \cap (\mcGP_{(\mcX,\mcY)})^\wedge_n.
\end{align*} 
On the other hand, by \cite[Corollary 4.3]{BMS} for every $C \in {}^{\perp_1}((\mcGP_{(\mcX,\mcY)})^\perp) \cap (\mcGP_{(\mcX,\mcY)})^\wedge_n$ there exists a short exact sequence $0 \to K \to G \to C \to 0$ with $G \in \mcGP_{(\mcX,\mcY)}$ and $K \in \omega^\wedge_{n-1}$. Also, we can note from \cite[Corollary 3.15]{BMS} that $\omega^\wedge_{n-1} \subseteq (\mcGP_{(\mcX,\mcY)})^{\perp_1}$. Then, \lccptwo \ and \lccpthree \ follow. The containment 
\[
(\mcGP_{(\mcX,\mcY)})^\perp \cap (\mcGP_{(\mcX,\mcY)})^\wedge_n \supseteq (\mcGP_{(\mcX,\mcY)})^{\perp_1} \cap (\mcGP_{(\mcX,\mcY)})^\wedge_n
\] 
follows as in the proof of part (2) of Proposition \ref{prop:cuts_from_Frobenius_and_GP-admissoble_pairs}. Hence, \rccptwo \ follows, and \rccpthree is a consequence of \cite[Corollaries 3.15 and 4.3]{BMS}. 
\end{remark}

\begin{example}\label{ex:GP2} \
\begin{enumerate}
\item From Corollary \ref{coro:cuts_from_Frobenius_and_GP-admissoble_pairs} we can note that it is not always possible to extend a cotorsion cut associated to a pair to the whole category $\mcC$. Indeed, consider the complete cotorsion pair $(\mcGP(R),\mcP(R)^\wedge)$ cut along $\mcGP(R)^\wedge$ from Example \ref{ex:GP}. Then, we have that $(\mcGP(R),\mcP(R)^\wedge)$ is a complete cotorsion pair in $\Mod(R)$ if, and only if, $\Mod(R) = \mcGP(R)^\wedge$. The latter equality occurs, for instance, if $R$ an Iwanaga-Gorenstein ring, but it is not true in general. 

\item We can also characterize when the complete cotorsion pair $(\mcP(R),\mcGP(R)^{\perp_1})$ cut along $\mcP(R)^\wedge$ is a complete cotorsion pair in $\Mod(R)$. Specifically, the pair $(\mcP(R),\mcGP(R)^{\perp_1})$  is a complete cotorsion pair in $\Mod(R)$ if, and only if, $\mcP(R) = \mcGP(R)$. The latter equality occurs, for instance, over any ring with finite global dimension. 
\end{enumerate}
\end{example}

In \cite[Proposition 3.5]{BMPS} it is given an alternative description of relative cotorsion pairs. Following the spirit of this result, we present the following characterization for cut cotorsion pairs. Its proof is straightforward.

\begin{proposition}\label{prop:characterization_ccp}
Let $\mcS$, $\mcA$ and $\mcB$ be classes of objects in $\mcC$. Then, $(\mcA,\mcB)$ is a complete left cotorsion pair cut along $\mcS$ if, and only if, $\mcA$ and $\mcB$ satisfy the following conditions:
\begin{enumerate}
\item $\mcA$ is closed under direct summands;

\item $\Ext^1_{\mcC}(\mcA \cap \mcS,\mcB) = 0$; and

\item for every $S \in \mcS$ there exists an exact sequence $0 \to B \to A \to S \to 0$ with $A \in \mcA$ and $B \in \mcB$.
\end{enumerate}
\end{proposition}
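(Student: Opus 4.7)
The plan is to establish the equivalence by directly matching conditions, since items (1) and (3) in the statement coincide verbatim with \lccpone\ and \lccpthree\ in Definition \ref{def:cut_cotorsion_pair}. So the entire content of the proposition reduces to showing that, assuming closure under direct summands and the existence of the sequences in (3), the Ext-vanishing condition $\Ext^1_{\mcC}(\mcA \cap \mcS, \mcB) = 0$ is equivalent to the equality of classes $\mcA \cap \mcS = {}^{\perp_1}\mcB \cap \mcS$ demanded by \lccptwo.

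For the forward direction, I would assume $(\mcA,\mcB)$ is a complete left cotorsion pair cut along $\mcS$, so that \lccpone, \lccptwo\ and \lccpthree\ hold. Items (1) and (3) are immediate. Item (2) follows from the inclusion $\mcA \cap \mcS \subseteq {}^{\perp_1}\mcB$ implicit in \lccptwo: every $A \in \mcA \cap \mcS$ satisfies $\Ext^1_{\mcC}(A,B) = 0$ for every $B \in \mcB$.

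For the converse, under conditions (1), (2), (3), the only task is to verify \lccptwo, since \lccpone\ and \lccpthree\ are given. The inclusion $\mcA \cap \mcS \subseteq {}^{\perp_1}\mcB \cap \mcS$ is a direct reformulation of condition (2). For the reverse inclusion, I would take an arbitrary $C \in {}^{\perp_1}\mcB \cap \mcS$ and apply condition (3) to obtain a short exact sequence $0 \to B \to A \to C \to 0$ with $A \in \mcA$ and $B \in \mcB$. Since $C \in {}^{\perp_1}\mcB$, we have $\Ext^1_{\mcC}(C,B) = 0$, so the sequence splits and exhibits $C$ as a direct summand of $A$. Condition (1) then yields $C \in \mcA$, hence $C \in \mcA \cap \mcS$.

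No genuine obstacle is expected; this is essentially a relative version of the standard Salce-style splitting argument for cotorsion pairs. The only point worth flagging is that closure of $\mcA$ under direct summands is exactly what is needed to convert the splitting into membership in $\mcA$, explaining why condition (1) cannot be dispensed with in this characterization.
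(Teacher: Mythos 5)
Your proposal is correct and follows exactly the argument the paper has in mind (the paper omits the proof as ``straightforward''): the only nontrivial step is the reverse inclusion in \lccptwo, which you handle with the standard splitting argument using condition (3) and closure of $\mcA$ under direct summands. Nothing to add.
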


\begin{remark}
Regarding condition (3) in Proposition \ref{prop:characterization_ccp}, in the case $\Ext^1_{\mcC}(\mcA,\mcB) = 0$, the morphism $\varphi \colon A \to S$ is an $\mcA$-precover. 
\end{remark}

%%%%%%%%%%%%%%%%%%%%%%%%%%%%%%%%%%%%%
%%%%%%%%%%%%%%%%%%%%%%%%%%%%%%%%%%%%%

\subsection*{Getting new cotorsion cuts and pairs from old ones}

In the following result, we show that the class ${\rm lCuts}(\mcA,\mcB)$ is closed under restrictions, arbitrary unions and  intersections.

\begin{proposition}\label{prop:properties_of_cuts}
Let $\mcA$ and $\mcB$ be two classes of objects in $\mcC$. 
\begin{enumerate}
\item \textbf{Restriction:} If $(\mcA,\mcB)$ is a (complete) left cotorsion pair cut along $\mcS$ and $\mcX \subseteq \mcS$, then $(\mcA,\mcB)$ is a (complete) left cotorsion pair cut along $\mcX$. 
\end{enumerate}
If $\{ \mcS_i \}_{i \in I}$ is a nonempty family of classes of objects in $\mcC$, then the following hold:
\begin{enumerate}
\item[(2)] \textbf{Unions:} $(\mcA,\mcB)$ is a (complete) left cotorsion pair cut along $\mcS := \bigcup_{i \in I} \mcS_i$ if, and only if, $(\mcA,\mcB)$ is a (complete) left cotorsion pair cut along $\mcS_i$, for every $i \in I$. 

\item[(3)] \textbf{Intersections:} If $(\mcA,\mcB)$ is a (complete) left cotorsion pair cut along $\mcS_i$ for every $i \in I$, then $(\mcA,\mcB)$ is a (complete) left cotorsion pair cut along $\mcS := \bigcap_{i \in I} \mcS_i$.
\end{enumerate}
\end{proposition}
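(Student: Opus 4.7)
The plan is to verify all three parts by direct manipulation of conditions \lccpone, \lccptwo, and \lccpthree from Definition \ref{def:cut_cotorsion_pair}, exploiting that \lccpone is a condition on $\mcA$ alone and is therefore preserved automatically throughout. For the restriction property (1), since $\mcX \subseteq \mcS$, I would intersect both sides of the equality $\mcA \cap \mcS = {}^{\perp_1}\mcB \cap \mcS$ with $\mcX$, immediately obtaining \lccptwo along $\mcX$. Condition \lccpthree transfers for free: every $X \in \mcX$ lies in $\mcS$, so the sequence $0 \to B \to A \to X \to 0$ required by the cut along $\mcX$ is already supplied by the cut along $\mcS$.

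For the union property (2), the forward implication will follow from part (1) applied to each inclusion $\mcS_i \subseteq \bigcup_{i\in I} \mcS_i$. For the converse, I would invoke distributivity of intersection over union to compute
\[
\mcA \cap \bigcup_{i \in I} \mcS_i = \bigcup_{i \in I} (\mcA \cap \mcS_i) = \bigcup_{i \in I} ({}^{\perp_1}\mcB \cap \mcS_i) = {}^{\perp_1}\mcB \cap \bigcup_{i \in I} \mcS_i,
\]
establishing \lccptwo. For \lccpthree along the union, any $S \in \bigcup_{i \in I} \mcS_i$ lies in some $\mcS_{i_0}$, so the sequence provided by the cut along $\mcS_{i_0}$ works as the required one.

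The intersection property (3) will proceed by an analogous computation,
\[
\mcA \cap \bigcap_{i \in I} \mcS_i = \bigcap_{i \in I} (\mcA \cap \mcS_i) = \bigcap_{i \in I} ({}^{\perp_1}\mcB \cap \mcS_i) = {}^{\perp_1}\mcB \cap \bigcap_{i \in I} \mcS_i,
\]
which gives \lccptwo. For \lccpthree, any $S \in \bigcap_{i\in I} \mcS_i$ belongs in particular to some fixed $\mcS_{i_0}$ (choosing $i_0 \in I$, which is nonempty by hypothesis), and I borrow the required short exact sequence from the cut along that index. There is no substantive obstacle in this proposition; the arguments are essentially formal, resting on the fact that \lccptwo is phrased as an equality of intersections, which interacts transparently with Boolean operations on the cut class, while \lccpthree is a pointwise existence statement that is inherited by any subclass and can be witnessed from any one $\mcS_{i_0}$ containing the given point.
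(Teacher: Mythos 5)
Your proposal is correct and follows essentially the same route as the paper: condition \lccpone{} is independent of the cut, \lccpthree{} transfers pointwise, and \lccptwo{} is handled by intersecting with the subclass (for restriction) and by distributivity of intersection over unions and intersections (for parts (2) and (3)). The paper's proof is just a terser version of the same argument, leaving (1) and (3) as immediate consequences of the definition.
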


\begin{proof} 
Part (1) is a consequence of Definition \ref{def:cut_cotorsion_pair}, and (3) is easy to prove. The ``only if'' part of (2) is a consequence of (1). Now for the ``if'' part, suppose that $(\mcA,\mcB)$ is a complete left cotorsion pair cut along each $\mcS_i$. The validity on \lccpone \ is independent of the cuts $\mcS_i$, and \lccpthree \ clearly holds for $\mcS$. Finally, \lccptwo \ follows by the distributive law of intersections with respect to unions. 
\end{proof}

\begin{remark} \
\begin{enumerate}
\item It is not always true that cotorsion cuts can be extended to a bigger class, as shown in Example \ref{ex:GP2} (1). 

\item The converse of the intersection property does not hold in general. It suffices to consider the pair $(\mcGP(R),\mcP(R)^\wedge)$ from Example \ref{ex:GP} and $\mcS_1 := \mcGP(R)^\wedge$ and $\mcS_2 := \Mod(R)$.

\item By the union property and its dual, we can note that $(\mcA,\mcB)$ is a complete cotorsion pair in $\mcC$ if, and only if, there exists a family $\{ \mcS_i \}_{i \in I}$ of classes of objects in $\mcC$ such that $\mcC = \bigcup_{i \in I} \mcS_i$ and that $(\mcA,\mcB)$ is a complete cotorsion pair cut along $\mcS_i$ for every $i \in I$.
\end{enumerate}
\end{remark}

\begin{example}
Over any ring $R$, $(\mcGP(R),\mcGP(R)^{\perp_1})$ is a complete cotorsion pair cut along $\mcGP(R)^\wedge$. This clearly follows by Corollary \ref{coro:cuts_from_Frobenius_and_GP-admissoble_pairs} and by the union property. 
\end{example}

%%%%%%%%%%%%%%%%%%%%%%%%%%%%%%%%%%%%%
%%%%%%%%%%%%%%%%%%%%%%%%%%%%%%%%%%%%%

\subsection*{Maximal cotorsion cuts} 

From the union property it is natural to think of the possibility of finding the largest cotorsion cut for a pair $(\mcA,\mcB)$. Indeed, assuming that $\mcA$ is closed under direct summands and that $0 \in \mcB$ (and so ${\rm lCuts}(\mcA,\mcB) \neq \emptyset$), it is possible to define the biggest cut for $(\mcA,\mcB)$ as a certain union.

\begin{definition}\label{def:maximal_cut}
Let $\mcA$ and $\mcB$ be two classes of objects in $\mcC$ such that $\mcA$ is closed under direct summands and $0 \in \mcB$. The \textbf{maximal left cotorsion cut of $\bm{(\mcA,\mcB)}$} is the union
\[
\mbS_l(\mcA,\mcB) := \bigcup \{ \mcS \text{ : } \mcS \in {\rm lCuts}(\mcA,\mcB) \}.
\]
The \textbf{maximal right cotorsion cut} and the \textbf{maximal cotorsion cut of $\bm{(\mcA,\mcB)}$} are defined similarly, and will be denoted by $\mbS_r(\mcA,\mcB)$ and $\mbS(\mcA,\mcB)$.  
\end{definition}

The maximal left cotorsion cut of $(\mcA,\mcB)$ has the interesting property that, under some mild conditions, it can cover the class of all objects satisfying \lccpthree. Let us denote the latter class by $\mcE_l(\mcA,\mcB)$, that is, $\mcE_l(\mcA,\mcB)$ is the class of all objects $C \in \mcC$ for which there exists a short exact sequence $0 \to B \to A \to C \to 0$ with $A \in \mcA$ and $B \in \mcB$. We can note that $\mbS_l(\mcA,\mcB) \subseteq \mcE_l(\mcA,\mcB)$, although the equality does not hold in general. Below we show that if $\mcE_l(\mcA,\mcB)$ is a left cotorsion cut of $(\mcA,\mcB)$, then it has to be the maximal one.

\begin{theorem}\label{theo:maximal_S}
The following conditions are equivalent for any two classes $\mcA$ and $\mcB$ of objects in $\mcC$, where $\mcA$ is closed under direct summands and $0 \in \mcB$:
\begin{enumerate}
\item[(a)] $\Ext^1_{\mcC}(\mcA,\mcB) = 0$.

\item[(b)] $\mbS_l(\mcA,\mcB)  = \mcE_l(\mcA,\mcB)$.

\item[(c)] $\mcE_l(\mcA,\mcB) \in {\rm lCuts}(\mcA,\mcB)$.

\item[(d)] $\mcA = {}^{\perp_1}\mcB \cap \mcE_l(\mcA,\mcB)$. 
\end{enumerate}
\end{theorem}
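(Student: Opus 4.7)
The plan is to establish the circular chain $(a) \Rightarrow (c) \Leftrightarrow (d) \Rightarrow (a)$ together with the equivalence $(b) \Leftrightarrow (c)$, relying on the always-holding inclusion $\mbS_l(\mcA,\mcB) \subseteq \mcE_l(\mcA,\mcB)$ noted just before the statement, and on the union property recorded in Proposition \ref{prop:properties_of_cuts}(2).

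For $(a) \Rightarrow (c)$, I would verify the three axioms \lccpone, \lccptwo, \lccpthree for the choice $\mcS := \mcE_l(\mcA,\mcB)$. Axiom \lccpone is part of the standing hypothesis, and \lccpthree is immediate from the very definition of $\mcE_l(\mcA,\mcB)$. The inclusion $\mcA \cap \mcE_l(\mcA,\mcB) \subseteq {}^{\perp_1}\mcB \cap \mcE_l(\mcA,\mcB)$ in \lccptwo follows directly from (a). The reverse inclusion, which is the key step, proceeds by a splitting argument: for any $C \in {}^{\perp_1}\mcB \cap \mcE_l(\mcA,\mcB)$, pick a short exact sequence $0 \to B \to A \to C \to 0$ with $A \in \mcA$ and $B \in \mcB$; since $\Ext^1_{\mcC}(C,B) = 0$, this sequence splits, exhibiting $C$ as a direct summand of $A$, and the closure of $\mcA$ under direct summands forces $C \in \mcA$.

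The equivalence $(c) \Leftrightarrow (d)$ rests on the observation $\mcA \subseteq \mcE_l(\mcA,\mcB)$: since $0 \in \mcB$, the sequence $0 \to 0 \to A \to A \to 0$ witnesses membership of any $A \in \mcA$ in $\mcE_l(\mcA,\mcB)$. Consequently $\mcA \cap \mcE_l(\mcA,\mcB) = \mcA$, and axiom \lccptwo for the cut $\mcE_l(\mcA,\mcB)$ reduces verbatim to the equality in (d); axioms \lccpone and \lccpthree are automatic, so (c) and (d) are equivalent. The implication $(d) \Rightarrow (a)$ is then immediate, since (d) forces $\mcA \subseteq {}^{\perp_1}\mcB$.

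For $(b) \Leftrightarrow (c)$: if (c) holds, then $\mcE_l(\mcA,\mcB)$ appears among the classes in the union defining $\mbS_l(\mcA,\mcB)$, so $\mcE_l(\mcA,\mcB) \subseteq \mbS_l(\mcA,\mcB)$, and combining with the reverse inclusion yields (b). Conversely, by Proposition \ref{prop:properties_of_cuts}(2) the union of any family of left cotorsion cuts is itself a left cotorsion cut, so $\mbS_l(\mcA,\mcB) \in {\rm lCuts}(\mcA,\mcB)$; if (b) holds, this transfers to $\mcE_l(\mcA,\mcB)$, giving (c). The only genuinely non-bookkeeping step is the splitting argument in $(a) \Rightarrow (c)$, which is precisely where the hypothesis that $\mcA$ is closed under direct summands is essential.
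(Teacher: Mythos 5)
Your proposal is correct and rests on the same core argument as the paper: the splitting of a sequence $0 \to B \to A \to C \to 0$ for $C \in {}^{\perp_1}\mcB$ combined with closure of $\mcA$ under direct summands, plus the union property to relate $\mbS_l(\mcA,\mcB)$ to ${\rm lCuts}(\mcA,\mcB)$. The only difference is organizational (the paper chains (a) $\Rightarrow$ (b) $\Rightarrow$ (c) $\Rightarrow$ (d) $\Rightarrow$ (a), adjoining one object $X$ at a time to $\mbS_l(\mcA,\mcB)$, whereas you verify directly that $\mcE_l(\mcA,\mcB)$ is a cut), which does not change the substance.
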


\begin{proof}
Note first that $\mcA \subseteq \mcE_l(\mcA,\mcB)$ since $0 \in \mcB$. Then, $\mcA \cap \mcE_l(\mcA,\mcB) = \mcA$, and so the implication (c) $\Rightarrow$ (d) is clear. On the other hand, the implications (b) $\Rightarrow$ (c) and (d) $\Rightarrow$ (a) are trivial. Thus, we only focus on proving that (a) $\Rightarrow$ (b). 

Suppose that $\Ext^1_{\mcC}(\mcA,\mcB) = 0$. Note that the containment $\mbS_l(\mcA,\mcB) \subseteq \mcE_l(\mcA,\mcB)$ is clear. Now let $X \in \mcE_l(\mcA,\mcB)$. We prove that $(\mcA,\mcB)$ is a complete left cotorsion pair cut along $\mcS := \mbS_l(\mcA,\mcB) \cup \{ X \}$. For this, we only need to prove \lccptwo. Consider the following two cases:
\begin{enumerate}
\item $X \in \mcA$: Since $\Ext^1_{\mcC}(\mcA,\mcB) = 0$ we have that $X \in {}^{\perp_1}\mcB$. Thus, 
\[
\mcA \cap \{ X \} = \{ X \} = {}^{\perp_1}\mcB \cap \{ X \}.
\] 

\item $X \not\in \mcA$: Since $X \in \mcE_l(\mcA,\mcB)$, there exists a non-split short exact sequence $0 \to B \to A \to X \to 0$ with $A \in \mcA$ and $B \in \mcB$. It follows that $X \not\in {}^{\perp_1}\mcB$, and so $\mcA \cap \{ X \} = \emptyset = {}^{\perp_1}\mcB \cap \{ X \}$. 
\end{enumerate}
In both cases, we get the equality $\mcA \cap \{ X \} = {}^{\perp_1}\mcB \cap \{ X \}$. Therefore, $\mbS_l(\mcA,\mcB) \cup \{ X \} \in {\rm lCuts}(\mcA,\mcB)$, and so $\mbS_l(\mcA,\mcB) \cup \{ X \} = \mbS_l(\mcA,\mcB)$ by maximality, proving that $\mbS_l(\mcA,\mcB) \supseteq \mcE_l(\mcA,\mcB)$. 
\end{proof}

A similar equivalence holds for the class $\mcE_r(\mcA,\mcB)$ of all objects satisfying \rccpthree, and for $\mcE(\mcA,\mcB) := \mcE_l(\mcA,\mcB) \cap \mcE_r(\mcA,\mcB)$.

\begin{remark}
As we mentioned earlier, $\mbS_l(\mcA,\mcB) \subseteq \mcE_l(\mcA,\mcB)$. In some cases this containment is strict and nontrivial, that is, we can find classes $\mcA$ and $\mcB$ such that $\{ 0 \} \subsetneq \mbS_l(\mcA,\mcB) \subsetneq \mcE_l(\mcA,\mcB)$. Indeed, let us consider $\mcA = \mcGP(R)$ and $\mcB = \Mod(R)$. One can note that $(\mcGP(R),\Mod(R))$ is a complete left cotorsion pair cut along $\mcP(R)^\wedge$, and so $\mcP(R)^\wedge \subseteq \mbS_l(\mcGP(R),\Mod(R))$, that is, $\mbS_l(\mcGP(R),\Mod(R)) \neq \{ 0 \}$. On the other hand, by Theorem \ref{theo:maximal_S} we have that 
\[
\mbS_l(\mcGP(R),\Mod(R)) = \mcE_l(\mcGP(R),\Mod(R)) \Longleftrightarrow \Ext^1_R(\mcGP(R),\Mod(R)) = 0,
\]  
and there are rings over which the latter condition does not hold (see Example \ref{ex:GP2} (2)). 
\end{remark}

%%%%%%%%%%%%%%%%%%%%%%%%%%%%%%%%%%%%%
%%%%%%%%%%%%%%%%%%%%%%%%%%%%%%%%%%%%%

\subsection*{Compatibility between cotorsion cuts} 

So far the methods we have showed to obtain new cotorsion cuts are restricted to a fixed pair $(\mcA,\mcB)$ of classes of objects of $\mcC$. In some cases it is possible to get new pairs along new cuts. More specifically, we show in Proposition \ref{prop:unions_cuts_and_pairs} below an extension of the union property of Proposition \ref{prop:properties_of_cuts} (2), in the sense that it is possible to take the union of two different complete cut cotorsion pairs along the union of their cuts, provided that certain compatibility condition between the given pairs is satisfied.

\begin{definition}\label{def:compatible_pairs}
Let $\mcA_1$, $\mcA_2$, $\mcB_1$ and $\mcB_2$ be classes of objects in $\mcC$, where $\mcA_1$ and $\mcA_2$ are closed under direct summands and $0 \in \mcB_1 \cap \mcB_2$, and let $\mcS_1 \in {\rm lCuts}(\mcA_1,\mcB_1)$ and $\mcS_2 \in {\rm lCuts}(\mcA_2,\mcB_2)$ be cotorsion cuts in $\mcC$. We shall say that $(\mcA_1,\mcB_1)$ and $(\mcA_2,\mcB_2)$ are \textbf{compatible} if the following two conditions hold:
\begin{enumerate}
\item $\Ext^1_{\mcC}(\mcA_1,\mcB_2) = 0$ and $\Ext^1_{\mcC}(\mcA_2,\mcB_1) = 0$.

\item $\mcA_1 \cap \mcS_2 = \mcA_2 \cap \mcS_1$. 
\end{enumerate}
\textbf{Compatible complete right cut cotorsion pairs} and \textbf{compatible complete cut cotorsion pairs} are defined similarly. 
\end{definition}

Considering relations between complete cut cotorsion pairs will be important in Section \ref{sec:versus} to establish correspondences between these pairs and the relative versions of the notions of Frobenius pairs and Auslander-Buchweitz contexts developed later in Section \ref{sec:cut_Frobenius_and_cut_AB}.

\begin{proposition}\label{prop:unions_cuts_and_pairs}
Let $\mcA_1$, $\mcA_2$, $\mcB_1$, $\mcB_2$, $\mcS_1$ and $\mcS_2$ be as in Definition \ref{def:compatible_pairs}. If $(\mcA_1,\mcB_1)$ and $(\mcA_2,\mcB_2)$ are compatible, then $\mcS_1 \cup \mcS_2 \in {\rm lCuts}(\mcA_1 \cup \mcA_2, \mcB_1 \cup \mcB_2)$.
\end{proposition}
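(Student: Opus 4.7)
The plan is to verify directly the three defining conditions \lccpone, \lccptwo, \lccpthree \ for $(\mcA_1 \cup \mcA_2, \mcB_1 \cup \mcB_2)$ cut along $\mcS_1 \cup \mcS_2$. Conditions \lccpone \ and \lccpthree \ are essentially immediate: the union of two classes closed under direct summands is again closed under direct summands, and for any $S \in \mcS_1 \cup \mcS_2$ the required short exact sequence is supplied by \lccpthree \ for whichever of $(\mcA_1,\mcB_1)$ or $(\mcA_2,\mcB_2)$ along its own cut contains $S$. So all the work is in proving \lccptwo, i.e.\ the equality
\[
(\mcA_1 \cup \mcA_2) \cap (\mcS_1 \cup \mcS_2) = {}^{\perp_1}(\mcB_1 \cup \mcB_2) \cap (\mcS_1 \cup \mcS_2),
\]
where of course ${}^{\perp_1}(\mcB_1 \cup \mcB_2) = {}^{\perp_1}\mcB_1 \cap {}^{\perp_1}\mcB_2$.

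For the containment $(\supseteq)$, I would simply split cases according to which $\mcS_i$ contains a given $X$. If $X \in {}^{\perp_1}(\mcB_1 \cup \mcB_2) \cap \mcS_1$, then $X \in {}^{\perp_1}\mcB_1 \cap \mcS_1 = \mcA_1 \cap \mcS_1$, placing $X$ in $\mcA_1 \cup \mcA_2$; the case $X \in \mcS_2$ is symmetric using $(\mcA_2,\mcB_2)$ cut along $\mcS_2$.

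For the containment $(\subseteq)$, let $X \in (\mcA_1 \cup \mcA_2) \cap (\mcS_1 \cup \mcS_2)$. Here one does a four-case analysis, and the compatibility conditions (1) and (2) are exactly what make the ``cross cases'' work. For instance, in the case $X \in \mcA_1 \cap \mcS_2$, condition (2) gives $X \in \mcA_1 \cap \mcS_2 = \mcA_2 \cap \mcS_1$, so $X \in \mcA_2$; then condition (1) yields $\mcA_2 \subseteq {}^{\perp_1}\mcB_1$ and $\mcA_1 \subseteq {}^{\perp_1}\mcB_2$, so $X \in {}^{\perp_1}\mcB_1 \cap {}^{\perp_1}\mcB_2$ as required. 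The symmetric cross case $X \in \mcA_2 \cap \mcS_1$ is handled identically. The ``diagonal'' cases $X \in \mcA_i \cap \mcS_i$ are easier: by \lccptwo \ for $(\mcA_i,\mcB_i)$ along $\mcS_i$ we get $X \in {}^{\perp_1}\mcB_i$, and then condition (1) supplies the orthogonality with respect to the other $\mcB_j$.

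The main (and essentially only) obstacle is organizing this case analysis cleanly and recognizing that the two compatibility conditions are precisely what is needed: condition (1) handles the orthogonality that does not come from the ``home'' cut of each pair, while condition (2) is what lets one move an element of $\mcA_1$ lying in the foreign cut $\mcS_2$ back into a setting where the first pair's \lccptwo \ can be applied (and vice versa). Once these two roles are isolated, the verification is a direct set-theoretic calculation.
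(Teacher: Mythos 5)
Your proposal is correct and follows essentially the same route as the paper: conditions \lccpone\ and \lccpthree\ are dispatched immediately, the reverse inclusion in \lccptwo\ uses the individual cuts, and the forward inclusion is a case analysis where compatibility condition (2) resolves the cross cases and condition (1) supplies the orthogonality against the foreign $\mcB_j$. The only (immaterial) difference is that in the cross case the paper, after moving $X$ from $\mcA_i\cap\mcS_j$ to $\mcA_j\cap\mcS_i$, applies \lccptwo\ for both pairs, whereas you apply condition (1) directly once $X\in\mcA_1\cap\mcA_2$ is established.
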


\begin{proof}
It is clear that conditions \lccpone \ and \lccpthree \ hold for $(\mcA_1 \cup \mcA_2, \mcB_1 \cup \mcB_2)$ and $\mcS_1 \cup \mcS_2$. Then, it suffices to show that the following equality holds: 
\[
(\mcA_1 \cup \mcA_2) \cap (\mcS_1 \cup \mcS_2) = {}^{\perp_1}(\mcB_1 \cup \mcB_2) \cap (\mcS_1 \cup \mcS_2).
\]
\begin{enumerate}
\item[($\supseteq$)] Let $C \in {}^{\perp_1}(\mcB_1 \cup \mcB_2) \cap (\mcS_1 \cup \mcS_2)$. Note that $C \in {}^{\perp_1}\mcB_1 \cap {}^{\perp_1}\mcB_2$ and that $C \in \mcS_i$ for some $i = 1,2$. Then, $C \in {}^{\perp_1}\mcB_i \cap \mcS_i = \mcA_i \cap \mcS_i \subseteq (\mcA_1 \cup \mcA_2) \cap (\mcS_1 \cup \mcS_2)$. 

\item[($\subseteq$)] Let $C \in (\mcA_1 \cup \mcA_2) \cap (\mcS_1 \cup \mcS_2)$. Then, $C \in \mcA_i$ for some $i = 1,2$, and $C \in \mcS_j$ for some $j = 1,2$. 
\begin{itemize}
\item If $i = j$, then $C \in \mcA_i \cap \mcS_i = {}^{\perp_1}\mcB_i \cap \mcS_i$. By condition (1) in Definition \ref{def:compatible_pairs}, we have that $C \in {}^{\perp_1}\mcB_1 \cap {}^{\perp_1}\mcB_2 = {}^{\perp_1}(\mcB_1 \cup \mcB_2)$. It follows that $C \in {}^{\perp_1}(\mcB_1 \cup \mcB_2) \cap (\mcS_1 \cup \mcS_2)$.

\item If $i \neq j$, then $C \in \mcA_i \cap \mcS_j$ implies that $C \in \mcA_j \cap \mcS_i$ by condition (2) in Definition \ref{def:compatible_pairs}. Thus, $C \in \mcA_i \cap \mcS_i = {}^{\perp_1}\mcB_i \cap \mcS_i$ for $i = 1, 2$, and so the containment $(\subseteq)$ follows in this case as well. 
\end{itemize}
\end{enumerate} 
\end{proof}

\begin{example}
The complete cotorsion pairs $(\mcGP(R),\mcP(R)^\wedge)$ and $(\mcP(R),\mcGP(R)^{\perp_1})$ cut along $\mcGP(R)^\wedge$ and $\mcP(R)^\wedge$, respectively, are compatible. Indeed, conditions (1) and (2) of Definition \ref{def:compatible_pairs} are clear by \cite[Proposition 10.2.3]{EJ1}, and the dual of (2), that is, the equality $\mcGP(R)^{\perp_1} \cap \mcGP(R)^\wedge = \mcP(R)^\wedge$ follows by \cite[Theorem 2.8]{BMPS}. 
\end{example}

%%%%%%%%%%%%%%%%%%%%%%%%%%%%%%%%%%%%%
%%%%%%%%%%%%%%%%%%%%%%%%%%%%%%%%%%%%%

\subsection*{More induced cotorsion cuts and examples}

Previously we showed how to construct new cotorsion cuts from a given one, or from a family of cotorsion cuts (as in Propositions \ref{prop:properties_of_cuts} and \ref{prop:unions_cuts_and_pairs}). In the last part of this section, we give some sufficient conditions on three classes $\mcA, \mcB, \mcS \subseteq \mcC$, without needing that $\mcS \in {\rm Cuts}(\mcA,\mcB)$, which imply that $(\mcA,\mcB)$ is a cotorsion pair cut along $\mcA^\wedge \cap \mcS$. The classes $\mcE_l(\mcA,\mcB)$, $\mcE_r(\mcA,\mcB)$ and $\mcE(\mcA,\mcB)$ will be useful to prove the following result.

\begin{proposition}\label{prop:a_new_cotorsion_cut}
Let $\mcS$, $\mcA$ and $\mcB$ be classes of objects in $\mcC$, and let $\omega := \mcA \cap \mcB$, such that the following conditions are satisfied:
\begin{enumerate}
\item $\mcA$ is closed under extensions and direct summands;

\item $\mcB$ is closed under direct summands;

\item $\omega \cap \mcS$ is a relative cogenerator in $\mcA$;

\item $(\omega \cap \mcS)^\wedge \subseteq \mcB$; 

\item $\Ext^1_{\mcC}(\mcA \cap \mcS,\mcB) = 0$ and $\Ext^1_{\mcC}(\mcA,\mcB \cap \mcS) = 0$.
\end{enumerate}
Then, $\mcA^\wedge \cap \mcS \in {\rm Cuts}(\mcA,\mcB)$. 
\end{proposition}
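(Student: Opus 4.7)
The plan is to verify all six defining conditions \lccpone, \lccptwo, \lccpthree, \rccpone, \rccptwo, \rccpthree for $(\mcA,\mcB)$ being a complete cotorsion pair cut along $\mcA^\wedge \cap \mcS$. Conditions \lccpone and \rccpone are immediate from hypotheses (1) and (2), so the substance of the proof is to build, for every $C \in \mcA^\wedge \cap \mcS$, appropriate approximation sequences; \lccptwo and \rccptwo will then follow by a standard splitting argument combined with (5).

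The core step is to prove by induction on $n = \resdim_{\mcA}(C)$ the following strengthened claim, valid for \emph{every} $C \in \mcA^\wedge_n$ (with no assumption that $C \in \mcS$): there exist short exact sequences $0 \to Y \to X \to C \to 0$ and $0 \to C \to H \to X' \to 0$ with $X, X' \in \mcA$ and $Y, H \in (\omega \cap \mcS)^\wedge_n$, which by hypothesis (4) lie in $\mcB$. For $n=0$ the first sequence is trivial ($Y=0$, $X=C$) and the second follows from (3). For the inductive step, break an $\mcA$-resolution of $C$ as $0 \to K \to A_0 \to C \to 0$ with $A_0 \in \mcA$ and $K \in \mcA^\wedge_{n-1}$. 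The inductive hypothesis applied to $K$ yields $0 \to K \to H_K \to X_K \to 0$ with $X_K \in \mcA$ and $H_K \in (\omega \cap \mcS)^\wedge_{n-1}$. Forming the push-out of $A_0 \hookleftarrow K \hookrightarrow H_K$, the resulting object $M$ fits simultaneously into $0 \to A_0 \to M \to X_K \to 0$ (so $M \in \mcA$ by (1)) and into $0 \to H_K \to M \to C \to 0$, which is the desired first sequence. To build the second sequence for $C$, apply (3) to $M$ to get $0 \to M \to W \to M' \to 0$ with $W \in \omega \cap \mcS$ and $M' \in \mcA$, and form the push-out of $W \hookleftarrow M \twoheadrightarrow C$; the right column of the resulting diagram gives $0 \to C \to N \to M' \to 0$, while the middle row $0 \to H_K \to W \to N \to 0$ shows that $N$ has an $(\omega \cap \mcS)$-resolution obtained by prepending $W \twoheadrightarrow N$ to a resolution of $H_K$ of length $\leq n-1$, so $N \in (\omega \cap \mcS)^\wedge_n \subseteq \mcB$.

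The claim immediately yields \lccpthree and \rccpthree by restricting to $C \in \mcA^\wedge \cap \mcS$. For \lccptwo, the containment $\mcA \cap \mcA^\wedge \cap \mcS \subseteq {}^{\perp_1}\mcB \cap \mcA^\wedge \cap \mcS$ is a direct consequence of (5); conversely, any $C \in {}^{\perp_1}\mcB \cap \mcA^\wedge \cap \mcS$ admits, by \lccpthree, a sequence $0 \to B \to A \to C \to 0$ with $A \in \mcA$ and $B \in \mcB$, which splits by orthogonality, so $C$ is a direct summand of $A$ and lies in $\mcA$ by (1). Condition \rccptwo is obtained symmetrically using \rccpthree and the closure (2) of $\mcB$ under direct summands.

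The main obstacle will be the careful execution of the two successive push-out diagrams in the inductive step, and the verification that the $(\omega \cap \mcS)$-resolution dimension of the intermediate kernels/cokernels grows by exactly one at each stage, so that hypothesis (4) can be invoked uniformly. Once the bookkeeping is set up, the rest reduces to a formal combination of extension closure in $\mcA$, direct-summand closure for $\mcA$ and $\mcB$, and the orthogonality relations provided by (5).
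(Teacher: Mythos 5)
Your proposal is correct and follows essentially the same route as the paper's proof: the same induction on $\resdim_{\mcA}$, the same two successive push-outs (first along $\mcA \leftarrow K \rightarrow H_K$ to get a precover-type sequence with kernel in $(\omega \cap \mcS)^\wedge$, then along $W \leftarrow M \rightarrow C$ to get the preenvelope-type sequence), and the same splitting argument for \lccptwo \ and \rccptwo. The only cosmetic difference is that you carry both approximation sequences through the induction with explicit dimension bounds, whereas the paper inducts only on the coresolution statement and extracts the other sequence from the middle row of the first push-out diagram.
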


\begin{proof}
First, we show the following containment:
\begin{align}
\mcA^{\wedge} \subseteq \mcE_r(\mcA, (\omega \cap \mcS)^{\wedge}). \label{eq0}
\end{align}
Indeed, let $M \in \mcA^{\wedge}$. We proceed by induction on $n := \resdim_{\mcA}(M)$. 
\begin{itemize}
\item \underline{Initial step}: If $n = 0$, the containment \eqref{eq0} follows by condition (3). 

\item \underline{Induction step}: Suppose that $n \geq 1$ and that the containment \eqref{eq0} holds for any object in $\mcA^\wedge$ with $\mcA$-resolution dimension at most $n-1$. Now consider a short exact sequence
\begin{align}
0 \to L \to A \to M \to 0, \label{eq1}
\end{align}
with $A \in \mcA$ and $\resdim_{\mcA}(L) = n-1$. By the induction hypothesis, there is a short exact sequence
\begin{align}
0 \to L \to K \to A' \to 0, \label{eq2}
\end{align}
with $A' \in \mcA$ and $K \in (\omega \cap \mcS)^\wedge$. Taking the pushout of $K \leftarrow L \to A$ from \eqref{eq1} and \eqref{eq2} yields the following solid diagram:
\begin{equation}\label{eq3}
\parbox{1.75in}{
\begin{tikzpicture}[description/.style={fill=white,inner sep=2pt}] 
\matrix (m) [ampersand replacement=\&, matrix of math nodes, row sep=2.5em, column sep=2.5em, text height=1.25ex, text depth=0.25ex] 
{ 
L \& A \& M \\
K \& E \& M \\
A' \& A' \& {} \\
}; 
\path[->] 
(m-1-1)-- node[pos=0.5] {\footnotesize$\mbox{\bf po}$} (m-2-2) 
; 
\path[>->]
(m-1-1) edge (m-2-1) (m-1-2) edge (m-2-2)
(m-2-1) edge (m-2-2) (m-1-1) edge (m-1-2)
(m-3-1) edge (m-3-2)
;
\path[->>]
(m-2-1) edge (m-3-1) (m-2-2) edge (m-3-2)
(m-2-2) edge (m-2-3) (m-1-2) edge (m-1-3)
;
\path[-,font=\scriptsize]
(m-1-3) edge [double, thick, double distance=2pt] (m-2-3)
(m-3-1) edge [double, thick, double distance=2pt] (m-3-2)
;
\end{tikzpicture} 
}
\end{equation}  
Note that $E \in \mcA$ by condition (1), and so by condition (3) there exists a short exact sequence $0 \to E \to W \to A'' \to 0$ with $W \in \omega \cap \mcS$ and $A'' \in \mcA$. Now take the pushout of $W \leftarrow E \to M$ to obtain the following solid diagram: 
\begin{equation}\label{eq4} 
\parbox{1.75in}{
\begin{tikzpicture}[description/.style={fill=white,inner sep=2pt}] 
\matrix (m) [ampersand replacement=\&, matrix of math nodes, row sep=2.5em, column sep=2.5em, text height=1.25ex, text depth=0.25ex] 
{ 
K \& E \& M \\
K \& W \& F \\
{} \& A'' \& A'' \\
};  
\path[->] 
(m-1-2)-- node[pos=0.5] {\footnotesize$\mbox{\bf po}$} (m-2-3) 
; 
\path[>->]
(m-1-1) edge (m-2-1) (m-1-2) edge (m-2-2)
(m-2-1) edge (m-2-2) (m-1-1) edge (m-1-2)
(m-1-3) edge (m-2-3)
;
\path[->>]
(m-2-3) edge (m-3-3) (m-2-2) edge (m-2-3)
(m-2-2) edge (m-3-2) (m-1-2) edge (m-1-3)
;
\path[-,font=\scriptsize]
(m-1-1) edge [double, thick, double distance=2pt] (m-2-1)
(m-3-2) edge [double, thick, double distance=2pt] (m-3-3)
;
\end{tikzpicture} 
}
\end{equation} 
We have that $F \in (\omega \cap \mcS)^\wedge$, and so the right-hand column of \eqref{eq4} implies that $M \in \mcE_r(\mcA,(\omega \cap \mcS)^\wedge)$. 
\end{itemize}

The containment \eqref{eq0} then holds true. Moreover, from the central row in \eqref{eq3} and condition (4) we have that for every $M \in \mcA^\wedge$ with $\resdim_{\mcA}(M) \geq 1$ there is a short exact sequence
\begin{align}
0 \to B \to A \to M \to 0 \label{eq5}
\end{align}
with $A \in \mcA$ and $B \in \mcB$. Moreover, for the case $n = 0$ we can simply take $A = M$ and $B = 0$ in \eqref{eq5} since $0 \in \mcB$, as $\mcB$ is closed under direct summands. In other words, we also have that the following containment holds:
\begin{align}
\mcA^\wedge & \subseteq \mcE_l(\mcA,\mcB). \label{eq6}
\end{align}
Hence, from \eqref{eq0}, \eqref{eq6} and condition (4) we conclude that $\mcA^\wedge \subseteq \mcE(\mcA,\mcB)$. It is clear now that the pair $(\mcA,\mcB)$ satisfies conditions \lccpthree \ and \rccpthree \ with respect to the class $\mcA^\wedge \cap \mcS$, and we also know from the hypotheses the validity of \lccpone \ and \rccpone. Finally, by condition (5) we have that $\mcA \cap (\mcA^\wedge \cap \mcS) \subseteq {}^{\perp_1}\mcB \cap (\mcA^\wedge \cap \mcS)$ and $\mcB \cap (\mcA^\wedge \cap \mcS) \subseteq \mcA^{\perp_1} \cap (\mcA^\wedge \cap \mcS)$, and the converse containments follow by \lccpthree \ and \rccpthree. Therefore, $(\mcA,\mcB)$ is a complete cotorsion pair cut along $\mcA^\wedge \cap \mcS$. 
\end{proof}

Let us apply the previous result to obtain another example of a complete cut cotorsion pair from Gorenstein objects relative to a GP-admissible pair.

\begin{proposition}\label{prop:GPXY_cotorsion_pair}
Let $(\mcX,\mcY)$ be a GP-admissible pair in $\mcC$ and $\omega := \mcX \cap \mcY$, such that $\mcY^\wedge$, $\omega$ and $\mcX \cap \mcY^\wedge$ are closed under direct summands. Then, $(\mcGP_{(\mcX,\mcY)},\mcY^\wedge)$ is a complete cotorsion pair cut along $\mcX^\wedge$. Moreover, 
\begin{align}
\mcGP_{(\mcX,\mcY)} \cap \mcY^\wedge & = \omega = \mcX \cap \mcY^\wedge = \mcY \cap \mcGP_{(\mcX,\mcY)}. \label{GPXY}
\end{align}
\end{proposition}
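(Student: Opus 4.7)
The plan is to apply Proposition \ref{prop:a_new_cotorsion_cut} with $\mcA := \mcGP_{(\mcX,\mcY)}$, $\mcB := \mcY^\wedge$, and $\mcS := \mcX^\wedge$. Note that $\mcA \cap \mcB$ in the sense of that proposition will coincide with $\omega = \mcX \cap \mcY$ once the equalities in \eqref{GPXY} are established, but these are actually not needed to invoke Proposition \ref{prop:a_new_cotorsion_cut} itself.

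A preliminary observation I would first record is that $\mcX \subseteq \mcGP_{(\mcX,\mcY)}$: given $X \in \mcX$, splicing the split short exact sequence $0 \to X \to X \oplus X \to X \to 0$ (with $X \oplus X \in \mcX$ by \GPathree) yields a split exact complex all of whose components lie in $\mcX$ and all of whose cycles are isomorphic to $X$; being split, it is $\Hom_{\mcC}(-,Y)$-acyclic for every $Y$. Consequently $\mcX^\wedge \subseteq \mcGP_{(\mcX,\mcY)}^\wedge$, and so $\mcA^\wedge \cap \mcS = \mcX^\wedge$, which is exactly the cut we want.

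Next I would verify the five hypotheses of Proposition \ref{prop:a_new_cotorsion_cut}. Closure of $\mcGP_{(\mcX,\mcY)}$ under extensions and direct summands comes from \cite[Corollaries 3.15 and 3.33]{BMS}; closure of $\mcY^\wedge$ under direct summands is assumed; since $\omega \subseteq \mcX \subseteq \mcX^\wedge$ we have $\omega \cap \mcX^\wedge = \omega$, and $\omega$ is a relative cogenerator in $\mcGP_{(\mcX,\mcY)}$ because $(\mcGP_{(\mcX,\mcY)},\omega)$ is a left Frobenius pair by \cite[Corollary 4.10]{BMS}; and $\omega^\wedge \subseteq \mcY^\wedge$ follows from $\omega \subseteq \mcY$. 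For the orthogonality in hypothesis (5), I would prove the stronger statement $\Ext^i_{\mcC}(\mcGP_{(\mcX,\mcY)},\mcY^\wedge) = 0$ for all $i \geq 1$ by induction on $\resdim_{\mcY}(M)$, using the $\mcY$-projectivity of $\mcGP_{(\mcX,\mcY)}$ from \GPaone \ as the base case and dimension-shifting along a short exact sequence $0 \to M' \to Y \to M \to 0$ with $Y \in \mcY$ for the inductive step. Applying Proposition \ref{prop:a_new_cotorsion_cut} then delivers $\mcX^\wedge \in {\rm Cuts}(\mcGP_{(\mcX,\mcY)}, \mcY^\wedge)$.

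Finally, for the equalities \eqref{GPXY}, the inclusions $\omega \subseteq \mcY \cap \mcGP_{(\mcX,\mcY)} \subseteq \mcGP_{(\mcX,\mcY)} \cap \mcY^\wedge$ and $\omega \subseteq \mcX \cap \mcY^\wedge$ are immediate, so the crux is $\mcGP_{(\mcX,\mcY)} \cap \mcY^\wedge \subseteq \omega$. Given $G$ in this intersection, I would apply the relative cogenerator property to produce $0 \to G \to W \to G' \to 0$ with $W \in \omega$ and $G' \in \mcGP_{(\mcX,\mcY)}$; the vanishing $\Ext^1_{\mcC}(G',G) = 0$ proved in step (5) forces this sequence to split, so $G$ is a direct summand of $W \in \omega$ and lies in $\omega$ by closure under direct summands. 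The remaining inclusion $\mcX \cap \mcY^\wedge \subseteq \omega$ then follows from $\mcX \subseteq \mcGP_{(\mcX,\mcY)}$ together with the preceding argument. The main obstacle I anticipate is getting the induction in hypothesis (5) right: a naive induction that only propagates $\Ext^1$-vanishing from $\mcY$-vanishing does not close, and one must simultaneously carry the vanishing of all higher $\Ext^i$'s; once this is in hand, the rest reduces to bookkeeping and invoking the results of \cite{BMS}.
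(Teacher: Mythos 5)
Your proposal follows the paper's proof almost exactly: the paper likewise applies Proposition \ref{prop:a_new_cotorsion_cut} with $\mcA = \mcGP_{(\mcX,\mcY)}$, $\mcB = \mcY^\wedge$, $\mcS = \mcX^\wedge$, the only real difference being that it cites \cite{BMS} (Corollaries 3.15, 3.25, 3.33 and Theorem 3.34) for the $\Ext$-vanishing, the cogenerator property and the equalities \eqref{GPXY}, where you reprove these directly. One small correction: the equalities \emph{are} needed before invoking the proposition, contrary to your parenthetical remark, because condition (4) there reads $(\mcA\cap\mcB\cap\mcS)^\wedge\subseteq\mcB$, i.e.\ $(\mcGP_{(\mcX,\mcY)}\cap\mcY^\wedge\cap\mcX^\wedge)^\wedge\subseteq\mcY^\wedge$, and your check of $\omega^\wedge\subseteq\mcY^\wedge$ only covers this once you know $\mcGP_{(\mcX,\mcY)}\cap\mcY^\wedge\cap\mcX^\wedge=\omega$ (the class $\mcY^\wedge$ need not be closed under the resolutions involved otherwise). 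Since your proof of $\mcGP_{(\mcX,\mcY)}\cap\mcY^\wedge\subseteq\omega$ uses only the $\Ext$-vanishing, the cogenerator property and closure of $\omega$ under direct summands, none of which depend on the proposition, the fix is simply to establish \eqref{GPXY} before verifying conditions (3)--(5); this is exactly the order in which the paper proceeds.
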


\begin{proof}
We have by hypothesis that $\mcY^\wedge$ is closed under direct summands. On the other hand, by \cite[Corollary 3.33]{BMS} we also have that $\mcGP_{(\mcX,\mcY)}$ is closed under extensions and direct summands. We then have that conditions (1) and (2) in Proposition \ref{prop:a_new_cotorsion_cut} are valid.   

Now let us show that $\mcGP_{(\mcX,\mcY)} \cap \mcY^\wedge \cap \mcX^\wedge$ is a relative cogenerator in $\mcGP_{(\mcX,\mcY)}$. By \cite[Theorem 3.34 (c) and (d)]{BMS} we know that the equality \eqref{GPXY} holds. Then, 
\[
\mcGP_{(\mcX,\mcY)} \cap \mcY^\wedge \cap \mcX^\wedge = \omega \cap \mcX^\wedge = \omega,
\]
and so condition (3) in Proposition \ref{prop:a_new_cotorsion_cut} follows as well by \cite[Corollary 3.25 (a)]{BMS}, while condition (4) is clear. 

Finally, the orthogonality relations in condition (5) of Proposition \ref{prop:a_new_cotorsion_cut} are a consequence of \cite[Corollary 3.15]{BMS}.
\end{proof}

\begin{corollary}\label{coro:GPXY_cotorsion_pair}
Let $(\mcX,\mcY)$ be a hereditary complete cotorsion pair in $\mcC$ and $\omega := \mcX \cap \mcY$. Then, $(\mcGP_{(\mcX,\omega)},\omega^\wedge)$ and $(\mcGP_{(\mcX,\omega)},\mcY)$ are complete cotorsion pairs cut along $\mcX^\wedge$. Moreover, 
\begin{align}
\mcGP_{(\mcX,\omega)} \cap \omega^\wedge & = \omega = \mcX \cap \omega^\wedge. \label{GPXW}
\end{align}
\end{corollary}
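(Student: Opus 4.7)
My plan is to obtain the first pair and the equality \eqref{GPXW} as a direct application of Proposition \ref{prop:GPXY_cotorsion_pair} with $(\mcX,\omega)$ in place of $(\mcX,\mcY)$. By Example \ref{ex:GP_admissible_pairs}(1), the hereditary complete cotorsion pair $(\mcX,\mcY)$ yields that $(\mcX,\omega)$ is GP-admissible. The proposition's three summand-closure hypotheses are verified as follows: $\omega$ is closed under direct summands as the intersection of two such classes; the pair $(\mcX,\omega)$ is left Frobenius, because $\mcX$ is left thick from the hereditary cotorsion pair structure, $\omega \subseteq \mcY = \mcX^{\perp}$ makes $\omega$ an $\mcX$-injective class, and $\omega$ is a relative cogenerator in $\mcX$ since \rccpthree \ of $(\mcX,\mcY)$ applied to $X \in \mcX$ produces $0 \to X \to Y \to X' \to 0$ with $X' \in \mcX$, forcing $Y \in \mcX \cap \mcY = \omega$ by the extension closure of $\mcX$; then \cite[Proposition 2.13]{BMPS} ensures $\omega^\wedge$ is summand-closed; and $\mcX \cap \omega^\wedge$ inherits this property. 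Proposition \ref{prop:GPXY_cotorsion_pair} now gives that $(\mcGP_{(\mcX,\omega)},\omega^\wedge)$ is a complete cotorsion pair cut along $\mcX^\wedge$, and specializes \eqref{GPXY} to \eqref{GPXW}.

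For the pair $(\mcGP_{(\mcX,\omega)},\mcY)$ I first record the inclusion $\omega^\wedge \subseteq \mcY$: since $\omega \subseteq \mcY$ and $\mcY$ is coresolving by the hereditary cotorsion pair, a short induction along $\omega$-resolutions places every object of $\omega^\wedge$ in $\mcY$. Combined with the first part, this inclusion immediately yields \lccpthree \ and \rccpthree \ for $(\mcGP_{(\mcX,\omega)},\mcY)$ cut along $\mcX^\wedge$, while \lccpone \ and \rccpone \ are immediate.

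The main obstacle is \lccptwo. Since $(\mcX,\mcY)$ is a cotorsion pair, ${}^{\perp_1}\mcY = \mcX$, so the condition reduces to $\mcGP_{(\mcX,\omega)} \cap \mcX^\wedge = \mcX$; only $\subseteq$ is nontrivial, and I prove it by induction on $n := \resdim_{\mcX}(G)$ for $G \in \mcGP_{(\mcX,\omega)} \cap \mcX^\wedge_n$. For the step $n \geq 1$ I combine three short exact sequences. The complete $\mcX$-resolution of $G$ provides $0 \to G_1 \to X_1 \to G \to 0$ with $X_1 \in \mcX$ and $G_1 \in \mcGP_{(\mcX,\omega)}$; the standard resolving dimension shift places $G_1$ in $\mcX^\wedge_{n-1}$, so the inductive hypothesis yields $G_1 \in \mcX$. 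Completeness of $(\mcX,\mcY)$ gives $0 \to Y_G \to X_G \to G \to 0$ with $X_G \in \mcX$ and $Y_G \in \mcY$. The pullback $P := X_1 \times_G X_G$ fits into exact sequences $0 \to G_1 \to P \to X_G \to 0$ and $0 \to Y_G \to P \to X_1 \to 0$; the second splits by the hereditary orthogonality $\Ext^1_{\mcC}(X_1,Y_G) = 0$, giving $P \cong Y_G \oplus X_1$, while the first exhibits $P$ as an extension of two objects of $\mcX$, so $P \in \mcX$ and hence $Y_G \in \mcX \cap \mcY = \omega$ by summand closure. Finally, $G \in \mcGP_{(\mcX,\omega)}$ forces $\Ext^1_{\mcC}(G,Y_G) = 0$, so the sequence $0 \to Y_G \to X_G \to G \to 0$ splits and $G \in \mcX$ as a summand of $X_G$.

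For \rccptwo \ I use the first part to replace $\mcGP_{(\mcX,\omega)}^{\perp_1} \cap \mcX^\wedge$ by $\omega^\wedge \cap \mcX^\wedge$, so the task reduces to $\mcY \cap \mcX^\wedge = \omega^\wedge \cap \mcX^\wedge$. The inclusion $\supseteq$ is $\omega^\wedge \subseteq \mcY$. For $\subseteq$ I induct on $\resdim_{\mcX}(Y)$: in the inductive step, completeness of $(\mcX,\mcY)$ provides $0 \to Y' \to X' \to Y \to 0$ with $X' \in \mcX$ and $Y' \in \mcY$; the extension closure of the coresolving class $\mcY$ places $X' \in \mcX \cap \mcY = \omega$; the dimension shift gives $Y' \in \mcY \cap \mcX^\wedge_{n-1}$; the inductive hypothesis yields $Y' \in \omega^\wedge$; and splicing an $\omega$-resolution of $Y'$ with the sequence produces an $\omega$-resolution of $Y$.
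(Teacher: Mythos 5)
Your proposal is correct. The treatment of $(\mcGP_{(\mcX,\omega)},\omega^\wedge)$ and of the equality \eqref{GPXW} coincides with the paper's: both reduce to Proposition \ref{prop:GPXY_cotorsion_pair} applied to the GP-admissible pair $(\mcX,\omega)$, after checking the three summand-closure hypotheses. For the second pair $(\mcGP_{(\mcX,\omega)},\mcY)$ you diverge: the paper simply feeds $\mcA:=\mcGP_{(\mcX,\omega)}$, $\mcB:=\mcY$, $\mcS:=\mcX^\wedge$ into Proposition \ref{prop:a_new_cotorsion_cut}, with the only substantive inputs being the identities $\mcY\cap\mcX^\wedge=\omega^\wedge$ and $\mcGP_{(\mcX,\omega)}\cap\mcX^\wedge=\mcX$, which it imports wholesale from Auslander--Buchweitz theory (\cite[Theorem 2.8]{BMPS}), whereas you verify \lccpone--\rccpthree\ by hand and re-derive those two identities from scratch by induction on $\mcX$-resolution dimension (your pullback argument producing $Y_G\in\omega$ and then splitting $0\to Y_G\to X_G\to G\to 0$ via $\Ext^1_{\mcC}(\mcGP_{(\mcX,\omega)},\omega)=0$ is a sound, if more roundabout, substitute for the special $\mcX$-precover with kernel in $\omega^\wedge$ that \cite[Theorem 2.8]{BMPS} hands you directly). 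Your route is longer but self-contained and makes visible exactly which consequences of heredity and completeness are used; the paper's is shorter because it leans on the general construction of Proposition \ref{prop:a_new_cotorsion_pair} and on cited approximation theory. One small point worth making explicit in your write-up is the containment $\mcX\subseteq\mcGP_{(\mcX,\omega)}$ (needed for the ``$\supseteq$'' half of \lccptwo), which you dismiss as trivial; it does hold (e.g.\ via the trivial totally acyclic complex, or \cite[Corollary 3.17]{BMS}), but it deserves a citation.
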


\begin{proof}
Recall from Example \ref{ex:GP_admissible_pairs} (1) that $(\mcX,\omega)$ is a GP-admissible pair. Moreover, it is clear that $\mcX$ and $\omega$ are closed under direct summands, while the same holds for $\omega^\wedge$ by \cite[Theorem 2.11 and Proposition 2.13]{BMPS}. Then, $\mcX \cap \omega^\wedge$ is also closed under direct summands, and Proposition \ref{prop:GPXY_cotorsion_pair} implies that $(\mcGP_{(\mcX,\omega)},\omega^\wedge)$ is a complete cotorsion pair cut along $\mcX^\wedge$ and the equality \eqref{GPXW}. 

For the second pair $(\mcGP_{(\mcX,\omega)},\mcY)$, the classes $\mcA := \mcGP_{(\mcX,\omega)}$, $\mcB := \mcY$ and $\mcS := \mcX^\wedge$ satisfy the conditions in Proposition \ref{prop:a_new_cotorsion_cut}. Indeed, the first four conditions are clear. For (5), it suffices to note that $\mcY \cap \mcX^\wedge = \omega^\wedge$ and $\mcGP_{(\mcX,\omega)} \cap \mcX^\wedge = \mcX$, which follow from the assumptions and \cite[Theorem 2.8]{BMPS}. Finally, for the equality \eqref{GPXW} we have that $\mcGP_{(\mcX,\omega)} \cap \omega^\wedge = \mcGP_{(\mcX,\omega)} \cap \omega^\wedge \cap \mcX^\wedge = \mcX \cap \omega^\wedge = \mcX \cap \mcY \cap \mcX^\wedge = \omega \cap \mcX^\wedge = \omega$.
\end{proof}

\begin{example}\label{ex:GPXY_cotorsion_pair}
For any ring $R$, $(\mcDP(R),\mcF(R)^\wedge)$ is a complete cotorsion pair cut along $\mcP(R)^\wedge$ in $\Mod(R)$. Indeed, we know from Example \ref{ex:GP_admissible_pairs} (2) that the pair $(\mcP(R),\mcF(R))$ is GP-admissible, and $\mcGP_{(\mcP(R),\mcF(R))}$ is precisely the class $\mcDP(R)$ of Ding projective $R$-modules. Moreover, it is clear that $\mcP(R) \cap \mcF(R) = \mcP(R)$, $\mcF(R)^\wedge$ and $\mcP(R) \cap \mcF(R)^\wedge = \mcP(R)$ are closed under direct summands. Then by Proposition \ref{prop:GPXY_cotorsion_pair} we get the desired result. 

Note also that this example cannot by obtained by using Corollary \ref{coro:GPXY_cotorsion_pair} since in general there is no hereditary complete cotorsion pair $(\mcX,\mcY)$ in $\Mod(R)$ such that $\mcX = \mcP(R)$ and $\mcX \cap \mcY = \mcF(R)$. This is possible for example for the trivial cotorsion pair $(\mcP(R),\Mod(R))$ over a left perfect ring $R$. 
\end{example}

\begin{remark}
Several of the examples of relative Gorenstein pairs are cut along the class $\mcGP_{(\mcX,\mcY)}$, but this is not always the case. For instance, the pair $(\mcGP_{(\mcX,\omega)},\mcY)$ from Corollary \ref{coro:GPXY_cotorsion_pair} is another example of a complete cut cotorsion pair that cannot be extended to a bigger class. In this case, one can note that $(\mcGP_{(\mcX,\omega)},\mcY)$ is a complete cotorsion pair cut along $\mcGP_{(\mcX,\omega)}^\wedge$ if, and only if, $\mcGP_{(\mcX,\omega)} = \mcX$. 
\end{remark}

Intersections of the form $\omega \cap \mcS$ considered in Proposition \ref{prop:a_new_cotorsion_cut} are not a mere technicality to obtain new cut cotorsion pairs. They are in fact an important component of the notions of cut Frobenius pairs and cut Auslander-Buchweitz contexts, which will be presented and studied in detail in the next section.

%%%%%%%%%%%%%%%%%%%%%%%%%%%%%%%%%%%%%
%%%%%%%%%%%%%%%%%%%%%%%%%%%%%%%%%%%%%
%%%%%%%%%%%%%%%%%%%%%%%%%%%%%%%%%%%%%
%%%%%%%%%%%%%%%%%%%%%%%%%%%%%%%%%%%%%

\section{\textbf{Cut Frobenius pairs and cut Auslander-Buchweitz contexts}}\label{sec:cut_Frobenius_and_cut_AB}

This section is devoted to present and study the concepts of cut Frobenius pairs and cut Auslander-Buchweitz contexts. As mentioned in the introduction, one of the main purposes of the present article is to describe an interplay between these two notions and the concept of complete cut cotorsion pairs studied in the previous section. This interplay will be the main topic of the next section. For the moment, we can note the following relation between certain Gorenstein complete cut cotorsion pairs and left Frobenius pairs.

\begin{proposition}\label{proposition:Gorenstein_Frobenius_pair_vs_Gorenstein_cotorsion_cut}
Let $(\mcX,\mcY)$ be a GP-admissible pair in $\mcC$, with $\omega := \mcX \cap \mcY$ closed under direct summands. Then, the following conditions are equivalent:
\begin{enumerate}
\item[(a)] $(\mcGP_{(\mcX,\mcY)},\mcY)$ is a left Frobenius pair.

\item[(b)] $\mcY \subseteq \mcGP_{(\mcX,\mcY)}$.

\item[(c)] $(\mcGP_{(\mcX,\mcY)},\mcY^\wedge)$ is a complete cotorsion pair cut along $\mcGP_{(\mcX,\mcY)}^\wedge$ with $\mcY \subseteq \mcGP_{(\mcX,\mcY)}$. 
\end{enumerate}
Moreover, if any of the previous conditions is satisfied, then $\mcY = \omega$.
\end{proposition}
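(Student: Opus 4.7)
The plan is to prove (a) $\Rightarrow$ (b) immediately, then derive the moreover clause $\mcY = \omega$ from (b) alone, and close the chain with (b) $\Rightarrow$ (a) and (b) $\Leftrightarrow$ (c). The implication (a) $\Rightarrow$ (b) is read off condition \lfpthree \ of Definition \ref{def:Frobenius_pair}: a relative cogenerator in $\mcGP_{(\mcX,\mcY)}$ must be contained in $\mcGP_{(\mcX,\mcY)}$. Likewise, (c) $\Rightarrow$ (b) is built into the statement of (c).

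For the moreover part I assume only (b) and fix $Y \in \mcY$. By \cite[Corollary 3.25(a)]{BMS}, $\omega$ is a relative cogenerator in $\mcGP_{(\mcX,\mcY)}$ for every GP-admissible pair, so there exists a short exact sequence
\[
0 \to Y \to W \to G \to 0
\]
with $W \in \omega$ and $G \in \mcGP_{(\mcX,\mcY)}$. Applying $\Hom_{\mcC}(-,Y)$ and invoking the relative injectivity $\Ext^1_{\mcC}(\mcGP_{(\mcX,\mcY)},\mcY) = 0$ from \cite[Corollary 3.15]{BMS}, this sequence splits, and $Y$ becomes a direct summand of $W \in \omega$. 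Since $\omega$ is closed under direct summands by hypothesis, $Y \in \omega$; combined with the trivial containment $\omega \subseteq \mcY$, this yields $\mcY = \omega$.

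With $\mcY = \omega$ at our disposal, (b) $\Rightarrow$ (a) amounts to verifying the three axioms of Definition \ref{def:Frobenius_pair}: \lfpone \ follows from \cite[Corollary 3.33]{BMS}, which delivers the left thickness of $\mcGP_{(\mcX,\mcY)}$ under GP-admissibility; \lfptwo \ is the direct-summand closure of $\mcY = \omega$; and \lfpthree \ combines $\mcY \subseteq \mcGP_{(\mcX,\mcY)}$ with the vanishing $\id_{\mcGP_{(\mcX,\mcY)}}(\mcY) = 0$ from \cite[Corollary 3.15]{BMS} and the relative cogenerator property from \cite[Corollary 3.25(a)]{BMS}. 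For (b) $\Rightarrow$ (c), I then apply Proposition \ref{prop:cuts_from_Frobenius_pairs}(1) to the left Frobenius pair $(\mcGP_{(\mcX,\mcY)},\omega)$ just established, which yields that $(\mcGP_{(\mcX,\mcY)},\omega^\wedge)$ is a complete cotorsion pair cut along $\mcGP_{(\mcX,\mcY)}^\wedge$; the identification $\omega^\wedge = \mcY^\wedge$ turns this into precisely (c).

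The main obstacle is the moreover clause, since the whole proposition pivots on the tight identification $\mcY = \omega$, obtained by playing the $\omega$-cogenerator extension from BMS against $\Ext^1_{\mcC}(\mcGP_{(\mcX,\mcY)},\mcY) = 0$ and the direct-summand closure of $\omega$. Once this equality is in hand, the remaining implications reduce to routine bookkeeping and a single appeal to Proposition \ref{prop:cuts_from_Frobenius_pairs}.
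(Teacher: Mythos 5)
Your proof is correct and follows essentially the same route as the paper: the implications (a)$\Rightarrow$(b) and (c)$\Rightarrow$(b) are immediate, the substance lies in (b)$\Rightarrow$(a) via the left thickness of $\mcGP_{(\mcX,\mcY)}$ and the identification $\mcY=\omega$, and (c) is then obtained from the resulting Frobenius pair exactly as in Proposition \ref{prop:cuts_from_Frobenius_pairs}(1). The only difference is cosmetic: where the paper cites \cite[Theorem 3.32 and Corollary 3.25(b)]{BMS} to get $\omega = \mcY \cap \mcGP_{(\mcX,\mcY)} = \mcY$, you rederive this by splitting the $\omega$-cogenerator sequence against $\Ext^1_{\mcC}(\mcGP_{(\mcX,\mcY)},\mcY)=0$, which is a valid, slightly more self-contained version of the same step.
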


\begin{proof}
Note that the implications (a) $\Rightarrow$ (b) and (c) $\Rightarrow$ (b) are trivial, while (a) $\Rightarrow$ (c) follows by \cite[Theorem 3.6]{BMPS}. Then, we only focus on proving (b) $\Rightarrow$ (a). So let us assume that the containment $\mcY \subseteq \mcGP_{(\mcX,\mcY)}$ is satisfied. By \cite[Corollary 3.33]{BMS} we know that $\mcGP_{(\mcX,\mcY)}$ is left thick, that is, condition \lfpone \ holds. By hypothesis, we also have that $\omega$ is closed under direct summands. On the other hand, using \cite[Theorem 3.32 and Corollary 3.25 (b)]{BMS} and the containment $\mcY \subseteq \mcGP_{(\mcX,\mcY)}$, we have that $\omega = \mcY \cap \mcGP_{(\mcX,\mcY)} = \mcY$, and then \lfptwo \ follows. Finally, the fact that $\mcY = \omega$ is a $\mcGP_{(\mcX,\mcY)}$-injective relative cogenerator in $\mcGP_{(\mcX,\mcY)}$ follows by \cite[Corollary 3.25 (a)]{BMS}. 
\end{proof}

\begin{remark}
Note that, for any hereditary complete cotorsion pair $(\mcX,\mcY)$ in $\mcC$, with $\omega := \mcX \cap \mcY$, one has that $(\mcX,\omega)$ is a GP-admissible pair with $\omega \subseteq \mcGP_{(\mcX,\omega)}$. Then, by Proposition \ref{proposition:Gorenstein_Frobenius_pair_vs_Gorenstein_cotorsion_cut}, $(\mcGP_{(\mcX,\omega)},\omega^\wedge)$ is a complete cotorsion pair cut along $\mcGP_{(\mcX,\omega)}^\wedge$. In particular, we have another way to obtain the first pair appearing in Corollary \ref{coro:GPXY_cotorsion_pair}, since $\mcX^\wedge \subseteq \mcGP_{(\mcX,\omega)}^\wedge$. 
\end{remark}

%%%%%%%%%%%%%%%%%%%%%%%%%%%%%%%%%%%%%
%%%%%%%%%%%%%%%%%%%%%%%%%%%%%%%%%%%%%

\subsection*{Some technical lemmas}

Before giving the definition of cut Frobenius pairs, we need to prove some preliminary results. The idea is to find conditions under which $\omega^\wedge$ is closed under extensions. The Induction Principle will be a frequently used argument in the following lemmas.

\begin{lemma}\label{lem:technical_lemma_1}
Let $\omega$ and $\mcS$ be classes of objects in $\mcC$ such that $\omega$ is closed under extensions and $\omega \cap \mcS$ is a relative generator in $\omega$. Let $C \in \mcC$ be an object for which there exists an exact sequence
\begin{align}
0 & \to E_n \xrightarrow{f_n} W_{n-1} \to \cdots \to W_1 \xrightarrow{f_1} W_0 \xrightarrow{f_0} C \to 0 \label{eqn:resolution_omega}
\end{align}
for some $n \geq 1$, with $E_{j+1} := \Ker(f_j)$ and $W_j \in \omega$ for every $0 \leq j \leq n-1$. Then, there exist short exact sequences
\begin{align}
0 & \to G_j \to X_{j+1} \to E_{j+1} \to 0, \label{eqn:technical1} \\
0 & \to X_{j+1} \to F_j \to X_j \to 0, \label{eqn:technical2} 
\end{align}
where $X_0 := C$, $F_j \in \omega \cap \mcS$ and $G_j \in \omega$ for every $0 \leq j \leq n-1$. 
\end{lemma}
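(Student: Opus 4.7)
The proof will proceed by induction on the index $j$, building the two required short exact sequences sequentially from $j=0$ up to $j = n-1$. The underlying idea is to \emph{refine} the given $\omega$-resolution of $C$ into one whose $(j+1)$-th syzygy $X_{j+1}$ differs from $E_{j+1}$ only by an object $G_j \in \omega$, while each $W_j$ is replaced by an object $F_j \in \omega \cap \mcS$ produced from the relative generator hypothesis. At every step, a pullback construction followed by the application of the relative generator is what advances the induction.

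I would first handle the base case $j = 0$. Applying the relative generator assumption to $W_0 \in \omega$ yields a short exact sequence $0 \to G_0 \to F_0 \to W_0 \to 0$ with $F_0 \in \omega \cap \mcS$ and $G_0 \in \omega$. Composing $F_0 \twoheadrightarrow W_0 \twoheadrightarrow C = X_0$ gives \eqref{eqn:technical2} at level $0$ once we set $X_1 := \Ker(F_0 \twoheadrightarrow C)$. A $3 \times 3$ diagram with columns $G_0 \hookrightarrow F_0 \twoheadrightarrow W_0$ and $X_1 \hookrightarrow F_0 \twoheadrightarrow C$ and bottom row $0 \to E_1 \to W_0 \to C \to 0$ then produces \eqref{eqn:technical1} at level $0$, namely $0 \to G_0 \to X_1 \to E_1 \to 0$.

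For the inductive step, assume we have produced the sequences up to level $k-1$; in particular we have $0 \to G_{k-1} \to X_k \to E_k \to 0$ with $G_{k-1} \in \omega$. From the exactness of \eqref{eqn:resolution_omega} we also have $0 \to E_{k+1} \to W_k \to E_k \to 0$. I form the pullback $P$ of $X_k \to E_k \leftarrow W_k$, which fits into two short exact sequences $0 \to E_{k+1} \to P \to X_k \to 0$ and $0 \to G_{k-1} \to P \to W_k \to 0$. Since $G_{k-1}, W_k \in \omega$ and $\omega$ is closed under extensions, $P \in \omega$. Applying the relative generator property to $P$ gives $0 \to G_k \to F_k \to P \to 0$ with $F_k \in \omega \cap \mcS$ and $G_k \in \omega$. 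Setting $X_{k+1} := \Ker(F_k \twoheadrightarrow P \twoheadrightarrow X_k)$ yields \eqref{eqn:technical2} at level $k$, and one more application of the $3 \times 3$ lemma to the kernels $G_k \hookrightarrow F_k$ and $E_{k+1} \hookrightarrow P$ delivers \eqref{eqn:technical1} at level $k$.

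The main obstacle is precisely the inductive bookkeeping: one cannot apply the relative generator condition directly to $X_k$, since this object need not lie in $\omega$. The pullback $P$ is the right intermediate object because it absorbs $X_k$ together with $W_k$ into a single element of $\omega$, and this is exactly where the hypothesis that $\omega$ be closed under extensions enters in an essential way. Once this is in place, the remaining arguments are standard abelian-category diagram chases.
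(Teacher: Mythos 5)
Your proposal is correct and follows essentially the same route as the paper's proof: the base case applies the relative generator to $W_0$ and takes a pullback (equivalently, the kernel of the composite $F_0 \twoheadrightarrow W_0 \twoheadrightarrow C$), and the inductive step forms exactly the intermediate pullback object (your $P$, the paper's $F'_{j+1}$) of $X_k \to E_k \leftarrow W_k$, observes it lies in $\omega$ by closure under extensions of $G_{k-1}$ and $W_k$, applies the relative generator to it, and takes one more pullback. The identification of the pullback $P$ as the place where closure under extensions is essential matches the paper's argument precisely.
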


\begin{proof}
Let us prove this result by induction on $j$. 
\begin{itemize}
\item \underline{Initial step}: For the case $j = 0$, since $\omega \cap \mcS$ is a relative generator in $\omega$, there is a short exact sequence $0 \to G_0 \to F_0 \to W_0 \to 0$ with $G_0 \in \omega$ and $F_0 \in \omega \cap \mcS$. Taking the pullback of $E_1 \to W_0 \leftarrow F_0$ yields the following solid diagram:
\begin{equation}\label{diagram1}
\parbox{1.75in}{
\begin{tikzpicture}[description/.style={fill=white,inner sep=2pt}] 
\matrix (m) [ampersand replacement=\&, matrix of math nodes, row sep=2.5em, column sep=2.5em, text height=1.25ex, text depth=0.25ex] 
{ 
G_{0} \& G_{0} \& {} \\
X_{1} \& F_{0}\& C \\
E_{1} \& W_{0} \& C \\
};  
\path[->] 
(m-2-1)-- node[pos=0.5] {\footnotesize$\mbox{\bf pb}$} (m-3-2) 
; 
\path[>->]
(m-1-1) edge (m-2-1) (m-1-2) edge (m-2-2)
(m-2-1) edge (m-2-2) (m-1-1) edge (m-1-2)
(m-3-1) edge (m-3-2)
;
\path[->>]
(m-2-3) edge (m-3-3) (m-2-2) edge (m-2-3)
(m-2-2) edge (m-3-2) (m-3-2) edge (m-3-3)
(m-2-1) edge (m-3-1)
;
\path[-,font=\scriptsize]
(m-1-1) edge [double, thick, double distance=2pt] (m-1-2)
(m-2-3) edge [double, thick, double distance=2pt] (m-3-3)
;
\end{tikzpicture} 
}
\end{equation} 
The left-hand column and the central row are precisely the sequences \eqref{eqn:technical1} and \eqref{eqn:technical2}. 

\item \underline{Induction step}: Now suppose that for $1 \leq j \leq n-2$ there are short exact sequences $0 \to G_j \to X_{j+1} \to E_{j+1} \to 0$ and $0 \to X_{j+1} \to F_j \to X_j \to 0$, with $F_j \in \omega \cap \mcS$ and $G_j \in \omega$. Consider also the $(j+1)$-th splicer from the resolution \eqref{eqn:resolution_omega}, namely $0 \to E_{j+2} \to W_{j+1} \to E_{j+1} \to 0$. Taking the pullback of $W_{j+1} \to E_{j+1} \leftarrow X_{j+1}$ yields the following solid diagram:
\begin{equation}\label{diagram2}
\parbox{1.75in}{
\begin{tikzpicture}[description/.style={fill=white,inner sep=2pt}] 
\matrix (m) [ampersand replacement=\&, matrix of math nodes, row sep=2.5em, column sep=2.5em, text height=1.25ex, text depth=0.25ex] 
{ 
{} \& G_j \& G_j \\
E_{j+2} \& F'_{j+1}\& X_{j+1}\\
E_{j+2} \& W_{j+1} \& E_{j+1} \\
};  
\path[>->]
(m-1-3) edge (m-2-3) (m-1-2) edge (m-2-2)
(m-2-1) edge (m-2-2) 
(m-3-1) edge (m-3-2)
;
\path[->>]
(m-2-3) edge (m-3-3) (m-2-2) edge (m-2-3)
(m-2-2) edge (m-3-2) (m-3-2) edge (m-3-3)
;
\path[->] 
(m-2-2)-- node[pos=0.5] {\footnotesize$\mbox{\bf pb}$} (m-3-3)
; 
\path[-,font=\scriptsize]
(m-1-2) edge [double, thick, double distance=2pt] (m-1-3)
(m-2-1) edge [double, thick, double distance=2pt] (m-3-1)
;
\end{tikzpicture} 
}
\end{equation}
Since $\omega$ is closed under extensions, $F'_{j+1} \in \omega$. By using again that $\omega \cap \mcS$ is a generator in $\omega$, we have an exact sequence $0 \to G_{j+1} \to F_{j+1} \to F'_{j+1} \to 0$ with $F_{j+1} \in \omega \cap \mcS$ and $G_{j+1} \in \omega$. Now we take the pullback of the cospan $E_{j+2} \to F'_{j+1} \leftarrow F_{j+1}$ in order to obtain the following solid diagram:
\begin{equation}\label{diagram3}
\parbox{1.75in}{
\begin{tikzpicture}[description/.style={fill=white,inner sep=2pt}] 
\matrix (m) [ampersand replacement=\&, matrix of math nodes, row sep=2.5em, column sep=2.5em, text height=1.25ex, text depth=0.25ex] 
{ 
G_{j+1} \& G_{j+1} \& {} \\
X_{j+2} \& F_{j+1}\& X_{j+1} \\
E_{j+2} \& F'_{j+1}\& X_{j+1} \\
};  
\path[->] 
(m-2-1)-- node[pos=0.5] {\footnotesize$\mbox{\bf pb}$} (m-3-2)
; 
\path[>->]
(m-1-1) edge (m-2-1) (m-1-2) edge (m-2-2)
(m-2-1) edge (m-2-2) 
(m-3-1) edge (m-3-2)
;
\path[->>]
(m-2-3) edge (m-3-3) (m-2-2) edge (m-2-3)
(m-2-2) edge (m-3-2) (m-3-2) edge (m-3-3) (m-2-1) edge (m-3-1)
;
\path[-,font=\scriptsize]
(m-1-1) edge [double, thick, double distance=2pt] (m-1-2)
(m-2-3) edge [double, thick, double distance=2pt] (m-3-3)
;
\end{tikzpicture} 
}
\end{equation}
The left-hand column and the central row give the desired sequences \eqref{eqn:technical1} and \eqref{eqn:technical2} for $j+1$. 
\end{itemize}
\end{proof}

\begin{lemma}\label{lem:technical_lemma_2}
Let $\omega \subseteq \mcC$ be a class of objects in $\mcC$ closed under extensions. If 
\begin{align}
0 & \to W \to B \to C \to 0 \label{eqn:technical3}
\end{align}
is a short exact sequence with $W \in \omega$ and $C \in \omega^\wedge$, then $B \in \omega^\wedge$ and $\resdim_\omega(B) \leq \resdim_\omega(C)$. 
\end{lemma}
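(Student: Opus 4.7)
My plan is to prove the lemma by induction on $n := \resdim_\omega(C)$, since $C \in \omega^\wedge$ ensures $n$ is finite. The base case $n=0$ is immediate: then $C \in \omega$, and the short exact sequence \eqref{eqn:technical3} exhibits $B$ as an extension of two objects of $\omega$, so $B \in \omega = \omega^\wedge_0$ because $\omega$ is closed under extensions. Hence $\resdim_\omega(B) = 0 \leq 0 = \resdim_\omega(C)$.

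For the inductive step, assume the statement for resolution dimension $\leq n-1$ and suppose $\resdim_\omega(C) = n \geq 1$. Choose a short exact sequence $0 \to K \to W_0 \to C \to 0$ with $W_0 \in \omega$ and $\resdim_\omega(K) \leq n-1$. Taking the pullback of the cospan $W_0 \twoheadrightarrow C \twoheadleftarrow B$ produces an object $E$ sitting in a solid diagram whose central row is $0 \to W \to E \to W_0 \to 0$ and whose central column is $0 \to K \to E \to B \to 0$. The row shows $E \in \omega$, since $\omega$ is closed under extensions and both $W$ and $W_0$ lie in $\omega$.

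The column $0 \to K \to E \to B \to 0$ now expresses $B$ as the cokernel of a monomorphism from $K$ into an object of $\omega$. Splicing a $\omega$-resolution of $K$ of length at most $n-1$, say $0 \to W_n \to W_{n-1} \to \cdots \to W_1 \to K \to 0$, with the epimorphism $E \twoheadrightarrow B$ yields an exact sequence
\[
0 \to W_n \to W_{n-1} \to \cdots \to W_1 \to E \to B \to 0
\]
of length at most $n$ with all terms in $\omega$. Therefore $B \in \omega^\wedge$ with $\resdim_\omega(B) \leq n = \resdim_\omega(C)$, completing the induction.

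The argument is essentially a single pullback manipulation followed by resolution-splicing, so no serious obstacle is expected; the only bookkeeping to watch is that the pullback of two epimorphisms in an abelian category really does produce both short exact sequences simultaneously and that the length count is done correctly (the splice increases length by exactly one, matching the drop from $n$ to $n-1$ in the inductive hypothesis).
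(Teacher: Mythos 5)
Your argument is correct and matches the paper's proof: both take the pullback of $B \twoheadrightarrow C \twoheadleftarrow W_0$, observe that the resulting object $E$ lies in $\omega$ by closure under extensions, and splice an $\omega$-resolution of the kernel $K$ with the epimorphism $E \twoheadrightarrow B$ to bound $\resdim_\omega(B)$. The only cosmetic difference is that you wrap this in an induction whose hypothesis is never actually invoked — the pullback-and-splice step alone already gives the bound, as the paper's direct argument shows.
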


\begin{proof}
For $\resdim_{\omega}(C) = 0$, the result follows since $\omega$ is closed under extensions. So we may assume that $\resdim_{\omega}(C) \geq 1$. Then, there exists an exact sequence $0 \to W' \to W_0 \to C \to 0$ with $W_0 \in \omega$ and $\resdim_{\omega}(W') = \resdim_{\omega}(C) - 1$. Now let us take the pullback of $B \to C \leftarrow W_0$ in order to obtain the following solid diagram:
\begin{equation}\label{diagram4}
\parbox{1.75in}{
\begin{tikzpicture}[description/.style={fill=white,inner sep=2pt}] 
\matrix (m) [ampersand replacement=\&, matrix of math nodes, row sep=2.5em, column sep=2.5em, text height=1.25ex, text depth=0.25ex] 
{ 
{} \& W' \& W' \\
W \& E \& W_0 \\
W \& B \& C \\
};  
\path[>->]
(m-1-3) edge (m-2-3) (m-1-2) edge (m-2-2)
(m-2-1) edge (m-2-2) 
(m-3-1) edge (m-3-2)
;
\path[->>]
(m-2-3) edge (m-3-3) (m-2-2) edge (m-2-3)
(m-2-2) edge (m-3-2) (m-3-2) edge (m-3-3)
;
\path[->] 
(m-2-2)-- node[pos=0.5] {\footnotesize$\mbox{\bf pb}$} (m-3-3)
; 
\path[-,font=\scriptsize]
(m-1-2) edge [double, thick, double distance=2pt] (m-1-3)
(m-2-1) edge [double, thick, double distance=2pt] (m-3-1)
;
\end{tikzpicture} 
}
\end{equation}
Note that $E \in \omega$. Therefore, the central column of \eqref{diagram4} gives an $\omega$-resolution of $B$ with $\resdim_{\omega}(B) \leq \resdim_{\omega}(W') + 1 = \resdim_{\omega}(C)$. 
\end{proof}

We are now ready to prove the main technical lemma of this section, where we give sufficient conditions so that the class $\omega^\wedge$ is closed under extensions and direct summands. It is not enough to assume that $\omega$ is closed under extensions and direct summands. In addition, we need an auxiliary class $\mcS$ so that $\omega \cap \mcS$ is a $\omega$-projective relative generator in $\omega$. Such class $\mcS$ will play the role of a suitable cut to propose a relative version for the concept of left Frobenius pair in Definition \ref{def:relative_Frobenius_pair} below.

\begin{lemma}\label{lem:main_lemma}
Let $\omega$ and $\mcS$ be classes of objects of $\mcC$ such that $\omega$ is closed under extensions and $\omega \cap \mcS$ is an $\omega$-projective relative generator in $\omega$. Then, the following assertions hold true:
\begin{enumerate}
\item $\omega \cap \mcS$ is an $\omega^\wedge$-projective relative generator in $\omega^\wedge$. Moreover, for any $C \in \omega^\wedge$ with $\resdim_{\omega}(C) \geq 1$, there exists an exact sequence $0 \to K \to F \to C \to 0$ such that $F \in \omega \cap \mcS$ and $\resdim_{\omega}(K) = \resdim_{\omega}(C) - 1$. 

\item $\omega^\wedge$ is closed under extensions.

\item If $\omega$ is closed under direct summands, then so is $\omega^\wedge$. 

\item If $\omega$ is closed under isomorphisms and $\mcS$ is closed under epi-kernels and mono-cokernels, then $\omega^\wedge \cap \mcS = (\omega \cap \mcS)^\wedge$.
\end{enumerate}
\end{lemma}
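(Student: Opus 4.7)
The plan is to prove the four parts in the stated order, since parts (2), (3), and (4) all rely on part (1), and part (3) further uses part (2). For part (1), I first establish the projectivity $\Ext^{i}_{\mcC}(\omega \cap \mcS, \omega^\wedge) = 0$ for all $i \geq 1$ by induction on $\resdim_{\omega}(M)$ for $M \in \omega^\wedge$: the base case $M \in \omega$ is exactly the hypothesis that $\omega \cap \mcS$ is $\omega$-projective, and the inductive step applies $\Hom_{\mcC}(\omega \cap \mcS, -)$ to a short exact sequence $0 \to M' \to W \to M \to 0$ (with $W \in \omega$ and $M' \in \omega^\wedge$ of strictly smaller $\omega$-resolution dimension, coming from an $\omega$-resolution of $M$) and reads off the vanishing from the long exact sequence. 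For the relative generator claim and the dimension drop, I apply Lemma \ref{lem:technical_lemma_1} to an $\omega$-resolution of $C$ of length $n = \resdim_{\omega}(C) \geq 1$, which yields the sequences $0 \to G_0 \to X_1 \to E_1 \to 0$ (with $G_0 \in \omega$ and $E_1$ the first syzygy of $C$, of $\omega$-dimension $n-1$) and $0 \to X_1 \to F_0 \to C \to 0$ (with $F_0 \in \omega \cap \mcS$). Lemma \ref{lem:technical_lemma_2} applied to the first sequence gives $X_1 \in \omega^\wedge$ with $\resdim_{\omega}(X_1) \leq n-1$, and the reverse inequality follows by splicing an $\omega$-resolution of $X_1$ with the second sequence to produce an $\omega$-resolution of $C$ of length $\resdim_{\omega}(X_1) + 1$.

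For part (2), I use a Horseshoe-type argument. By iterating part (1), every object of $\omega^\wedge$ admits a ``modified'' $(\omega \cap \mcS)$-resolution whose terms lie in $\omega \cap \mcS$ except for the final syzygy, which lies in $\omega$. Given such modified resolutions of $A$ and $C$, I construct a resolution of $B$ with $i$-th term $F^A_i \oplus F^C_i$ by lifting, at each stage, the augmentation $F^C_i \to K^C_{i-1}$ through the extension $0 \to K^A_{i-1} \to K^B_{i-1} \to K^C_{i-1} \to 0$: the obstruction lies in $\Ext^{1}_{\mcC}(F^C_i, K^A_{i-1})$, which vanishes by the projectivity proved in part (1) (since $F^C_i \in \omega \cap \mcS$ and each syzygy $K^A_{i-1}$ lies in $\omega^\wedge$). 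The resulting final syzygy of $B$ is an extension of the two terminal syzygies of $A$ and $C$, both in $\omega$, and hence lies in $\omega$ by its closure under extensions, yielding $B \in \omega^\wedge$.

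For part (3), I proceed by induction on $r := \resdim_{\omega}(X \oplus Y)$. The base $r = 0$ is immediate from closure of $\omega$ under direct summands. For $r \geq 1$, I apply part (1) to obtain $0 \to K \to F \to X \oplus Y \to 0$ with $F \in \omega \cap \mcS$ and $K \in \omega^\wedge$ of $\omega$-dimension $r-1$, and form the pullbacks $F_X := F \times_{X \oplus Y} X$ and $F_Y := F \times_{X \oplus Y} Y$, which produce the sequences $0 \to K \to F_X \to X \to 0$ and $0 \to K \to F_Y \to Y \to 0$. The difference map $F_X \oplus F_Y \to F$, $(a,b) \mapsto a-b$, assembles into a short exact sequence $0 \to K \to F_X \oplus F_Y \to F \to 0$ (kernel $\cong K$ because $F_X \cap F_Y = K$, image $= F$ because $F_X + F_Y = F$); the projectivity from part (1) forces it to split, so $F_X \oplus F_Y \cong F \oplus K \in \omega^\wedge$ with $\omega$-dimension at most $r-1$. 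By the inductive hypothesis, $F_X, F_Y \in \omega^\wedge$, and then $X, Y \in \omega^\wedge$ follows from the sequences $0 \to K \to F_X \to X \to 0$ and $0 \to K \to F_Y \to Y \to 0$ by combining part (2) with a standard splicing argument: resolving $F_X$ by $\omega$-objects, the preimage of $K$ at the first stage is an extension of $K$ by a syzygy of $F_X$ (both in $\omega^\wedge$), hence in $\omega^\wedge$ by part (2), and splicing its $\omega$-resolution yields one for $X$.

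For part (4), I treat the two containments separately. For $(\omega \cap \mcS)^\wedge \subseteq \omega^\wedge \cap \mcS$, I induct on the length of an $(\omega \cap \mcS)$-resolution of $C$, using closure of $\mcS$ under cokernels of monomorphisms at each stage to move from the syzygies back up to $C$. For $\omega^\wedge \cap \mcS \subseteq (\omega \cap \mcS)^\wedge$, given $C \in \omega^\wedge \cap \mcS$, I iterate part (1) to produce short exact sequences $0 \to K_{i+1} \to F_i \to K_i \to 0$ with $F_i \in \omega \cap \mcS$; closure of $\mcS$ under kernels of epimorphisms ensures each $K_i$ stays in $\mcS$, and after $\resdim_{\omega}(C)$ steps the final syzygy lies in $\omega \cap \mcS$. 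The main technical obstacle is part (2): the horseshoe construction requires coordinated liftings at every level, and the key enabler that makes all these liftings possible, namely the projectivity established in part (1), is precisely what drives the entire argument.
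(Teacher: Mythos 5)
Your proposal is correct. Parts (1) and (4) follow essentially the paper's own route: the relative generator statement and the exact dimension drop come from Lemma \ref{lem:technical_lemma_1} (the paper splices the sequences \eqref{eqn:technical2} directly to read off $\resdim_{\omega}(X_1) = n-1$, while you get the upper bound from Lemma \ref{lem:technical_lemma_2} and the lower bound by splicing back — same content), and the $\omega^\wedge$-projectivity is the standard dimension shift that the paper outsources to a cited lemma of [MS]. The genuine divergence is in parts (2) and (3). For (2) the paper argues by induction on $\resdim_{\omega}(A)$: it uses part (1) to produce $0 \to C' \to P \to C \to 0$ with $P \in \omega \cap \mcS$, pulls back along $B \twoheadrightarrow C$, notes that the resulting extension of $A$ by $P$ splits by the projectivity from (1), and then feeds a sequence $0 \to A' \to W_0 \oplus P \to A \oplus P \to 0$ into the induction hypothesis via a second pullback. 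Your horseshoe argument replaces this induction by one simultaneous resolution of $A$ and $C$; the same projectivity from (1) is what makes each lifting possible, and closure of $\omega$ under extensions absorbs the terminal syzygy. The only detail you leave implicit is padding the two modified resolutions to a common length, which part (1) licenses since $\omega \cap \mcS$ is a relative generator in $\omega$. For (3) the paper gives no argument at all, deferring to \cite[Proof of Proposition 5.3]{BGP}; your construction — the pullbacks $F_X, F_Y$, the exact sequence $0 \to K \to F_X \oplus F_Y \to F \to 0$ split by the projectivity from (1), and the descent $X \in \omega^\wedge$ from $0 \to K \to F_X \to X \to 0$ via part (2) and splicing — is a complete, self-contained substitute and is correct.
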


\begin{proof} \
\begin{enumerate}
\item First, we show that $\omega \cap \mcS$ is a relative generator in $\omega^\wedge$. We use induction on $n := \resdim_{\omega}(C)$ for $C \in \omega^\wedge$. 
\begin{itemize}
\item \underline{Initial step}: This is clear. 

\item \underline{Induction step}: For the case $n \geq 1$, we have an exact sequence
\[
0 \to W_n \to W_{n-1} \to \cdots \to W_1 \to W_0 \to C \to 0
\]
with $W_k \in \omega$ for every $0 \leq k \leq n$. By Lemma \ref{lem:technical_lemma_1} and its notation therein, we have that $X_n \in \omega$ since $G_{n-1}, E_n := W_n \in \omega$ and $\omega$ is closed under extensions. Glueing together the splicer sequences \eqref{eqn:technical2} in Lemma \ref{lem:technical_lemma_1} gives rise to the exact sequence
\[
0 \to X_n \to F_{n-1} \to \cdots \to F_1 \to F_0 \to C \to 0
\]
with $F_k \in \omega \cap \mcS$ for every $0 \leq k \leq n-1$. Thus, for the short exact sequence $0 \to X_1 \to F_0 \to C \to 0$ we have $F_0 \in \omega \cap \mcS$ and $X_1 \in \omega^\wedge$, with $\resdim_{\omega}(X_1) = n - 1$; that is, $\omega \cap \mcS$ is a relative generator in $\omega^\wedge$. 
\end{itemize}

Now in order to show that $\omega \cap \mcS$ is $\omega^\wedge$-projective, that is $\pd_{\omega^\wedge}(\omega \cap \mcS) = 0$, we use \cite[dual of Lemma 2.13 (a) and Remark 1.2 (a)]{MS} as follows:
\[
\pd_{\omega^\wedge}(\omega \cap \mcS) = \id_{\omega \cap \mcS}(\omega^\wedge) = \id_{\omega \cap \mcS}(\omega) = \pd_{\omega}(\omega \cap \mcS) = 0.
\]

\item Suppose we are given a short exact sequence $0 \to A \to B \to C \to 0$ with $A, C \in \omega^\wedge$. Let us use induction on $n := \resdim_{\omega}(A)$ to show that $B \in \omega^\wedge$. 
\begin{itemize}
\item \underline{Initial step}: If $\resdim_{\omega}(A) = 0$, the result follows by Lemma \ref{lem:technical_lemma_2}.

\item \underline{Induction step}: We may assume that $\resdim_{\omega}(A) \geq 1$. Suppose also that for any short exact sequence $0 \to A' \to B' \to C' \to 0$ with $C' \in \omega^\wedge$ and $\resdim_{\omega}(A') \leq n-1$, one has that $B' \in \omega^\wedge$. By part (1) there is a short exact sequence $0 \to C' \to P \to C \to 0$ with $P \in \omega \cap \mcS$ and $C' \in \omega^\wedge$. We take the pullback of $B \to C \leftarrow P$ to obtain the following solid diagram:
\begin{equation}\label{diagram5}
\parbox{1.75in}{
\begin{tikzpicture}[description/.style={fill=white,inner sep=2pt}] 
\matrix (m) [ampersand replacement=\&, matrix of math nodes, row sep=2.5em, column sep=2.5em, text height=1.25ex, text depth=0.25ex] 
{ 
{} \& C' \& C' \\
A \& E \& P \\
A \& B \& C \\
};  
\path[>->]
(m-1-3) edge (m-2-3) (m-1-2) edge (m-2-2)
(m-2-1) edge (m-2-2) 
(m-3-1) edge (m-3-2)
;
\path[->>]
(m-2-3) edge (m-3-3) (m-2-2) edge (m-2-3)
(m-2-2) edge (m-3-2) (m-3-2) edge (m-3-3)
;
\path[->] 
(m-2-2)-- node[pos=0.5] {\footnotesize$\mbox{\bf pb}$} (m-3-3)
; 
\path[-,font=\scriptsize]
(m-1-2) edge [double, thick, double distance=2pt] (m-1-3)
(m-2-1) edge [double, thick, double distance=2pt] (m-3-1)
;
\end{tikzpicture} 
}
\end{equation}
Since $\pd_{\omega^\wedge}(\omega \cap \mcS) = 0$, the central row in \eqref{diagram5} splits, and then $E \simeq A \oplus P$. Now by part (1) again, consider a short exact sequence 
\begin{align}
0 & \to A' \to W_0 \to A \to 0 \label{eqn:res_for_A}
\end{align}
with $W_0 \in \omega \cap \mcS$ and $\resdim_{\omega}(A') = \resdim_{\omega}(A) - 1$. By taking the direct sum of the identity complex on $P$ and the sequence \eqref{eqn:res_for_A}, we get the short exact sequence
\[
0 \to A' \to W_0 \oplus P \to A \oplus P \to 0,
\] 
with $W_0 \oplus P \in \omega$ since $\omega$ is closed under extensions. Now take the pullback of $C' \to A \oplus P \leftarrow W_0 \oplus P$ to obtain the following solid diagram:
\begin{equation}\label{diagram6}
\parbox{1.75in}{
\begin{tikzpicture}[description/.style={fill=white,inner sep=2pt}] 
\matrix (m) [ampersand replacement=\&, matrix of math nodes, row sep=2.5em, column sep=2.5em, text height=1.25ex, text depth=0.25ex] 
{ 
A' \& A' \& {} \\
B' \& W_{0} \oplus P \& B \\
C' \& A \oplus P \& B \\
};  
\path[>->]
(m-1-1) edge (m-2-1) (m-1-2) edge (m-2-2)
(m-2-1) edge (m-2-2) 
(m-3-1) edge (m-3-2)
;
\path[->>]
(m-2-3) edge (m-3-3) (m-2-2) edge (m-2-3)
(m-2-2) edge (m-3-2) (m-3-2) edge (m-3-3) (m-2-1) edge (m-3-1)
;
\path[->] 
(m-2-1)-- node[pos=0.5] {\footnotesize$\mbox{\bf pb}$} (m-3-2)
;
\path[-,font=\scriptsize]
(m-1-1) edge [double, thick, double distance=2pt] (m-1-2)
(m-2-3) edge [double, thick, double distance=2pt] (m-3-3)
;
\end{tikzpicture} 
} 
\end{equation}
By the induction hypothesis, we have that $B' \in \omega^\wedge$. Therefore, the central row of \eqref{diagram6} is precisely a $\omega$-resolution of $B$, and so $B \in \omega^\wedge$.
\end{itemize}

\item Given an object $C = C_1 \oplus C_2 \in \omega^\wedge$ with $n := \resdim_{\omega}(C)$, the proof that $C_1, C_2 \in \omega^\wedge$ follows by induction on $n$ and using an argument similar to the one appearing in \cite[Proof of Proposition 5.3]{BGP}.

\item The containment $\omega^\wedge \cap \mcS \supseteq (\omega \cap \mcS)^\wedge$ is clear since $\mcS$ is closed under mono-cokernels. Now suppose we are given an object $S \in \omega^\wedge \cap \mcS$, that is, an $S \in \mcS$ with a finite $\omega$-resolution
\[
0 \to W_n \to W_{n-1} \to \cdots \to W_1 \to W_0 \to S \to 0.
\]
If $n = 0$, the result follows since $\omega$ is closed under isomorphisms. So we may assume that $n \geq 1$. Since $\mcS$ is closed under epi-kernels,  from Lemma \ref{lem:technical_lemma_1} there are short exact sequences $0 \to X_{j+1} \to F_j \to X_j \to 0$ with $X_j \in \mcS$ and $F_j \in \omega \cap \mcS$ for all $0 \leq j \leq n-1$. Moreover, $X_n \in \omega \cap \mcS$ since $G_{n-1}, E_n := W_n \in \omega$ and $\omega$ is closed under extensions. Then, 
\[
0 \to X_n \to F_{n-1} \to \cdots \to F_1 \to F_0 \to S \to 0
\]
is a $(\omega \cap \mcS)$-resolution of $S$. 
\end{enumerate}
\end{proof}

\subsection*{Cut Frobenius pairs}

We are now ready to present the concept of Frobenius pairs cut along subcategories. It is important that the reader keeps in mind the concept of Frobenius pairs from the preliminaries (Definition \ref{def:Frobenius_pair}).

\begin{definition}\label{def:relative_Frobenius_pair}
Let $\mcX$, $\omega$ and $\mcS$ be classes of objects in $\mcC$. We say that $(\mcX,\omega)$ is a \textbf{left Frobenius pair cut along $\bm{\mcS}$} if the following conditions are satisfied: 
\begin{enumerate}
\item[\lcfpone] $\mcX$ is left thick. 

\item[\lcfptwo] $(\mcX \cap \mcS,\omega \cap \mcS)$ is a left Frobenius pair in $\mcC$.

\item[\lcfpthree] $\omega \cap \mcS$ is an $\omega$-projective relative generator in $\omega$.

\item[\lcfpfour] $\omega$ is closed under extensions and direct summands. 
\end{enumerate}
Dually, we have the notion of \textbf{right Frobenius pairs $\bm{(\nu,\mcY)}$ cut along $\bm{\mcS}$}. 
\end{definition}

\begin{remark}\label{rem:properties_of_cut_Frobenius_pairs}
For any left Frobenius pair $(\mcX,\omega)$ cut along $\mcS$, we can note the following by Lemma \ref{lem:main_lemma} and \cite[Proposition 2.7 (2)]{BMPS}:
\begin{enumerate}
\item $\omega \cap \mcS$ is closed under extensions and finite coproducts, and it is an $\omega^\wedge$-projective relative generator in $\omega^\wedge$.

\item $\omega^\wedge$ is closed under extensions and direct summands. 

\item $\omega$ is closed under isomorphisms, and so $\omega^\wedge \cap \mcS = (\omega \cap \mcS)^\wedge$ whenever $\mcS$ is closed under epi-kernels and mono-cokernels. 
\end{enumerate} 
\end{remark}

Of course any left Frobenius pair $(\mcX,\omega)$, with $\omega$ closed under finite coproduts, is a left Frobenius pair cut along $\mcC$, but not every relative left Frobenius pair is an absolute left Frobenius pair, as we show in Example \ref{ex:relative_Frobenius_but_not_absolute} below.

\begin{proposition}\label{prop:induced_cut_cotorsion_pair}
Let $(\mcX,\mcY)$ be a GP-admissible pair in $\mcC$ such that: 
\begin{enumerate}
\item $\mcX$ is closed under direct summands;

\item $\mcY$ is closed under extensions and direct summands; and 

\item $\mcX \cap \mcY$ is a relative generator in $\mcY$. 
\end{enumerate}
Then, $(\mcC,\mcY^\wedge)$ is a left Frobenius pair cut along $\mcGP_{(\mcX,\mcY)}$. Moreover, the following equality holds true: 
\begin{align}
\mcGP_{(\mcX,\mcY)} \cap \mcY^\wedge & = \mcX \cap \mcY = \mcX \cap \mcY^\wedge. \label{eqn:relative_not_absolute_Frobenius}
\end{align}
\end{proposition}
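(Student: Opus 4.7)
The plan splits naturally into two parts: first establishing the equality \eqref{eqn:relative_not_absolute_Frobenius}, then verifying the four axioms \lcfpone--\lcfpfour of Definition \ref{def:relative_Frobenius_pair} for the pair $(\mcC,\mcY^\wedge)$ cut along $\mcGP_{(\mcX,\mcY)}$. The equality is the linchpin, since every axiom depends on it.

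For the equality, I would begin by showing that $\mcY^\wedge$ is closed under extensions and under direct summands by invoking Lemma \ref{lem:main_lemma}, parts (2) and (3), with $\omega := \mcY$ and $\mcS := \mcX$. The hypotheses are all available: (2) gives that $\mcY$ is closed under extensions and direct summands; (3) says that $\mcX \cap \mcY$ is a relative generator in $\mcY$; and \GPaone implies $\pd_{\mcY}(\mcX \cap \mcY) \leq \pd_{\mcY}(\mcX) = 0$, so $\mcX \cap \mcY$ is $\mcY$-projective. With $\mcY^\wedge$, $\mcX \cap \mcY$ and $\mcX \cap \mcY^\wedge$ now all closed under direct summands (the latter two using (1) and (2)), Proposition \ref{prop:GPXY_cotorsion_pair} applies directly and yields the chain $\mcGP_{(\mcX,\mcY)} \cap \mcY^\wedge = \mcX \cap \mcY = \mcX \cap \mcY^\wedge$.

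For the cut Frobenius axioms: \lcfpone is trivial since $\mcC$ is abelian, and \lcfpfour has just been established. For \lcfptwo, the equality reduces $(\mcC \cap \mcGP_{(\mcX,\mcY)}, \mcY^\wedge \cap \mcGP_{(\mcX,\mcY)})$ to $(\mcGP_{(\mcX,\mcY)}, \mcX \cap \mcY)$, which is an absolute left Frobenius pair by \cite[Corollary 4.10]{BMS} (since $(\mcX,\mcY)$ is GP-admissible and $\mcX \cap \mcY$ is closed under direct summands). For \lcfpthree, using the equality again, $\omega \cap \mcS = \mcX \cap \mcY$, so I must show that $\mcX \cap \mcY$ is a $\mcY^\wedge$-projective relative generator in $\mcY^\wedge$: the generator condition is exactly Lemma \ref{lem:main_lemma}(1), while the projectivity is the identity $\pd_{\mcY^\wedge}(\mcX \cap \mcY) = \id_{\mcX \cap \mcY}(\mcY^\wedge) = \id_{\mcX \cap \mcY}(\mcY) = \pd_{\mcY}(\mcX \cap \mcY) = 0$ already used in the proof of Lemma \ref{lem:main_lemma}(1).

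The main obstacle is therefore the equality \eqref{eqn:relative_not_absolute_Frobenius}, which requires the preparatory closure statement for $\mcY^\wedge$ via Lemma \ref{lem:main_lemma} in order to invoke Proposition \ref{prop:GPXY_cotorsion_pair}. Once that is in place, the axiomatic verifications amount to invoking \cite[Corollary 4.10]{BMS} and Lemma \ref{lem:main_lemma}, with some routine bookkeeping.
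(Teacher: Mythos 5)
Your proposal is correct and follows essentially the same route as the paper: both rest on Lemma \ref{lem:main_lemma} (parts (1)--(3) applied with $\omega := \mcY$, $\mcS := \mcX$) for \lcfpthree\ and \lcfpfour, on the equality $\mcGP_{(\mcX,\mcY)} \cap \mcY^\wedge = \mcX \cap \mcY = \mcX \cap \mcY^\wedge$ (the paper cites \cite[Theorem 3.34 (d)]{BMS} directly, while you reach it via Proposition \ref{prop:GPXY_cotorsion_pair}, which itself relies on that same theorem), and on the fact that $(\mcGP_{(\mcX,\mcY)},\mcX\cap\mcY)$ is a left Frobenius pair for \lcfptwo. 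The only differences are cosmetic (order of the verifications and the intermediate citation).
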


\begin{proof} 
Let us verify each condition in Definition \ref{def:relative_Frobenius_pair}, although we need to do this in a specific order. 

Condition \lcfpone \ is clearly satisfied. On the other hand, since $\mcY$ is closed under extensions and direct summands (by (2)), and $\mcX \cap \mcY$ is an $\mcY$-projective relative generator in $\mcY$ (by (3) and \GPaone \ in Definition \ref{def:GP-admissible}), we have from Lemma \ref{lem:main_lemma} (2) and (3) that $\mcY^\wedge$ is closed under extensions and direct summands, that is, condition \lcfpfour \ holds. Moreover, since also $\mcX \cap \mcY$ and $\mcX \cap \mcY^\wedge$ are closed under direct summands, we obtain the equality \eqref{eqn:relative_not_absolute_Frobenius} from \cite[Theorem 3.34 (d)]{BMS}. This, along with Lemma \ref{lem:main_lemma} (1), implies that $\mcY^\wedge \cap \mcGP_{(\mcX,\mcY)}$ is a $\mcY^\wedge$-projective relative generator in $\mcY^\wedge$, that is, we have condition \lcfpthree. Finally, condition \lcfptwo, that is, that $(\mcGP_{(\mcX,\mcY)}, \mcY^\wedge \cap \mcGP_{(\mcX,\mcY)}) = (\mcGP_{(\mcX,\mcY)},\mcX \cap \mcY)$ is a left Frobenius pair in $\mcC$, follows by \cite[Corollaries 3.25 (a) and 3.33]{BMS}.
\end{proof}

\begin{example}\label{ex:relative_Frobenius_but_not_absolute}
For any ring $R$, if $\mcY$ is a class in $\Mod(R)$ closed under finite coproducts and containing $\mcP(R)$, then $(\Mod(R),\mcY^\wedge)$ is a left Frobenius pair cut along $\mcGP_{(\mcP(R),\mcY)}$, since $(\mcP(R),\mcY)$ is a GP-admissible pair satisfying the conditions of the previous proposition. In particular, we have that $(\Mod(R),\mcP(R)^\wedge)$ and $(\Mod(R),\mcF(R)^\wedge)$ are left Frobenius pairs cut along $\mcGP(R)$ and $\mcDP(R)$, respectively. 

In a more general sense, consider a hereditary complete cotorsion pair $(\mcX,\mcY)$ in $\mcC$. Then, $(\mcC,\mcY^\wedge)$ is a left Frobenius pair cut along $\mcX$. Note in this case that $\mcX = \mcGP_{(\mcX,\mcY)}$ by \cite[Corollary 3.17]{BMS}. On the other hand, for the class $\omega := \mcX \cap \mcY$, we have that $(\mcC,\omega^\wedge)$ is a left Frobenius pair cut along $\mcGP_{(\mcX,\omega)}$.
\end{example}

Below we establish necessary and sufficient conditions under which the relative Frobenius pair from the previous example is left Frobenius in $\Mod(R)$. This will be a consequence of the following general result.

\begin{proposition}
Let $(\mcX,\omega)$ be a pair of classes of objects in $\mcC$ such that $\omega^\wedge \subseteq \mcX$, $\mcX$ is left thick, and $\omega$ is closed under direct summands and a relative cogenerator in $\mcX$. Then, the following statements are equivalent:
\begin{enumerate}
\item[(a)] $(\mcX,\omega^\wedge)$ is a left Frobenius pair in $\mcC$.

\item[(b)] $\omega = \mcX^\perp \cap \mcX$.
\end{enumerate}
Moreover, if any of the above equivalent conditions holds, then $\omega = \omega^\wedge$. 
\end{proposition}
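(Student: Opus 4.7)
The plan is to prove the equivalence (a) $\Leftrightarrow$ (b) directly, extracting the identity $\omega = \omega^\wedge$ on the way.

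For the implication (a) $\Rightarrow$ (b), I would first observe that by \lfpthree \ applied to $(\mcX,\omega^\wedge)$, the class $\omega^\wedge$ is $\mcX$-injective, so $\omega \subseteq \omega^\wedge \subseteq \mcX^{\perp}$; combined with the standing hypothesis $\omega^\wedge \subseteq \mcX$, this gives $\omega \subseteq \mcX^\perp \cap \mcX$. For the converse containment, I would take $Y \in \mcX^\perp \cap \mcX$ and use that $\omega$ is a relative cogenerator in $\mcX$ to obtain a short exact sequence $0 \to Y \to W \to X' \to 0$ with $W \in \omega$ and $X' \in \mcX$. Since $\Ext^1_{\mcC}(X',Y) = 0$ by $Y \in \mcX^\perp$, this sequence splits, so $Y$ is a direct summand of $W \in \omega$, and by closure under direct summands $Y \in \omega$.

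For (b) $\Rightarrow$ (a), the crux of the argument is showing $\omega = \omega^\wedge$, which I would do by induction on $n := \resdim_\omega(C)$ for $C \in \omega^\wedge$. The base case is trivial. In the inductive step, I would pick a short exact sequence $0 \to K \to W_0 \to C \to 0$ with $W_0 \in \omega$ and $\resdim_\omega(K) = n-1$, whence $K \in \omega$ by induction. The hypothesis $\omega^\wedge \subseteq \mcX$ gives $C \in \mcX$, while dimension shifting on $\Ext^i_\mcC(X,-)$ yields
\[
\Ext^i_\mcC(X,C) \cong \Ext^{i+1}_\mcC(X,K) = 0 \quad \text{for all } i \geq 1 \text{ and } X \in \mcX,
\]
so $C \in \mcX^\perp$. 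Hence $C \in \mcX^\perp \cap \mcX = \omega$, completing the induction.

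Once $\omega = \omega^\wedge$ is established, verifying that $(\mcX, \omega^\wedge)$ is a left Frobenius pair is routine: \lfpone \ holds by hypothesis; \lfptwo \ for $\omega^\wedge = \omega$ is the assumed closure under direct summands; and \lfpthree \ decomposes into the $\mcX$-injectivity of $\omega$ (immediate from $\omega \subseteq \mcX^\perp$) together with the hypothesis that $\omega$ is a relative cogenerator in $\mcX$. The ``moreover'' clause is precisely the identity $\omega = \omega^\wedge$ produced by the induction; under (a) it can also be derived from the (a) $\Rightarrow$ (b) argument, since $\omega^\wedge \subseteq \mcX^\perp \cap \mcX = \omega$. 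The main obstacle is the induction in (b) $\Rightarrow$ (a), where one must recognize that $\omega = \mcX^\perp \cap \mcX$ is automatically closed under the ``quotient by an $\omega$-subobject'' operation inside $\mcX$ — this is the key closure property that collapses $\omega^\wedge$ to $\omega$ and trivializes the remaining checks.
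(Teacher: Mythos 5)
Your proof is correct, and its skeleton matches the paper's: the containment $\mcX^\perp \cap \mcX \subseteq \omega$ via the splitting of the cogenerator sequence, and the containment $\omega \subseteq \mcX^\perp \cap \mcX$ from $\id_{\mcX}(\omega^\wedge) = \id_{\mcX}(\omega) = 0$ together with $\omega \subseteq \omega^\wedge \subseteq \mcX$, are exactly the paper's two steps. Where you genuinely diverge is in (b) $\Rightarrow$ (a): the paper notes that (b) makes $(\mcX,\omega)$ a left Frobenius pair and then invokes \cite[Proposition 2.13]{BMPS} to conclude $\omega^\wedge = \mcX^\perp \cap \mcX = \omega$, after which the three Frobenius axioms for $(\mcX,\omega^\wedge)$ are immediate; you instead prove the collapse $\omega^\wedge = \omega$ from scratch by induction on $\resdim_{\omega}(C)$ with a dimension-shifting argument (which works because $W_0$ and $K$ both land in $\omega \subseteq \mcX^\perp$, so the connecting maps kill $\Ext^i_{\mcC}(X,C)$ for $i \geq 1$). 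Your route buys a self-contained argument that makes visible exactly why the hypotheses $\omega^\wedge \subseteq \mcX$ and $\omega = \mcX^\perp \cap \mcX$ force $\omega^\wedge$ to collapse; the paper's route is shorter because it reuses the general Auslander--Buchweitz identity for left Frobenius pairs. Both are complete proofs of the statement, including the ``moreover'' clause.
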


\begin{proof}
The containment $\mcX^\perp \cap \mcX \subseteq \omega$ always holds since $\omega$ is closed under direct summands and a relative cogenerator in $\mcX$. 

Suppose first that $(\mcX,\omega^\wedge)$ is a left Frobenius pair in $\mcC$. Then, $0 = \id_{\mcX}(\omega^\wedge) = \id_{\mcX}(\omega)$ and so the other containment $\omega \subseteq \mcX^\perp \cap \mcX$ holds as well. 

Now suppose that $\omega = \mcX^\perp \cap \mcX$. This implies that $(\mcX,\omega)$ is a left Frobenius pair. By \cite[Proposition 2.13]{BMPS}, we have that $\omega^\wedge = \mcX^\perp \cap \mcX$, and so $\omega^\wedge$ is closed under direct summands. Finally, it is clear that $\omega^\wedge \subseteq \mcX$ is an $\mcX$-injective relative cogenerator in $\mcX$. Hence, $(\mcX,\omega^\wedge)$ is a left Frobenius pair in $\mcC$. 
\end{proof}

A more particular version of the previous result is the following.

\begin{corollary}
The following assertions are equivalent for any class $\mcY$ of $R$-modules:
\begin{enumerate}
\item[(a)] $(\Mod(R),\mcY^\wedge)$ is a left Frobenius pair in $\Mod(R)$.

\item[(b)] $\mcY^\wedge = \mcI(R)$. 
\end{enumerate}
In case any of the above conditions holds true and $\mcP(R) \subseteq \mcY$, then $R$ is a quasi-Frobenius ring.
\end{corollary}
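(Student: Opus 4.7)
The plan is to prove (a) $\Leftrightarrow$ (b) directly from Definition \ref{def:Frobenius_pair}, rather than trying to invoke the preceding proposition (where the hypothesis that $\omega$ be closed under direct summands would itself have to be verified in the present setting, and the class $\mcY$ is otherwise arbitrary).

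For the implication (a) $\Rightarrow$ (b), I would exploit the two halves of condition \lfpthree \ separately. The $\Mod(R)$-injectivity $\id_{\Mod(R)}(\mcY^\wedge) = 0$ translates directly to $\mcY^\wedge \subseteq \mcI(R)$, since every $W \in \mcY^\wedge$ then satisfies $\Ext^{i}_{R}(-,W) = 0$ for all $i \geq 1$ and is therefore injective. For the reverse inclusion, pick $I \in \mcI(R)$ and apply the relative cogenerator part of \lfpthree \ to obtain a short exact sequence $0 \to I \to W \to M \to 0$ with $W \in \mcY^\wedge$; injectivity of $I$ splits this sequence, displaying $I$ as a direct summand of $W$, and \lfptwo \ then forces $I \in \mcY^\wedge$.

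The converse (b) $\Rightarrow$ (a) is routine: $\Mod(R)$ is trivially left thick, $\mcI(R)$ is well-known to be closed under direct summands, and $\mcI(R)$ is an $\Mod(R)$-injective relative cogenerator in $\Mod(R)$ since $\Ext^{\geq 1}_{R}(-,\mcI(R)) = 0$ tautologically and every $R$-module embeds into an injective one. For the ``moreover'' clause, assume in addition that $\mcP(R) \subseteq \mcY$. Since $\mcY \subseteq \mcY^\wedge$ by definition of the resolution dimension, we get the chain
\[
\mcP(R) \subseteq \mcY \subseteq \mcY^\wedge = \mcI(R),
\]
which says that every projective $R$-module is injective. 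By the Faith--Walker characterization of quasi-Frobenius rings, this forces $R$ to be quasi-Frobenius.

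None of these steps presents a real obstacle; the only mild subtlety is recognizing that the direct-summand closure of $\mcY^\wedge$ needed to upgrade ``$I$ is a direct summand of an object of $\mcY^\wedge$'' to ``$I \in \mcY^\wedge$'' is precisely what \lfptwo \ provides, which is why we cannot avoid invoking it in the first implication.
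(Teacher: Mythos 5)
Your proof is correct and follows essentially the same route as the paper: the paper likewise derives $\mcY^\wedge \subseteq \mcI(R)$ from $\id(\mcY^\wedge)=0$ and obtains the reverse inclusion by splitting the cogenerator sequence for an injective module, with the direct-summand closure of $\mcY^\wedge$ (your appeal to \lfptwo) used implicitly. Your explicit treatment of (b) $\Rightarrow$ (a) and of the quasi-Frobenius conclusion via Faith--Walker fills in details the paper leaves as trivial.
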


\begin{proof}
The implication (b) $\Rightarrow$ (a) is trivial. Now for (a) $\Rightarrow$ (b), since $\id(\mcY^\wedge) = 0$ we have that $\mcY^\wedge \subseteq \mcI(R)$. Now if $E$ is an injective $R$-module, since $\mcY^\wedge$ is a relative cogenerator in $\Mod(R)$ we can find a split exact sequence $0 \to E \to W \to M \to 0$ where $W \in \mcY^\wedge$, and so $E \in \mcY^\wedge$. Hence, $\mcY^\wedge = \mcI(R)$. 
\end{proof}

From the previous two results, we can note that the left Frobenius pairs $(\mcX,\omega)$ that induce a left Frobenius pair of the form $(\mcX,\omega^\wedge)$ are scarce. As a matter of fact, what we expect from a left Frobenius pair $(\mcX,\omega)$ is that $(\mcX,\omega^\wedge)$ is a complete cut cotorsion pair. This is precisely the case of Gorenstein left Frobenius pairs $(\mcGP_{(\mcX,\mcY)},\mcY)$ in Proposition \ref{proposition:Gorenstein_Frobenius_pair_vs_Gorenstein_cotorsion_cut}. We shall explore this in more detail in Section \ref{sec:versus}. 

Let us now present one more example of cut Frobenius pairs in the context of quiver representations.

\begin{example}
Several facts in this example are extracted from \cite[Example 5.3]{ZX}. Let $\Lambda$ be the quotient path $k$-algebra given by the quiver
\[
\xymatrix{
1 \ar@/^/[r]^{\alpha} & 2\ar@/^/[l]^{\beta} & 3 \ar[l]^{\gamma}
}
\]
with relations $\alpha \beta = 0 = \beta \alpha$. In the category $\fgMod(\Lambda)$ of finitely generated (left) $\Lambda$-modules, the indecomposable projective $\Lambda$-modules are:
\begin{align*}
{\scriptsize \begin{array}{c} 1 \\ 2 \end{array}}, & & {\scriptsize \begin{array}{c} 2 \\ 1 \end{array}} & & \text{and} & & {\scriptsize \begin{array}{l} 3 \\ 2 \\ 1. \end{array}}
\end{align*}
On the other hand, the indecomposable injective $\Lambda$-modules are:
\begin{align*}
{\scriptsize \begin{array}{c} 3 \\ 2 \\ 1 \end{array}}, & & {\scriptsize \begin{array}{ccc} 1 & & 3 \\ & 2 \end{array}} & & \text{and} & & {\scriptsize \begin{array}{c} 3. \end{array}}
\end{align*}
It follows that the Auslander-Reiten quiver of $\fgMod(\Lambda)$ is given by:
\[
\xymatrix@R=0.3cm{
 & & {\begin{tiny}
\begin{array}{c}
3 \\
2\\
1
\end{array}
\end{tiny}} \ar[rd] & & & \\
 & {\begin{tiny}
\begin{array}{c}
2\\
1
\end{array}
\end{tiny}} \ar[ru] \ar[rd] \ar@{--}[rr] & & {\begin{tiny}
\begin{array}{c}
3 \\
2
\end{array}
\end{tiny}} \ar[rd] \ar[rd] \ar@{--}[rr] & & {\scriptsize \begin{array}{c} 1 \end{array}} \\
{\scriptsize \begin{array}{c} 1 \end{array}} \ar[ru] \ar@{--}[rr] & & {\scriptsize \begin{array}{c} 2 \end{array}} \ar[ru] \ar[rd] \ar@{--}[rr] & & {\begin{tiny}
\begin{array}{ccc}
1& &3\\
&2&
\end{array}
\end{tiny}} \ar[ru] \ar[rd] & \\
 & & & {\begin{tiny}
\begin{array}{c}
1\\
2
\end{array}
\end{tiny}} \ar[ru] \ar@{--}[rr] & & {\scriptsize \begin{array}{c} 3 \end{array}},
}
\]
where the two vertices $1$ represent the same simple module.

Now let $\mcX := {\rm add} ( \begin{tiny} \begin{array}{c} 1 \end{array} \end{tiny} \oplus \begin{tiny} \begin{array}{c} 2 \\ 1 \end{array}\end{tiny} \oplus \begin{tiny} \begin{array}{c} 2 \end{array} \end{tiny} \oplus \begin{tiny}\begin{array}{c} 1 \\ 2 \end{array}\end{tiny} )$ be the class of direct summands of finite coproducts of copies of the $\Lambda$-module $\begin{tiny} \begin{array}{c} 1 \end{array} \end{tiny} \oplus \begin{tiny} \begin{array}{c} 2 \\ 1 \end{array}\end{tiny} \oplus \begin{tiny} \begin{array}{c} 2 \end{array} \end{tiny} \oplus \begin{tiny}\begin{array}{c} 1 \\ 2 \end{array}\end{tiny}$. Then, $\mcX$ is closed under extensions and a Frobenius subcategory of $\fgMod(A)$. Moreover, the class of the projective-injective objects in $\mcX$ is $\mcP(\mcX) = {\rm add}(\begin{tiny}\begin{array}{c} 1 \\ 2 \end{array}\end{tiny} \oplus \begin{tiny}\begin{array}{c} 2 \\ 1 \end{array} \end{tiny})$. Indeed, by using Auslander-Reiten theory, it can be shown that $\Ext^i_\Lambda(\mcX,\mcP(\mcX)) = 0$ for every $i \geq 1$. 

We assert that $(\fgMod(\Lambda), \mcP(\mcX))$ is a left Frobenius pair cut along $\mcX$. Note first that $\mcX$ is not a resolving class in $\fgMod(\Lambda)$ since it does not contain all indecomposable projective $\Lambda$-modules. On the one hand, it is easy to see that conditions \lcfpone, \lcfpthree \ and \lcfpfour \ in Definition \ref{def:relative_Frobenius_pair} hold true as $\mcP(\mcX) \subseteq \mcP(\Lambda)$, and $\mcX$ is closed under extensions and direct summands. So, we only need to prove that $(\mcX,\mcP(\mcX))$ is a left Frobenius pair in $\fgMod(\Lambda)$. First, we show that $\mcX$ is left thick. Since $\mcX$ is closed under extensions and direct summands, it is only left to verify that it is closed under epi-kernels. So suppose we are given a short exact sequence $0 \to X \to Y \to Z \to 0$ in $\fgMod(\Lambda)$ with $Y, Z \in \mcX$. Using that $\mcX$ has enough projective objects, we construct the following solid diagram by taking the pullback of $Y \to Z \leftarrow P$:
\begin{equation}\label{1ec}
\parbox{1.5in}{
\begin{tikzpicture}[description/.style={fill=white,inner sep=2pt}] 
\matrix (m) [ampersand replacement=\&, matrix of math nodes, row sep=2.5em, column sep=2.5em, text height=1.25ex, text depth=0.25ex] 
{ 
{} \& Z' \& Z'\\
X \& L \& P\\
X \& Y \& Z\\
}; 
\path[->] 
(m-2-2)-- node[pos=0.5] {\footnotesize$\mbox{\bf pb}$} (m-3-3)
; 
\path[-,font=\scriptsize]
(m-1-2) edge [double, thick, double distance=2pt] (m-1-3)
(m-2-1) edge [double, thick, double distance=2pt] (m-3-1)
;
\path[>->]
(m-1-2) edge (m-2-2)
(m-1-3) edge (m-2-3)
(m-2-1) edge (m-2-2)
(m-3-1) edge (m-3-2)
;
\path[->>]
(m-2-2) edge (m-2-3)
(m-2-2) edge (m-3-2)
(m-3-2) edge (m-3-3)
(m-2-3) edge (m-3-3)
;
\end{tikzpicture}
}
\end{equation}
with $P\in \mcP(\mcX)$ and $Z' \in \mcX$. Note that $L \in \mcX$ since $\mcX$ is closed under extensions, and $Y$ and $Z'$ belong to $\mcX$. Moreover, since $\mcP(\mcX) \subseteq \mcP(\Lambda)$ the central row in (\ref{1ec}) splits, and then $L \simeq X \oplus P$. Thus, $X \in \mcX$ as $\mcX$ is closed under direct summands. The remaining conditions for being a left Frobenius pair follow from the fact that $\mcX$ is a Frobenius subcategory of $\fgMod(\Lambda)$. 
\end{example}

%%%%%%%%%%%%%%%%%%%%%%%%%%%%%%%%%%%%%
%%%%%%%%%%%%%%%%%%%%%%%%%%%%%%%%%%%%%

\subsection*{Cut Auslander-Buchweitz contexts}

In this part we introduce the concept of cut Auslander-Buchweitz contexts and present some examples related to hereditary complete cotorsion pairs. Let us recall the following definition from \cite[Definition 5.1]{BMPS}.

\begin{definition}\label{def:AB_context}
A pair $(\mcA,\mcB)$ of classes of objects in $\mcC$ with $\omega := \mcA \cap \mcB$ is a \textbf{left weak Auslander-Buchweitz context} (or a \textbf{left weak AB context}, for short) if the following three conditions are satisfied: 
\begin{enumerate}
\item[\lABone] $(\mcA,\omega)$ is a left Frobenius pair in $\mcC$.

\item[\lABtwo] $\mcB$ is a right thick class. 

\item[\lABthree] $\mcB \subseteq \mcA^\wedge$.
\end{enumerate}
The notion of \textbf{right weak AB context} is dual. 
\end{definition}

Let us now propose the following generalization of the previous definition.

\begin{definition}\label{def:cut_AB_context}
Let $\mcA$, $\mcB$ and $\mcS$ be classes of objects in $\mcC$, and let $\omega := \mcA \cap \mcB$. We say that $(\mcA,\mcB)$ is a \textbf{left weak AB context cut along $\bm{\mcS}$} if the following three conditions are satisfied:
\begin{enumerate}
\item[\lcABone] $(\mcA,\omega)$ is a left Frobenius pair cut along $\mcS$.

\item[\lcABtwo] $\mcB \cap \mcS$ is a right thick class. 

\item[\lcABthree] $\mcB \cap \mcS \subseteq (\mcA \cap \mcS)^\wedge$.
\end{enumerate} 
Dually, we have the concept of \textbf{right weak AB context cut along $\bm{\mcS}$}.
\end{definition}

\begin{remark}\label{rem:vs_first_approach}
We can have a first approach to the relation between cut AB contexts and relative cotorsion pairs, which will be analyzed in more detail in Section \ref{sec:versus}. For any left weak AB context $(\mcA,\mcB)$ cut along $\mcS$, one has that $(\mcA \cap \mcS,\mcB \cap \mcS)$ is a left weak AB context in $\mcC$. Then, $(\mcA \cap \mcS, \mcB \cap \mcS)$ is a relative $\mathsf{Thick}(\mcA \cap \mcS)$-cotorsion pair with $\id_{\mcA \cap \mcS}(\mcB \cap \mcS) = 0$ and $(\mcA \cap \mcB \cap \mcS)^\wedge = \mcB \cap \mcS$. See \cite[Definition 3.4 and Proposition 5.5]{BMPS} for details. 
\end{remark}

\begin{example}\label{ex:From_GP_to_AB}
Let $(\mcX,\mcY)$ be a GP-admissible pair in $\mcC$ with $\omega := \mcX \cap \mcY$, such that $\mcX$ is closed under epi-kernels and direct summands, and such that $\mcY$ is right thick. 
\begin{enumerate}
\item $(\mcX,\mcY)$ is a left weak AB context cut along $\mcX^\wedge$: It is easy to check that $(\mcX,\mcY)$ satisfies conditions \lcABone \ and \lcABthree \ in Definition \ref{def:cut_AB_context}. Moreover, the class $\mcY \cap \mcX^\wedge$ is right thick since $\mcX^\wedge$ is thick by \cite[Theorem 2.11]{BMPS} and $\mcY$ is right thick by assumption. Thus, $(\mcX,\mcY)$ satisfies also \lcABtwo.

\item $(\mcGP_{(\mcX,\mcY)},\mcY)$ is a left weak AB context cut along $\mcX^\wedge$: Let us first see that the pair $(\mcGP_{(\mcX,\mcY)},\mcGP_{(\mcX,\mcY)} \cap \mcY)$ is left Frobenius cut along $\mcX^\wedge$. By \cite[Corollary 3.33]{BMS} we have that $\mcGP_{(\mcX,\mcY)}$ is left thick. Moreover, $\mcGP_{(\mcX,\mcY)} \cap \mcY = \omega$ (by \cite[Corollary 3.25 (b)]{BMS}) and $\mcGP_{(\mcX,\mcY)} \cap \mcX^\wedge = \mcX$. Indeed, the first equality  and the containment $\mcGP_{(\mcX,\mcY)} \cap \mcX^\wedge \supseteq \mcX$  are clear. Now let $C \in \mcGP_{(\mcX,\mcY)} \cap \mcX^\wedge$. By \cite[Theorem 2.8]{BMPS} we can find a short exact sequence $0 \to K \to X \to C \to 0$ with $X \in \mcX$ and $K \in \omega^\wedge$. This sequence splits since $C \in \mcGP_{(\mcX,\mcY)}$, and so $C \in \mcX$. We then have that $(\mcX, \omega) = (\mcGP_{(\mcX,\mcY)} \cap \mcX^\wedge, (\mcGP_{(\mcX,\mcY)} \cap \mcY) \cap \mcX^\wedge)$ is a left Frobenius pair. From the previous equalities we can also note that $(\mcGP_{(\mcX,\mcY)} \cap \mcY) \cap \mcX^\wedge$ is a $(\mcGP_{(\mcX,\mcY)} \cap \mcY)$-projective relative generator in $\mcGP_{(\mcX,\mcY)} \cap \mcY$, and that $\mcGP_{(\mcX,\mcY)} \cap \mcY$ is closed under extensions and direct summands. Hence, we have that the pair $(\mcGP_{(\mcX,\mcY)}, \mcGP_{(\mcX,\mcY)} \cap \mcY)$ satisfies \lcABone. Finally, conditions \lcABtwo \  and \ \lcABthree \ are clear. 

\item $(\mcGP_{(\mcX,\omega)},\mcY)$ is a left weak AB context cut along $\mcX^\wedge$: We first check that the pair  $(\mcGP_{(\mcX,\omega)},\mcGP_{(\mcX,\omega)} \cap \mcY)$ is left Frobenius cut along $\mcX^\wedge$. Note that $\pd_{\omega}(\mcX) = 0$, $\omega$ is closed under finite coproducts, and $\omega$ is a relative cogenerator in $\mcX$. Then, it follows by \cite[Theorems 3.30 and 3.32]{BMS} that $\mcGP_{(\mcX,\omega)}$ is left thick. Using again \cite[Theorem 2.8]{BMPS}, we have that $\mcGP_{(\mcX,\omega)} \cap \mcX^\wedge = \mcX$ and $(\mcGP_{(\mcX,\omega)} \cap \mcY) \cap \mcX^\wedge = \omega$. Thus, $(\mcGP_{(\mcX,\omega)} \cap \mcX^\wedge, (\mcGP_{(\mcX,\omega)} \cap \mcY) \cap \mcX^\wedge) = (\mcX,\omega)$ is a left Frobenius pair in $\mcC$. The rest of the proof follows easily. 
\end{enumerate}
\end{example}

\begin{remark}
Note that hereditary complete cotorsion pairs provide with a wide range of GP-admissible pairs $(\mcX,\mcY)$ satisfying the assumptions in the previous example. Let us now exhibit a GP-admissible pair $(\mcX,\mcY)$, with $\mcX$ closed under epi-kernels and direct summands, and with $\mcY$ right thick, such that $(\mcX,\mcY)$ is not a hereditary complete cotorsion pair. This is the case of the classes $\mcX = \mcGP_{\text{AC}}(R)$ of Gorenstein AC-projective $R$-modules, and $\mcY = \mcL(R)^\wedge$ of $R$-modules of finite level dimension, provided that $R$ is not an AC-Gorenstein ring (see Gillespie's \cite[Theorem 6.2]{GillespieAC}). Indeed, by \cite[Lemma 8.6 and Proposition 8.10]{BGH} we know that $(\mcGP_{\text{AC}}(R),(\mcGP_{\text{AC}}(R))^{\perp_1})$ is a hereditary complete cotorsion pair in $\Mod(R)$. Moreover, it is clear that $\pd_{\mcL(R)^\wedge}(\mcGP_{\text{AC}}(R)) = 0$ and that $\mcL(R)^\wedge$ is closed under finite coproducts. On the other hand, it is straightforward to check that $\mcGP_{\text{AC}}(R) \cap \mcL(R)^\wedge = \mcP(R)$. Thus, in particular, we have from the previous example that $(\mcGP_{\text{AC}}(R),\mcL(R)^\wedge)$ is a left weak AB-context cut along $\mcGP_{\text{AC}}(R)^\wedge$. 
\end{remark}

In Example \ref{ex:From_GP_to_AB} we obtained the left weak AB-context $(\mcGP_{(\mcX,\omega)}, \mcY)$ cut along $\mcX^\wedge$, from a GP-admissible pair $(\mcX,\mcY)$ with $\mcX$ left thick and $\mcY$ right thick. With a couple of extra assumptions on $\mcX$ and $\mcY$, we are able to characterize $(\mcGP_{(\mcX,\omega)}, \mcY)$ as a left weak AB-context that is \emph{absolute} (that is, cut along the whole category $\mcC$). Let us specify this in the following result.

\begin{theorem}\label{theo:GP_admissible_weak_AB}
Let $(\mcX,\mcY)$ be a GP-admissible pair in $\mcC$ such that $\omega := \mcX \cap \mcY$ is closed under direct summands, ${}^{\perp_1}\mcY \subseteq \mcX$ and $\mcY$ is right thick. Then, the following conditions are equivalent:
\begin{enumerate}
\item[(a)] $(\mcGP_{(\mcX,\omega)},\mcY)$ is a left weak AB-context in $\mcC$. 

\item[(b)] $\mcY \subseteq \mcX^\wedge$ and $\mcX$ is left thick. 

\item[(c)] $(\mcX,\mcY)$ is a left weak AB-context in $\mcC$. 
\end{enumerate}
Moreover, if any of  the previous conditions holds, then $\mcX = \mcGP_{(\mcX,\omega)}$. 
\end{theorem}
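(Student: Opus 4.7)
The plan is to establish the equivalences in the order (c) $\Leftrightarrow$ (b), prove the moreover statement $\mcX = \mcGP_{(\mcX,\omega)}$ under (b), and finally deduce (a) $\Rightarrow$ (b) by an analogous argument; (c) $\Rightarrow$ (a) is then trivial. First, two structural consequences of the hypotheses that will be used repeatedly: since $\pd_{\mcY}(\mcX) = 0$ (condition \GPaone) gives $\mcX \subseteq {}^{\perp}\mcY$, and since ${}^{\perp_1}\mcY \subseteq \mcX$ by hypothesis, one obtains $\mcX = {}^{\perp}\mcY = {}^{\perp_1}\mcY$; a routine long exact sequence computation shows ${}^{\perp}\mcY$ is closed under epi-kernels, so $\mcX$ is automatically left thick. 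A separate induction on $\omega$-resolution length, using right thickness of $\mcY$ and $\omega \subseteq \mcY$, yields $\omega^\wedge \subseteq \mcY$.

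The equivalence (b) $\Leftrightarrow$ (c) is straightforward: (c) $\Rightarrow$ (b) is immediate from \lABone \ and \lABthree, while (b) $\Rightarrow$ (c) uses \GPafour \ (relative cogenerator) together with $\omega \subseteq \mcY$ and $\pd_{\mcY}(\mcX) = 0$ (giving $\id_{\mcX}(\omega) = 0$) to verify the left Frobenius condition \lABone, with \lABtwo \ and \lABthree \ already at hand. For the moreover under (b): the containment $\mcX \subseteq \mcGP_{(\mcX,\omega)}$ is obtained by splicing a left $\mcX$-resolution of any $X \in \mcX$ (built via \GPatwo \ and epi-kernel closure) with a right $\omega$-coresolution (from the relative cogenerator property), verifying $\Hom_{\mcC}(-,\omega)$-acyclicity via $\pd_{\omega}(\mcX) = 0$. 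The reverse containment is the crucial step: establish $\mcY \cap \mcX^\wedge = \omega^\wedge$ by applying \cite[Theorem 2.8]{BMPS} to the left Frobenius pair $(\mcX,\omega)$ to obtain, for $Y \in \mcY \cap \mcX^\wedge$, a short exact sequence $0 \to K \to X \to Y \to 0$ with $K \in \omega^\wedge \subseteq \mcY$, so that extension-closure of $\mcY$ forces $X \in \mcX \cap \mcY = \omega$. Under (b), this collapses $\mcY$ into $\omega^\wedge$, and standard dimension shifting then gives $\Ext^1_{\mcC}(\mcGP_{(\mcX,\omega)},\mcY) = 0$, placing $\mcGP_{(\mcX,\omega)} \subseteq {}^{\perp_1}\mcY = \mcX$.

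For (a) $\Rightarrow$ (b), I would set $\omega' := \mcGP_{(\mcX,\omega)} \cap \mcY$ and repeat the argument with the left Frobenius pair $(\mcGP_{(\mcX,\omega)},\omega')$ supplied by \lABone \ of (a) together with $\mcY \subseteq \mcGP_{(\mcX,\omega)}^\wedge$ from \lABthree: the analogous Auslander-Buchweitz reduction via \cite[Theorem 2.8]{BMPS} yields $\mcY \subseteq (\omega')^\wedge$, and dimension shifting forces $\mcGP_{(\mcX,\omega)} \subseteq {}^{\perp_1}\mcY = \mcX$; combined with $\mcX \subseteq \mcGP_{(\mcX,\omega)}$ this gives $\mcX = \mcGP_{(\mcX,\omega)}$, whence $\omega' = \omega$ and $\mcY \subseteq \omega^\wedge \subseteq \mcX^\wedge$, i.e., (b). The main obstacle throughout is the collapse $\mcY \subseteq \omega^\wedge$ from the $\mcX^\wedge$- or $\mcGP_{(\mcX,\omega)}^\wedge$-containment of $\mcY$; once that is in hand the rest is routine dimension shifting. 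The key conceptual point is that the hypothesis ${}^{\perp_1}\mcY \subseteq \mcX$ (equivalently $\mcX = {}^{\perp}\mcY$) is exactly what makes the Auslander-Buchweitz approximation produce a right-hand factor that lies in $\omega$ rather than merely in $\mcX$.
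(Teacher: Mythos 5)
Your proof is correct, and although it runs on the same machinery as the paper's (the Auslander--Buchweitz approximation \cite[Theorem 2.8]{BMPS}, the orthogonality $\Ext^1_{\mcC}(\mcGP_{(\mcX,\omega)},\omega^\wedge)=0$, and the unconditional containment $\mcX \subseteq \mcGP_{(\mcX,\omega)}$), the organization and the pivotal identity are genuinely different. The paper argues cyclically (a) $\Rightarrow$ (b) $\Rightarrow$ (c) $\Rightarrow$ (a): for (a) $\Rightarrow$ (b) it invokes \cite[Proposition 5.5]{BMPS} to get $\id_{\mcGP_{(\mcX,\omega)}}(\mcY)=0$ and closes the chain $\mcX \subseteq \mcGP_{(\mcX,\omega)} \subseteq {}^{\perp_1}\mcY \subseteq \mcX$ (which is where the ``moreover'' is extracted), while for (c) $\Rightarrow$ (a) it verifies the Frobenius condition directly through the identity $\mcGP_{(\mcX,\omega)} \cap \mcX^\wedge = \mcX$, obtaining only the weaker $\mcGP_{(\mcX,\omega)} \cap \mcY = \omega$. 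You instead pivot on the collapse $\mcY \cap \mcX^\wedge = \omega^\wedge$, so that under (b) one gets $\mcY = \omega^\wedge$ and hence $\mcGP_{(\mcX,\omega)} \subseteq {}^{\perp_1}\mcY = \mcX$; establishing the ``moreover'' first renders (a) $\Leftrightarrow$ (c) literally tautological, which is a cleaner decomposition. Your (a) $\Rightarrow$ (b) essentially re-derives by hand the content of \cite[Proposition 5.5]{BMPS} by running the approximation for the Frobenius pair $(\mcGP_{(\mcX,\omega)},\mcGP_{(\mcX,\omega)}\cap\mcY)$ — longer than the paper's citation, but sound. Two of your side observations are worth recording independently of which route one takes: the hypotheses force $\mcX = {}^{\perp}\mcY = {}^{\perp_1}\mcY$, so the left-thickness demanded in (b) is automatic; and right-thickness of $\mcY$ propagates to $\omega^\wedge \subseteq \mcY$, which is exactly what makes the middle term of the approximation land in $\omega$ rather than merely in $\mcX$.
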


\begin{proof}
Let us show first that (a) $\Rightarrow$ (b). Since $(\mcGP_{(\mcX,\omega)},\mcY)$ is a left weak AB-context in $\mcC$, we have by \cite[Proposition 5.5]{BMPS} that $\id_{\mcGP_{(\mcX,\omega)}}(\mcY) = 0$. This fact, along with the assumption (a), implies that $\mcX \subseteq \mcGP_{(\mcX,\omega)} \subseteq {}^{\perp_1}\mcY \subseteq \mcX$, and thus $\mcX = \mcGP_{(\mcX,\omega)}$, which is left thick by \cite[Corollary 3.33]{BMS} since $(\mcX,\omega)$ is a GP-admissible pair by \cite[Theorem 3.34 (b)]{BMS}. Then, $\mcY \subseteq \mcGP^\wedge_{(\mcX,\omega)} = \mcX^\wedge$ by \lABthree. 

The implication (b) $\Rightarrow$ (c) is clear. Finally, let us show (c) $\Rightarrow$ (a). So suppose that $(\mcX,\mcY)$ is a left weak AB-context. Thus, we have that $\mcY$ is right thick and that $\mcY \subseteq \mcX^\wedge \subseteq \mcGP^\wedge_{(\mcX,\omega)}$. It is only left to show that $(\mcGP_{(\mcX,\omega)}, \mcGP_{(\mcX,\omega)} \cap \mcY)$ is a left Frobenius pair in $\mcC$. We know that $\mcGP_{(\mcX,\omega)}$ is left thick by \cite[Corollary 3.33]{BMS}. For conditions \lfptwo \ and \lfpthree, it suffices to show that $\mcGP_{(\mcX,\omega)} \cap \mcY = \omega$. Indeed, using \cite[Theorem 2.8]{BMPS}, $\Ext^1_{\mcC}(\mcGP_{(\mcX,\omega)},\omega^\wedge) = 0$ (see \cite[Corollary 3.25 (a)]{BMS}) and the fact that $\mcX$ is closed under direct summands, we can note that $\mcGP_{(\mcX,\omega)} \cap \mcX^\wedge = \mcX$. This equality along with the assumption $\mcY \subseteq \mcX^\wedge$ yields $\omega \subseteq \mcGP_{(\mcX,\omega)} \cap \mcY \subseteq \mcX \cap \mcY = \omega$.
\end{proof}

The previous theorem basically asserts that $(\mcX,\mcX \cap \mcY)$-Gorenstein projective objects in the sense of Xu \cite{Xu} are trivial in the case they are part of a left weak AB-context. In other words, given a hereditary complete cotorsion pair $(\mcX,\mcY)$, it is not useful to apply the theory of \emph{absolute} AB-contexts \cite{BMPS} to the objects in $\mcGP_{(\mcX,\mcX \cap \mcY)}$, in the sense that any result obtained this way is simply a property for the classes $\mcX$, $\mcY$ and $\mcX \cap \mcY$. This leads to the need of a relativization for the notion of AB context to subcategories of $\mcC$. One interesting aspect about the relativization proposed in Definition \ref{def:cut_AB_context} is that the correspondence proved in \cite{BMPS} for the absolute case, between AB contexts, Frobenius pairs and relative cotorsion pairs, is still going to be valid. We shall prove this assertion in a series of results which are part of Section \ref{sec:versus}. For the moment, we give a small preamble to this by constructing below in Proposition \ref{prop:GP_cotorsion_cuts} three examples of complete cut cotorsion pairs involving the classes $\mcX$, $\mcGP_{(\mcX,\mcY)}$ and $\mcGP_{(\mcX,\omega)}$, which were considered in Example \ref{ex:From_GP_to_AB} in the construction of relative weak AB-contexts.

In \cite[Theorem 3.6]{BMPS}, the authors proved that if $(\mcX,\omega)$ is a left Frobenius pair, then 
$(\mcX,\omega^\wedge)$ is an  $\mcX^\wedge$-cotorsion pair. If in addition $\omega$ is an $\mcX$-projective relative generator in $\mcX$, then $(\omega,\mcX^\wedge)$ is also an $\mcX^\wedge$-cotorsion pair (see \cite[Theorem 3.7]{BMPS}). The following establishes similar results in the setting of complete cut cotorsion pairs.

\begin{proposition}\label{prop:cotorsion_cut_AB_theory}
Let $\mcX$ and $\omega$ be classes of objects in $\mcC$. The following holds:
\begin{enumerate}
\item If $\omega$ is closed under extensions and direct summands, and $\id_{\omega}(\omega) = 0$, then $\omega^\wedge$ is also closed under extensions and direct summands. 
\end{enumerate}
If $\mcX$ is closed under extensions and direct summands, then the following also hold:
\begin{enumerate}\setcounter{enumi}{1}
\item If $\omega$ is an $\mcX$-injective relative cogenerator in $\mcX$, then $\mcX^\wedge \in {\rm lCuts}(\mcX,\omega^\wedge)$. If in addition, $\omega$ is closed under extensions and direct summands, then $\mcX^\wedge \in {\rm rCuts}(\mcX,\omega^\wedge)$ (and so $\mcX^\wedge \in {\rm Cuts}(\mcX,\omega^\wedge)$).

\item If $\omega$ is an $\mcX$-projective relative generator in $\mcX$, then $\mcX^\wedge$ is closed under extensions and direct summands. If in addition $\omega$ is closed under direct summands, then $\mcX^\wedge \in {\rm Cuts}(\omega,\mcX^\wedge)$. 
\end{enumerate}
\end{proposition}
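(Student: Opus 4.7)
The plan is to reduce each of the three assertions to Lemma \ref{lem:main_lemma} combined with the Auslander--Buchweitz approximation theorem \cite[Theorem 2.8]{BMPS}, plus a dimension-shifting argument. For part (1), I would apply Lemma \ref{lem:main_lemma} with the class ``$\omega$'' of that lemma equal to $\omega$ and $\mcS := \mcC$. The only nontrivial hypothesis is that $\omega \cap \mcC = \omega$ is an $\omega$-projective relative generator in itself: closure under direct summands gives $0 \in \omega$, so for every $X \in \omega$ the identity map fits into the short exact sequence $0 \to 0 \to X \to X \to 0$, yielding the relative generator property, while $\omega$-projectivity is precisely the assumption $\pd_{\omega}(\omega) = \id_{\omega}(\omega) = 0$. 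Parts (2) and (3) of Lemma \ref{lem:main_lemma} then give that $\omega^\wedge$ is closed under extensions and direct summands.

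For part (2), a standard dimension-shifting argument from $\id_{\mcX}(\omega) = 0$ extends to $\Ext^1_{\mcC}(\mcX, \omega^\wedge) = 0$, which yields the containment $\mcX \cap \mcX^\wedge \subseteq {}^{\perp_1}(\omega^\wedge) \cap \mcX^\wedge$. For the reverse, given $C \in {}^{\perp_1}(\omega^\wedge) \cap \mcX^\wedge$, \cite[Theorem 2.8]{BMPS} produces a short exact sequence $0 \to B \to A \to C \to 0$ with $A \in \mcX$ and $B \in \omega^\wedge$; this sequence splits and, since $\mcX$ is closed under direct summands, forces $C \in \mcX$. The same sequence gives \lccpthree, and \lccpone \ is built into the hypotheses, so $\mcX^\wedge \in {\rm lCuts}(\mcX,\omega^\wedge)$. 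For the right-hand statement, once $\omega$ is additionally closed under extensions and direct summands, part (1) gives the same for $\omega^\wedge$; the dual half of \cite[Theorem 2.8]{BMPS} produces sequences $0 \to S \to B \to A \to 0$ with $A \in \mcX$, $B \in \omega^\wedge$, covering \rccpthree, while the orthogonality $\Ext^1_{\mcC}(\mcX, \omega^\wedge) = 0$ together with a splitting argument dual to the one above yields \rccptwo, and \rccpone \ follows from part (1).

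For part (3), the first claim is another direct application of Lemma \ref{lem:main_lemma}, this time with the class ``$\omega$'' of the lemma playing the role of $\mcX$ and $\mcS := \omega$ (so that $\mcX \cap \mcS = \omega$); parts (2) and (3) of the lemma then give that $\mcX^\wedge$ is closed under extensions and direct summands. For the claim $\mcX^\wedge \in {\rm Cuts}(\omega, \mcX^\wedge)$ under the extra assumption that $\omega$ is closed under direct summands, Lemma \ref{lem:main_lemma} (1) gives that $\omega$ is an $\mcX^\wedge$-projective relative generator in $\mcX^\wedge$; this directly yields \lccpthree, while \lccptwo \ follows by splitting any generator sequence $0 \to X' \to F \to C \to 0$ for $C \in {}^{\perp_1}(\mcX^\wedge) \cap \mcX^\wedge$ and invoking the closure of $\omega$ under direct summands. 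Condition \rccpone \ is the first half of part (3), dimension shifting from $\pd_{\mcX}(\omega) = 0$ gives $\mcX^\wedge \subseteq \omega^{\perp_1}$ and hence \rccptwo, and \rccpthree \ is trivial via the identity sequence $0 \to S \to S \to 0 \to 0$ since $0 \in \omega$. The main bookkeeping difficulty is keeping straight the role swap of $\mcX$ and $\omega$ in the two distinct applications of Lemma \ref{lem:main_lemma}; no truly new ingredient beyond that lemma and \cite[Theorem 2.8]{BMPS} is needed.
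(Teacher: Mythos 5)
Your proposal is correct and follows essentially the same route as the paper: part (1) and the first half of part (3) reduce to Lemma \ref{lem:main_lemma} with the same choices of classes, and the cut-cotorsion-pair claims are handled via the approximation sequences of \cite[Theorem 2.8]{BMPS} together with the splitting arguments and the dimension-shifted orthogonality $\Ext^1_{\mcC}(\mcX,\omega^\wedge)=0$ (resp. $\mcX^\wedge\subseteq\omega^{\perp_1}$). The only cosmetic difference is in part (3), where you invoke Lemma \ref{lem:main_lemma} (1) to get \lccptwo\ and \lccpthree\ in one step, while the paper redoes the pullback construction explicitly and passes through $C\in\mcX$ before concluding $C\in\omega$; both arguments rest on the same construction.
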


\begin{proof} \
\begin{enumerate}
\item Follows by taking $\mcS := \mcC$ in Lemma \ref{lem:main_lemma} (2) and (3). 

\item From \cite[Theorem 2.8]{BMPS} we know that for any $C \in \mcX^\wedge$ there are exact sequences $\eta \colon 0 \to Y \to X \to C \to 0$ and $\epsilon \colon 0 \to C \to Y' \to X' \to 0$ with $X, X' \in \mcX$ and $Y, Y' \in \omega^\wedge$. We show that $\mcX  = {}^{\perp_1}(\omega^\wedge) \cap \mcX^\wedge$. Since $\omega$ is $\mcX$-injective, we have from \cite[dual of Lemma 2.13 (a)]{MS} that $\Ext^1_{\mcC}(\mcX,\omega^\wedge) = 0$  and so the containment $\subseteq$ holds true. Now, let $C \in {}^{\perp_1}(\omega^\wedge) \cap \mcX^\wedge$ and consider $\eta$ as above. Since $C \in {}^{\perp_1}(\omega^\wedge)$ we get that $\eta$ splits and so $C$ is a direct summand of $X \in \mcX$. Hence, $C \in \mcX$ and the equality holds true. We thus have $\mcX^\wedge \in {\rm lCuts}(\mcX,\omega^\wedge)$. For the assertion $\mcX^\wedge \in {\rm rCuts}(\mcX,\omega^\wedge)$, suppose that $\omega$ is closed under extensions and direct summands. From part (1), $\omega^\wedge$ is closed under direct summands. Then, it remains to show the equality $\omega^\wedge = \mcX^{\perp_1} \cap \mcX^\wedge$. The containment $\subseteq$ follows from \cite[dual of Lemma 2.13]{MS} again. Now, let $C \in \mcX^{\perp_1} \cap \mcX^\wedge$ and consider $\epsilon$ as above. Notice that $\epsilon$ splits since $C \in \mcX^{\perp_1}$. Then, $C$ is a direct summand of an object in $\omega^\wedge$. Therefore, $C \in \omega^\wedge$. 

\item The first part follows by Lemma \ref{lem:main_lemma} (2) and (3). For the second part, we have now that $\mcX^\wedge$ and $\omega$ are closed under direct summands. Now let us show the equality $\omega = {}^{\perp_1}(\mcX^\wedge) \cap \mcX^\wedge$. The inclusion $(\subseteq)$ is clear since $\omega$ is $\mcX$-projective. Now let $C \in {}^{\perp_1}(\mcX^\wedge) \cap \mcX^\wedge$. Then there is a short exact sequence $\xi \colon 0 \to K \to X \to C \to 0$ with $X \in \mcX$ and $K \in \mcX^\wedge$. Since $C \in {}^{\perp_1}(\mcX^\wedge)$, we have that $\xi$ splits and so $C$ is a direct summand of $X$. It follows that $C \in \mcX$. Then, since $\omega$ is a generator in $\mcX$, there exists a short exact sequence $\eta \colon 0 \to X' \to W \to C \to 0$ with $W \in \omega$ and $X' \in \mcX$. Again, using the fact that $C \in {}^{\perp_1}(\mcX^\wedge)$, we have that $\eta$ splits, and so $C$ is a direct summand of $W$. Therefore, $C \in \omega$. The other equality $\mcX^\wedge = \omega^{\perp_1} \cap \mcX^\wedge$ follows from \cite[dual of Lemma 2.13 (a)]{MS}. 

It is clear that $(\omega,\mcX^\wedge)$ satisfies \rccpthree. On the other hand, to show \lccpthree \ let $C \in \mcX^\wedge$ and set $n := \resdim_{\mcX}(C)$. Then, we have a short exact sequence $0 \to K \xrightarrow{\alpha} X \to C \to 0$ where $X \in \mcX$ and $\resdim_{\mcX}(K) \leq n-1$. Since $\omega$ is a relative generator in $\mcX$, there exists also a short exact sequence $0 \to X' \to W \xrightarrow{p} X \to 0$ with $W \in \omega$ and $X' \in \mcX$. Taking the pullback of $\alpha$ and $p$ gives rise to the following solid diagram:
\[
\begin{tikzpicture}[description/.style={fill=white,inner sep=2pt}]
\matrix (m) [matrix of math nodes, row sep=2.5em, column sep=2.5em, text height=1.25ex, text depth=0.25ex]
{ 
X' & X' & {}\\
W' & W & C\\
K & X & C\\
};
\path[>->]
(m-1-1) edge (m-2-1) (m-2-1) edge (m-2-2)
(m-1-2) edge (m-2-2) (m-3-1) edge (m-3-2)
;
\path[->>]
(m-2-1) edge (m-3-1) (m-2-2) edge (m-3-2)
(m-2-2) edge (m-2-3) (m-3-2) edge (m-3-3)
;
\path[->]
(m-2-1)-- node[pos=0.5] {\footnotesize$\mbox{\bf pb}$} (m-3-2)
;
\path[-,font=\scriptsize]
(m-1-1) edge [double, thick, double distance=2pt] (m-1-2)
(m-2-3) edge [double, thick, double distance=2pt] (m-3-3)
;
\end{tikzpicture}
\]
Using the fact that $\mcX^\wedge$ is closed under extensions, the central row is the desired short exact sequence. 
\end{enumerate}
\end{proof}

\begin{proposition}\label{prop:GP_cotorsion_cuts}
Let $(\mcX,\mcY)$ be a GP-admissible pair in $\mcC$ such that $\mcX$ and $\omega := \mcX \cap \mcY$ are closed under direct summands. Then, the following assertions hold true:
\begin{enumerate}
\item $(\mcX,\omega^\wedge)$ is a complete cotorsion pair cut along $\mcX^\wedge$.

\item $(\mcGP_{(\mcX,\mcY)},\omega^\wedge)$ is a complete cotorsion pair cut along $\mcGP^\wedge_{(\mcX,\mcY)}$.

\item $(\mcGP_{(\mcX,\omega)},\omega^\wedge)$ is a complete cotorsion pair cut along $\mcGP^\wedge_{(\mcX,\omega)}$.
\end{enumerate}
\end{proposition}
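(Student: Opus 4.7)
The plan is to establish parts (2) and (3) by identifying suitable left Frobenius pairs and invoking Proposition~\ref{prop:cuts_from_Frobenius_pairs}(1), and then to derive part (1) from part (3) using the restriction property of cotorsion cuts (Proposition~\ref{prop:properties_of_cuts}(1)).

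I would first observe that $(\mcX,\omega)$ is itself a GP-admissible pair --- a direct verification of \GPaone--\GPafour (equivalently, \cite[Theorem 3.34(b)]{BMS}); the key points are $\id_\mcX(\omega) \leq \id_\mcX(\mcY) = 0$, closure of $\omega$ under finite coproducts inherited from that of $\mcX$ and $\mcY$ in \GPathree, and the fact that $\mcX \cap \omega = \omega$ is still a relative cogenerator in $\mcX$. In particular $\mcGP_{(\mcX,\omega)}$ is well-defined, and by \cite[Corollary 3.33]{BMS} both $\mcGP_{(\mcX,\mcY)}$ and $\mcGP_{(\mcX,\omega)}$ are left thick. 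I then check that $(\mcGP_{(\mcX,\mcY)},\omega)$ and $(\mcGP_{(\mcX,\omega)},\omega)$ are left Frobenius pairs: \lfpone \ was just established; \lfptwo \ holds by hypothesis; the relative-cogenerator portion of \lfpthree \ is \cite[Corollary 3.25(a)]{BMS}; and the relative-injectivity portion, $\id_{\mcGP_{(\mcX,\mcY)}}(\omega) = 0$ (resp.\ $\id_{\mcGP_{(\mcX,\omega)}}(\omega) = 0$), follows from the $\Hom_\mcC(-,\mcY)$-acyclicity built into Definition~\ref{def:relative_Gorenstein_objects} combined with dimension shifting, as in \cite[Corollary 3.15]{BMS}. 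Applying Proposition~\ref{prop:cuts_from_Frobenius_pairs}(1) to each of these Frobenius pairs yields parts (2) and (3) at once. A useful by-product is that $\omega^\wedge$ is closed under direct summands (this is \rccpone \ for the pairs just produced), a fact I will need in the last step.

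For part (1), I would use the inclusion $\mcX \subseteq \mcGP_{(\mcX,\omega)}$, which holds because every $X \in \mcX$ admits an $\mcX$-resolution by \GPatwo \ and an $\omega$-coresolution by \GPafour, and these splice into an exact, $\Hom_\mcC(-,\omega)$-acyclic complex in $\mcX$ (the acyclicity coming from the $\mcX$-injectivity of $\omega$). Consequently $\mcX^\wedge \subseteq \mcGP^\wedge_{(\mcX,\omega)}$, so Proposition~\ref{prop:properties_of_cuts}(1) transfers the cut cotorsion pair from (3) to give that $(\mcGP_{(\mcX,\omega)},\omega^\wedge)$ is a complete cotorsion pair cut along $\mcX^\wedge$. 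The decisive step is the identity
\[
\mcGP_{(\mcX,\omega)} \cap \mcX^\wedge = \mcX,
\]
proved as in Example~\ref{ex:From_GP_to_AB}(3): for $C$ in the intersection, \cite[Theorem 2.8]{BMPS} provides a short exact sequence $0 \to K \to X \to C \to 0$ with $X \in \mcX$ and $K \in \omega^\wedge$; relative injectivity forces $\Ext^1_\mcC(\mcGP_{(\mcX,\omega)},\omega^\wedge) = 0$, so the sequence splits, and direct-summand closure of $\mcX$ yields $C \in \mcX$. With this identity in hand, every defining condition \lccpone--\rccpthree for $(\mcX,\omega^\wedge)$ cut along $\mcX^\wedge$ is inherited from the corresponding condition for the larger pair: the middle term $A \in \mcGP_{(\mcX,\omega)}$ of any approximation sequence automatically lies in $\mcX^\wedge$ (since the other term lies in $\omega^\wedge \subseteq \mcX^\wedge$ and $\mcX^\wedge$ is closed under extensions as $\mcX$ is), hence in $\mcX$; the orthogonality identities \lccptwo \ and \rccptwo \ are then recovered by the standard splitting arguments, using the direct-summand closure of $\omega^\wedge$ established in the previous paragraph.

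The main obstacle I anticipate is precisely the identity $\mcGP_{(\mcX,\omega)} \cap \mcX^\wedge = \mcX$: it is where the argument departs from the abstract Frobenius-pair framework and uses that $\mcX$ is intrinsically the class governing the $(\mcX,\omega)$-Gorenstein projective objects of finite $\mcX$-resolution dimension. Once this identity is in hand, the remainder is a routine assembly of Propositions~\ref{prop:cuts_from_Frobenius_pairs} and \ref{prop:properties_of_cuts} of the present paper together with the cited results from \cite{BMS} and \cite{BMPS}.
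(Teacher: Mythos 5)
Your treatment of parts (2) and (3) is correct: $(\mcGP_{(\mcX,\mcY)},\omega)$ and $(\mcGP_{(\mcX,\omega)},\omega)$ are indeed left Frobenius pairs under the stated hypotheses, and Proposition \ref{prop:cuts_from_Frobenius_pairs}(1) then yields both cut cotorsion pairs. The paper runs the deduction in the opposite direction: it first proves part (1) as an instance of Proposition \ref{prop:cotorsion_cut_AB_theory}(2) (which rests on the Auslander--Buchweitz approximation theorem \cite[Theorem 2.8]{BMPS} applied to $(\mcX,\omega)$), and then obtains (2) and (3) by applying part (1) to the GP-admissible pairs $(\mcGP_{(\mcX,\mcY)},\mcY)$ and $(\mcX,\omega)$, whose intersections with their second components are again $\omega$. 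Either ordering is legitimate as far as (2) and (3) are concerned.

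The gap is in your derivation of part (1) from part (3). You transfer \lccpthree{} from the restricted pair $(\mcGP_{(\mcX,\omega)},\omega^\wedge)$ by arguing that the middle term $A \in \mcGP_{(\mcX,\omega)}$ of an approximation sequence $0 \to B \to A \to S \to 0$ lies in $\mcX^\wedge$ because ``$\mcX^\wedge$ is closed under extensions as $\mcX$ is.'' That inference is not valid: extension-closure of $\mcX^\wedge$ does not follow from extension-closure of $\mcX$. In this paper it is obtained either from an $\mcX$-projective relative \emph{generator} (Lemma \ref{lem:main_lemma}(2), Proposition \ref{prop:cotorsion_cut_AB_theory}(3)) or from $\mcX$ being left thick (\cite[Theorem 2.11]{BMPS}); here you have neither, since a GP-admissible $\mcX$ is only closed under extensions and, by hypothesis, direct summands, and $\omega$ is a relative \emph{cogenerator}. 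The same issue affects \rccpthree{} and the splitting argument for \rccptwo{}, where you need the cokernel term of the coapproximation to lie in $\mcX$ rather than merely in $\mcGP_{(\mcX,\omega)}$. The repair is already contained in your own argument: \cite[Theorem 2.8]{BMPS} applied to $(\mcX,\omega)$ --- which you invoke to prove $\mcGP_{(\mcX,\omega)} \cap \mcX^\wedge = \mcX$ --- directly produces, for every $C \in \mcX^\wedge$, sequences $0 \to K \to X \to C \to 0$ and $0 \to C \to H \to X' \to 0$ with $X,X' \in \mcX$ and $K,H \in \omega^\wedge$. Using these in place of the $\mcGP_{(\mcX,\omega)}$-approximations gives \lccpthree, \rccpthree{} and both orthogonality identities without any closure claim about $\mcX^\wedge$; this is exactly the paper's route through Proposition \ref{prop:cotorsion_cut_AB_theory}(2), and it renders the detour through $\mcGP_{(\mcX,\omega)}$ unnecessary for part (1).
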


\begin{proof}
We first note that the class $\omega$ is closed under extensions by \cite[Corollary 3.25 (a)]{BMS}. Then part (1) follows by Proposition \ref{prop:cotorsion_cut_AB_theory} (2). For part (2), by \cite[Theorem 3.34 (a) and (c)]{BMS} we have that $(\mcGP_{(\mcX,\mcY)},\mcY)$ is a GP-admissible pair with $\mcGP_{(\mcX,\mcY)} \cap \mcY = \omega$. Moreover, by \cite[Corollary 3.33]{BMS} we also know that $\mcGP_{(\mcX,\mcY)}$ is closed under direct summands. Thus, the results follows as an application of part (1). Finally, part (3) is in turn an application of part (2) by considering the GP-admissible pair $(\mcX,\omega)$. 
\end{proof}

%%%%%%%%%%%%%%%%%%%%%%%%%%%%%%%%%%%%%
%%%%%%%%%%%%%%%%%%%%%%%%%%%%%%%%%%%%%
%%%%%%%%%%%%%%%%%%%%%%%%%%%%%%%%%%%%%
%%%%%%%%%%%%%%%%%%%%%%%%%%%%%%%%%%%%%

\section{\textbf{Correspondences between complete cut cotorsion pairs, cut Frobenius pairs and cut Auslander-Buchweitz contexts}}\label{sec:versus}

In this section we study the interplay between cut Frobenius pairs and cut AB contexts. This interaction will depend on two equivalence relations: one defined on $\mfF_{\mcS}$, the class of left Frobenius pairs cut along $\mcS$, and the other one defined on the class $\mfC_{\mcS}$ of left weak AB-contexts cut along $\mcS$. Using these relations, we shall prove that there exists a one-to-one correspondence between the corresponding quotient classes. We shall also show that there exists a one-to-one correspondence between cut AB contexts and complete cut cotorsion pairs. For this, we consider an equivalence relation defined on the class $\mfP_{\mcS}$ of complete cotorsion pairs $(\mcF,\mcG)$ cut along the smallest thick subcategory containing $\mcF$ and which satisfy a certain relation with $\mcS$. 

Let us commence proving the following lemma, which is a relativization of \cite[Theorem 3.6]{BMPS}.

\begin{lemma}\label{lem:relative_equality}
Let $\mcX$, $\omega$ and $\mcS$ be classes of objects in $\mcC$ satisfying the following conditions:
\begin{enumerate}
\item $\omega$ is closed under extensions and isomorphisms; 

\item $\omega \cap \mcS$ is closed under direct summands and a relative generator in $\omega$;

\item $\omega \cap \mcS \subseteq \mcX \cap \mcS$;

\item $\mcX \cap \mcS$ is closed under epi-kernels; and 

\item $\id_{\mcX \cap \mcS}(\omega \cap \mcS) = 0$.
\end{enumerate} 
Then, $\omega \cap \mcS = \mcX \cap \omega^\wedge \cap \mcS$. In particular, this equality holds if $(\mcX,\omega)$ is a left Frobenius pair cut along $\mcS$. 
\end{lemma}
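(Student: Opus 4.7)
The plan is as follows. The inclusion $\omega \cap \mcS \subseteq \mcX \cap \omega^\wedge \cap \mcS$ is immediate from hypothesis (3) together with the trivial containment $\omega \subseteq \omega^\wedge$. For the reverse inclusion I would fix $C \in \mcX \cap \omega^\wedge \cap \mcS$ and argue by induction on $n := \resdim_\omega(C)$. The base case $n = 0$ gives $C$ isomorphic to an object of $\omega$, so $C \in \omega$ by the closure under isomorphisms in (1), and hence $C \in \omega \cap \mcS$.

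For the induction step $n \geq 1$, I would apply Lemma \ref{lem:technical_lemma_1}---whose hypotheses are exactly condition (1) and the generator part of (2)---to any chosen $\omega$-resolution of $C$ of length $n$. Taking $j = 0$ in that lemma produces a short exact sequence
\[
0 \to X_1 \to F_0 \to C \to 0
\]
with $F_0 \in \omega \cap \mcS$, while splicing the resulting sequences \eqref{eqn:technical2} for $0 \leq j \leq n-1$ yields an $\omega$-resolution $0 \to X_n \to F_{n-1} \to \cdots \to F_1 \to X_1 \to 0$ of length $n-1$ (with $X_n \in \omega$ because $G_{n-1}, E_n = W_n \in \omega$ and $\omega$ is closed under extensions). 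Hence $X_1 \in \omega^\wedge$ with $\resdim_\omega(X_1) \leq n-1$. Moreover, $F_0 \in \mcX \cap \mcS$ by (3) and $C \in \mcX \cap \mcS$ by assumption, so condition (4) forces $X_1 \in \mcX \cap \mcS$. The inductive hypothesis then gives $X_1 \in \omega \cap \mcS$, and condition (5) yields $\Ext^1_\mcC(C,X_1) = 0$; thus the displayed sequence splits, making $C$ a direct summand of $F_0 \in \omega \cap \mcS$, and the closure under direct summands in (2) finally places $C$ in $\omega \cap \mcS$.

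The only delicate step is to produce a short exact sequence surjecting onto $C$ from an object in $\omega \cap \mcS$ while keeping precise control of the $\omega$-resolution dimension of the kernel; this is exactly what Lemma \ref{lem:technical_lemma_1} delivers, so the remainder of the argument is routine. For the ``in particular'' clause, if $(\mcX,\omega)$ is a left Frobenius pair cut along $\mcS$, then \lcfpfour\ and Remark \ref{rem:properties_of_cut_Frobenius_pairs}(3) give (1); \lcfpthree\ combined with \lfptwo\ for $(\mcX \cap \mcS, \omega \cap \mcS)$ gives (2); \lfpthree\ applied to that Frobenius pair yields both the containment $\omega \cap \mcS \subseteq \mcX \cap \mcS$ of (3) and the relative injectivity relation of (5); and \lfpone\ for $(\mcX \cap \mcS, \omega \cap \mcS)$ gives (4). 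Thus all five hypotheses hold, and the main assertion applies.
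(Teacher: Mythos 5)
Your proof is correct and is essentially the paper's argument: both rest on Lemma \ref{lem:technical_lemma_1} to replace the given $\omega$-resolution by the sequences $0 \to X_{j+1} \to F_j \to X_j \to 0$ with $F_j \in \omega \cap \mcS$, and both conclude by splitting via $\id_{\mcX\cap\mcS}(\omega\cap\mcS)=0$ together with closure of $\omega\cap\mcS$ under direct summands. The only difference is organizational — the paper splits the sequences $\eta_{n-1},\dots,\eta_0$ in descending order, while you induct on $\resdim_\omega(C)$ and split only $\eta_0$; unrolling your induction recovers exactly the paper's chain.
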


\begin{proof}
The containment $\omega \cap \mcS \subseteq \mcX \cap \omega^\wedge \cap \mcS$ is clear. For the converse, let $M \in \mcX \cap \omega^\wedge \cap \mcS$ and $0 \to W_n \to W_{n-1} \to \cdots \to W_1 \to W_0 \to M \to 0$ be an $\omega$-resolution of $M$. If $n = 0$, then $M \in \omega \cap \mcS$ since $\omega$ is closed under isomorphisms. So we can assume that $n \geq 1$. Now, since $\mcX \cap \mcS$ is closed under epi-kernels, we have by Lemma \ref{lem:technical_lemma_1} and its notation therein that there are exact sequences 
\[
\eta_j \colon 0 \to X_{j+1} \to F_j \to X_j \to 0
\] 
where $X_0 = M$ and $X_j \in \mcX \cap \mcS$ for all $1 \leq j \leq n-1$. Notice that $X_n \in \omega \cap \mcS$ since $G_{n-1}, E_n := W_n \in \omega$. Thus, $\eta_{n-1}$ splits and $X_{n-1} \in \omega \cap \mcS$ since $\id_{\mcX \cap \mcS}(\omega \cap \mcS) = 0$ and $\omega \cap \mcS$ is closed under direct summands. Using again the preceding argument, we get that $X_j \in \omega \cap \mcS$ for all $1 \leq j \leq n$. Therefore, $\eta_0$ splits and so $M \in \omega \cap \mcS$. 
\end{proof}

\begin{example}
We know from \cite[Proposition 6.1]{BMPS} that $(\mcX,\omega) := (\Mod(R),\mcP(R))$ is a left Frobenius pair cut along $\mcGP(R)$, for any ring $R$. Notice that in this case the equality $\mcX \cap \omega^\wedge = \omega$ does not necessarily hold true, while $\mcX \cap \omega^\wedge \cap \mcS = \omega \cap \mcS$ does by the previous lemma. 
\end{example}

%%%%%%%%%%%%%%%%%%%%%%%%%%%%%%%%%%%%%
%%%%%%%%%%%%%%%%%%%%%%%%%%%%%%%%%%%%%

\subsection*{Cut Frobenius pairs vs. cut AB-contexts}

We are now ready to give a precise definition for the equivalence relations mentioned at the beginning of this section.

\begin{definition}\label{def:Frobenius_and_AB_relations}
Let $\mcS$ be a class of objects in $\mcC$. For $(\mcX,\omega), (\mcX',\omega') \in \mfF_{\mcS}$ and $(\mcA,\mcB)$, $(\mcA',\mcB') \in \mfC_{\mcS}$, we shall say that:
\begin{enumerate}
\item $(\mcX,\omega)$ is \textbf{related} to $(\mcX',\omega')$ in $\mfF_{\mcS}$, denoted $(\mcX,\omega) \sim (\mcX',\omega')$, if $\mcX \cap \mcS = \mcX' \cap \mcS$ and $\omega \cap \mcS = \omega' \cap \mcS$;

\item $(\mcA,\mcB)$ is \textbf{related} to $(\mcA',\mcB')$ in $\mfC_{\mcS}$, denoted $(\mcA,\mcB) \sim (\mcA',\mcB')$, if $\mcA \cap \mcS = \mcA' \cap \mcS$ and $\mcA \cap \mcB \cap \mcS = \mcA' \cap \mcB' \cap \mcS$. 
\end{enumerate} 
\end{definition}

Notice that (1) and (2) in the previous definition are equivalence relations. We denote by $[\mcX,\omega]_{\mfF_{\mcS}}$ the equivalence class of $(\mcX,\omega)$ in $\mfF_{\mcS} / \sim$. Similarly, $[\mcA,\mcB]_{\mfC_{\mcS}}$ denotes the equivalence class of $(\mcA,\mcB)$ in $\mfC_{\mcS} / \sim$.

\begin{example}
Let $(\mcX,\mcY)$ be a GP-admissible pair with $\mcX$ closed under epi-kernels and direct summands, and such that $\mcY$ is right thick. We know from Example \ref{ex:From_GP_to_AB} that $(\mcX,\mcY)$ and $(\mcGP_{(\mcX,\omega)},\mcY)$ are left weak AB-contexts cut along $\mcX^\wedge$, that is, $(\mcX,\mcY), (\mcGP_{(\mcX,\omega)},\mcY) \in \mfC_{\mcX^\wedge}$. Also, one can verify using \cite[Theorem 2.8]{BMPS} and \cite[Theorem 3.4 (c)]{BMS} that $\mcX = \mcGP_{(\mcX,\omega)} \cap \mcX^\wedge$ and $\mcGP_{(\mcX,\omega)} \cap \mcY \cap \mcX^\wedge = \omega$. It follows that $(\mcX,\mcY)$ and $(\mcGP_{(\mcX,\omega)},\mcY)$ are related in $\mfC_{\mcX^\wedge}$. 
\end{example}

The following result is a generalization of \cite[part 1. of Theorem 5.4]{BMPS}, one of the main results in this reference, where the authors establish a one-to-one correspondence between left Frobenius pairs and left weak AB-contexts.

\begin{theorem}[first correspondence theorem]\label{theo:correspondence_1}
Let $\mcS$ be a class of objects in $\mcC$, which is closed under epi-kernels and mono-cokernels. Then, there is a one-to-one correspondence 
\begin{align*}
\Phi_{\mcS} \colon & \mfF_{\mcS} / \sim \mbox{} \to \mfC_{\mcS} / \sim & & \text{given by} & [\mcX,\omega]_{\mfF_{\mcS}} & \mapsto [\mcX,\omega^\wedge]_{\mfC_{\mcS}},
\end{align*}
with inverse
\begin{align*}
\Psi_{\mcS} \colon & \mfC_{\mcS} / \sim \mbox{} \to \mfF_{\mcS} / \sim & & \text{given by} & [\mcA,\mcB]_{\mfC_{\mcS}} & \mapsto [\mcA,\mcA \cap \mcB]_{\mfF_{\mcS}}.
\end{align*}
\end{theorem}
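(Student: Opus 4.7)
The proof consists of four coordinated verifications: (i) $\Phi_{\mcS}$ sends cut Frobenius pairs to cut AB-contexts, (ii) $\Psi_{\mcS}$ sends cut AB-contexts to cut Frobenius pairs, (iii) both descend to the quotients, and (iv) they are mutually inverse on equivalence classes. The backbone throughout is Lemma \ref{lem:relative_equality}: for any left Frobenius pair $(\mcX,\omega)$ cut along $\mcS$, one has $\omega \cap \mcS = \mcX \cap \omega^\wedge \cap \mcS$. This single identity reconciles the asymmetry between $\Phi_{\mcS}$ (passing to $(-)^\wedge$) and $\Psi_{\mcS}$ (intersecting with $\mcA$), making them inverses modulo $\sim$ even though they are plainly not inverse on representatives.

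For (i), starting from $(\mcX,\omega)\in \mfF_{\mcS}$, I will verify the three conditions for $(\mcX,\omega^\wedge)\in\mfC_{\mcS}$. Condition \lcABone requires $(\mcX,\mcX\cap\omega^\wedge)$ to be a cut Frobenius pair: \lcfpone is inherited; \lcfpfour follows since $\mcX$ is left thick and $\omega^\wedge$ is closed under extensions and direct summands by Remark \ref{rem:properties_of_cut_Frobenius_pairs}(2); \lcfptwo reduces via Lemma \ref{lem:relative_equality} to the original hypothesis that $(\mcX\cap\mcS,\omega\cap\mcS)$ is a left Frobenius pair; and \lcfpthree follows from Remark \ref{rem:properties_of_cut_Frobenius_pairs}(1), noting that the generating short exact sequences $0 \to K \to W \to C \to 0$ with $W \in \omega \cap \mcS$ have $K \in \mcX$ whenever $C \in \mcX$, by left-thickness, while projectivity is inherited from $\id_{\omega \cap \mcS}(\omega^\wedge) = 0$. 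Condition \lcABtwo reduces via Remark \ref{rem:properties_of_cut_Frobenius_pairs}(3) to right-thickness of $(\omega\cap\mcS)^\wedge$, which follows from the absolute theory of the left Frobenius pair $(\mcX\cap\mcS, \omega\cap\mcS)$ together with Lemma \ref{lem:main_lemma}. Condition \lcABthree is immediate: $\omega^\wedge \cap \mcS = (\omega\cap\mcS)^\wedge \subseteq (\mcX\cap\mcS)^\wedge$. Step (ii) is free, since for $(\mcA,\mcB)\in\mfC_{\mcS}$ the pair $(\mcA,\mcA\cap\mcB)$ is a cut Frobenius pair by \lcABone itself.

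For step (iii), if $(\mcX,\omega) \sim (\mcX',\omega')$ in $\mfF_{\mcS}$ then Lemma \ref{lem:relative_equality} gives $\mcX \cap \omega^\wedge \cap \mcS = \omega \cap \mcS = \omega' \cap \mcS = \mcX' \cap (\omega')^\wedge \cap \mcS$, so $\Phi_{\mcS}$ descends to the quotient; the corresponding check for $\Psi_{\mcS}$ is formal. For step (iv), the composition $\Psi_{\mcS}\circ\Phi_{\mcS}$ sends $[\mcX,\omega]_{\mfF_{\mcS}}$ to $[\mcX, \mcX \cap \omega^\wedge]_{\mfF_{\mcS}}$, which equals $[\mcX,\omega]_{\mfF_{\mcS}}$ again by Lemma \ref{lem:relative_equality}. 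Dually, $\Phi_{\mcS}\circ\Psi_{\mcS}$ sends $[\mcA,\mcB]_{\mfC_{\mcS}}$ to $[\mcA,(\mcA\cap\mcB)^\wedge]_{\mfC_{\mcS}}$, and the required equality $\mcA \cap \mcB \cap \mcS = \mcA \cap (\mcA \cap \mcB)^\wedge \cap \mcS$ is Lemma \ref{lem:relative_equality} applied to the cut Frobenius pair $(\mcA, \mcA \cap \mcB)$. The main obstacle is the right-thickness assertion in \lcABtwo of step (i); this is precisely where the closure of $\mcS$ under epi-kernels and mono-cokernels enters essentially, via Remark \ref{rem:properties_of_cut_Frobenius_pairs}(3), to collapse $\omega^\wedge \cap \mcS$ to $(\omega\cap\mcS)^\wedge$ and thereby bring the absolute theory of \cite{BMPS} to bear.
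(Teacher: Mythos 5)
Your proposal is correct and follows essentially the same route as the paper's proof: both hinge on Lemma \ref{lem:relative_equality} (giving $\omega \cap \mcS = \mcX \cap \omega^\wedge \cap \mcS$) for well-definedness and the mutual-inverse computations, verify \lcABone--\lcABthree for $(\mcX,\omega^\wedge)$ via Lemma \ref{lem:main_lemma} and Remark \ref{rem:properties_of_cut_Frobenius_pairs}, and reduce \lcABtwo and \lcABthree to the absolute theory through the identification $\omega^\wedge \cap \mcS = (\omega\cap\mcS)^\wedge$. No gaps.
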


\begin{proof}
First, we show that the mappings $\Phi_{\mcS}$ and $\Psi_{\mcS}$ are well defined. On the one hand, for $\Psi_{\mcS}$, we have that $(\mcA,\mcA \cap \mcB) \in \mfF_{\mcS}$ for every $(\mcA,\mcB) \in \mfC_{\mcS}$, by definition of cut left weak AB-context. Also, it is clear that $\Psi_{\mcS}([\mcA,\mcB]_{\mfC_{\mcS}})$ does not depend on the chosen representative $(\mcA,\mcB) \in \mfC_{\mcS}$. On the other hand, $\Phi_{\mcS}$ does not depend on representatives either by Lemma \ref{lem:relative_equality}. Now we prove that if $(\mcX,\omega) \in \mfF_{\mcS}$ then $(\mcX,\omega^\wedge) \in \mfC_{\mcS}$ by checking conditions \lcABone, \lcABtwo \ and \lcABthree \ in Definition \ref{def:cut_AB_context}:
\begin{itemize}
\item $(\mcX,\mcX \cap \omega^\wedge)$ is a left Frobenius pair cut along $\mcS$:

Clearly, $\mcX$ is left thick by \lcfpone \ in Definition \ref{def:relative_Frobenius_pair}. Now by Lemma \ref{lem:relative_equality}, we have that $(\mcX \cap \mcS, \mcX \cap \omega^\wedge \cap \mcS) = (\mcX \cap \mcS,\omega \cap \mcS)$, which is a left Frobenius pair in $\mcC$. 

In order to show that $\omega \cap \mcS$ is an $(\mcX \cap \omega^\wedge)$-projective relative generator in $\mcX \cap \omega^\wedge$, note first that $\pd_{\omega^\wedge}(\omega \cap \mcS) = 0$ by Lemma \ref{lem:main_lemma} (1). Now let $M \in \mcX \cap \omega^\wedge$. Using again Lemma \ref{lem:main_lemma} (1), there exists a short exact sequence $
0 \to M' \to P \to M \to 0$ with $P \in \omega \cap \mcS$ and $M' \in \omega^\wedge$. Since $\mcX$ is left thick and $\omega \cap \mcS \subseteq \mcX \cap \mcS$, we get that $M' \in \mcX \cap \omega^\wedge$.

Finally, we note that $\mcX \cap \omega^\wedge$ is closed under extensions and direct summands. Indeed, since $(\mcX,\omega) \in \mfF_{\mcS}$, we have by Remark \ref{rem:properties_of_cut_Frobenius_pairs} (3) that the statements of Lemma \ref{lem:main_lemma} hold true. In particular, $\omega^\wedge$ is closed under extensions and direct summands, and so the same holds for $\mcX \cap \omega^\wedge$ since $\mcX$ is left thick. 

\item $\omega^\wedge \cap \mcS$ is right thick and $\omega^\wedge \cap \mcS \subseteq (\mcX \cap \mcS)^\wedge$:

First, note that since $(\mcX \cap \mcS, \omega \cap \mcS)$ is a left Frobenius pair in $\mcC$, we have by \cite[Theorem 5.4]{BMPS} that $(\omega \cap \mcS)^\wedge$ is right thick and that $\omega \cap \mcS \subseteq \mcX \cap \mcS$. The rest follows by Lemma \ref{lem:main_lemma} (4). 
\end{itemize}

Let us conclude this proof showing that $\Psi_{\mcS}$ and $\Phi_{\mcS}$ are inverse to each other. For any $(\mcX,\omega) \in \mfF_{\mcS}$ we have that
\[
\Psi_{\mcS} \circ \Phi_{\mcS}([\mcX,\omega]_{\mfF_{\mcS}}) = \Psi_{\mcS}([\mcX,\omega^\wedge]_{\mfC_{\mcS}}) = [\mcX, \mcX \cap \omega^\wedge]_{\mfF_{\mcS}} = [\mcX,\omega]_{\mfF_{\mcS}},
\]
where the relation $(\mcX,\mcX \cap \omega^\wedge) \sim (\mcX,\omega)$ holds true by Lemma \ref{lem:relative_equality}. The equality $\Phi_{\mcS} \circ \Psi_{\mcS}([\mcA,\mcB]_{\mfC_{\mcS}}) = [\mcA,\mcB]_{\mfC_{\mcS}}$ follows similarly. 
\end{proof}

%%%%%%%%%%%%%%%%%%%%%%%%%%%%%%%%%%%%%
%%%%%%%%%%%%%%%%%%%%%%%%%%%%%%%%%%%%%

\subsection*{Cut AB-contexts vs. complete cut cotorsion pairs}

In order to present the second correspondence mentioned earlier, let us point out the following facts about complete cut cotorsion pairs.

\begin{lemma}\label{lem:Frobenius_from_cut_cotorsion}
Let $(\mcF,\mcG)$ be a complete cotorsion pair cut along $\Thick(\mcF)$ with $\id_{\mcF}(\mcG) = 0$. Then, the following statements hold true:
\begin{enumerate}
\item $(\mcF,\mcF \cap \mcG)$ is a left Frobenius pair in $\mcC$.

\item $\mcF \cap \mcG = \mcF \cap \mcF^{\perp_1} = \mcF \cap (\mcF \cap \mcG)^\wedge$.

\item $(\mcF \cap \mcG)^\wedge = \mcF^{\perp} \cap \mcF^\wedge$.

\item $\mcF^\wedge = \Thick(\mcF)$. 
\end{enumerate}
\end{lemma}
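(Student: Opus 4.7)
The plan is to establish the four assertions in the stated order, since (3) and (4) follow from standard results on left Frobenius pairs applied to the pair produced in (1), while (2) blends the freshly established Frobenius structure with the orthogonality coming from \rccptwo.

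For (1), I would verify the three axioms \lfpone, \lfptwo, \lfpthree. The key content is that $\mcF$ is left thick. Closure under direct summands comes from \lccpone. For extensions, given $0 \to F_1 \to X \to F_2 \to 0$ with $F_i \in \mcF$, the object $X$ belongs to $\Thick(\mcF)$ since this class is closed under extensions; and the long exact sequence obtained by applying $\Hom_\mcC(-,G)$, for any $G \in \mcG$, pinches $\Ext^1_\mcC(X,G)$ between $\Ext^1_\mcC(F_2,G) = 0$ and $\Ext^1_\mcC(F_1,G) = 0$ using $\id_\mcF(\mcG) = 0$; hence $X \in {}^{\perp_1}\mcG \cap \Thick(\mcF) = \mcF$ by \lccptwo. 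Closure under epi-kernels is analogous, with the dimension shift invoking $\Ext^2_\mcC(\mcF,\mcG) = 0$ instead of just $\Ext^1$. Condition \lfptwo is immediate from \lccpone and \rccpone, while for \lfpthree the equality $\id_\mcF(\mcF \cap \mcG) = 0$ is inherited from $\id_\mcF(\mcG) = 0$, and the relative cogenerator property follows by applying \rccpthree to any $F \in \mcF \subseteq \Thick(\mcF)$: this yields a short exact sequence $0 \to F \to G' \to F' \to 0$ with $F' \in \mcF$ and $G' \in \mcG$, and then $G' \in \mcF$ by the extension closure just proved, so $G' \in \mcF \cap \mcG$.

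For (2), the first equality reduces to two orthogonality arguments: $\mcF \cap \mcG \subseteq \mcF^{\perp_1}$ is a restatement of $\id_\mcF(\mcG) = 0$, and conversely any $X \in \mcF \cap \mcF^{\perp_1}$ lies in $\mcF^{\perp_1} \cap \Thick(\mcF) = \mcG \cap \Thick(\mcF)$ by \rccptwo. For the second equality, I would induct on $n := \resdim_{\mcF \cap \mcG}(X)$ for $X \in \mcF \cap (\mcF \cap \mcG)^\wedge$. The base case is trivial; in the inductive step, a first step of the resolution $0 \to K \to W_0 \to X \to 0$ has $W_0 \in \mcF \cap \mcG$ and $\resdim_{\mcF \cap \mcG}(K) \le n-1$. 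Since $X, W_0 \in \mcF$, the epi-kernel closure from (1) gives $K \in \mcF$, so by induction $K \in \mcF \cap \mcG$; then $\Ext^1_\mcC(X,K) = 0$ forces the sequence to split, and $X \in \mcG$ follows from closure of $\mcG$ under direct summands.

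For (3) and (4), I would invoke the results on left Frobenius pairs from \cite{BMPS} that are already cited elsewhere in the paper, applied to $(\mcF, \mcF \cap \mcG)$. For (3), the inclusion $(\mcF \cap \mcG)^\wedge \subseteq \mcF^\perp \cap \mcF^\wedge$ follows by dimension shifting along a finite $(\mcF \cap \mcG)$-resolution, using that each step lies in $\mcG$ and so is $\mcF^\perp$-orthogonal. For the reverse inclusion, the Frobenius-pair co-approximation \cite[Theorem 2.8]{BMPS} provides, for any $Y \in \mcF^\wedge$, a sequence $0 \to Y \to H \to F \to 0$ with $H \in (\mcF \cap \mcG)^\wedge$ and $F \in \mcF$; when $Y \in \mcF^\perp$ this sequence splits, exhibiting $Y$ as a direct summand of $H$, and $(\mcF \cap \mcG)^\wedge$ is closed under direct summands by \cite[Proposition 2.13]{BMPS}. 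For (4), the inclusion $\mcF^\wedge \subseteq \Thick(\mcF)$ is a trivial induction on resolution length since $\Thick(\mcF)$ is closed under mono-cokernels; conversely, \cite[Theorem 2.11]{BMPS} ensures that $\mcF^\wedge$ is itself thick, and since it contains $\mcF$ it contains the smallest thick subcategory $\Thick(\mcF)$. The main obstacle is in (1): the verification that $\mcF$ is closed under epi-kernels requires $\Ext^2$-vanishing rather than just $\Ext^1$-vanishing, so it is here that the full strength of $\id_\mcF(\mcG) = 0$ (as opposed to mere $\perp_1$-orthogonality) becomes indispensable.
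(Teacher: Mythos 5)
Your proposal is correct and follows essentially the same route as the paper: part (1) is proved by verifying that $\mcF$ is left thick (extensions and summands from $\mcF = {}^{\perp_1}\mcG \cap \Thick(\mcF)$, epi-kernels from the full strength of $\id_{\mcF}(\mcG)=0$) and then extracting the relative cogenerator from \rccpthree, while (2)--(4) rest on the Auslander--Buchweitz results for the left Frobenius pair $(\mcF,\mcF\cap\mcG)$ from \cite{BMPS}. The only difference is that you spell out the dimension-shifting and splitting arguments that the paper delegates to \cite[Theorems 2.11 and 3.6]{BMPS}, and these details are accurate.
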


\begin{proof}
Let us first check conditions \lfpone, \lfptwo \ and \lfpthree \ in Definition \ref{def:Frobenius_pair} for the pair $(\mcF,\mcF \cap \mcG)$. Since $(\mcF,\mcG)$ is a complete cotorsion pair cut along $\Thick(\mcF)$, we have the equality $\mcF = {}^{\perp_1}\mcG \cap \Thick(\mcF)$. Then, $\mcF$ is closed under extensions and direct summands. The fact that $\mcF$ is closed under epi-kernels follows from the equality $\id_{\mcF}(\mcG) = 0$. We thus have that $\mcF$ is left thick. Note from \rccptwo \ in Definition \ref{def:cut_cotorsion_pair} and $\mcF \cap \Thick(\mcF) = \mcF$ that $\mcF \cap \mcG = \mcF \cap \mcF^{\perp_1}$, which is closed under direct summands. So it is only left to show that $\mcF \cap \mcG$ is an $\mcF$-injective relative cogenerator in $\mcF$. We have that $\id_{\mcF}(\mcF \cap \mcG) = 0$ directly from the assumption $\id_{\mcF}(\mcG) = 0$. Now let $F \in \mcF$. Since $(\mcF,\mcG)$ is a complete cotorsion pair cut along $\Thick(\mcF)$, there exists a short exact sequence $0 \to F \to G \to F' \to 0$ with $G \in \mcG$ and $F' \in \mcF$. Moreover, $G \in \mcF \cap \mcG$ since $\mcF$ is closed under extensions. Hence, part (1) follows. 

We previously proved that $\mcF \cap \mcG = \mcF \cap \mcF^{\perp_1}$. The rest of the equalities appearing in (2), (3) and (4) follow from part (1) and \cite[Theorems 2.11 and 3.6]{BMPS}.
\end{proof}

\begin{lemma}\label{lem:correspondence_2}
Let $\mcS$ be a thick subcategory of $\mcC$ and $(\mcF,\mcG)$ be a complete cotorsion pair cut along $\Thick(\mcF)$, with $\id_{\mcF}(\mcG) = 0$. If $\mcF \cap \mcG \cap \mcS$ is both a relative generator and cogenerator in $\mcF \cap \mcG$, then $(\mcF, \mcF \cap \mcG)$ is a left Frobenius pair cut along $\mcS$. 
\end{lemma}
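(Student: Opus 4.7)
Write $\omega := \mcF \cap \mcG$. The plan is to verify the four axioms \lcfpone--\lcfpfour \ of Definition~\ref{def:relative_Frobenius_pair} in turn, leaning on Lemma~\ref{lem:Frobenius_from_cut_cotorsion} to handle the absolute structure of $(\mcF,\omega)$ and then upgrading everything to live inside $\mcS$ by using that $\mcS$ is thick.

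By Lemma~\ref{lem:Frobenius_from_cut_cotorsion}(1) the pair $(\mcF,\omega)$ is already a left Frobenius pair in $\mcC$, so $\mcF$ is left thick (this is \lcfpone) and $\omega$ is an $\mcF$-injective relative cogenerator in $\mcF$ that is closed under direct summands. Moreover Lemma~\ref{lem:Frobenius_from_cut_cotorsion}(2) gives $\omega = \mcF \cap \mcF^{\perp_1}$, and both factors are closed under extensions (the first because $\mcF$ is left thick, the second from the long exact sequence of $\Ext$), so $\omega$ is closed under extensions as well; this is \lcfpfour. For \lcfpthree, the relative generator assumption is part of the hypothesis, and the $\omega$-projectivity of $\omega \cap \mcS$ is free: for any $P \in \omega \cap \mcS \subseteq \mcF$ and $W \in \omega \subseteq \mcG$ the identity $\id_{\mcF}(\mcG) = 0$ yields $\Ext^i_{\mcC}(P,W) = 0$ for every $i \geq 1$, i.e.\ $\pd_{\omega}(\omega \cap \mcS) = 0$.

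The main step is \lcfptwo, namely that $(\mcF \cap \mcS, \omega \cap \mcS)$ is a left Frobenius pair in $\mcC$. Since $\mcS$ is thick and $\mcF$ is left thick, $\mcF \cap \mcS$ is left thick (\lfpone), and $\omega \cap \mcS$ is closed under direct summands because both $\omega$ and $\mcS$ are (\lfptwo). The $(\mcF \cap \mcS)$-injectivity of $\omega \cap \mcS$ is inherited from $\id_{\mcF}(\omega) = 0$. The remaining point is the relative cogenerator property. Given $F \in \mcF \cap \mcS$, first use that $\omega$ cogenerates $\mcF$ to obtain a short exact sequence
\[
0 \to F \to W \to F' \to 0
\]
with $W \in \omega$ and $F' \in \mcF$. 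Then, using that $\omega \cap \mcS$ cogenerates $\omega$, choose
\[
0 \to W \to W'' \to W''' \to 0
\]
with $W'' \in \omega \cap \mcS$ and $W''' \in \omega$, and push out along $W \to F'$ to obtain the solid diagram
\[
\begin{tikzpicture}[description/.style={fill=white,inner sep=2pt}]
\matrix (m) [ampersand replacement=\&, matrix of math nodes, row sep=2.2em, column sep=2.2em, text height=1.25ex, text depth=0.25ex]
{
F \& W \& F' \\
F \& W'' \& E \\
{} \& W''' \& W''' \\
};
\path[>->]
(m-1-1) edge (m-1-2) (m-2-1) edge (m-2-2)
(m-1-2) edge (m-2-2) (m-1-3) edge (m-2-3)
;
\path[->>]
(m-1-2) edge (m-1-3) (m-2-2) edge (m-2-3)
(m-2-2) edge (m-3-2) (m-2-3) edge (m-3-3)
;
\path[->]
(m-1-2)-- node[pos=0.5] {\footnotesize$\mbox{\bf po}$} (m-2-3)
;
\path[-,font=\scriptsize]
(m-1-1) edge [double, thick, double distance=2pt] (m-2-1)
(m-3-2) edge [double, thick, double distance=2pt] (m-3-3)
;
\end{tikzpicture}
\]
The right-hand column shows $E$ is an extension of $W''' \in \omega \subseteq \mcF$ by $F' \in \mcF$, so $E \in \mcF$ by extension-closure of $\mcF$. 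The middle row is a short exact sequence $0 \to F \to W'' \to E \to 0$ with $F, W'' \in \mcS$, and thickness of $\mcS$ (closure under cokernels of monomorphisms) forces $E \in \mcS$. Thus $W'' \in \omega \cap \mcS$ and $E \in \mcF \cap \mcS$, which is the required cogenerating sequence and establishes \lfpthree \ for $(\mcF \cap \mcS, \omega \cap \mcS)$.

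The only place that could cause trouble is precisely this last pushout step, and everything there hinges on $\mcS$ being closed under mono-cokernels; the thickness hypothesis on $\mcS$ is what makes the argument go through.
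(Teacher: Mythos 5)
Your proof is correct and follows essentially the same route as the paper: reduce the absolute structure of $(\mcF,\mcF\cap\mcG)$ to Lemma \ref{lem:Frobenius_from_cut_cotorsion}, dispatch \lcfpone, \lcfpthree\ and \lcfpfour\ directly, and establish the cogenerator property for $(\mcF\cap\mcS,\omega\cap\mcS)$ by composing a cogenerating sequence for $\mcF$ with one for $\omega$ via the same pushout diagram, using thickness of $\mcS$ to place the cokernel in $\mcS$. The only cosmetic difference is that the paper obtains the first short exact sequence from the relative right completeness of $(\mcF,\mcG)$ along $\Thick(\mcF)$ rather than from the cogenerator property of $\omega$ in $\mcF$; these are interchangeable here.
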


\begin{proof}
We need to verify conditions \lcfpone, \lcfptwo, \lcfpthree \ and \lcfpfour \ in Definition \ref{def:relative_Frobenius_pair} for the classes $\mcF$, $\mcF \cap \mcG$ and $\mcS$. First note that \lcfpone \ and \lcfpfour \ hold by the previous lemma. Now, $\id_{\mcF}(\mcG) = 0$ implies that $\id_{\mcF \cap \mcG \cap \mcS}(\mcF \cap \mcG) = 0$ and $\id_{\mcF \cap \mcG}(\mcF \cap \mcG \cap \mcS) = 0$. In particular, \lcfpthree \ holds. It remains to check \lcfptwo, that is, that $(\mcF \cap \mcS, \mcF \cap \mcG \cap \mcS)$ is a left Frobenius pair in $\mcC$. Note first that $\mcF \cap \mcS$ is left thick, since $\mcS$ is thick by hypothesis and $\mcF$ is left thick by Lemma \ref{lem:Frobenius_from_cut_cotorsion}. Furthermore, by the previous lemma we have that $\mcF \cap \mcG \cap \mcS = \mcF \cap \mcF^{\perp_1} \cap \mcS$, and then $\mcF \cap \mcG \cap \mcS$ is closed under direct summands. Finally, since $\id_{\mcF \cap \mcG \cap \mcS}(\mcF \cap \mcG) = 0$, it suffices to show that $\mcF \cap \mcG \cap \mcS$ is a relative cogenerator in $\mcF \cap \mcS$. So for any $F \in \mcF \cap \mcS$, by the relative right completeness of $(\mcF,\mcG)$ there is a short exact sequence $0 \to F \to G \to F' \to 0$ with $G \in \mcF \cap \mcG$ and $F' \in \mcF$. Now by using that $\mcF \cap \mcG \cap \mcS$ is a relative cogenerator in $\mcF \cap \mcG$, we can construct the following solid diagram  
\begin{equation}\label{4-1dig}
\parbox{1.5in}{
\begin{tikzpicture}[description/.style={fill=white,inner sep=2pt}] 
\matrix (m) [ampersand replacement=\&, matrix of math nodes, row sep=2.5em, column sep=2.5em, text height=1.25ex, text depth=0.25ex] 
{ 
F \& G \& F' \\
F \& L \& K \\
{} \& G' \& G' \\
}; 
\path[->] 
(m-1-2)-- node[pos=0.5] {\footnotesize$\mbox{\bf po}$} (m-2-3)
; 
\path[-,font=\scriptsize]
(m-1-1) edge [double, thick, double distance=2pt] (m-2-1)
(m-3-2) edge [double, thick, double distance=2pt] (m-3-3)
;
\path[>->]
(m-1-1) edge (m-1-2)
(m-2-1) edge (m-2-2)
(m-1-2) edge (m-2-2)
(m-1-3) edge (m-2-3)
;
\path[->>]
(m-1-2) edge (m-1-3)
(m-2-2) edge (m-2-3)
(m-2-2) edge (m-3-2)
(m-2-3) edge (m-3-3)
;
\end{tikzpicture}
}
\end{equation}
with $L \in \mcF \cap \mcG \cap \mcS$ and $G' \in \mcF \cap \mcG$. Note also that $K \in \mcF \cap \mcS$ since $\mcF$ is closed under extensions and $\mcS$ is thick. Thus, the central row in \eqref{4-1dig} is the desired sequence for $F$. Therefore, the result follows. 
\end{proof}

For a fixed class of objects $\mcS \subseteq \mcC$, let $\mfP_{\mcS}$ denote the class of pairs $(\mcF,\mcG)$ of classes of objects in $\mcC$ such that $\Thick(\mcF) \in {\rm Cuts}(\mcF,\mcG)$, $\id_{\mcF}(\mcG) = 0$ and $\mcF \cap \mcG \cap \mcS$ is both a relative generator and cogenerator in $\mcF \cap \mcG$.

\begin{definition}
Let $(\mcF,\mcG), (\mcF',\mcG') \in \mfP_{\mcS}$. We shall say that $(\mcF,\mcG)$ is \textbf{related} to $(\mcF',\mcG')$ in $\mfP_{\mcS}$, denoted $(\mcF,\mcG) \sim (\mcF',\mcG')$, if $\mcF \cap \mcS = \mcF' \cap \mcS$ and $\mcF \cap \mcG \cap \mcS = \mcF' \cap \mcG' \cap \mcS$. 
\end{definition}

Note that $\sim$ is an equivalence relation in $\mfP_{\mcS}$. In what follows, let us denote by $[\mcF,\mcG]_{\mfP_{\mcS}}$ the equivalence class of the representative $(\mcF,\mcG) \in \mfP_{\mcS}$.

\begin{example} 
Let $(\mcX,\omega)$ be a left Frobenius pair in $\mcC$. We know from Proposition \ref{prop:cuts_from_Frobenius_pairs} that $(\mcX,\omega^\wedge)$ and $(\omega,\mcX^{\perp_1})$ are complete cotorsion pairs cut along $\omega^\wedge$. These pairs are related in $\mfP_{\omega^\wedge}$ by \cite[Proposition 2.7]{BMPS}. In particular, $(\mcGP(R),\mcP(R)^\wedge) \sim (\mcP(R),\mcGP(R)^{\perp_1})$ in $\mfP_{\mcP(R)^\wedge}$. 
\end{example}

We are now ready to show the correspondence between the quotient classes $\mfP_{\mcS} / \sim$ and $\mfC_{\mcS} / \sim$ in the following result, which generalizes \cite[part 2. of Theorem 5.4]{BMPS}.

\begin{theorem}[second correspondence theorem]\label{theo:correspondence_2}
Let $\mcS \subseteq \mcC$ be a thick subcategory. Then, there is a one-to-one correspondence 
\begin{align*}
\Lambda_{\mcS} \colon & \mfP_{\mcS} / \sim \mbox{} \to \mfC_{\mcS} / \sim & & \text{given by} & [\mcF,\mcG]_{\mfP_{\mcS}} & \mapsto [\mcF,(\mcF \cap \mcG)^\wedge]_{\mfC_{\mcS}},
\end{align*}
with inverse
\begin{align*}
\Upsilon_{\mcS} \colon & \mfC_{\mcS} / \sim \mbox{} \to \mfP_{\mcS} / \sim & & \text{given by} & [\mcA,\mcB]_{\mfC_{\mcS}} & \mapsto [\mcA \cap \mcS,\mcB \cap \mcS]_{\mfP_{\mcS}}.
\end{align*}
\end{theorem}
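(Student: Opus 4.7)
The plan is to show that $\Lambda_{\mcS}$ and $\Upsilon_{\mcS}$ are both well-defined on equivalence classes and that they are mutually inverse. I would proceed in four short steps, relying heavily on the two lemmas just proved.

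First, I would verify that $\Lambda_{\mcS}$ is well-defined. Given $(\mcF,\mcG) \in \mfP_{\mcS}$, I need to check conditions \lcABone, \lcABtwo, \lcABthree\ for $(\mcF,(\mcF \cap \mcG)^{\wedge})$ cut along $\mcS$. For \lcABone, the pertinent intersection $\mcF \cap (\mcF \cap \mcG)^{\wedge}$ collapses to $\mcF \cap \mcG$ by Lemma \ref{lem:Frobenius_from_cut_cotorsion}(2), and Lemma \ref{lem:correspondence_2} then supplies the required left Frobenius pair cut along $\mcS$. For \lcABtwo, I would first check that Lemma \ref{lem:main_lemma}(4) applies to $\omega := \mcF \cap \mcG$: closure under extensions of $\omega$ follows from Lemma \ref{lem:Frobenius_from_cut_cotorsion}, while $\mcF \cap \mcG \cap \mcS$ being an $\omega$-projective relative generator in $\omega$ comes from the hypothesis on $\mfP_{\mcS}$ combined with $\id_{\mcF}(\mcG) = 0$, and $\mcS$ is thick. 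This yields $(\mcF \cap \mcG)^{\wedge} \cap \mcS = (\mcF \cap \mcG \cap \mcS)^{\wedge}$, so \cite[Theorem~5.4]{BMPS} applied to the absolute Frobenius pair $(\mcF \cap \mcS,\mcF \cap \mcG \cap \mcS)$ produced inside the proof of Lemma \ref{lem:correspondence_2} gives right thickness. Condition \lcABthree\ is then immediate from $\mcF \cap \mcG \cap \mcS \subseteq \mcF \cap \mcS$. Respect for the equivalence relation on $\mfP_{\mcS}$ follows once more from the collapse $\mcF \cap (\mcF \cap \mcG)^{\wedge} = \mcF \cap \mcG$.

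Second, I would verify that $\Upsilon_{\mcS}$ is well-defined. For $(\mcA,\mcB) \in \mfC_{\mcS}$, Remark \ref{rem:vs_first_approach} already states that $(\mcA \cap \mcS,\mcB \cap \mcS)$ is a complete cotorsion pair cut along $\mathsf{Thick}(\mcA \cap \mcS)$ with $\id_{\mcA \cap \mcS}(\mcB \cap \mcS) = 0$ and $(\mcA \cap \mcB \cap \mcS)^{\wedge} = \mcB \cap \mcS$. The remaining condition in the definition of $\mfP_{\mcS}$ is automatic: since $\mcS$ is thick,
\[
(\mcA \cap \mcS) \cap (\mcB \cap \mcS) \cap \mcS \;=\; \mcA \cap \mcB \cap \mcS \;=\; (\mcA \cap \mcS) \cap (\mcB \cap \mcS),
\]
which is trivially both a relative generator and cogenerator in itself. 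Respect for $\sim$ is read directly off Definition \ref{def:Frobenius_and_AB_relations}.

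Third, I would check the two composition identities. For $\Upsilon_{\mcS} \circ \Lambda_{\mcS}$, the output is $[\mcF \cap \mcS,(\mcF \cap \mcG)^{\wedge} \cap \mcS]_{\mfP_{\mcS}}$, and the required identity $\mcF \cap \mcS \cap (\mcF \cap \mcG)^{\wedge} \cap \mcS = \mcF \cap \mcG \cap \mcS$ is Lemma \ref{lem:Frobenius_from_cut_cotorsion}(2) intersected with $\mcS$. For $\Lambda_{\mcS} \circ \Upsilon_{\mcS}$, thickness of $\mcS$ and Remark \ref{rem:vs_first_approach} together give
\[
(\mcA \cap \mcS) \cap ((\mcA \cap \mcS) \cap (\mcB \cap \mcS))^{\wedge} \cap \mcS \;=\; \mcA \cap \mcS \cap (\mcA \cap \mcB \cap \mcS)^{\wedge} \;=\; \mcA \cap \mcB \cap \mcS,
\]
as needed. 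The main obstacle I expect is the verification of \lcABone\ inside the well-definedness of $\Lambda_{\mcS}$: one has to recognize that the intersected cogenerator $\mcF \cap (\mcF \cap \mcG)^{\wedge}$ prescribed by Definition \ref{def:cut_AB_context} coincides with $\mcF \cap \mcG$, so that the cut Frobenius pair is precisely the one produced by Lemma \ref{lem:correspondence_2} rather than some enlargement involving coresolutions. Everything else is intersection chasing backed by Lemmas \ref{lem:main_lemma}, \ref{lem:Frobenius_from_cut_cotorsion} and \ref{lem:correspondence_2}.
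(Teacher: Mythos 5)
Your proposal is correct and follows essentially the same route as the paper: well-definedness of $\Upsilon_{\mcS}$ via Remark \ref{rem:vs_first_approach}, well-definedness of $\Lambda_{\mcS}$ and the composition identities via the collapse $\mcF \cap (\mcF\cap\mcG)^{\wedge} = \mcF\cap\mcG$ from Lemmas \ref{lem:Frobenius_from_cut_cotorsion} and \ref{lem:correspondence_2}. The only cosmetic difference is that you verify \lcABone--\lcABthree\ for $(\mcF,(\mcF\cap\mcG)^{\wedge})$ by hand, whereas the paper obtains this in one stroke by feeding the cut Frobenius pair $(\mcF,\mcF\cap\mcG)$ of Lemma \ref{lem:correspondence_2} into Theorem \ref{theo:correspondence_1}; your unrolled verification is exactly the argument hidden inside that theorem's proof.
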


\begin{proof}
First, we check that the mappings $\Lambda_{\mcS}$ and $\Upsilon_{\mcS}$ are well defined. On the one hand, let $(\mcF,\mcG), (\mcF',\mcG') \in \mfP_{\mcS}$ such that $(\mcF,\mcG) \sim (\mcF',\mcG')$. By Theorem \ref{theo:correspondence_1} and Lemma \ref{lem:correspondence_2}, we get that $(\mcF,(\mcF \cap \mcG)^\wedge)$ is a left weak AB-context cut along $\mcS$. Moreover, by Lemma \ref{lem:Frobenius_from_cut_cotorsion} we have the following equalities:
\begin{align*}
\mcF \cap \mcG \cap \mcS & = \mcF \cap (\mcF \cap \mcG)^\wedge \cap \mcS & & \text{and} & \mcF' \cap \mcG' \cap \mcS & = \mcF' \cap (\mcF' \cap \mcG')^\wedge \cap \mcS.
\end{align*}
Thus, $(\mcF,(\mcF \cap \mcG)^\wedge) \sim (\mcF', (\mcF' \cap \mcG')^\wedge)$, and hence $\Lambda_{\mcS}$ does not depend on representatives. On the other hand, consider now $(\mcA,\mcB) \in \mfC_{\mcS}$. By Remark \ref{rem:vs_first_approach} we have that $(\mcA \cap \mcS, \mcB \cap \mcS)$ is a complete cotorsion pair cut along $\Thick(\mcA \cap \mcS)$ with $\id_{\mcA \cap \mcS}(\mcB \cap \mcS) = 0$. The fact that $\Upsilon_{\mcS}$ does not depend on representatives is clear. 

Finally, we show that $\Lambda_{\mcS}$ and $\Upsilon_{\mcS}$ are inverse to each other. For any $(\mcF,\mcG) \in \mfP_{\mcS}$, we have:
\[
\Upsilon_{\mcS} \circ \Lambda_{\mcS}([\mcF,\mcG]_{\mfP_{\mcS}}) = \Upsilon_{\mcS}([\mcF,(\mcF \cap \mcG)^\wedge]_{\mfC_{\mcS}}) = [\mcF \cap \mcS,(\mcF \cap \mcG)^\wedge \cap \mcS]_{\mfP_{\mcS}} = [\mcF,\mcG]_{\mfP_{\mcS}},
\]
where $(\mcF \cap \mcS, (\mcF \cap \mcG)^\wedge \cap \mcS) \sim (\mcF,\mcG)$ by Lemma \ref{lem:Frobenius_from_cut_cotorsion}. Now let $(\mcA,\mcB)$ be a left weak AB-context cut along $\mcS$. Then, 
\[
\Lambda_{\mcS} \circ \Upsilon_{\mcS}([\mcA,\mcB]_{\mfC_{\mcS}}) = \Lambda_{\mcS}([\mcA \cap \mcS,\mcB \cap \mcS]_{\mfP_{\mcS}}) = [\mcA \cap \mcS,(\mcA \cap \mcB \cap \mcS)^\wedge]_{\mfC_{\mcS}} = [\mcA,\mcB]_{\mfC_{\mcS}},
\]
where $(\mcA \cap \mcS,(\mcA \cap \mcB \cap \mcS)^\wedge) \sim (\mcA \cap \mcS,\mcB \cap \mcS) \sim (\mcA,\mcB)$ by Remark \ref{rem:vs_first_approach}. Therefore, the result follows. 
\end{proof}

%%%%%%%%%%%%%%%%%%%%%%%%%%%%%%%%%%%%%
%%%%%%%%%%%%%%%%%%%%%%%%%%%%%%%%%%%%%
%%%%%%%%%%%%%%%%%%%%%%%%%%%%%%%%%%%%%
%%%%%%%%%%%%%%%%%%%%%%%%%%%%%%%%%%%%%

\section{\textbf{Applications and more examples}}\label{sec:applications}

In this last section we present more detailed examples and applications of the theory of complete cut cotorsion pairs. Our examples will be presented in settings more general than $\Mod(R)$, namely, in categories of chain complexes, quasi-coherent sheaves, and modules over extriangulated categories. We also explore some relations with the Finitistic Dimension Conjecture and Serre subcategories.

%%%%%%%%%%%%%%%%%%%%%%%%%%%%%%%%%%%%%
%%%%%%%%%%%%%%%%%%%%%%%%%%%%%%%%%%%%%

\subsection*{Induced cotorsion cuts in chain complexes}

In the category $\Ch(\mcC)$ of chain complexes in an abelian category $\mcC$, we shall write the extension bifunctors $\Ext^i_{\Ch(\mcC)}$ as $\Ext^i_{\Ch}$, for simplicity. 

In this section, we induce complete cut cotorsion pairs in chain complexes in the following sense: we consider complete cotorsion pairs $(\mathcal{A,B})$ in $\mcC$ cut along $\mcS \subseteq \mcC$, and study under which conditions it is possible to get complete cut cotorsion pairs in $\Ch(\mcC)$ involving the classes $\widetilde{\mcA}$ of $\mcA$-complexes and $\widetilde{\mcB}$ of $\mcB$-complexes. Our motivation comes from Gillespie's result \cite[Proposition 3.6]{GillespieFlat}, which asserts that if $(\mcA,\mcB)$ is a complete cotorsion pair in $\mcC$ with enough $\mcA$-objects and enough $\mcB$-objects, then $(\tilde{\mcA},{\rm dg}\tilde{\mcB})$ and $({\rm dg}\tilde{\mcA},\tilde{\mcB})$ are complete cotorsion pairs in $\Ch(\mcC)$. 

In the case where $(\mcA,\mcB)$ is a complete cotorsion pair cut along $\mcS \subseteq \mcC$, we shall determine a subcategory of $\Ch(\mcC)$ along which the classes $\tilde{\mcA}$, $\Ch_{\rm acy}(\widetilde{\mcA};\mcB)$, $\tilde{\mcB}$ and $\Ch_{\rm acy}(\mcA;\widetilde{\mcB})$ (see \cite[Definition 6.4]{HMP}) form a complete cut cotorsion pair. It is known from Yang and Liu's \cite[Lemma 3.4]{YangLiu} that if $(\mcA,\mcB)$ is a hereditary complete cotorsion pair in $\Ch(R)$ (with $R$ an associative ring with unit) then every exact complex admits an special $\tilde{\mcA}$-precover and a special $\tilde{\mcB}$-preenvelope. This suggests that the class $\tilde{\mcS}$ should be considered as a possible cotorsion cut. Below we impose some conditions on $\mcA$, $\mcB$ and $\mcS$ so that $(\tilde{\mcA},\Ch_{\rm acy}(\widetilde{\mcA};\mcB))$ is a complete cotorsion pair cut along $\tilde{\mcS}$. 

Let us recall and specify some of the notation previously displayed. Let $\mcX$, $\mcA$ and $\mcB$ be classes of objects in $\mcC$:
\begin{itemize}
\item $\Ch(\mcX)$ denotes the class of complexes $X_\bullet \in \Ch(\mcC)$ such that $X_m \in \mcX$ for every $m \in \mathbb{Z}$.

\item $\widetilde{\mcX}$ denotes the class of exact complexes $X_\bullet \in \Ch(\mcC)$ such that $Z_m(X_\bullet) \in \mcX$ for every $m \in \mathbb{Z}$.  

\item $\Ch_{\rm acy}(\mathcal{A};\widetilde{\mathcal{B}})$ denotes the class of complexes $X_\bullet \in \Ch(\mcA)$ such that the internal hom $\mathcal{H}{om}(X_\bullet,B_\bullet)$\footnote{The reader can recall the definition of $\mathcal{H}{om}(X_\bullet,B_\bullet)$ from \cite[Section 2.1]{GR}.} is an exact complex of abelian groups whenever $B_\bullet \in \widetilde{\mcB}$. Dually, $\Ch_{\rm acy}(\widetilde{\mathcal{A}};\mathcal{B})$ denotes the class of complexes $Y_\bullet \in \Ch(\mcB)$ such that $\mathcal{H}{om}(A_\bullet,Y_\bullet)$ is exact for every $A_\bullet \in \widetilde{\mcA}$. 
\end{itemize}

\begin{proposition}\label{cutcomplexes}
Let $\mcA$ and $\mcB$ be two classes of objects in $\mcC$ closed under extensions and such that $\Ext_{\mcC}^i(\mcA,\mcB) = 0$ for every $i = 1, 2$. If $\mcA$ is closed under direct summands and $\mcS \subseteq \mcC$ is a class of objects in $\mcC$ such that $\mcS \in {\rm lCuts}(\mcA,\mcB)$, then $\widetilde{\mcS} \in {\rm lCuts}(\widetilde{\mcA},\Ch_{\rm acy}(\widetilde{\mcA};\mcB))$.
\end{proposition}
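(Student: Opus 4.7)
The plan is to verify conditions \lccpone, \lccptwo, \lccpthree \ of Definition~\ref{def:cut_cotorsion_pair} for the pair $(\widetilde{\mcA}, \Ch_{\rm acy}(\widetilde{\mcA};\mcB))$ cut along $\widetilde{\mcS}$. Condition \lccpone \ is immediate: if $X_\bullet$ is a direct summand of some $A_\bullet \in \widetilde{\mcA}$, then $X_\bullet$ is exact as a summand of an exact complex, and each cycle $Z_m(X_\bullet)$ is a direct summand of $Z_m(A_\bullet) \in \mcA$, hence in $\mcA$ by its closure under direct summands.

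For \lccpthree, fix $S_\bullet \in \widetilde{\mcS}$ and set $Z_m := Z_m(S_\bullet) \in \mcS$. Since $\mcS \in {\rm lCuts}(\mcA,\mcB)$, for each $m \in \mbZ$ I choose a short exact sequence $\xi_m \colon 0 \to B^{(m)} \to A^{(m)} \xrightarrow{\pi_m} Z_m \to 0$ with $A^{(m)} \in \mcA$ and $B^{(m)} \in \mcB$, while the exactness of $S_\bullet$ provides the sequences $\eta_m \colon 0 \to Z_m \to S_m \to Z_{m-1} \to 0$. The goal is to assemble the $\xi_m$ and $\eta_m$, via a horseshoe-type construction, into a short exact sequence of complexes $0 \to Y_\bullet \to X_\bullet \to S_\bullet \to 0$ with $X_\bullet \in \widetilde{\mcA}$ and $Y_\bullet \in \Ch(\mcB)$, each component SES fitting in the horseshoe pattern $0 \to A^{(m)} \to X_m \to A^{(m-1)} \to 0$ and $0 \to B^{(m)} \to Y_m \to B^{(m-1)} \to 0$. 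The horseshoe step requires the obstruction $\delta(\pi_{m-1}) \in \Ext^1_{\mcC}(A^{(m-1)}, Z_m)$ to vanish on a suitable lift; the hypotheses $\Ext^i_{\mcC}(\mcA,\mcB) = 0$ for $i = 1, 2$ combined with the long exact sequence derived from $\xi_m$ yield $\Ext^1_{\mcC}(A^{(m-1)}, Z_m) \cong \Ext^1_{\mcC}(A^{(m-1)}, A^{(m)})$, which lets me absorb the obstruction by enlarging $A^{(m-1)}$ via an extension in $\mcA$ (preserved by closure of $\mcA$ under extensions). Once $X_\bullet$ is produced, the standard fact that $\Ext^1_{\mcC}(\mcA,\mcB) = 0$ forces every chain map $A_\bullet \to \Sigma^n Y_\bullet$ (with $A_\bullet \in \widetilde{\mcA}$) to be null-homotopic, which implies $\mathcal{H}{om}(A_\bullet, Y_\bullet)$ is exact and hence $Y_\bullet \in \Ch_{\rm acy}(\widetilde{\mcA};\mcB)$.

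With \lccpthree \ available, \lccptwo \ follows quickly. For the inclusion $\widetilde{\mcA} \cap \widetilde{\mcS} \subseteq {}^{\perp_1}\Ch_{\rm acy}(\widetilde{\mcA};\mcB) \cap \widetilde{\mcS}$, I would establish $\Ext^1_{\Ch}(\widetilde{\mcA}, \Ch_{\rm acy}(\widetilde{\mcA};\mcB)) = 0$ directly: any extension $0 \to Y_\bullet \to E_\bullet \to A_\bullet \to 0$ splits degreewise because $A_m \in \mcA$ (closure of $\mcA$ under extensions applied to the cycles of $A_\bullet$) and $Y_m \in \mcB$ give $\Ext^1_{\mcC}(A_m, Y_m) = 0$, and the cocycle in $\mathcal{H}{om}(A_\bullet, Y_\bullet)$ measuring the failure of degreewise splittings to assemble into a chain-level splitting is a coboundary by the exactness of $\mathcal{H}{om}(A_\bullet, Y_\bullet)$. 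For the reverse inclusion, let $X_\bullet \in {}^{\perp_1}\Ch_{\rm acy}(\widetilde{\mcA};\mcB) \cap \widetilde{\mcS}$; by \lccpthree \ there is $0 \to Y_\bullet \to X'_\bullet \to X_\bullet \to 0$ with $X'_\bullet \in \widetilde{\mcA}$ and $Y_\bullet \in \Ch_{\rm acy}(\widetilde{\mcA};\mcB)$; orthogonality forces this to split, so $X_\bullet$ is a direct summand of $X'_\bullet \in \widetilde{\mcA}$ and \lccpone \ yields $X_\bullet \in \widetilde{\mcA}$.

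The main obstacle is the horseshoe construction in \lccpthree. A naive direct-sum model $X_m := A^{(m)} \oplus A^{(m-1)}$ gives a complex in $\widetilde{\mcA}$, but the candidate map to $S_\bullet$ commutes with the differentials only when $\delta(\pi_{m-1})$ vanishes in $\Ext^1_{\mcC}(A^{(m-1)}, Z_m)$, which is not guaranteed. Carrying out the step-by-step enlargements of the $A^{(m-1)}$ through extensions in $\mcA$ to kill the successive obstructions, while simultaneously keeping each component $Y_m$ of the kernel in $\mcB$ and ensuring the full complex $Y_\bullet$ satisfies the $\mathcal{H}{om}$-acyclicity condition required to land in $\Ch_{\rm acy}(\widetilde{\mcA};\mcB)$, is the technical core of the proof and the likely reason the stronger vanishing $\Ext^2_{\mcC}(\mcA,\mcB) = 0$ appears in the hypotheses.
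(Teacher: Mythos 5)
Your argument is correct and follows the same route as the paper, which verifies exactly these three conditions but delegates the technical work — the approximation sequence $0 \to B_\bullet \to A_\bullet \to S_\bullet \to 0$ with $B_\bullet \in \widetilde{\mcB}$, the inclusion $\widetilde{\mcB} \subseteq \Ch_{\rm acy}(\widetilde{\mcA};\mcB)$, and the vanishing $\Ext^1_{\Ch}(\widetilde{\mcA},\Ch_{\rm acy}(\widetilde{\mcA};\mcB))=0$ — to \cite[Remark 3.10, Lemmas 6.6 and 6.7]{HMP}, whose proofs are essentially the constructions you sketch. The one point to record explicitly is that your null-homotopy argument for the kernel requires $Y_\bullet$ to lie in $\widetilde{\mcB}$ (exact with cycles in $\mcB$), not merely in $\Ch(\mcB)$; your horseshoe construction does deliver this, since the cycles of $Y_\bullet$ are precisely the objects $B^{(m)} \in \mcB$.
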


\begin{proof} 
It is clear that $\widetilde{\mcA}$ is closed under diret summands. Moreover, following the proof given in  \cite[Remark 3.10]{HMP}, we obtain for every complex $S_\bullet \in \widetilde{\mcS}$ an exact sequence in $\Ch(\mcC)$ of the form $\eta \colon 0 \to B_{\bullet} \to A_{\bullet} \to S_\bullet \to 0$ where  $A_{\bullet} \in \widetilde{\mcA}$ and $B_{\bullet} \in \widetilde{\mcB} \subseteq \Ch_{\rm acy}(\widetilde{\mcA};\mcB)$ (this inclusion follows by \cite[Lemma 6.6]{HMP}). So, it suffices to show that $\tilde{\mcA} \cap \widetilde{\mcS} = {}^{\perp_1}(\Ch_{\rm acy}(\widetilde{\mcA};\mcB)) \cap \widetilde{\mcS}$. On the one hand, considering $\eta$ as above, with $S_{\bullet} \in {}^{\perp_1}(\Ch_{\rm acy}(\widetilde{\mcA};\mcB)) \cap \tilde{\mcS}$, we have that $\eta$ splits and then $S_{\bullet} \in \widetilde{\mcA}\cap \widetilde{\mcS}$. Thus, the containment $\supseteq$ holds true. For the remaining containment $\subseteq$, since $\Ext_{\mcC}^1(\mcA,\mcB) = 0$, it follows from \cite[Lemma 6.7 (1)]{HMP} that $\Ext_{\Ch}^1(\widetilde{\mcA},\Ch_{\rm acy}(\widetilde{\mcA};\mcB)) = 0$. Therefore, $\tilde{\mcA} \cap \widetilde{\mcS} \subseteq {}^{\perp_1}(\Ch_{\rm acy}(\widetilde{\mcA};\mcB)) \cap \widetilde{\mcS}$.
\end{proof}

In the next result we give some conditions so that the converse of Proposition \ref{cutcomplexes} holds. In what follows, given an object $C \in \mcC$, we denote by $D^1(C)$ the chain complex with $C$ in the first and zeroth positions, and with $0$ elsewhere, where the only nonzero differential is the identity on $C$. The complex $D^0(C)$ is defined similarly.

\begin{proposition}\label{cutcomplexes2}
Let $\mcA$, $\mcB$ and $\mcS$ be classes of objects in $\mcC$, such that $\mcA$ is closed under extensions, $\widetilde{\mcA}$ is closed under direct summands, $0 \in \mcS$ and $\widetilde{\mcS} \in {\rm lCuts}(\widetilde{\mcA},\Ch_{\rm acy}(\widetilde{\mcA};\mcB))$. Then, $\mcS \in {\rm lCuts}(\mcA,\mcB)$ provided that $\Ext^1_{\mcC}(\mcA,\mcB) = 0$ or $D^1(B) \in \Ch_{\rm acy}(\widetilde{\mcA};\mcB)$ for every $B \in \mcB$. 
\end{proposition}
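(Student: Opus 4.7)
The plan is to verify the three defining conditions \lccpone, \lccptwo \ and \lccpthree \ of a left cotorsion cut for $(\mcA,\mcB)$ along $\mcS$, using the disk functor $D^1 \colon \mcC \to \Ch(\mcC)$ to pull back information from $\Ch(\mcC)$ to $\mcC$. The key observations are that $D^1(A) \in \widetilde{\mcA}$ whenever $A \in \mcA$ (the only nonzero cycle of $D^1(A)$ is $A$ itself, and the convention $0 \in \mcA$ handles the trivial cycles), and $D^1(S) \in \widetilde{\mcS}$ whenever $S \in \mcS$, thanks to the hypothesis $0 \in \mcS$.

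First I would check \lccpone: given $A = A_1 \oplus A_2 \in \mcA$, additivity of $D^1$ gives $D^1(A) = D^1(A_1) \oplus D^1(A_2)$ in $\widetilde{\mcA}$, and the hypothesis that $\widetilde{\mcA}$ is closed under direct summands forces $D^1(A_i) \in \widetilde{\mcA}$, whence $A_i = Z_0(D^1(A_i)) \in \mcA$. Next, for \lccpthree, I apply the completeness of the cut cotorsion pair $(\widetilde{\mcA}, \Ch_{\rm acy}(\widetilde{\mcA};\mcB))$ along $\widetilde{\mcS}$ to $D^1(S)$ with $S \in \mcS$, yielding a short exact sequence $0 \to X_\bullet \to Y_\bullet \to D^1(S) \to 0$ in $\Ch(\mcC)$ with $Y_\bullet \in \widetilde{\mcA}$ and $X_\bullet \in \Ch_{\rm acy}(\widetilde{\mcA};\mcB) \subseteq \Ch(\mcB)$. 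Evaluating at degree $0$ produces the desired sequence $0 \to X_0 \to Y_0 \to S \to 0$, with $X_0 \in \mcB$ directly and $Y_0 \in \mcA$ because $\mcA$ is closed under extensions and $Y_0$ sits between the cycles $Z_0(Y_\bullet), Z_{-1}(Y_\bullet) \in \mcA$.

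For \lccptwo, the inclusion ${}^{\perp_1}\mcB \cap \mcS \subseteq \mcA \cap \mcS$ follows immediately from \lccpthree: the sequence constructed there splits for $S \in {}^{\perp_1}\mcB \cap \mcS$, so $S$ is a direct summand of $Y_0 \in \mcA$, hence in $\mcA$ by \lccpone. The reverse inclusion $\mcA \cap \mcS \subseteq {}^{\perp_1}\mcB \cap \mcS$ is trivial under the hypothesis $\Ext^1_\mcC(\mcA,\mcB) = 0$. Under the alternative hypothesis that $D^1(B) \in \Ch_{\rm acy}(\widetilde{\mcA};\mcB)$ for all $B \in \mcB$, I would take a representative $\eta \colon 0 \to B \to E \to A \to 0$ of an arbitrary element of $\Ext^1_\mcC(A,B)$ with $A \in \mcA \cap \mcS$, apply $D^1$ (which is additive and exact) to obtain the sequence $0 \to D^1(B) \to D^1(E) \to D^1(A) \to 0$ in $\Ch(\mcC)$, and invoke the orthogonality relation $\widetilde{\mcA} \cap \widetilde{\mcS} = {}^{\perp_1}(\Ch_{\rm acy}(\widetilde{\mcA};\mcB)) \cap \widetilde{\mcS}$ (noting $D^1(A) \in \widetilde{\mcA} \cap \widetilde{\mcS}$) to conclude the complex-level extension splits; restricting a section to degree $0$ yields a section of $\eta$, so $\Ext^1_\mcC(A,B) = 0$. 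The main obstacle is this last case, and its resolution rests on the clean observation that a splitting of a $D^1$-applied SES restricts in degree $0$ to a splitting of the original, which works because $D^1$ is degree-wise the identity on the underlying objects and the projection in degree $0$ recovers $E \twoheadrightarrow A$.
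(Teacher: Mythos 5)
Your proposal is correct and follows essentially the same route as the paper: apply the cut cotorsion pair in $\Ch(\mcC)$ to a disk complex on $S$ and read off the degree-zero component, using closure of $\mcA$ under extensions to see that the components of a complex in $\widetilde{\mcA}$ lie in $\mcA$. The only real difference is in the second alternative of \lccptwo, where the paper simply cites the natural isomorphism $\Ext^1_{\mcC}(A,B) \cong \Ext^1_{\Ch}(D^0(A),D^1(B))$ from Gillespie's work, whereas you give a self-contained argument by applying the exact functor $D^1$ to a representative extension and restricting a splitting to degree zero — a valid and slightly more elementary substitute.
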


\begin{proof} 
Let us assume that $\Ext^1_{\mcC}(\mcA,\mcB) = 0$ to show that $\mcS \in {\rm lCuts}(\mcA,\mcB)$. The closure for $\mcA$ under direct summands is easy to show. Now, we prove \lccpthree \ in Definition \ref{def:cut_cotorsion_pair}. Let $S\in \mcS$. Notice that $D^0(S) \in \widetilde{\mcS}$ since $0 \in \mcS$. Now using the assumption $\widetilde{\mcS} \in {\rm lCuts}(\widetilde{\mcA},\Ch_{\rm acy}(\widetilde{\mcA};\mcB))$, we have that there exists a short exact sequence $0 \to B_{\bullet} \to A_{\bullet} \to D^0(S) \to 0$ in  $\Ch(\mcC)$ with $A_{\bullet} \in \widetilde{\mcA}$ and $B_{\bullet} \in \Ch_{\rm acy}(\widetilde{\mcA};\mcB)$. Then, \lccpthree \ follows by considering $0$-th entries in this sequence, since $\mcA$ is closed under extensions.

Finally, we see that $\mcA \cap \mcS = {}^{\perp_1} \mcB \cap \mcS$. On the one hand, let $S \in {}^{\perp_1}\mcB \cap \mcS$. From the preceding paragraph, there exists a short exact sequence of the form $0 \to B_0 \to A_0 \to S \to 0$, which splits since $S \in {}^{\perp_1}\mcB \cap \mcS$. Then, $S \in \mcA$ since $\mcA$ is closed under direct summands. On the other hand, for the remaining containment $\subseteq$, we use that $\Ext^1_{\mcC}(\mcA,\mcB) = 0$.

Now we consider the other condition $D^1(B) \in \Ch_{\rm acy}(\widetilde{\mcA};\mcB)$ for every $B \in \mcB$. The closure under direct summands for $\mcA$, condition \lccpthree \ and the containment $\mcA \cap \mcS \supseteq {}^{\perp_1}\mcB \cap \mcS$ follow as in the previous proof. So we only verify that $\mcA \cap \mcS \subseteq {}^{\perp_1}\mcB \cap \mcS$. Let $A \in \mcA \cap \mcS$ and $B \in \mcB$. By \cite[Lemma 3.1]{GillespieFlat} and Proposition \ref{prop:characterization_ccp}, we get that $\Ext_{\mcC}^1(A,B) \cong \Ext_{\Ch}^1(D^0(A),D^1(B)) = 0$ since $D^0(A) \in \widetilde{\mcA} \cap \widetilde{\mcS}$ and $D^1(B) \in \Ch_{\rm acy}(\widetilde{\mcA};\mcB)$.
\end{proof}

%%%%%%%%%%%%%%%%%%%%%%%%%%%%%%%%%%%%%
%%%%%%%%%%%%%%%%%%%%%%%%%%%%%%%%%%%%%

\subsection*{Cut cotorsion pairs in the category of quasi-coherent sheaves}

For the notions about sheaves and schemes appearing in this section, we recommend the reader to check Hartshorne's \cite{Hartshorne}. In what follows, $X$ will be a scheme with structure sheaf $\mathcal{O}_X$, and $\Qcoh(X)$ will denote the category of quasi-coherent sheaves on $X$. For simplicity, we shall refer to the objects in $\Qcoh(X)$ simply as ``sheaves''. It is a well-known fact that $\Qcoh(X)$ is a Grothendieck category which in general does not have enough projective objects. The latter makes us think that it is not likely to obtain complete cut cotorsion pairs in $\Qcoh(X)$ involving the class of Gorenstein projective sheaves. This suggests to consider the class $\mfGF(X)$ of Gorenstein flat sheaves on $X$ as a more reliable source to obtain complete cut cotorsion pairs. Indeed, in \cite[Theorem 2.2]{CET} Christensen, Estrada and Thompson proved that $(\mfGF(X),(\mfGF(X))^{\perp_1})$ is a hereditary complete cotorsion pair in $\Qcoh(X)$, provided that $X$ is a semi-separated noetherian scheme. In general, the orthogonal class $(\mfGF(X))^{\perp_1}$ does not always have an explicit description in terms of simpler sheaves, but studying the cotorsion of $\mathcal{A} := \mfGF(X)$ along certain subcategories $\mathcal{S} \subseteq \Qcoh(X)$ could overcome this limitation. The two questions that arise at this point are: (1) what can we expect for a suitable ``local'' orthogonal complement $\mathcal{B}$ of $\mfGF(X)$?, and (2) which cotorsion cut $\mathcal{S}$ do we need to choose for $(\mathcal{A,B})$?. First, we known from \cite[Proposition 6.17]{BMPS} and \cite[Corollary 4.12]{SS} that $(\mathcal{GF}(R),\mathcal{F}(R) \cap (\mathcal{F}(R))^{\perp_1})$ is a left Frobenius pair in $\Mod(R)$, for any ring $R$, where $\mathcal{GF}(R)$ denotes the class of Gorenstein flat $R$-modules. It follows by \cite[Theorem 5.4]{BMPS} that $(\mathcal{GF}(R),(\mathcal{F}(R) \cap (\mathcal{F}(R))^{\perp_1})^\wedge)$ is a $\Thick(\mathcal{GF}(R))$-cotorsion pair in $\Mod(R)$. On the other hand, in case $R$ is commutative, we can regard $\Mod(R)$ as the category $\mathfrak{Qcoh}(\text{Spec}(R))$. The previous, along with the correspondences in Theorem \ref{theo:correspondence_1} and \ref{theo:correspondence_2}, suggests that we should take $\mathcal{B} := (\mfF(X) \cap \mfC(X))^\wedge$ (where $\mfF(X)$ and $\mfC(X) = \mfF(X)^{\perp_1}$ denote the classes of flat and cotorsion  sheaves on $X$, respectively) and $\mathcal{S}$ as a subcategory of $\Qcoh(X)$ equivalent to $\Mod(\mathcal{O}_X(U))$, for some affine open set $U \subseteq X$. One can for instance determine an equivalence between $\mathcal{S}$ and $\Mod(\mathcal{O}_X(U))$ by using the inverse and direct image functors $i^\ast \colon \Qcoh(X) \longrightarrow \Qcoh(U)$ and $i_\ast \colon  \Qcoh(U) \longrightarrow \Qcoh(X)$ induced by the inclusion $i \colon U \to X$.\footnote{Notice that these functors are well defined as they preserve quasi-coherence. See \cite[Proposition II 5.8]{Hartshorne} and \cite[Theorem 1.17]{Iitaka}.} Thus, we shall be working then with the following classes of sheaves:
\begin{itemize}
\item $\mathcal{A} := \mfGF(X)$ for Gorenstein flat sheaves.\footnote{Recall from \cite[Definition 1.2]{CET} that a sheaf $\mathscr{M}$ is \emph{Gorenstein flat} if $\mathscr{M} = Z_0(\mathscr{F}_\bullet)$, where $\mathscr{F}_\bullet$ is an exact complex of flat sheaves on $X$ such that $\mathscr{F}_\bullet \otimes \mathscr{I}$ is an exact complex of $\mathcal{O}_X$-modules for every injective sheaf $\mathscr{I}$.}

\item $\mathcal{B} := (\mfF(X) \cap \mfC(X))^\wedge$. 

\item $\mathcal{S} := i_\ast(\Qcoh(U))$, where $U$ is a affine open subset of $X$. That is, $\mathcal{S}$ is the class of sheaves on $X$ isomorphic to sheaves of the form $i_\ast(\mathscr{N})$, where $\mathscr{N}$ is a sheaf on $U$. 
\end{itemize}

We shall give sufficient conditions on $X$ so that $(\mcA,\mcB)$ is a complete right cotorsion pair cut along $\mcS$. More specifically, the goal of this section is to show the following result.

\begin{theorem}
Let $X$ be a semi-separated noetherian scheme, and $U \subseteq X$ be an affine open subset such that every $\mathcal{O}_X(U)$-module has finite Gorenstein flat dimension. Then, 
\[
(\mfGF(X),(\mfF(X) \cap \mfC(X))^\wedge)
\]
is a complete right cotorsion pair cut along $i_\ast(\Qcoh(U))$.
\end{theorem}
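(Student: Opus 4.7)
The strategy is to establish first that $(\mfGF(X), \mfF(X) \cap \mfC(X))$ is a left Frobenius pair in $\Qcoh(X)$, so that Proposition \ref{prop:cuts_from_Frobenius_pairs} (1) yields that $(\mfGF(X), (\mfF(X) \cap \mfC(X))^\wedge)$ is a complete cotorsion pair cut along $\mfGF(X)^\wedge$; and second to show that the hypothesis on $U$ forces $\mcS := i_\ast(\Qcoh(U)) \subseteq \mfGF(X)^\wedge$, whence the restriction principle in Proposition \ref{prop:properties_of_cuts} (1) delivers the desired complete (and in particular complete right) cotorsion pair cut along $\mcS$.

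For the Frobenius pair in $\Qcoh(X)$, left thickness of $\mfGF(X)$ is a consequence of \cite{CET}, and $\mfF(X) \cap \mfC(X)$ is clearly closed under direct summands. Relative $\mfGF(X)$-injectivity of $\mfF(X) \cap \mfC(X)$ is obtained by combining the heredity of the flat cotorsion pair $(\mfF(X),\mfC(X))$ (see \cite[Corollary 4.2]{EE} and \cite[Lemma A.1]{EfimovPositselski}) with the definition of Gorenstein flatness via totally acyclic flat complexes: given $W \in \mfF(X) \cap \mfC(X)$ and $G = Z_0(\mathscr{F}_\bullet) \in \mfGF(X)$, iterated dimension shifting along $\mathscr{F}_\bullet$ yields $\Ext^i_{\Qcoh(X)}(G,W) = 0$ for all $i \geq 1$. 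The relative cogenerator property of $\mfF(X) \cap \mfC(X)$ in $\mfGF(X)$ is then extracted from the Christensen--Estrada--Thompson machinery \cite{CET}.

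Since $X$ is semi-separated and $U$ affine, the open immersion $i\colon U \hookrightarrow X$ is affine and flat, so both $i_\ast$ and $i^\ast$ are exact with $i^\ast i_\ast$ naturally isomorphic to the identity on $\Qcoh(U)$. The key fact is that $i_\ast$ preserves Gorenstein flatness: if $\mathscr{F}_\bullet$ is a totally acyclic flat complex in $\Qcoh(U)$, then $i_\ast \mathscr{F}_\bullet$ is an acyclic complex of flats on $X$, and for every injective $\mathscr{I} \in \Qcoh(X)$ the projection formula $i_\ast(\mathscr{F}_n) \otimes_{\mathcal{O}_X} \mathscr{I} \cong i_\ast(\mathscr{F}_n \otimes_{\mathcal{O}_U} i^\ast \mathscr{I})$ together with $i^\ast$ preserving injective $\mathcal{O}$-modules (since its right adjoint $i_\ast$ is exact) shows that $i_\ast \mathscr{F}_\bullet \otimes_{\mathcal{O}_X} \mathscr{I}$ remains acyclic. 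Under the hypothesis on $U$, every $\mathscr{N} \in \Qcoh(U) \cong \Mod(R)$ with $R := \mathcal{O}_X(U)$ admits a finite $\mcGF(R)$-resolution, which $i_\ast$ transports to a finite $\mfGF(X)$-resolution of $i_\ast \mathscr{N}$; hence $\mcS \subseteq \mfGF(X)^\wedge$ as required.

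The main technical obstacle is the sheaf-level Frobenius pair statement of the second paragraph, which is a nontrivial analog of \cite[Proposition 6.17]{BMPS}; the other subtle point, the preservation of Gorenstein flatness by $i_\ast$, reduces to the projection formula and to $i^\ast$ preserving injective $\mathcal{O}$-modules, both standard but requiring careful verification in the semi-separated noetherian setting.
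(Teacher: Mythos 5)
Your route is genuinely different from the paper's. The paper never establishes a global Frobenius pair on $\Qcoh(X)$; instead it verifies the three conditions of (the dual of) Proposition \ref{prop:characterization_ccp} directly, working \emph{downstairs}: it uses the adjunction isomorphism $\Ext^i_X(\mathscr{A},i_\ast(\mathscr{N})) \cong \Ext^i_{\mathcal{O}_X(U)}(i^\ast(\mathscr{A})(U),\mathscr{N}(U))$ to reduce the orthogonality to the known module-level left Frobenius pair $(\mathcal{GF}(R),\mcF(R)\cap\mcC(R))$ over $R=\mathcal{O}_X(U)$, and it builds the preenvelope of each $i_\ast(\mathscr{N})$ by constructing it in $\Mod(R)$ and pushing forward (citing Murfet--Salarian for $i_\ast$ preserving Gorenstein flatness). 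In particular the paper only ever proves the \emph{local} orthogonality $\Ext^1_X(\mfGF(X),(\mfF(X)\cap\mfC(X))^\wedge\cap i_\ast(\Qcoh(U)))=0$. You instead work \emph{upstairs}: a sheaf-level left Frobenius pair $(\mfGF(X),\mfF(X)\cap\mfC(X))$ plus Proposition \ref{prop:cuts_from_Frobenius_pairs}(1) and the restriction property of Proposition \ref{prop:properties_of_cuts}(1). This is a cleaner deduction from the general theory and yields a stronger conclusion (a two-sided complete cut cotorsion pair), but it requires the stronger global input $\Ext^1_X(\mfGF(X),\mfF(X)\cap\mfC(X))=0$ together with the cogenerator property; both do follow from \cite{CET} (the hereditary complete cotorsion pair $(\mfGF(X),\mfGF(X)^{\perp_1})$ with core $\mfF(X)\cap\mfC(X)$), so the strategy is sound, just heavier on outside input than the paper's.

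Two of your supporting justifications, however, do not work as written. First, ``iterated dimension shifting along $\mathscr{F}_\bullet$'' only gives $\Ext^i_X(G,W)\cong\Ext^1_X(Z_{i-1}(\mathscr{F}_\bullet),W)$, i.e., it reduces everything to the case $i=1$ for a Gorenstein flat cycle against a flat cotorsion sheaf --- and that base case is precisely the nontrivial content (a \v{S}aroch--\v{S}\v{t}ov\'{\i}\v{c}ek/CET-type theorem, since Gorenstein flatness is defined by a tensor condition, not a $\Hom$ condition). You must cite it, not derive it formally. Second, the claim that ``$i^\ast$ preserves injectives since its right adjoint $i_\ast$ is exact'' has the adjunctions backwards: exactness of a \emph{left} adjoint is what makes a functor preserve injectives, and $i_\ast$ is the right adjoint of $i^\ast$. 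The fact you need (restriction to an open of a locally noetherian scheme preserves injective quasi-coherent sheaves, hence $i_\ast$ preserves Gorenstein flatness) is true and is exactly what the paper extracts from \cite[Lemma 4.8]{MurfetSalarian}; replace your argument by that citation.
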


In order to prove this theorem, according to the dual of Proposition \ref{prop:characterization_ccp}, we need to show the following:
\begin{enumerate}
\item $(\mfF(X) \cap \mfC(X))^\wedge$ is closed under direct summands.

\item $\Ext^1_X(\mfGF(X),(\mfF(X) \cap \mfC(X))^\wedge \cap i_\ast(\Qcoh(U))) = 0$.\footnote{By $\Ext^1_X(-,-)$ we mean the extension bifunctor $\Ext^1_{\Qcoh(X)}(-,-)$.} 

\item For every $\mathscr{G} \in i_\ast(\Qcoh(U))$ there exists a short exact sequence 
\[
0 \to \mathscr{G} \to \mathscr{B} \to \mathscr{A} \to 0
\]
with $\mathscr{A} \in \mfGF(X)$ and $\mathscr{B} \in (\mfF(X) \cap \mfC(X))^\wedge$. 
\end{enumerate}

Property (1) is easy to note on any scheme $X$ from our results. First, it is clear that $\mfF(X) \cap \mfC(X)$ is self-orthogonal and closed under extensions and direct summands. Then by Lemma \ref{lem:main_lemma} (3) we have that $(\mfF(X) \cap \mfC(X))^\wedge$ is closed under direct summands. For (2), we need $X$ to be a semi-separated scheme.\footnote{Recall that $X$ is \emph{semi-separated} if it has an open cover by affine open sets with affine intersections. See for instance Neeman's \cite{Neeman}.}

\begin{proposition}\label{prop:property2}
Let $X$ be a quasi-compact\footnote{Recall from \cite[Definition 3.16 (b)]{GW} that a scheme $(X,\mathcal{O}_X)$ is quasi-compact if the underlying topological space $X$ is quasi-compact, that is, if any open covering of $X$ has a finite subcovering.} and semi-separated scheme, and $U \subseteq X$ an affine open. Then, $\Ext^i_X(\mathscr{A},\mathscr{B}) = 0$ for every $\mathscr{A} \in \mfGF(X)$, $\mathscr{B} \in (\mfF(X) \cap \mfC(X))^\wedge \cap i_\ast(\Qcoh(U))$ and $i \geq 1$.
\end{proposition}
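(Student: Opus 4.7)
The plan is to use the adjunction $(i^\ast, i_\ast)$ associated with the inclusion $i\colon U \hookrightarrow X$ to transport the Ext computation on $X$ into the category $\Mod(R)$ with $R := \mathcal{O}_X(U)$, and then to apply the left Frobenius pair $(\mathcal{GF}(R), \mcF(R) \cap (\mcF(R))^{\perp_1})$ of \cite[Proposition 6.17]{BMPS} at the module level. Since $X$ is semi-separated, $U \cap V$ is affine for every affine open $V \subseteq X$, so $i$ is an affine morphism; consequently $i_\ast$ is exact on quasi-coherent sheaves, and being right adjoint to the exact functor $i^\ast$ it also preserves injectives. Writing $\mathscr{B} = i_\ast \mathscr{N}$ and applying $i_\ast$ to an injective resolution of $\mathscr{N}$ in $\Qcoh(U)$ yields an injective resolution of $\mathscr{B}$, whence
\[
\Ext^i_X(\mathscr{A}, \mathscr{B}) \;\cong\; \Ext^i_U(i^\ast \mathscr{A}, \mathscr{N}) \;\cong\; \Ext^i_R(G, N),
\]
where $G, N \in \Mod(R)$ correspond to $i^\ast \mathscr{A}$ and $\mathscr{N}$ under $\Qcoh(U) \simeq \Mod(R)$.

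Next I would verify that $G \in \mathcal{GF}(R)$. Writing $\mathscr{A} = Z_0(\mathscr{F}_\bullet)$ with $\mathscr{F}_\bullet$ an exact complex of flat sheaves on $X$ such that $\mathscr{F}_\bullet \otimes_{\mathcal{O}_X} \mathscr{I}$ is exact for every injective sheaf $\mathscr{I}$, the exact and flatness-preserving functor $i^\ast$ gives an exact complex $i^\ast \mathscr{F}_\bullet$ of flat sheaves on $U$ with $i^\ast \mathscr{A} = Z_0(i^\ast \mathscr{F}_\bullet)$. For any injective $\mathscr{J}$ on $U$, the sheaf $i_\ast \mathscr{J}$ is injective on $X$, so $\mathscr{F}_\bullet \otimes_{\mathcal{O}_X} i_\ast \mathscr{J}$ is exact; since $i^\ast$ preserves tensor products and $i^\ast i_\ast \simeq \mathrm{id}$ on $\Qcoh(U)$, the complex $i^\ast \mathscr{F}_\bullet \otimes_{\mathcal{O}_U} \mathscr{J}$ is also exact, so $G \in \mathcal{GF}(R)$.

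The heart of the argument is to show $N \in \bigl(\mcF(R) \cap (\mcF(R))^{\perp_1}\bigr)^\wedge$. For any flat $R$-module $F$, semi-separatedness makes $i_\ast \widetilde{F}$ flat on $X$, and the derived adjunction (again using exactness of $i_\ast$ and its preservation of injectives) together with $i^\ast i_\ast \widetilde{F} \cong \widetilde{F}$ delivers
\[
\Ext^i_R(F, N) \;\cong\; \Ext^i_X(i_\ast \widetilde{F}, \mathscr{B}) \;=\; 0 \quad \text{for every } i \geq 1,
\]
the final vanishing following from the hereditary complete cotorsion pair $(\mfF(X), \mfC(X))$ together with the inclusion $(\mfF(X) \cap \mfC(X))^\wedge \subseteq \mfC(X)$, which holds because $\mfC(X)$ is coresolving. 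In particular $N$ is cotorsion. Moreover, restricting a finite flat-cotorsion resolution of $\mathscr{B}$ along the exact functor $i^\ast$ yields a finite resolution of $\mathscr{N}$ on $U$ by flat sheaves, so $\mathrm{fd}_R(N) < \infty$; combining this with the completeness of the flat-cotorsion pair in $\Mod(R)$ allows one to assemble a finite resolution of $N$ by modules in $\mcF(R) \cap (\mcF(R))^{\perp_1}$.

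Finally, by \cite[Proposition 6.17]{BMPS} the pair $(\mathcal{GF}(R), \mcF(R) \cap (\mcF(R))^{\perp_1})$ is left Frobenius, and Theorem~\ref{theo:correspondence_1} (cf.\ \cite[Theorem 5.4]{BMPS}) identifies $(\mathcal{GF}(R), (\mcF(R) \cap (\mcF(R))^{\perp_1})^\wedge)$ as a left weak AB-context, whence \cite[Proposition 5.5]{BMPS} gives $\id_{\mathcal{GF}(R)}\bigl((\mcF(R) \cap (\mcF(R))^{\perp_1})^\wedge\bigr) = 0$. Thus $\Ext^i_R(G,N) = 0$ for every $i \geq 1$, and combined with the first paragraph this yields the asserted vanishing. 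The main obstacle is the last assertion of the third paragraph, namely rigorously upgrading ``cotorsion plus finite flat dimension'' to ``finite flat-cotorsion dimension'' for $N$ in $\Mod(R)$; this is where the special structure of $N = \Gamma(U,\mathscr{N})$ coming from a globally flat-cotorsion-finite sheaf on $X$, together with the completeness of the Enochs cotorsion pair on $R$, must be leveraged.
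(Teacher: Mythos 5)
Your strategy is exactly the one the paper uses: transport the computation to $\Mod(\mathcal{O}_X(U))$ via the adjunction $(i^\ast,i_\ast)$ and the affine equivalence, check that $i^\ast\mathscr{A}$ corresponds to a Gorenstein flat module and $\mathscr{N}$ to a module in $(\mcF(R)\cap\mcC(R))^\wedge$ where $R=\mathcal{O}_X(U)$, and finish with the left Frobenius pair $(\mathcal{GF}(R),\mcF(R)\cap\mcC(R))$. Your verification that $i^\ast\mathscr{A}$ is Gorenstein flat (via $i_\ast$ preserving injectives and $i^\ast i_\ast\cong\mathrm{id}$) is a clean substitute for the paper's citation of \cite[Proposition 2.10]{CEI}, and your direct proof that $\Ext^i_R(F,N)=0$ for \emph{all} $i\geq 1$ is slightly stronger than what the paper records in its base case.

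The one genuine gap is the step you yourself flag: passing from ``$N\in\mcF(R)^{\perp}$ and $N\in\mcF(R)^{\wedge}$'' to ``$N\in(\mcF(R)\cap\mcC(R))^{\wedge}$''. This does not require any special structure of $N=\Gamma(U,\mathscr{N})$ beyond the two facts you have already established; it is a purely module-theoretic consequence of Auslander--Buchweitz approximation theory, namely the equality
\[
(\mcF(R)\cap\mcC(R))^{\wedge}\;=\;\mcF(R)^{\perp}\cap\mcF(R)^{\wedge},
\]
which holds by \cite[Proposition 2.13]{BMPS} because $\mcF(R)\cap\mcC(R)$ is an $\mcF(R)$-injective relative cogenerator in $\mcF(R)$ closed under direct summands (this is where completeness and heredity of the Enochs cotorsion pair enter, exactly as you suspected). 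The paper's proof invokes precisely this identity, on both the scheme level and the ring level. With that citation in place your argument closes; note also that the containment you need for the identity is total orthogonality $N\in\mcF(R)^{\perp}$, not merely cotorsion-ness, which is why your strengthening to all $i\geq 1$ is not optional but essential.
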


\begin{proof}
First, let us write $\mathscr{B} \simeq i_\ast(\mathscr{N})$ where $\mathscr{N} \in \Qcoh(U)$. Since $X$ is semi-separated, it is known by Gillespie's \cite[Lemma 6.5]{GillespieKaplansky} that there is a natural adjunction $\Ext^i_X(\mathscr{A},\mathscr{B}) = \Ext^i_X(\mathscr{A},i_\ast(\mathscr{N})) \cong \Ext^i_U(i^\ast(\mathscr{A}),\mathscr{N})$. On the other hand, since $U$ is an open affine, by a well-known result of Grothendieck (see for instance \cite[Corollary II 5.5]{Hartshorne}) the categories $\Qcoh(U)$ and $\Mod(\mathcal{O}_X(U))$ are equivalent via the mapping $\mathscr{N} \mapsto \mathscr{N}(U)$, and so we can write the previous natural isomorphism as $\Ext^i_X(\mathscr{A},\mathscr{B}) \cong \Ext^i_{\mathcal{O}_X(U)}(i^\ast(\mathscr{A})(U),\mathscr{N}(U))$. The result will follow after showing that $i^\ast(\mathscr{A})(U)$ is a Gorenstein flat $\mathcal{O}_X(U)$-module and that $\mathscr{N}(U) \in (\mathcal{F}(\mathcal{O}_X(U)) \cap \mathcal{C}(\mathcal{O}_X(U)))^\wedge$. 
\begin{itemize}
\item $i^\ast(\mathscr{A})(U) \in \mathcal{GF}(\mathcal{O}_X(U))$: First, we know that $\mathscr{A} = Z_0(\mathscr{F}_\bullet)$ for an exact complex $\mathscr{F}_\bullet$ of flat sheaves on $X$ such that $\mathscr{F}_\bullet \otimes \mathscr{I}$ is exact, for every injective sheaf $\mathscr{I} \in \Qcoh(X)$. By the implication (i) $\Rightarrow$ (ii) in \cite[Proposition 2.10]{CEI} we have that $\mathscr{F}_\bullet(U)$ is an exact complex of flat $\mathcal{O}_X(U)$-modules, such that $\mathscr{F}_\bullet(U) \otimes_{\mathcal{O}_X(U)} I$ is an exact complex of abelian groups for every injective $\mathcal{O}_X(U)$-module $I$, that is, $Z_0(\mathscr{F}_\bullet(U))$ is a Gorenstein flat $\mathcal{O}_X(U)$-module. On the other hand, the functor $i^\ast$ is the restriction on $U$, and so $i^\ast(\mathscr{A})(U) = Z_0(\mathscr{F}_\bullet)|_U(U) = Z_0(\mathscr{F}_\bullet)(U) = Z_0(\mathscr{F}_\bullet(U)) \in \mathcal{GF}(\mathcal{O}_X(U))$.

\item $\mathscr{N}(U) \in (\mcF(\mathcal{O}_X(U)) \cap \mathcal{C}(\mathcal{O}_X(U)))^\wedge$: We proof this claim by induction on the flat-cotorsion resolution dimension of $\mathscr{B}$. So suppose first that $\mathscr{B} = i_\ast(\mathscr{N}) \in \mfF(X) \cap \mfC(X)$. To see that $\mathscr{N}(U)$ is a flat $\mathcal{O}_X(U)$-module, we verify that the functor $\mathscr{N}(U) \otimes_{\mathcal{O}_X(U)} -$ is exact. Since $i_\ast(\mathscr{N})  \otimes -$ is an exact functor and $U$ is an affine open set, we have that 
\[
(i_\ast(\mathscr{N}) \otimes -)(U) = i_\ast(\mathscr{N})(U) \otimes_{\mathcal{O}_X(U)} - = \mathscr{N}(U) \otimes_{\mathcal{O}_X(U)} -
\]
is exact by \cite[Proof of Proposition 3.3]{EEO}. We now show that $\mathscr{N}(U)$ is also a cotorsion $\mathcal{O}_X(U)$-module. For let $F \in \mcF(\mathcal{O}_X(U))$ and consider the sheaf on $U$, $\tilde{F} \in \Qcoh(U)$, associated to $F$ (see \cite[Section II 5.]{Hartshorne}). Note that $\tilde{F} \simeq i^\ast(i_\ast(\tilde{F}))$, and so
\begin{align*}
\Ext^1_{\mathcal{O}_X(U)}(F,\mathscr{N}(U)) & \cong \Ext^1_U(\tilde{F},\mathscr{N}) \cong \Ext^1_U(i^\ast(i_\ast(\tilde{F})),\mathscr{N}) \\
& \cong \Ext^1_X(i_\ast(\tilde{F}),i_\ast(\mathscr{N}))
\end{align*}
Now since $F$ is a flat $\mathcal{O}_X(U)$-module, one can note that $i_\ast(\tilde{F})$ is a flat sheaf on $X$. Also, $i_\ast(\mathscr{N})$ is a cotorsion sheaf on $X$ by assumption. Hence, we obtain $\Ext^1_X(i_\ast(\tilde{F}),i_\ast(\mathscr{N})) = 0$ and so $\mathscr{N}(U)$ is a cotorsion $\mathcal{O}_X(U)$-module. 

So far we have shown that $\mathscr{N}(U) \in \mcF(\mathcal{O}_X(U)) \cap \mcC(\mathcal{O}_X(U))$ if $i_\ast(\mathscr{N}) \in \mfF(X) \cap \mfC(X)$. The more general case where $i_\ast(\mathscr{N})$ has positive flat-cotorsion dimension will follow by using Auslander-Buchweitz Approximation Theory. Specifically, we shall use the equalities
\begin{align}
(\mfF(X) \cap \mfC(X))^\wedge & = \mfF(X)^\perp \cap \mfF(X)^\wedge, \label{eqn:ABscheme} \\
(\mcF(\mathcal{O}_X(U)) \cap \mcC(\mathcal{O}_X(U)))^\wedge & = \mcF(\mathcal{O}_X(U))^\perp \cap \mcF(\mathcal{O}_X(U))^\wedge. \label{eqn:ABring}
\end{align}
In order to prove \eqref{eqn:ABscheme}, note first that since $X$ is a quasi-compact and semi-separated scheme, we have by \cite[Corollary 4.2]{EE} that $(\mfF(X),\mfC(X))$ is a complete cotorsion pair in $\Qcoh(X)$. On the other hand, by \cite[Lemma A.1]{EfimovPositselski} we know that the category $\Qcoh(X)$ has a flat generator. Then, by \cite[Lemma 4.25]{SaorinStovicek} we finally have that $(\mfF(X),\mfC(X))$ is also a hereditary cotorsion pair in $\Qcoh(X)$. It follows that the classes $\mfF(X)$ and $\mfF(X) \cap \mfC(X)$ satisfy the conditions of \cite[Proposition 2.13]{BMPS}, and so \eqref{eqn:ABscheme} holds. The equality \eqref{eqn:ABring} is simply the affine case of \eqref{eqn:ABscheme}. 

Let us now show that $\mathscr{N}(U) \in \mcF(\mathcal{O}_X(U))^\perp \cap \mcF(\mathcal{O}_X(U))^\wedge$. We already know from previous arguments that $\mathscr{N}(U) \in \mcF(\mathcal{O}_X(U))^\perp$. Now let us check $\mathscr{N}(U) \in \mcF(\mathcal{O}_X(U))^\wedge$. Since $i_\ast(\mathscr{N}) \in \mfF(X)^\wedge$, there is an exact sequence $0 \to \mathscr{F}_m \to \mathscr{F}_{m-1} \to \cdots \to \mathscr{F}_1 \to \mathscr{F}_0 \to i_\ast(\mathscr{N})  \to 0$, for some $m > 0$, where $\mathscr{F}_k$ is a flat sheaf on $X$ for every $0 \leq k \leq m$. Apply now the exact functor $i^\ast$ (see \cite[Section 6.3]{GillespieKaplansky}) to obtain the following exact sequence in $\Qcoh(U)$:
\[
\parbox{4.15in}{
\begin{tikzpicture}[description/.style={fill=white,inner sep=2pt}] 
\matrix (m) [ampersand replacement=\&, matrix of math nodes, row sep=2em, column sep=1em, text height=1.25ex, text depth=0.25ex] 
{ 
0 \& i^\ast(\mathscr{F}_m) \& i^\ast(\mathscr{F}_{m-1}) \& \cdots \& i^\ast(\mathscr{F}_1) \& i^\ast(\mathscr{F}_0) \& i^\ast(i_\ast(\mathscr{N})) \& 0 \\
0 \& \mathscr{F}_m|_U \& \mathscr{F}_{m-1}|_U \& \cdots \& \mathscr{F}_1|_U \& \mathscr{F}_0|_U \& \mathscr{N} \& 0 \\
}; 
\path[->] 
(m-1-1) edge (m-1-2) (m-1-2) edge (m-1-3) (m-1-3) edge (m-1-4) (m-1-4) edge (m-1-5) (m-1-5) edge (m-1-6) (m-1-6) edge (m-1-7) (m-1-7) edge (m-1-8) 
(m-2-1) edge (m-2-2) (m-2-2) edge (m-2-3) (m-2-3) edge (m-2-4) (m-2-4) edge (m-2-5) (m-2-5) edge (m-2-6) (m-2-6) edge (m-2-7) (m-2-7) edge (m-2-8)
; 
\path[-,font=\scriptsize]
(m-1-2) edge [double, thick, double distance=2pt] (m-2-2)
(m-1-3) edge [double, thick, double distance=2pt] (m-2-3)
(m-1-5) edge [double, thick, double distance=2pt] (m-2-5)
(m-1-6) edge [double, thick, double distance=2pt] (m-2-6)
(m-1-7) edge [double, thick, double distance=2pt] (m-2-7)
;
\end{tikzpicture}
}
\]
Since the previous sequence is formed by quasi-coherent sheaves on $U$, it remains exact after applying the functor of global sections $\Gamma(U,-)$ (see \cite[Proposition II 5.6]{Hartshorne}):
\[
\parbox{4.15in}{
\footnotesize
\begin{tikzpicture}[description/.style={fill=white,inner sep=2pt}] 
\matrix (m) [ampersand replacement=\&, matrix of math nodes, row sep=2.5em, column sep=1em, text height=1.25ex, text depth=0.25ex] 
{ 
0 \& \Gamma(U,\mathscr{F}_m|_U) \& \Gamma(U,\mathscr{F}_{m-1}|_U) \& \cdots \& \Gamma(U,\mathscr{F}_1|_U) \& \Gamma(U,\mathscr{F}_0|_U) \& \Gamma(U,\mathscr{N}) \& 0 \\
0 \& \mathscr{F}_m(U) \& \mathscr{F}_{m-1}(U) \& \cdots \& \mathscr{F}_1(U) \& \mathscr{F}_0(U) \& \mathscr{N}(U) \& 0 \\
}; 
\path[->] 
(m-1-1) edge (m-1-2) (m-1-2) edge (m-1-3) (m-1-3) edge (m-1-4) (m-1-4) edge (m-1-5) (m-1-5) edge (m-1-6) (m-1-6) edge (m-1-7) (m-1-7) edge (m-1-8) 
(m-2-1) edge (m-2-2) (m-2-2) edge (m-2-3) (m-2-3) edge (m-2-4) (m-2-4) edge (m-2-5) (m-2-5) edge (m-2-6) (m-2-6) edge (m-2-7) (m-2-7) edge (m-2-8)
; 
\path[-,font=\scriptsize]
(m-1-2) edge [double, thick, double distance=2pt] (m-2-2)
(m-1-3) edge [double, thick, double distance=2pt] (m-2-3)
(m-1-5) edge [double, thick, double distance=2pt] (m-2-5)
(m-1-6) edge [double, thick, double distance=2pt] (m-2-6)
(m-1-7) edge [double, thick, double distance=2pt] (m-2-7)
;
\end{tikzpicture}
}
\]
Here, each $\mathscr{F}_k(U)$ is a flat $\mathcal{O}_X(U)$-module by the case $m = 0$ settled previously. Then, $\mathscr{N}(U) \in \mathcal{F}(\mathcal{O}_X(U))^\wedge$. Hence, from \eqref{eqn:ABring} we can conclude that $\mathscr{N}(U) \in (\mcF(\mathcal{O}_X(U)) \cap \mcC(\mathcal{O}_X(U)))^\wedge$. 
\end{itemize}

Therefore, since $i^\ast(\mathscr{A})(U) \in \mathcal{GF}(\mathcal{O}_X(U))$ and $\mathscr{N}(U) \in (\mcF(\mathcal{O}_X(U)) \cap \mcC(\mathcal{O}_X(U)))^\wedge$, we conclude that $\Ext^i_X(\mathscr{A},\mathscr{B}) \cong \Ext^i_{\mathcal{O}_X(U)}(i^\ast(\mathscr{A})(U),\mathscr{N}(U)) = 0$.
\end{proof}

\begin{proposition}
Let $X$ be a noetherian\footnote{Recall that $X$ is noetherian if it has a finite covering by affine open sets $\text{Spec}(A_i)$, where each $A_i$ is a (commutative) noetherian ring.} semi-separated scheme, and $U \subseteq X$ be an open affine subset such that every $\mathcal{O}_X(U)$-module has finite Gorenstein flat dimension. Then, for every $\mathscr{S} \in i_\ast(\Qcoh(U))$ there exists a short exact sequence
\[
0 \to \mathscr{S} \to \mathscr{B} \to \mathscr{A} \to 0
\]
with $\mathscr{A} \in \mfGF(X)$ and $\mathscr{B} \in (\mfF(X) \cap \mfC(X))^\wedge$.
\end{proposition}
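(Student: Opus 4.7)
The plan is to transfer an approximation constructed at the ring level up to $X$ via the direct image functor $i_\ast$. Write $\mathscr{S} \simeq i_\ast(\mathscr{N})$ with $\mathscr{N} \in \Qcoh(U)$, set $R := \mathcal{O}_X(U)$, and let $N := \mathscr{N}(U) \in \Mod(R)$ be the corresponding module under the equivalence $\Qcoh(U) \simeq \Mod(R)$. Since $X$ is semi-separated and $U$ is affine, the inclusion $i \colon U \hookrightarrow X$ is an affine morphism, so $i_\ast \colon \Qcoh(U) \to \Qcoh(X)$ is exact and the projection formula $i_\ast(\mathscr{G}) \otimes_{\mathcal{O}_X} \mathscr{M} \simeq i_\ast(\mathscr{G} \otimes_{\mathcal{O}_U} i^\ast \mathscr{M})$ holds. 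These are the only geometric inputs beyond those already used in the proof of Proposition~\ref{prop:property2}.

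First I would construct the approximation at the ring level. By \cite[Proposition 6.17]{BMPS} together with \cite[Corollary 4.12]{SS} (both cited in the introductory discussion of this subsection), $(\mathcal{GF}(R), \mathcal{F}(R) \cap \mathcal{C}(R))$ is a left Frobenius pair in $\Mod(R)$. The hypothesis on $U$ says exactly that $\Mod(R) = \mathcal{GF}(R)^\wedge$, so in particular $N \in \mathcal{GF}(R)^\wedge$; \cite[Theorem 2.8]{BMPS} then produces a short exact sequence
\[
0 \to N \to Y \to G \to 0
\]
with $G \in \mathcal{GF}(R)$ and $Y \in (\mathcal{F}(R) \cap \mathcal{C}(R))^\wedge$. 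Applying the exact functor $i_\ast$ to the image of this sequence in $\Qcoh(U)$ yields the candidate sequence $0 \to \mathscr{S} \to i_\ast(\widetilde{Y}) \to i_\ast(\widetilde{G}) \to 0$ in $\Qcoh(X)$. The task then reduces to verifying that $i_\ast(\widetilde{G}) \in \mfGF(X)$ and that $i_\ast(\widetilde{Y}) \in (\mfF(X) \cap \mfC(X))^\wedge$.

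For the second containment, using the projection formula together with the exactness of $i^\ast$ and $i_\ast$, the functor $i_\ast$ sends flat $R$-modules to flat sheaves on $X$; combined with the adjunction $\Ext^1_X(\mathscr{F}, i_\ast(\widetilde{C})) \simeq \Ext^1_R((i^\ast \mathscr{F})(U), C)$ exploited in the proof of Proposition~\ref{prop:property2} and the preservation of flatness by $i^\ast$, one sees that $i_\ast$ also sends cotorsion $R$-modules to cotorsion sheaves. A routine induction on the flat-cotorsion resolution dimension, using the exactness of $i_\ast$, then shows that $i_\ast$ carries $(\mathcal{F}(R) \cap \mathcal{C}(R))^\wedge$ into $(\mfF(X) \cap \mfC(X))^\wedge$, as required.

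The main obstacle is checking that $i_\ast(\widetilde{G}) \in \mfGF(X)$. Writing $G = Z_0(F_\bullet)$ for an exact complex $F_\bullet$ of flat $R$-modules with $F_\bullet \otimes_R I$ exact for every injective $R$-module $I$, the complex $i_\ast(\widetilde{F}_\bullet)$ is exact with flat terms and $0$th cycle $i_\ast(\widetilde{G})$. By the projection formula,
\[
i_\ast(\widetilde{F}_\bullet) \otimes_{\mathcal{O}_X} \mathscr{I} \simeq i_\ast\bigl(\widetilde{F}_\bullet \otimes_{\mathcal{O}_U} i^\ast \mathscr{I}\bigr),
\]
so exactness on tensoring with an injective $\mathscr{I} \in \Qcoh(X)$ reduces to exactness of $F_\bullet \otimes_R (i^\ast \mathscr{I})(U)$. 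Here the noetherian hypothesis on $X$ enters: by the Matlis--Gabriel classification of injective quasi-coherent sheaves on a noetherian scheme, $\mathscr{I}$ is a direct sum of sheaves of the form $\iota_{x,\ast} E(k(x))$ with $x \in X$, and its restriction to $U$ is the direct sum of those summands with $x \in U$; hence $(i^\ast \mathscr{I})(U)$ is an injective $R$-module, and the required exactness follows from the Gorenstein flat defining property of $F_\bullet$.
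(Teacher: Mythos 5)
Your proposal is correct and follows essentially the same route as the paper: realize $\mathscr{S}$ as $i_\ast(\widetilde{N})$, produce the special preenvelope $0 \to N \to Y \to G \to 0$ at the ring level from the left Frobenius pair $(\mathcal{GF}(R),\mathcal{F}(R)\cap\mathcal{C}(R))$ and the finiteness hypothesis on Gorenstein flat dimension, and push it forward along the exact functor $i_\ast$, checking that $i_\ast$ carries flat (resp.\ cotorsion, resp.\ Gorenstein flat) modules to flat (resp.\ cotorsion, resp.\ Gorenstein flat) sheaves. The single point of divergence is the verification that $i_\ast(\widetilde{G}) \in \mfGF(X)$: the paper invokes \cite[Lemma 4.8]{MurfetSalarian} as a black box, whereas you reprove the needed statement directly via the projection formula for the affine morphism $i$ together with the Matlis--Gabriel decomposition of injective objects of $\Qcoh(X)$ on a noetherian scheme (the restriction of an indecomposable injective $\iota_{x,\ast}E(k(x))$ to $U$ vanishes unless $x \in U$, and a direct sum of injectives over the noetherian ring $\mathcal{O}_X(U)$ is injective). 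That inline argument is sound and has the merit of making explicit exactly where the noetherian hypothesis on $X$ is used; the remaining steps (flatness and cotorsion of the pushforward via the $\Ext$-adjunction, and termwise application of $i_\ast$ to the finite flat--cotorsion resolution) coincide with the paper's.
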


\begin{proof}
Let us write $\mathscr{S} \simeq i_\ast(\mathscr{N})$ with $\mathscr{N} \in \Qcoh(U)$. First, note by \cite[Proposition II 5.4]{Hartshorne} that we can write $\mathscr{N} \simeq \tilde{N}$ for some $\mathcal{O}_X(U)$-module $N \in \Mod(\mathcal{O}_X(U))$. Since $N \in \Mod(\mathcal{O}_X(U))$ has finite Gorenstein flat dimension by assumption, and $(\mathcal{GF}(\mathcal{O}_X(U)),\mathcal{F}(\mathcal{O}_X(U)) \cap \mathcal{C}(\mathcal{O}_X(U)))$ is a left Frobenius pair in $\Mod(\mathcal{O}_X(U))$, there exists a  short exact sequence $0 \to N \to B \to A \to 0$ with $A \in \mathcal{GF}(\mathcal{O}_X(U))$ and $B \in (\mathcal{F}(\mathcal{O}_X(U)) \cap \mathcal{C}(\mathcal{O}_X(U)))^\wedge$. The previous induces by \cite[Proposition II 5.2]{Hartshorne} an exact sequence $0 \to \tilde{N} \to \tilde{B} \to \tilde{A} \to 0$ of associated sheaves on $U$. Now since the functor $i_\ast$ is exact by \cite[Lemma 6.5]{GillespieKaplansky}, the previous sequence induces in turn a short exact sequence $0 \to \mathscr{S} \to i_\ast(\tilde{B}) \to i_\ast(\tilde{A}) \to 0$ of sheaves on $X$. The result will follow after showing that $i_\ast(\tilde{A}) \in \mfGF(X)$ and $i_\ast(\tilde{B}) \in (\mfF(X) \cap \mfC(X))^\wedge$:
\begin{itemize}
\item $i_\ast(\tilde{A})$ is a Gorenstein flat sheaf on $X$: Since $A$ is a Gorenstein flat $\mathcal{O}_X(U)$-module, we have that $A = Z_0(F_\bullet)$ for some exact complex $F_\bullet$ of flat $\mathcal{O}_X(U)$-modules such that $F_\bullet \otimes_{\mathcal{O}_X(U)} I$ is exact for every injective $I \in \Mod(\mathcal{O}_X(U))$. Using the assumption that $X$ is a noetherian semi-separated scheme, we can apply \cite[Lemma 4.8]{MurfetSalarian} to deduce that $i_\ast(\tilde{F_\bullet})$ is an exact complex of flat sheaves on $X$ such that $i_\ast(\tilde{F_\bullet}) \otimes \mathscr{I}$ is exact for every injective sheaf $\mathscr{I} \in \Qcoh(X)$. Hence, $i_\ast(\tilde{A}) = Z_0(i_\ast(\tilde{F_\bullet}))$ is a Gorenstein flat sheaf on $X$.  

\item $i_\ast(\tilde{B})$ has finite flat-cotorsion dimension: Since $B \in (\mathcal{F}(\mathcal{O}_X(U)) \cap \mathcal{C}(\mathcal{O}_X(U)))^\wedge$, we have that there is a flat-cotorsion resolution of $B$,
\[
0 \to F_m \to F_{m-1} \to \cdots \to F_1 \to F_0 \to B \to 0,
\]
in $\Mod(\mathcal{O}_X(U))$, which induces an exact sequence
\begin{align}\label{eqn:flat-cot-res}
0 \to i_\ast(\tilde{F_m}) \to i_\ast(\tilde{F_{m-1}}) \to \cdots \to i_\ast(\tilde{F_1}) \to i_\ast(\tilde{F_0}) \to i_\ast(\tilde{B}) \to 0
\end{align}
where each $i_\ast(\tilde{F}_k)$ is a flat sheaf on $X$ by our previous comments in the proof of Proposition \ref{prop:property2}. Moreover, since $U$ is affine, for every flat sheaf $\mathscr{F}$ on $X$ and each $0 \leq k \leq m$ we have that
\[
\Ext^i_X(\mathscr{F},i_\ast(\tilde{F_k})) \cong \Ext^i_U(i^\ast(\mathscr{F}),\tilde{F_k}) \cong \Ext^1_{\mathcal{O}_X(U)}(\mathscr{F}(U),F_k) = 0.
\] 
Hence, \eqref{eqn:flat-cot-res} is a flat-cotorsion resolution of $i_\ast(\tilde{B})$.
\end{itemize}
\end{proof}

%%%%%%%%%%%%%%%%%%%%%%%%%%%%%%%%%%%%%
%%%%%%%%%%%%%%%%%%%%%%%%%%%%%%%%%%%%%

\subsection*{Cut cotorsion pairs and the Finitistic Dimension Conjecture}

Among the homological conjectures studied nowadays, the \textit{Finitistic Dimension Conjecture} has a remarkable importance in representation theory of algebras, as it implies the validity of other well-known conjectures, such as the \textit{Nunke Condition} and the (\textit{generalized}) \textit{Nakayama Conjecture}. The Finitistic Dimension Conjecture was stated by H. Bass in 1960 \cite{Bass}, and it says that the \textit{small finitistic dimension} of an Artin algebra is always finite. This problem still remains open, but has been proved in several cases (see for instance \cite{GKK,GZH,IT}).

In the next lines, we give some examples of cut cotorsion pairs and complete cut cotorsion pairs that arise when studying the finiteness of the big and small finitistic dimensions of a ring. Moreover, in the last part of this section, we shall provide a characterization of the Finitistic Dimension Conjecture in terms of complete cut cotorsion pairs.

In what follows, we let $\mcC$ be an abelian category with enough projective and injective objects. The \emph{finitistic dimension of $\mcC$} is defined as
\[
\text{Findim}(\mcC) := \pd(\mcP^\wedge).
\]
Note that $\mcP^\wedge$ is a resolving class, and since $\mcC$ has enough projectives, one has that $(\mcP^\wedge)^{\perp_1} = (\mcP^\wedge)^{\perp}$. Hence, by setting the classes 
\[
\mcG := (\mcP^\wedge)^{\perp_1} \text{ \ and \ } \mcF := {}^{\perp_1}\mcG = {}^{\perp_1}((\mcP^\wedge)^{\perp_1})
\] 
one forms a hereditary cotorsion pair $(\mcF,\mcG)$, which turns out to be useful for computing the finitistic dimension of $\mcC$, as we show in the following result.

\begin{proposition}\label{prop:findim}
${\rm Findim}(\mcC) = \coresdim_{\mcG}(\mcC) = \pd(\mcF)$.
\end{proposition}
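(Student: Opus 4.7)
The plan is to prove the chain of inequalities
\[
\text{Findim}(\mcC) \;\leq\; \pd(\mcF) \;\leq\; \coresdim_{\mcG}(\mcC) \;\leq\; \text{Findim}(\mcC),
\]
from which the two equalities follow at once. First I would note that $\mcP \subseteq \mcP^\wedge$, so for every $X \in \mcP^\wedge$ and every $Y \in \mcG = (\mcP^\wedge)^{\perp_1}$ one has $\Ext^1_{\mcC}(X,Y) = 0$; hence $\mcP^\wedge \subseteq {}^{\perp_1}\mcG = \mcF$. Taking projective dimensions yields $\text{Findim}(\mcC) = \pd(\mcP^\wedge) \leq \pd(\mcF)$.

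Next I would verify that $(\mcF,\mcG)$ is hereditary. Because $\mcC$ has enough projectives and $\mcP^\wedge$ contains $\mcP$ and is closed under extensions and kernels of epimorphisms between its objects (a standard consequence of dimension shifting), $\mcP^\wedge$ is resolving, and hence $\mcG = (\mcP^\wedge)^{\perp_1}$ equals $(\mcP^\wedge)^{\perp}$, making $(\mcF,\mcG)$ hereditary (as recalled in Remark \ref{rem:cotorsion_pair}(3)). With this in hand, for the second inequality take $F \in \mcF$ and $M \in \mcC$ with $n := \coresdim_{\mcG}(M) < \infty$, together with a $\mcG$-coresolution
\[
0 \to M \to G^0 \to G^1 \to \cdots \to G^n \to 0.
\]
Letting $Z^i$ be the $i$-th cosyzygy, the short exact sequences $0 \to Z^i \to G^i \to Z^{i+1} \to 0$ combined with $\Ext^k_{\mcC}(F,G^i) = 0$ for all $k \geq 1$ (by hereditariness) give, via dimension shifting, the isomorphism $\Ext^{n+1}_{\mcC}(F,M) \cong \Ext^1_{\mcC}(F,G^n) = 0$. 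Hence $\pd(F) \leq n$, and passing to the supremum over $F \in \mcF$ and over $M \in \mcC$ yields $\pd(\mcF) \leq \coresdim_{\mcG}(\mcC)$.

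For the last inequality I may assume $n := \text{Findim}(\mcC) < \infty$. For an arbitrary $M \in \mcC$, take an injective coresolution $0 \to M \to I^0 \to I^1 \to \cdots$ and truncate at the $n$-th cosyzygy to form
\[
0 \to M \to I^0 \to I^1 \to \cdots \to I^{n-1} \to Z^n \to 0.
\]
Since injective objects lie in $\mcG$, it suffices to check $Z^n \in \mcG$. For any $X \in \mcP^\wedge$ we have $\pd(X) \leq n$ and the injectives contribute trivially to $\Ext$, so dimension shifting along the coresolution gives
\[
\Ext^1_{\mcC}(X,Z^n) \cong \Ext^{n+1}_{\mcC}(X,M) = 0,
\]
whence $Z^n \in (\mcP^\wedge)^{\perp_1} = \mcG$. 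Thus $\coresdim_{\mcG}(M) \leq n$ for every $M \in \mcC$, completing the chain. The only delicate point is the hereditariness of $(\mcF,\mcG)$, which is what licenses the dimension-shift arguments in both directions; everything else is a routine truncation/shift.
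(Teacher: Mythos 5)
Your proof is correct, but it takes a more elementary, self-contained route than the paper's. The paper's argument is essentially a two-line computation: it invokes \cite[Lemma 2.11]{BMS} and its dual, which identify $\coresdim_{\mcX^{\perp}}(M)$ with $\id_{\mcX}(M)$ (and $\pd_{\mcY}(M)$ with $\resdim_{{}^{\perp}\mcY}(M)$), to show that both $\coresdim_{\mcG}(\mcC)$ and $\pd(\mcF)$ equal the common value $\id_{\mcP^{\wedge}}(\mcC)=\pd(\mcP^{\wedge})={\rm Findim}(\mcC)$. You instead establish the cyclic chain ${\rm Findim}(\mcC)\le\pd(\mcF)\le\coresdim_{\mcG}(\mcC)\le{\rm Findim}(\mcC)$ by hand, unpacking the content of that lemma into explicit dimension shifts along $\mcG$-coresolutions and truncated injective coresolutions; in effect you reprove the special cases of the cited lemma that are needed. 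Your version makes the hypotheses visible --- where enough injectives enters (the truncated injective coresolution in the last step) and where hereditariness of $(\mcF,\mcG)$, already recorded by the paper in the paragraph preceding the statement, licenses the shifts --- while the paper's version is shorter and isolates the reusable identity $\coresdim_{\mcX^{\perp}}(-)=\id_{\mcX}(-)$. Two minor points to polish: in your middle inequality the same shift gives $\Ext^{n+j}_{\mcC}(F,M)\cong\Ext^{j}_{\mcC}(F,G^{n})=0$ for all $j\ge 1$, not only $j=1$, which is what $\pd(F)\le n$ actually requires; and the step ``$\mcG=(\mcP^{\wedge})^{\perp}$ makes $(\mcF,\mcG)$ hereditary'' should pass through the observation that a total right orthogonal complement is coresolving before appealing to Remark \ref{rem:cotorsion_pair}.
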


\begin{proof}
For any $M \in \mcC$ we have that $\coresdim_{\mcG}(M) = \coresdim_{(\mcP^\wedge)^\perp}(M)$. Now by the dual of \cite[Lemma 2.11]{BMS}, we get that $\coresdim_{(\mcP^\wedge)^\perp}(M) = \id_{\mcP^\wedge}(M)$, which yields
\[
\coresdim_{(\mcP^\wedge)^\perp}(\mathcal{C}) = \id_{\mcP^\wedge}(\mathcal{C}) = \pd(\mcP^\wedge) =: \text{Findim}(\mcC).
\]
On the other hand, using again \cite[Lemma 2.11]{BMS}, we get that
\[
\pd(\mcF) = \pd({}^\perp\mcG) = \id_{{}^\perp\mcG}(\mcC) = \coresdim_{({}^\perp\mcG)^\perp}(\mcC) = \coresdim_{\mcG}(\mcC).
\]
Hence, the result follows. 
\end{proof}

In the next result we aim to characterize the finiteness of $\text{Findim}(\mcC)$ by means of the existence of a certain cut cotorsion pair.

\begin{theorem}\label{theo:findim}
The following conditions are equivalent for any $n \geq 0$ and $\mcG := (\mcP^\wedge)^{\perp_1}$:
\begin{enumerate}
\item[(a)] ${\rm Findim}(\mcC) \leq n$.

\item[(b)] $(\mcP^\wedge_n,\mcG)$ is a left cotorsion pair cut along $\mcP^\wedge_n \cup {}^{\perp_1}\mcG$.

\item[(c)] $\mcP^\wedge_n = {}^{\perp_1}\mcG$. 
\end{enumerate}
Moreover, ${\rm Findim}(\mcC) = \coresdim_{\mcG}(\mcC) = \pd({}^{\perp_1}\mcG)$.
\end{theorem}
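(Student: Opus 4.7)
The plan is to observe that the cut $\mcS := \mcP^\wedge_n \cup {}^{\perp_1}\mcG$ in fact coincides with $\mcF := {}^{\perp_1}\mcG$: the definition $\mcG = (\mcP^\wedge)^{\perp_1}$ forces $\Ext^1_{\mcC}(\mcP^\wedge,\mcG) = 0$, hence $\mcP^\wedge \subseteq \mcF$ and in particular $\mcP^\wedge_n \subseteq \mcF$, so $\mcS = \mcF$. I will also use that, since $\mcC$ has enough projective objects, $\mcP^\wedge_n$ equals the class of objects of projective dimension at most $n$.

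With this in hand, the equivalence (b)~$\Leftrightarrow$~(c) reduces to unpacking Definition~\ref{def:cut_cotorsion_pair}. Condition \lccpone\ (closure of $\mcP^\wedge_n$ under direct summands) is automatic, so the only content of (b) lies in \lccptwo, i.e.\ $\mcP^\wedge_n \cap \mcS = {}^{\perp_1}\mcG \cap \mcS$. Because both $\mcP^\wedge_n$ and ${}^{\perp_1}\mcG$ are contained in $\mcS = \mcF$, the intersections collapse to $\mcP^\wedge_n$ and ${}^{\perp_1}\mcG$ respectively, so \lccptwo\ is literally the equality $\mcP^\wedge_n = {}^{\perp_1}\mcG$, which is (c).

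For (a)~$\Leftrightarrow$~(c), I plan to deploy Proposition~\ref{prop:findim}. Assuming (a), that proposition gives $\pd(\mcF) = {\rm Findim}(\mcC) \leq n$, whence $\mcF \subseteq \mcP^\wedge_n$; combined with $\mcP^\wedge_n \subseteq \mcP^\wedge \subseteq \mcF$, this yields (c). Conversely, if $\mcP^\wedge_n = {}^{\perp_1}\mcG$, then every $M \in \mcP^\wedge$ belongs to $\mcF = \mcP^\wedge_n$, so $\pd(M) \leq n$ and thus ${\rm Findim}(\mcC) \leq n$. The final ``moreover'' clause is precisely Proposition~\ref{prop:findim}. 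There is no real obstacle: the theorem is essentially a repackaging of Proposition~\ref{prop:findim} in the language of cut cotorsion pairs, the only delicate point being the identification $\mcS = \mcF$ via the orthogonality relation built into the definition of $\mcG$.
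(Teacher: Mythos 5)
Your proof is correct and follows essentially the same route as the paper's: both rest entirely on Proposition~\ref{prop:findim} together with the observation that $\mcP^\wedge_n \subseteq \mcP^\wedge \subseteq {}^{\perp_1}\mcG$ collapses the cut to ${}^{\perp_1}\mcG$ and makes \lccptwo\ literally the equality in (c). Your explicit identification $\mcS = {}^{\perp_1}\mcG$ just spells out what the paper leaves implicit in calling (c)~$\Rightarrow$~(b) immediate.
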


\begin{proof} 
The implication (c) $\Rightarrow$ (b) is immediate, while for (a) $\Rightarrow$ (c) we have by Proposition \ref{prop:findim} that $\pd({}^{\perp_1}\mcG) = \text{Findim}(\mcC) \leq n$, and so ${}^{\perp_1}\mcG \subseteq \mcP^\wedge_n \subseteq \mcP^\wedge \subseteq {}^{\perp_1}\mcG$. Finally, for (b) $\Rightarrow$ (a), we can note that $\mcP^\wedge_n = {}^{\perp_1}\mcG$. We then have by Proposition \ref{prop:findim} that $\text{Findim}(\mcC) = \pd({}^{\perp_1}\mcG) = \pd(\mcP^\wedge_n) \leq n$.
\end{proof}

In the particular case where $\mcC$ is the category $\Mod(R)$ of modules over a ring $R$, let $\text{Findim}(R)$ denote the finitistic dimension of $\Mod(R)$. Using \cite[Theorems 2.2 and 3.2]{Estrada}, we can add to the equivalence in Theorem \ref{theo:findim} an additional condition, and also improve condition (b).  

Note that in Theorem \ref{theo:findim} we only need conditions \lccpone \ and \lccptwo \ to characterize the finiteness of ${\rm Findim}(\mcC)$, that is, in some cases left completeness is not required for objects along the cut. This is the case shown in the following example.

\begin{example}\label{ex:FPn}
In \cite{BGP}, the authors introduced the notion of objects of finite type in Grothen-dieck categories, as a generalization for finitely $n$-presented modules in the sense of \cite[Section 1]{BP}. An object $F$ in a Grothendieck category $\mcC$ is said to be \textbf{of type $\bm{\text{FP}_n}$} if the functor $\Ext^k_{\mcC}(F,-)$ preserves direct limits for every $0 \leq k \leq n-1$ (see \cite[Definition 2.1]{BGP}). Recall also that $\mcC$ is \textbf{locally finitely presented} if it has a generating family of finitely presented objects. 

Let $\mathcal{FP}_n$ denote the class of objects of type $\text{FP}_n$ in a locally finitely presented Grothen-dieck category $\mcC$, with $n \geq 2$. Consider also the class $\mathcal{FP}_n^{\perp_1}$ of \textbf{$\bm{\text{FP}_n}$-injective objects}. By \cite[Part 4 of Proposition 2.8]{BGP}, we know that $\mathcal{FP}_n$ is closed under direct summands. Moreover, by \cite[Proposition 3.8]{BGP} the equality $\mathcal{FP}_{n} \cap \mathcal{FP}_{n-1} = {}^{\perp_1}(\mathcal{FP}_n^{\perp_1}) \cap \mathcal{FP}_{n-1}$ holds true. Hence, $(\mathcal{FP}_n, \mathcal{FP}_n^{\perp_1})$ and $\mathcal{FP}_{n-1}$ satisfy \lccpone \ and \lccptwo. Also, it is clear that \rccpone \ and \rccptwo hold for $(\mathcal{FP}_n, \mathcal{FP}_n^{\perp_1})$ and $\mathcal{FP}_{n-1}$. 

In the particular case where $\mcC$ coincides with $\Mod(R)$, $\Ch(R)$ or $\Qcoh(X)$ (with $X$ a semi-separated scheme), then $(\mathcal{FP}_1,\mathcal{FP}_1^{\perp_1})$ is a cotorsion pair cut along the class of finitely generated objects. See \cite[Remark 3.9 and Proposition B.2]{BGP} for details. 
\end{example}

\begin{proposition}\label{prop:findimR}
Let $R$ be an arbitrary ring. The following are equivalent for the class $\mcG := (\mcP(R)^\wedge)^{\perp_1}$ and any integer $n \geq 0$:
\begin{enumerate}
\item[(a)] ${\rm Findim}(R) \leq n$.

\item[(b)] $\mcP(R)^\wedge_n \cup {}^{\perp_1}\mcG \in {\rm lCuts}(\mcP(R)^\wedge_n,\mcG)$.

\item[(c)] $\mcP(R)^\wedge_n = {}^{\perp_1}\mcG$.

\item[(d)] There exists $\mcS \in {\rm rCuts}(\mcP(R),\mcG^\vee_n)$ such that $R^{(R)} \in \mcS$. 
\end{enumerate}
Moreover, ${\rm Findim}(R) = \coresdim_{\mcG}(R^{(R)})$.
\end{proposition}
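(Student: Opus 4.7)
The plan is to base the proof on Theorem \ref{theo:findim} applied to $\mcC = \Mod(R)$, augmented by the input from Estrada's \cite[Theorems 2.2 and 3.2]{Estrada}, which (as the authors hint right after Theorem \ref{theo:findim}) allow one to detect the big finitistic dimension of $R$ on the single free module $R^{(R)}$.

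First, since $\Mod(R)$ has enough projectives and injectives, the equivalences (a) $\Leftrightarrow$ (b) $\Leftrightarrow$ (c) are immediate instances of Theorem \ref{theo:findim}. The identity ${\rm Findim}(\Mod(R)) = \coresdim_{\mcG}(\Mod(R))$ of Proposition \ref{prop:findim} should then be refined to
\[
{\rm Findim}(R) = \coresdim_{\mcG}(R^{(R)})
\]
by invoking \cite[Theorems 2.2 and 3.2]{Estrada}: intuitively, $R^{(R)}$ is a free module big enough to witness the ${\rm Ext}$-vanishing that detects $\pd(M) \le n$ for every $M \in \mcP(R)^\wedge$, so its $\mcG$-coresolution dimension already dominates that of $\Mod(R)$, while the reverse inequality is obvious.

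Next I handle the new equivalence (a) $\Leftrightarrow$ (d). For (a) $\Rightarrow$ (d), condition (a) combined with Proposition \ref{prop:findim} gives $\mcG^\vee_n = \Mod(R)$, and one may simply take $\mcS := \Mod(R)$. Then \rccpone holds because $\mcG^\vee_n$ is closed under direct summands (a consequence of $\mcG$ being coresolving, via the dual of Lemma \ref{lem:main_lemma}\,(3), using the class $\mcI$ of injectives as a $\mcG$-injective relative cogenerator in $\mcG$). Condition \rccptwo reduces to the tautology $\mcG^\vee_n \cap \mcS = \mcS = \mcP(R)^{\perp_1} \cap \mcS$ since $\mcP(R)^{\perp_1} = \Mod(R)$. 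Finally \rccpthree is trivial: any $S \in \mcS = \mcG^\vee_n$ fits into $0 \to S \to S \to 0 \to 0$ with $S \in \mcG^\vee_n$ and $0 \in \mcP(R)$. Since $R^{(R)} \in \mcS$, (d) follows.

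For (d) $\Rightarrow$ (a), apply \rccpthree to $R^{(R)} \in \mcS$ to obtain an exact sequence
\[
0 \to R^{(R)} \to B \to A \to 0
\]
with $A \in \mcP(R)$ and $B \in \mcG^\vee_n$. Projectivity of $A$ splits the sequence, so $R^{(R)}$ is a direct summand of $B$; closure of $\mcG^\vee_n$ under direct summands then yields $R^{(R)} \in \mcG^\vee_n$, i.e.\ $\coresdim_{\mcG}(R^{(R)}) \le n$, and the formula from the first step concludes ${\rm Findim}(R) \le n$.

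The principal obstacle is securing the refined formula ${\rm Findim}(R) = \coresdim_{\mcG}(R^{(R)})$: the weaker Proposition \ref{prop:findim} alone is insufficient to close the loop in (d) $\Rightarrow$ (a), since the hypothesized cut $\mcS$ is required to contain only $R^{(R)}$, not all of $\Mod(R)$. Once Estrada's result is invoked, everything else reduces to routine closure bookkeeping for $\mcG^\vee_n$ and the trivial observation that $\mcP(R)^{\perp_1} = \Mod(R)$ collapses condition \rccptwo.
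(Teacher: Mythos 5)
Your overall strategy coincides with the paper's: reduce (a)\,$\Leftrightarrow$\,(c) to Theorem \ref{theo:findim}, use Estrada's results to detect ${\rm Findim}(R)$ on $R^{(R)}$, and take $\mcS := \Mod(R)$ (using $\mcG^\vee_n = \Mod(R)$) for (a)\,$\Rightarrow$\,(d). Two points need attention, however. First, condition (b) here is \emph{not} the same as condition (b) of Theorem \ref{theo:findim}: membership in ${\rm lCuts}(\mcP(R)^\wedge_n,\mcG)$ means that $(\mcP(R)^\wedge_n,\mcG)$ is a \emph{complete} left cotorsion pair cut along that class, which is precisely the "improvement" the paper announces just before the proposition. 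So (c)\,$\Rightarrow$\,(b) is not an immediate instance of the theorem; you must also verify \lccpthree. This is easy (under (c) the cut equals $\mcP(R)^\wedge_n = {}^{\perp_1}\mcG$, and $0 \to 0 \to S \to S \to 0$ works since $0 \in \mcG$; the paper instead invokes completeness of the cotorsion pair $({}^{\perp_1}\mcG,\mcG)$ via \cite[Theorem 7.4.6]{EJ1}), but as written your argument skips it.

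Second, in (d)\,$\Rightarrow$\,(a) you route through \rccpthree, a splitting, and closure of $\mcG^\vee_n$ under direct summands. That closure is true but you do not justify it for the \emph{bounded} class $\mcG^\vee_n$ (the dual of Lemma \ref{lem:main_lemma}\,(3) as stated only gives it for $\mcG^\vee$; one needs the dimension-preserving version of the summand argument). The paper avoids this entirely by using \rccptwo: since $\mcP(R)^{\perp_1} = \Mod(R)$, condition \rccptwo\ gives $\mcG^\vee_n \cap \mcS = \mcP(R)^{\perp_1} \cap \mcS = \mcS$, whence $R^{(R)} \in \mcS \subseteq \mcG^\vee_n$ directly, and Estrada's \cite[dual of Proposition 1.11 and Theorem 3.2]{Estrada} finishes. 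Your version is salvageable, but the orthogonality clause of the definition gives the conclusion in one line without any closure bookkeeping.
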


\begin{proof}
We already have from Theorem \ref{theo:findim} the implications (a) $\Leftrightarrow$ (c) and (b) $\Rightarrow$ (a) and (c). 
\begin{itemize}
\item (c) $\Rightarrow$ (b):  From (c) we have that $(\mcP(R)_n^{\wedge},\mcG) = ({}^{\perp_1}\mcG, \mcG)$ is a hereditary cotorsion pair in $\Mod(R)$, which is also complete by \cite[Theorem 7.4.6]{EJ1}. In particular, $(\mcP(R)_n^{\wedge},\mcG)$ is a complete cotorsion pair cut along any class of objects in $\Mod(R)$.

\item (c) $\Rightarrow$ (a): It follows from \cite[Theorem 3.2]{Estrada}.

\item (a) $\Rightarrow$ (d): From \cite[Theorem 3.2]{Estrada} and Proposition \ref{prop:findim}, we get that $\mcG_{n}^{\vee} = \Mod(R)$. Then, $(\mcP(R), \mcG_{n}^{\vee}) = (\mcP(R),\Mod(R))$  is clearly a complete right cotorsion pair cut along $\mcS := \Mod(R)$.

\item (d) $\Rightarrow$ (a): Condition (d) yields $\mcG_{n}^{\vee} \cap \mcS = \mcP(R)^{\perp_1} \cap \mcS = \mcS$. Thus, $R^{(R)} \in \mcS \subseteq \mcG_n^{\vee}$. By \cite[Dual of Proposition 1.11 and Theorem 3.2]{Estrada}, the latter is equivalent to saying that $\text{Findim}(R) \leq n$.
\end{itemize}
\end{proof}

Condition (d) in the previous theorem can be simplified for certain rings. Specifically, using the proof of (a) $\Leftrightarrow$ (d), along with \cite[Corollary 3.3]{Estrada}, we have the following result.

\begin{corollary}
Let $R$ be a left perfect and right coherent ring and $\mcG:=(\mathcal{P}(R)^{\wedge})^{\perp_{1}}$. Then,
${\rm Findim}(R) \leq n$ if, and only if, there exists a class $\mcS \subseteq \Mod(R)$ such that $\mcS \in {\rm rCuts}(\mcP(R), \mcG_n^{\vee})$ with $R \in \mcS$. 
\end{corollary}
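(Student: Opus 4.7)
The plan is to mirror the proof of $(a) \Leftrightarrow (d)$ from Proposition \ref{prop:findimR}, the only substantive change being that the appeal to \cite[dual of Proposition 1.11 and Theorem 3.2]{Estrada} gets replaced by \cite[Corollary 3.3]{Estrada}. Under the hypothesis that $R$ is left perfect and right coherent, that corollary allows one to detect ${\rm Findim}(R) \leq n$ from the single module ${}_RR$, rather than from the generator $R^{(R)}$ of $\Mod(R)$, which is precisely what accounts for the weakening of the membership condition from $R^{(R)} \in \mcS$ to $R \in \mcS$.

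For the direction $(\Rightarrow)$, I will assume ${\rm Findim}(R) \leq n$. Proposition \ref{prop:findim} gives $\coresdim_{\mcG}(\Mod(R)) \leq n$, hence $\mcG_{n}^{\vee} = \Mod(R)$. Then the trivial choice $\mcS := \Mod(R)$ satisfies $R \in \mcS$, and $(\mcP(R), \mcG_{n}^{\vee}) = (\mcP(R), \Mod(R))$ is trivially a complete right cotorsion pair cut along $\mcS$.

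For the direction $(\Leftarrow)$, suppose we are given $\mcS \in {\rm rCuts}(\mcP(R), \mcG_{n}^{\vee})$ with $R \in \mcS$. The dual of \rccptwo \ applied to this pair and cut yields the equality $\mcG_{n}^{\vee} \cap \mcS = \mcP(R)^{\perp_1} \cap \mcS$. Because every $R$-module $M$ satisfies $\Ext^{1}_{R}(P,M) = 0$ for every projective $P$, we have $\mcP(R)^{\perp_1} = \Mod(R)$, so the equality collapses to $\mcS \subseteq \mcG_{n}^{\vee}$. In particular $R \in \mcG_{n}^{\vee}$, i.e., $\coresdim_{\mcG}(R) \leq n$. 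At this point I invoke \cite[Corollary 3.3]{Estrada}, whose hypotheses hold since $R$ is left perfect and right coherent, to upgrade the bound on $\coresdim_{\mcG}(R)$ to a bound on ${\rm Findim}(R)$, concluding ${\rm Findim}(R) \leq n$.

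The only nontrivial ingredient is that final appeal to \cite[Corollary 3.3]{Estrada}; everything else is a direct reuse of the argument already spelled out for Proposition \ref{prop:findimR}, together with the bookkeeping identification $\mcP(R)^{\perp_1} = \Mod(R)$. I do not anticipate a genuine obstacle: the perfect-coherent assumption is precisely what ensures that the finitistic dimension can be tested on the single module ${}_RR$, so the proof should slot together cleanly.
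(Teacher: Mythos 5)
Your proposal is correct and follows exactly the route the paper intends: the paper gives no separate proof, stating only that the corollary follows from the proof of (a) $\Leftrightarrow$ (d) in Proposition \ref{prop:findimR} together with \cite[Corollary 3.3]{Estrada}, which is precisely your argument. The only cosmetic slip is calling the relevant condition "the dual of \rccptwo" when \rccptwo \ is itself the condition for a right cut cotorsion pair; the mathematics is unaffected.
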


In the rest of this section we apply Theorem \ref{theo:findim} to establish a relation between cotorsion cuts and the small finitistic dimension of a ring. We shall work with a slight generalization of this dimension. Let $\mathcal{FP}_{\infty}(R) = \bigcup_{n \geq 0} \mathcal{FP}_n(R)$, where $\mathcal{FP}_n(R)$ denotes the class of $R$-modules of type $\text{FP}_n$ (See Example \ref{ex:FPn} above). The $R$-modules in $\mathcal{FP}_{\infty}(R)$ are known as \emph{modules of type $\text{FP}_\infty$}.

\begin{definition}
Let $R$ be a ring. The \textbf{FP-finitistic dimension} of $R$ is
\[
{\rm FP\mbox{-}findim}(R) := \sup\{ \pd(M) \mbox{ : } M \in \mcP(R)^{\wedge} \cap \mathcal{FP}_{\infty}(R)\}.
\]
Let $\fgMod(R)$ be the class of finitely generated $R$ modules\footnote{The reader should be warned that in \cite{Estrada} the notation $\fgMod(R)$ is used for the subcategory of all $R$-modules admitting a projective resolution consisting of finitely generated modules.} . The \textbf{small finitistic dimension} of $R$ is
\[
{\rm findim}(R) := \sup\{ \pd(M) \mbox{ : } \mcP(R)^{\wedge} \cap \fgMod(R) \}.
\]
\end{definition}

\begin{remark}\label{rem:findim-FPfindim} 
It is known that in the case where $R$ is a left noetherian ring, $\mathcal{FP}_\infty(R) = \fgMod(R)$, and so ${\rm FP\mbox{-}findim}(R) = {\rm findim}(R)$.
\end{remark}

In what follows, let us consider the class
\[
\mcG^{<\infty} := (\mcP(R)^{\wedge} \cap \mathcal{FP}_{\infty}(R))^{\perp_1}.
\]
The class $\mathcal{FP}_{\infty}(R)$ is thick by \cite[Proposition 2.3]{BGH}, and so we can note that $\mcG^{<\infty} = (\mcP(R)^{\wedge} \cap \mathcal{FP}_{\infty}(R))^\perp$. From this equality, and following the arguments in Proposition \ref{prop:findim} and Theorem \ref{theo:findim}, one can show that
\begin{align}
{\rm FP\mbox{-}findim}(R) & = \coresdim_{\mathcal{G}^{<\infty}}(\Mod(R)) = \pd({}^{\perp_1}(\mathcal{G}^{<\infty})). \label{eqn:FP-findim}
\end{align}

The following result is an extension of \cite[Theorem 3.4]{Estrada} in the setting of cotorsion cuts.

\begin{proposition}\label{findimR-mod}
The following assertions are equivalent for any positive integer $n > 0$:
\begin{enumerate}
\item[(a)] ${\rm FP\mbox{-}findim}(R) \leq n$.

\item[(b)] $(\mcP(R)^{\wedge}_n \cup {}^{\perp_1}(\mcG^{<\infty})) \cap \mathcal{FP}_{\infty}(R) \in {\rm lCuts}(\mcP(R)^{\wedge}_n, \mcG^{<\infty})$.

\item[(c)] $\mcP(R)_n^{\wedge} \cap \mathcal{FP}_{\infty}(R) = {}^{\perp_1}(\mcG^{<\infty}) \cap \mathcal{FP}_{\infty}(R)$.

\item[(d)] There exists $\mcS \subseteq \Mod(R)$ such that $\mcS \in {\rm rCuts}(\mcP(R), (\mcG^{<\infty})_n^{\vee})$ with $R^{(R)} \in \mcS$.
\end{enumerate}
\end{proposition}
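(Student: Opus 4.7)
The plan is to follow the strategy of the proof of Proposition \ref{prop:findimR}, substituting the class $\mcG$ by $\mcG^{<\infty}$ and incorporating $\mathcal{FP}_{\infty}(R)$ wherever appropriate, with the identification \eqref{eqn:FP-findim} playing the role that Proposition \ref{prop:findim} does there. For (a) $\Leftrightarrow$ (c) I would first recall that \eqref{eqn:FP-findim} gives ${\rm FP\mbox{-}findim}(R) = \pd({}^{\perp_1}(\mcG^{<\infty}))$. Then (a) forces ${}^{\perp_1}(\mcG^{<\infty}) \subseteq \mcP(R)_n^{\wedge}$; intersecting with $\mathcal{FP}_{\infty}(R)$ yields one inclusion of (c), while the reverse one $\mcP(R)_n^{\wedge} \cap \mathcal{FP}_{\infty}(R) \subseteq {}^{\perp_1}(\mcG^{<\infty}) \cap \mathcal{FP}_{\infty}(R)$ is automatic from $\mcG^{<\infty} = (\mcP(R)^{\wedge} \cap \mathcal{FP}_{\infty}(R))^{\perp_1}$ together with $\mcP(R)_n^{\wedge} \subseteq \mcP(R)^{\wedge}$. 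Conversely, under (c) the chain $\mcP(R)^{\wedge} \cap \mathcal{FP}_{\infty}(R) \subseteq {}^{\perp_1}(\mcG^{<\infty}) \cap \mathcal{FP}_{\infty}(R) = \mcP(R)_n^{\wedge} \cap \mathcal{FP}_{\infty}(R)$ yields ${\rm FP\mbox{-}findim}(R) = \pd(\mcP(R)^{\wedge} \cap \mathcal{FP}_{\infty}(R)) \leq n$.

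For (b) $\Leftrightarrow$ (c), I would set $\mcS := (\mcP(R)_n^{\wedge} \cup {}^{\perp_1}(\mcG^{<\infty})) \cap \mathcal{FP}_{\infty}(R)$ and simplify both sides of \lccptwo \ using distributivity of intersection over union: they become $\mcP(R)_n^{\wedge} \cap \mathcal{FP}_{\infty}(R)$ and ${}^{\perp_1}(\mcG^{<\infty}) \cap \mathcal{FP}_{\infty}(R)$, respectively, so \lccptwo \ translates exactly to (c). Condition \lccpone \ is standard, and under (c) every $S \in \mcS$ belongs to $\mcP(R)_n^{\wedge}$, whence \lccpthree \ is witnessed by the trivial sequence $0 \to 0 \to S \to S \to 0$ with $A := S$ and $B := 0 \in \mcG^{<\infty}$.

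For (a) $\Leftrightarrow$ (d), I would mimic the corresponding step of Proposition \ref{prop:findimR}. Assuming (a), \eqref{eqn:FP-findim} yields $(\mcG^{<\infty})_n^{\vee} = \Mod(R)$, so $(\mcP(R), \Mod(R))$ is trivially a complete right cotorsion pair cut along $\mcS := \Mod(R)$, which contains $R^{(R)}$. Conversely, from (d) condition \rccptwo \ gives $(\mcG^{<\infty})_n^{\vee} \cap \mcS = \mcP(R)^{\perp_1} \cap \mcS = \mcS$, so $R^{(R)} \in \mcS \subseteq (\mcG^{<\infty})_n^{\vee}$. The main obstacle is then to pass from $R^{(R)} \in (\mcG^{<\infty})_n^{\vee}$ to ${\rm FP\mbox{-}findim}(R) \leq n$, which is the $\text{FP}_{\infty}$-analog of the final step in Proposition \ref{prop:findimR}; I would handle it by the appropriate version of \cite[dual of Proposition 1.11 and Theorem 3.2]{Estrada}, exploiting that $\Ext^i(L,-)$ commutes with direct sums for $L$ of type $\text{FP}_{\infty}$, and that $R^{(R)}$ is a generator of $\Mod(R)$. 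Together these reduce the coresolution condition on $R^{(R)}$ to the vanishing of $\Ext^{>n}(\mcP(R)^{\wedge} \cap \mathcal{FP}_{\infty}(R), R)$, and thence to the desired uniform projective-dimension bound.
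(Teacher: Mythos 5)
Your proposal is correct and follows essentially the same route as the paper: both hinge on the identity \eqref{eqn:FP-findim}, reduce \lccptwo\ for the cut $(\mcP(R)^{\wedge}_n \cup {}^{\perp_1}(\mcG^{<\infty})) \cap \mathcal{FP}_{\infty}(R)$ to the equality in (c), and settle (a) $\Leftrightarrow$ (d) by extracting $R^{(R)} \in (\mcG^{<\infty})_n^{\vee}$ from \rccptwo\ and invoking the relevant results of Estrada. The only differences are cosmetic (you cycle through (a)$\Leftrightarrow$(c), (b)$\Leftrightarrow$(c), (a)$\Leftrightarrow$(d) instead of (a)$\Rightarrow$(c)$\Rightarrow$(b)$\Rightarrow$(a), and you sketch a justification of the external citation that the paper simply quotes).
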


\begin{proof} \
\begin{itemize}
\item (a) $\Rightarrow$ (c): The equality \eqref{eqn:FP-findim} implies that ${}^{\perp_1}(\mathcal{G}^{< \infty}) \subseteq \mathcal{P}(R)^\wedge_n$, and so 
\[
{}^{\perp_1}(\mathcal{G}^{< \infty}) \cap \mathcal{FP}_{\infty}(R) \subseteq \mathcal{P}(R)^\wedge_n \cap \mathcal{FP}_{\infty}(R).
\] 
The other containment is clear. 

\item (c) $\Rightarrow$ (b): It is easy to verify from $\mcP(R)_n^{\wedge} \cap \mathcal{FP}_{\infty}(R) = {}^{\perp_1}(\mcG^{<\infty}) \cap \mathcal{FP}_{\infty}(R)$ that $(\mcP(R)^{\wedge}_n, \mcG^{<\infty})$ is a left cotorsion pair cut along $(\mcP(R)^{\wedge}_n \cup {}^{\perp_1}(\mcG^{<\infty})) \cap \mathcal{FP}_{\infty}(R)$. Condition \lccpthree \ is also clear from the assumption. 

\item (b) $\Rightarrow$ (a): From $(\mcP(R)^{\wedge}_n \cup {}^{\perp_1}(\mcG^{<\infty})) \cap \mathcal{FP}_{\infty}(R) \in \text{lCuts}(\mcP(R)^{\wedge}_n, \mcG^{<\infty})$ we can easily note that $\mcP(R)_n^{\wedge} \cap \mathcal{FP}_{\infty}(R) = {}^{\perp_1}(\mcG^{<\infty}) \cap \mathcal{FP}_{\infty}(R)$. This implies that $\mcP(R)^\wedge \cap \mathcal{FP}_{\infty}(R) = \mcP(R)^\wedge_n \cap \mathcal{FP}_{\infty}(R)$, and so from \eqref{eqn:FP-findim} we have that
\[
\hspace{1cm} \text{FP-findim}(R) = \pd(\mcP(R)^\wedge \cap \mathcal{FP}_{\infty}(R)) = \pd(\mcP(R)^\wedge_n \cap \mathcal{FP}_{\infty}(R)) \leq n.
\]

\item (a) $\Rightarrow$ (d): Similar to the corresponding implication in Proposition \ref{prop:findimR} and follows by using \cite[Theorem 3.4]{Estrada}.

\item (d) $\Rightarrow$ (a): Suppose there exists $\mcS \in \text{rCuts}(\mcP(R),(\mcG^{<\infty})^\vee_n)$ with $R^{(R)} \in \mcS$. Then, it follows that $\mcS = (\mcG^{<\infty})^\vee_n \cap \mcS$, and so $R^{(R)} \in (\mcG^{<\infty})^\vee_n$. The latter along with \cite[Theorem 3.4 and dual of Proposition 1.11]{Estrada} implies that $\text{FP-findim}(R) \leq n$. 
\end{itemize}
\end{proof}

From the previous result, we can obtain the following characterization for the finiteness of $\text{FP-findim}(R)$, provided that $R$ is coherent, in terms of right cotorsion cuts. This way we extend \cite[Corollary 3.5]{Estrada}.

\begin{corollary}
For any left coherent ring $R$, the following assertions are equivalent:
\begin{enumerate}
\item[(a)] ${\rm FP\mbox{-}findim}(R) \leq n$.

\item[(b)] There exists $\mcS \subseteq \mcC$ such that $\mcS \in {\rm rCuts}(\mcP(R), (\mcG^{<\infty})_n^{\vee})$ with $R \in \mcS$.
\end{enumerate}
\end{corollary}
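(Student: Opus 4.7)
The plan is to adapt the equivalence (a) $\Leftrightarrow$ (d) of Proposition \ref{findimR-mod}, replacing the condition $R^{(R)} \in \mcS$ with the weaker condition $R \in \mcS$. The strengthening is made possible by invoking \cite[Corollary 3.5]{Estrada}, which, for a left coherent ring $R$, characterizes ${\rm FP\mbox{-}findim}(R) \leq n$ by the single condition $R \in (\mcG^{<\infty})^{\vee}_n$, rather than $R^{(R)} \in (\mcG^{<\infty})^{\vee}_n$. The coherence hypothesis enters the proof exclusively through this reference.

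For the implication (b) $\Rightarrow$ (a), given $\mcS \in {\rm rCuts}(\mcP(R),(\mcG^{<\infty})^{\vee}_n)$ with $R \in \mcS$, I would use that $\mcP(R)^{\perp_1} = \Mod(R)$ (since $\Ext^1_R(P,-) = 0$ for every projective $P$) to place $R$ in $\mcP(R)^{\perp_1} \cap \mcS$. Applying \rccptwo \ then gives $R \in (\mcG^{<\infty})^{\vee}_n \cap \mcS \subseteq (\mcG^{<\infty})^{\vee}_n$, and \cite[Corollary 3.5]{Estrada} delivers the desired dimension bound. This mirrors the (d) $\Rightarrow$ (a) half of Proposition \ref{findimR-mod} but uses coherence in place of the free module $R^{(R)}$.

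For the converse (a) $\Rightarrow$ (b), I would avoid passing through the dual of \cite[Proposition 1.11]{Estrada} used in Proposition \ref{findimR-mod}, and instead invoke \cite[Corollary 3.5]{Estrada} directly to obtain $R \in (\mcG^{<\infty})^{\vee}_n$. It then suffices to exhibit $\mcS := \{R\}$ as a cotorsion cut. The three conditions of Definition \ref{def:cut_cotorsion_pair} are essentially immediate: \rccpone \ from the closure of $\mcP(R)$ under direct summands; \rccptwo \ from $\mcP(R)^{\perp_1} = \Mod(R)$ together with $R \in (\mcG^{<\infty})^{\vee}_n$; and \rccpthree \ from the trivial split sequence $0 \to R \to R \to 0 \to 0$, whose middle term lies in $(\mcG^{<\infty})^{\vee}_n$ and whose cokernel $0$ lies in $\mcP(R)$.

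There is no genuine obstacle in this argument: the only nontrivial ingredient is the cited characterization from \cite{Estrada}, which is precisely where the coherence hypothesis is needed to pass from $R^{(R)}$ to $R$. Everything else amounts to a direct bookkeeping of the definition of a right cotorsion cut, so the proof is expected to be short.
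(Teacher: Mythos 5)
Your argument is correct and is essentially the paper's intended proof: the corollary is read off from the (a) $\Leftrightarrow$ (d) mechanics of Proposition \ref{findimR-mod}, with \cite[Corollary 3.5]{Estrada} supplying the passage from $R^{(R)}$ to $R$ under the coherence hypothesis, and your explicit choice $\mcS := \{R\}$ in (a) $\Rightarrow$ (b) is a perfectly valid (smaller) cut than the one implicit in the paper. One small slip: for a \emph{right} cut cotorsion pair, condition \rccpone\ requires the second class $(\mcG^{<\infty})^\vee_n$ (not $\mcP(R)$) to be closed under direct summands; this does hold, since $\mcG^{<\infty}$ is a total right orthogonal class, hence coresolving and closed under direct summands, and it is in any case already presupposed for ${\rm rCuts}(\mcP(R),(\mcG^{<\infty})^\vee_n)$ to be defined in Proposition \ref{findimR-mod}.
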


Recall from Remark \ref{rem:findim-FPfindim} that $\mathcal{FP}_{\infty}(R) = \fgMod(R)$ and $\text{FP-findim}(R) = \text{findim}(R)$ provided that $R$ is a left noetherian ring. Since any left artinian ring is left noetherian, one can deduce the following extension of \cite[Corollary 3.6]{Estrada} by using Propositions \ref{prop:findimR} and \ref{findimR-mod}.

\begin{corollary}
The following statements hold true for any two-sided artinian ring $R$:
\begin{enumerate}
\item[(1)] ${\rm Findim}(R) = n$ if, and only if, $n$ is the smallest positive integer such that there exists $\mcS \subseteq \Mod(R)$, with $R \in \mcS$ and such that $(\mcP(R), \mcG_n^\vee)$ is a complete right cotorsion pair cut along  $\mcS$.

\item[(2)] ${\rm findim}(R) = n$ if, and only if, $n$ is the smallest positive integer such that there exists $\mcS \subseteq \Mod(R)$, with $R \in \mcS$ and such that $(\mcP(R), (\mcG^{<\infty})_n^\vee)$ is a complete right cotorsion pair cut along $\mcS$.
\end{enumerate}
\end{corollary}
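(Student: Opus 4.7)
The corollary is essentially a specialization of Propositions \ref{prop:findimR} and \ref{findimR-mod} to the case of two-sided artinian rings, reformulated as ``smallest $n$'' statements. The plan is to apply each proposition directly and verify that the appearance of $R^{(R)}$ in condition (d) of those propositions can be replaced by $R$ in the artinian (hence left noetherian) setting.

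For part (1), I would invoke Proposition \ref{prop:findimR}: $\mathrm{Findim}(R) \leq n$ if and only if there exists $\mathcal{S} \in \mathrm{rCuts}(\mathcal{P}(R), \mathcal{G}_n^{\vee})$ with $R^{(R)} \in \mathcal{S}$. Consequently, $\mathrm{Findim}(R) = n$ iff $n$ is the minimal positive integer for which such an $\mathcal{S}$ exists. To pass from ``$R^{(R)} \in \mathcal{S}$'' to ``$R \in \mathcal{S}$'', observe that for the pair $(\mathcal{P}(R), \mathcal{G}_n^{\vee})$, condition \rccptwo\ reduces to $\mathcal{S} \subseteq \mathcal{G}_n^{\vee}$ (since $\mathcal{P}(R)^{\perp_1} = \mathrm{Mod}(R)$), while \rccpthree\ for a module $S \in \mathcal{G}_n^{\vee}$ is trivially satisfied by the split sequence $0 \to S \to S \to 0 \to 0$. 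Hence such an $\mathcal{S}$ containing $R^{(R)}$ (resp.\ $R$) exists if and only if $R^{(R)} \in \mathcal{G}_n^{\vee}$ (resp.\ $R \in \mathcal{G}_n^{\vee}$). The implication $R^{(R)} \in \mathcal{G}_n^{\vee} \Rightarrow R \in \mathcal{G}_n^{\vee}$ is automatic since $\mathcal{G}_n^{\vee}$ is closed under direct summands. For the reverse, the cleanest route is to use Proposition \ref{prop:findim}, which yields that $\mathrm{Findim}(R) \leq n$ is equivalent to $\mathcal{G}_n^{\vee} = \mathrm{Mod}(R)$: once one verifies that $R \in \mathcal{G}_n^{\vee}$ forces $\mathcal{G}_n^{\vee} = \mathrm{Mod}(R)$ in the noetherian case (via direct sums of coresolutions applied to $R$, together with Matlis--Papp-type closure of $\mathcal{G}$ under arbitrary direct sums over a left noetherian ring), the equivalence $R \in \mathcal{G}_n^{\vee} \Leftrightarrow R^{(R)} \in \mathcal{G}_n^{\vee}$ follows.

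For part (2), the argument is entirely parallel, now applying Proposition \ref{findimR-mod}. By Remark \ref{rem:findim-FPfindim}, the two-sided artinian ring $R$ is in particular left noetherian, so $\mathcal{FP}_{\infty}(R) = \mathsf{mod}(R)$ and $\mathrm{FP\text{-}findim}(R) = \mathrm{findim}(R)$. The same \rccptwo/\rccpthree\ analysis applied to the pair $(\mathcal{P}(R), (\mathcal{G}^{<\infty})_n^{\vee})$ reduces everything to checking membership in $(\mathcal{G}^{<\infty})_n^{\vee}$, and the noetherian property again yields that the role of $R^{(R)}$ and $R$ are interchangeable in the cut-existence statement.

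The main obstacle I anticipate is the technical verification that, for the left noetherian ring $R$, membership $R \in \mathcal{G}_n^{\vee}$ (respectively $R \in (\mathcal{G}^{<\infty})_n^{\vee}$) actually upgrades to $R^{(R)}$ being in the same class. This requires knowing that $\mathcal{G}$ (and $\mathcal{G}^{<\infty}$) is closed under arbitrary direct sums over a left noetherian ring, which is not entirely formal since elements of $\mathcal{P}(R)^{\wedge}$ need not be finitely presented; the argument must exploit either Matlis--Papp-style closure or the fact that, once $\mathrm{Findim}(R) \leq n$ is established, $\mathcal{G}_n^{\vee}$ coincides with the whole of $\mathrm{Mod}(R)$ by Proposition \ref{prop:findim}, rendering the subtlety moot.
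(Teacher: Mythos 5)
Your overall reduction is the same as the paper's: by the argument of (d) $\Rightarrow$ (a) in Proposition \ref{prop:findimR}, a right cotorsion cut $\mcS$ for $(\mcP(R),\mcG_n^\vee)$ containing a given module $X$ exists if and only if $X \in \mcG_n^\vee$ (condition \rccptwo\ forces $\mcS \subseteq \mcG_n^\vee$, and \rccpthree\ is then automatic via $0 \to X \to X \to 0 \to 0$), so everything reduces to exchanging $R^{(R)}$ for $R$. The gap is in how you carry out that exchange in part (1). The implication you need is $R \in \mcG_n^\vee \Rightarrow R^{(R)} \in \mcG_n^\vee$, and your proposed justification --- that $\mcG = (\mcP(R)^\wedge)^{\perp_1}$ is closed under arbitrary direct sums over a left noetherian ring --- is not established: $\mcG$ is cut out by the vanishing of $\Ext^1_R(M,-)$ for $M$ ranging over \emph{all} of $\mcP(R)^\wedge$, and such $M$ need not be finitely generated (this is exactly the difference between ${\rm findim}$ and ${\rm Findim}$, which can genuinely differ for artin algebras), so left noetherianness gives no commutation of $\Ext^1_R(M,-)$ with coproducts. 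Your fallback (``once ${\rm Findim}(R)\le n$ is established, $\mcG_n^\vee = \Mod(R)$'') is circular, since deducing ${\rm Findim}(R)\le n$ from $R \in \mcG_n^\vee$ is precisely the point at issue. The analogous step in part (2) \emph{is} sound in principle, because there the test modules lie in $\mathcal{FP}_{\infty}(R)$ and $\Ext^i_R(M,-)$ does commute with coproducts for such $M$; but part (1) cannot be handled this way.

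The paper closes this gap by a different route: it passes through the two corollaries proved immediately after Propositions \ref{prop:findimR} and \ref{findimR-mod}, valid for left perfect and right coherent rings (resp.\ left coherent rings), which replace $R^{(R)}$ by $R$ by appealing to \cite{Estrada} (Corollaries 3.3 and 3.5 there) --- that is, to the external fact that over such rings ${\rm Findim}(R) = \coresdim_{\mcG}(R)$ and ${\rm FP\mbox{-}findim}(R) = \coresdim_{\mcG^{<\infty}}(R)$, with $R$ itself as test object. A two-sided artinian ring is left perfect and two-sided coherent, and left noetherian so that Remark \ref{rem:findim-FPfindim} converts ${\rm FP\mbox{-}findim}$ into ${\rm findim}$; hence both corollaries apply and the statement follows by taking the minimal such $n$. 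You should either invoke those corollaries or supply an actual proof that $R$ detects $\coresdim_{\mcG}$ over artinian rings; as written, part (1) of your argument is incomplete.
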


%%%%%%%%%%%%%%%%%%%%%%%%%%%%%%%%%%%%%
%%%%%%%%%%%%%%%%%%%%%%%%%%%%%%%%%%%%%

\subsection*{Relations with Serre subcategories}

Let $\mcC$ be a locally small abelian category. Recall that a subcategory $\mcS \subseteq \mcC$ is a \emph{Serre subcategory} if for every short exact sequence $0 \to X \to Y \to Z \to 0$ in $\mcC$, one has that $Y \in \mcS$ if, and only if, $X, Z \in \mcS$. In particular, Serre subcategories are clearly thick, and closed under subobjects and quotients. 

If $\mcS \subseteq \mcC$ is a Serre subcategory, we can consider the \emph{Serre quotient} $\mcC / \mcS$, which is an abelian category whose objects are the same objects in $\mcC$, and whose morphisms $X \to Y$ are defined as the direct limit of abelian groups $\varinjlim \Hom_{\mcC}(X', Y / Y')$ with $X'$ and $Y'$ running over the subobjects of $X$ and $Y$, respectively, and such that $X / X' \in \mcS$ and $Y' \in \mcS$. For the Serre quotient $\mcC / \mcS$, there is an associated quotient functor $Q \colon \mcC \longrightarrow \mcC / \mcS$ which sends any object $C \in \mcC$ to itself, and any morphism $f \colon X \to Y$ to its corresponding element in the direct limit $(f_{X',Y'}) \in \varinjlim \Hom_{\mcC}(X', Y / Y')$ for $X' = X$ and $Y' = 0$, that is, $f \mapsto f_{X,0}$. 

In Ogawa's \cite{Ogawa}, the author gives several outcomes from the existence of a right adjoint for $Q$. The purpose of this section is to characterize the latter via complete right cut cotorsion pairs. Let us begin proving the following consequence of having a right adjoint for $Q$.

\begin{proposition}\label{prop:Serre}
Let $\mcS$ be a Serre subcategory of $\mcC$. If the Serre quotient functor $Q \colon \mcC \longrightarrow \mcC / \mcS$ admits a right adjoint, then $(\mcS,\mcS^{\perp_0} \cap \mcS^{\perp_1})$ is a complete right cotorsion pair cut along $\mcS^{\perp_0}$.
\end{proposition}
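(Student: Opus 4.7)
The plan is to verify the three dual conditions \rccpone, \rccptwo, \rccpthree for the pair $(\mcS, \mcS^{\perp_0} \cap \mcS^{\perp_1})$ along the cut $\mcS^{\perp_0}$. Condition \rccpone is immediate, since any class of the form $\mcS^{\perp_i}$ is closed under direct summands, hence so is the intersection $\mcS^{\perp_0} \cap \mcS^{\perp_1}$. Condition \rccptwo is essentially formal: with $\mcA = \mcS$ we have $\mcA^{\perp_1} = \mcS^{\perp_1}$, and intersecting $\mcS^{\perp_0} \cap \mcS^{\perp_1}$ again with $\mcS^{\perp_0}$ manifestly equals $\mcS^{\perp_1} \cap \mcS^{\perp_0}$, so both sides of the required equality coincide.

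The real content lies in \rccpthree: given $C \in \mcS^{\perp_0}$, I need to produce a short exact sequence $0 \to C \to B \to A \to 0$ with $A \in \mcS$ and $B \in \mcS^{\perp_0} \cap \mcS^{\perp_1}$. Let $R$ denote the right adjoint of $Q$, with unit $\eta$ and counit $\epsilon$. I claim the exact sequence can be taken to be
\[
0 \to C \xrightarrow{\eta_C} RQ(C) \to \Coker(\eta_C) \to 0,
\]
which requires three facts: (i) $\Ker(\eta_C), \Coker(\eta_C) \in \mcS$; (ii) $\eta_C$ is monic; and (iii) $RQ(C) \in \mcS^{\perp_0} \cap \mcS^{\perp_1}$. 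Fact (ii) follows from (i) together with $C \in \mcS^{\perp_0}$: the inclusion $\Ker(\eta_C) \hookrightarrow C$ is a morphism from an object of $\mcS$ into an object of $\mcS^{\perp_0}$, hence zero. The Hom-orthogonality part of (iii) is immediate from the adjunction, since $\Hom_{\mcC}(S, R(Y)) \cong \Hom_{\mcC / \mcS}(Q(S), Y) = 0$ for every $S \in \mcS$ because $Q$ annihilates $\mcS$. For fact (i), I will use that the right adjoint of a Serre quotient functor is automatically fully faithful, so $\epsilon$ is an isomorphism; the triangle identity $\epsilon_{Q(C)} \circ Q(\eta_C) = \mathrm{id}_{Q(C)}$ then forces $Q(\eta_C)$ to be an isomorphism, and by the standard characterization of morphisms inverted by $Q$ both its kernel and cokernel lie in $\mcS$.

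The main obstacle is the Ext-orthogonality in (iii), i.e.\ showing that every short exact sequence
\[
0 \to R(Y) \xrightarrow{\iota} E \to S \to 0
\]
with $S \in \mcS$ splits. My plan is to exploit naturality of $\eta$ against $\iota$. Applying $Q$ kills $S$ and shows that $Q(\iota)$ is an isomorphism; together with $\eta_{R(Y)} \colon R(Y) \to RQ(R(Y))$ being an isomorphism (a direct consequence of the triangle identities once $\epsilon$ is iso), the naturality square
\[
\begin{tikzpicture}[description/.style={fill=white,inner sep=2pt}]
\matrix (m) [matrix of math nodes, row sep=2em, column sep=2.5em, text height=1.25ex, text depth=0.25ex]
{ R(Y) & E \\ RQ(R(Y)) & RQ(E) \\ };
\path[->] (m-1-1) edge node[above] {$\iota$} (m-1-2) (m-2-1) edge node[below] {$RQ(\iota)$} (m-2-2)
(m-1-1) edge node[left] {$\eta_{R(Y)}$} (m-2-1) (m-1-2) edge node[right] {$\eta_E$} (m-2-2);
\end{tikzpicture}
\]
forces the composite $\eta_E \circ \iota$ to be an isomorphism. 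Then $\eta_{R(Y)}^{-1} \circ RQ(\iota)^{-1} \circ \eta_E \colon E \to R(Y)$ is the desired retraction, proving $\Ext^1_{\mcC}(S, R(Y)) = 0$ and completing the verification.
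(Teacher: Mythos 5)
Your proof is correct, and it takes a genuinely different route from the paper's. The paper verifies \rccpthree\ by citing Ogawa's Propositions 1.1 and 1.3 as a black box: these give, for $M \in \mcS^{\perp_0}$, an exact sequence $S \to M \xrightarrow{g} Y$ with $S \in \mcS$ and $Y \in \mcS^{\perp_0} \cap \mcS^{\perp_1}$, whence $g$ is monic; but since that result does not control $\Coker(g)$, the paper must apply it a second time to $\Coker(g)$ and then perform a pullback construction, checking via long exact sequences that the resulting middle term lies in $\mcS^{\perp_0} \cap \mcS^{\perp_1}$. You instead work directly with the adjunction: you identify the middle term explicitly as $RQ(C)$ and the quotient as $\Coker(\eta_C) \in \mcS$, proving along the way that every object in the essential image of $R$ lies in $\mcS^{\perp_0} \cap \mcS^{\perp_1}$ — with the $\Ext^1$-vanishing obtained by an explicit retraction built from naturality of $\eta$ and the invertibility of $\eta_{R(Y)}$ and $RQ(\iota)$. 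In effect you reprove the relevant portion of Ogawa's Propositions 1.1 and 1.3 rather than citing it, which makes your argument self-contained and arguably cleaner (one application of the unit instead of two localization sequences plus a pullback). The standard facts you invoke without proof — that the right adjoint of a Serre quotient is fully faithful, and that $Q$ inverts exactly the morphisms with kernel and cokernel in $\mcS$ — are classical (Gabriel's thesis, Gabriel--Zisman) and should simply be given a reference; they are no more of a gap than the paper's own reliance on Ogawa. The remaining verifications (\rccpone, \rccptwo, the Hom-orthogonality of $R(Y)$, and monicity of $\eta_C$ from $C \in \mcS^{\perp_0}$) are all handled correctly.
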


\begin{proof}
It is clear that the dual of conditions (1) and (2) in Proposition \ref{prop:characterization_ccp} are satisfied. It is only left to show that for every object $M \in \mcS^{\perp_0}$ there exists a short exact sequence $0 \to M \to F \to K \to 0$ with $F \in \mcS^{\perp_0} \cap \mcS^{\perp_1}$ and $K \in \mcS$. So let us take $M \in \mcS^{\perp_0}$. By \cite[Propositions 1.1 and 1.3]{Ogawa}, there exists an exact sequence $S \xrightarrow{f} M \xrightarrow{g} Y$ with $S \in \mcS$ and $Y \in \mcS^{\perp_0} \cap \mcS^{\perp_1}$. Since $M \in \mcS^{\perp_0}$, we have that $f = 0$, and so $g$ is a monomorphism. We can thus consider the short exact sequence $0 \to M \xrightarrow{g} Y \to \Coker(g) \to 0$. Let us now apply again \cite[Propositions 1.1 and 1.3]{Ogawa} to the object $\Coker(g)$. We get an exact sequence $D \xrightarrow{h} \Coker(g) \xrightarrow{i} E$ with $D \in \mcS$ and $E \in \mcS^{\perp_0} \cap \mcS^{\perp_1}$. Let us factor $h$ and $i$ through their images, so that we get the following commutative diagram
\begin{equation}\label{Serre1}
\parbox{4in}{
\begin{tikzpicture}[description/.style={fill=white,inner sep=2pt}] 
\matrix (m) [ampersand replacement=\&, matrix of math nodes, row sep=2.5em, column sep=2.5em, text height=1.25ex, text depth=0.25ex] 
{ 
D \& {} \& \Coker(g) \& {} \& E \& \Coker(i) \\
{} \& K \& {} \& C \& {} \& {} \\
}; 
\path[->] 
(m-1-1) edge (m-1-3) (m-1-3) edge (m-1-5) (m-1-5) edge (m-1-6)
; 
\path[>->]
(m-2-2) edge (m-1-3) (m-2-4) edge (m-1-5)
;
\path[->>]
(m-1-1) edge (m-2-2) (m-1-3) edge (m-2-4)
;
\end{tikzpicture}
}
\end{equation}
where $K := {\rm Im}(h) = \Ker(i)$ and $C := \Coker(\Ker(i)) \simeq \Ker(\Coker(i))$. Notice that $K \in \mcS$ since $D \in \mcS$ and $\mcS$ is closed under quotients. Taking the pullback of $K \to \Coker(g) \leftarrow Y$ yields the following solid diagram:
\begin{equation}\label{Serre}
\parbox{1.5in}{
\begin{tikzpicture}[description/.style={fill=white,inner sep=2pt}] 
\matrix (m) [ampersand replacement=\&, matrix of math nodes, row sep=2.5em, column sep=2.5em, text height=1.25ex, text depth=0.25ex] 
{ 
M \& M \& {} \\
F \& Y \& C \\
K \& \Coker(g) \& C \\
}; 
\path[->] 
(m-2-1)-- node[pos=0.5] {\footnotesize$\mbox{\bf pb}$} (m-3-2)
; 
\path[-,font=\scriptsize]
(m-1-1) edge [double, thick, double distance=2pt] (m-1-2)
(m-2-3) edge [double, thick, double distance=2pt] (m-3-3)
;
\path[>->]
(m-1-1) edge (m-2-1)
(m-1-2) edge (m-2-2)
(m-2-1) edge (m-2-2)
(m-3-1) edge (m-3-2)
;
\path[->>]
(m-2-1) edge (m-3-1)
(m-2-2) edge (m-3-2)
(m-2-2) edge (m-2-3)
(m-3-2) edge (m-3-3)
;
\end{tikzpicture}
}
\end{equation}
We show that $F \in \mcS^{\perp_0} \cap \mcS^{\perp_1}$. Let $S' \in \mcS$ and apply the functor $\Hom_{\mcC}(S',-)$ to the central row in \eqref{Serre}. We get the following exact sequence:
\[
\Hom_{\mcC}(S',F) \rightarrowtail \Hom_{\mcC}(S',Y) \to \Hom_{\mcC}(S',C) \to \Ext^1_{\mcC}(S',F) \to \Ext^1_{\mcC}(S',Y),
\]
where $\Hom_{\mcC}(S',Y) = 0 = \Ext^1_{\mcC}(S',Y)$ since $Y \in \mcS^{\perp_0} \cap \mcS^{\perp_1}$. It follows that $\Hom_{\mcC}(S',F) = 0$ and $\Hom_{\mcC}(S',C) \cong \Ext^1_{\mcC}(S',F)$. Now consider the short exact sequence $0 \to C \to E \to \Coker(i) \to 0$ in \eqref{Serre1}. By applying the functor $\Hom_{\mcC}(S',-)$ to this sequence we obtain the monomorphism 
\[
0 \to \Hom_{\mcC}(S',C) \to \Hom_{\mcC}(S',E),
\] 
where $\Hom_{\mcC}(S',E) = 0$ since $E \in \mcS^{\perp_0}$. Then, $\Ext^1_{\mcC}(S',F) \cong \Hom_{\mcC}(S',C) = 0$ for every $S' \in \mcS$. Therefore, $F \in \mcS^{\perp_0} \cap \mcS^{\perp_1}$, and thus the left-hand column in \eqref{Serre} is the desired exact sequence. 
\end{proof}

In the next result we prove the converse of the previous proposition with an additional condition: we shall need $\mcC$ to be cocomplete. We show that having a right adjoint for the quotient functor $Q \colon \mcC \to \mcC / \mcS$ is equivalent to the existence of $(\mcS,\mcS^{\perp_0} \cap \mcS^{\perp_1})$ as a complete right cotorsion pair cut along $\mcS^{\perp_0}$. 

We need to recall from \cite[Definition 2.1]{KSZ} that two classes of objects $(\mathcal{T,F})$ form a \emph{torsion pair} in $\mcC$ if $\Hom_{\mcC}(\mathcal{T,F}) = 0$ and if for every $C \in \mcC$ there exists a short exact sequence $0 \to T_M \to M \to F_M \to 0$ with $T_M \in \mathcal{T}$ and $F_M \in \mathcal{F}$. The classes $\mathcal{T}$ and $\mathcal{F}$ are called the \emph{torsion class} and the \emph{torsion-free class}, respectively. If $\mcC$ is cocomplete, it is known that $(\mathcal{T,F})$ is a torsion pair if, and only if, $\mathcal{T}$ is closed under extensions, quotients and coproducts (See for instance \cite[Proposition VI. 2.1]{Stenstrom}). In particular, every Serre subcategory $\mcS$ of a cocomplete locally small abelian category $\mcC$, which is closed under coproducts, is a torsion class.

\begin{theorem}\label{theo:Serre}
Let $\mcS$ be a Serre subcategory of a cocomplete abelian category $\mcC$. If $\mcS$ is closed under coproducts, then the following conditions are equivalent:
\begin{enumerate}
\item[(a)] $Q \colon \mcC \to \mcC / \mcS$ admits a right adjoint. 

\item[(b)] $(\mcS,\mcS^{\perp_0} \cap \mcS^{\perp_1})$ is a complete right cotorsion pair cut along $\mcS^{\perp_0}$.   
\end{enumerate}
\end{theorem}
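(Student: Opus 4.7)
The implication (a) $\Rightarrow$ (b) is exactly Proposition \ref{prop:Serre}, so it suffices to prove (b) $\Rightarrow$ (a). The strategy is to show that $\mathcal{L} := \mcS^{\perp_0} \cap \mcS^{\perp_1}$ is a reflective subcategory of $\mcC$ and that the composite $\mathcal{L} \hookrightarrow \mcC \xrightarrow{Q} \mcC/\mcS$ is an equivalence of categories. The right adjoint to $Q$ then arises as the composite $\mcC/\mcS \xrightarrow{\sim} \mathcal{L} \hookrightarrow \mcC$.

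First I would construct, for each $M \in \mcC$, a canonical morphism $\eta_M \colon M \to R(M)$ with $R(M) \in \mathcal{L}$. Since $\mcS$ is a Serre subcategory closed under coproducts in the cocomplete abelian $\mcC$, the pair $(\mcS,\mcS^{\perp_0})$ is a torsion pair, so there is an exact sequence $0 \to t(M) \to M \to \overline{M} \to 0$ with $t(M) \in \mcS$ and $\overline{M} \in \mcS^{\perp_0}$. Applying hypothesis (b) to $\overline{M}$ yields an exact sequence $0 \to \overline{M} \to R(M) \to S(M) \to 0$ with $R(M) \in \mathcal{L}$ and $S(M) \in \mcS$. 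Define $\eta_M$ as the composite $M \twoheadrightarrow \overline{M} \hookrightarrow R(M)$; note that both $\Ker(\eta_M) = t(M)$ and $\Coker(\eta_M) = S(M)$ lie in $\mcS$.

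Next I would verify the reflection property: for every $N \in \mathcal{L}$ the precomposition map
\[
\eta_M^{\ast} \colon \Hom_{\mcC}(R(M),N) \longrightarrow \Hom_{\mcC}(M,N)
\]
is a bijection. Applying $\Hom_{\mcC}(-,N)$ to the two short exact sequences above and using $\Hom_{\mcC}(\mcS,N) = 0 = \Ext^1_{\mcC}(\mcS,N)$, one obtains $\Hom_{\mcC}(R(M),N) \xrightarrow{\sim} \Hom_{\mcC}(\overline{M},N) \xrightarrow{\sim} \Hom_{\mcC}(M,N)$. Standard categorical arguments then promote $R$ to a functor $\mcC \to \mathcal{L}$ left adjoint to the inclusion $\mathcal{L} \hookrightarrow \mcC$, with unit $\eta$.

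Finally I would identify $\mathcal{L}$ with $\mcC/\mcS$ via $Q$. Full faithfulness of $Q|_{\mathcal{L}}$ follows from the direct-limit formula for $\Hom_{\mcC/\mcS}(N_1,N_2) = \varinjlim \Hom_{\mcC}(N_1',N_2/N_2'')$ with $N_1/N_1' \in \mcS$ and $N_2'' \in \mcS$: when $N_1,N_2 \in \mathcal{L}$, each restriction map $\Hom_{\mcC}(N_1,N_2) \to \Hom_{\mcC}(N_1',N_2/N_2'')$ is an isomorphism using the vanishings from the short exact sequences $0 \to N_1' \to N_1 \to N_1/N_1' \to 0$ and $0 \to N_2'' \to N_2 \to N_2/N_2'' \to 0$. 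Essential surjectivity is immediate: since $\Ker(\eta_M), \Coker(\eta_M) \in \mcS$, the morphism $Q(\eta_M)$ is an isomorphism in $\mcC/\mcS$, so $Q(M) \simeq Q(R(M))$ with $R(M) \in \mathcal{L}$. This gives the equivalence $\mathcal{L} \simeq \mcC/\mcS$, and hence the right adjoint to $Q$. The main obstacle will be executing the reflection computation cleanly across the two short exact sequences and handling the direct-limit bookkeeping in the full-faithfulness step; once those are in place, the adjunction $Q \dashv R$ is formal.
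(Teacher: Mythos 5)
Your proof is correct, and its core construction coincides with the paper's: both proofs produce, for each $M \in \mcC$, an exact sequence $S \to M \to Y$ with $S \in \mcS$ and $Y \in \mcS^{\perp_0} \cap \mcS^{\perp_1}$ by first truncating along the torsion pair $(\mcS,\mcS^{\perp_0})$ (which exists because $\mcS$ is a Serre class closed under coproducts in a cocomplete category) and then applying the relative right completeness from (b) to the torsion-free quotient. The difference lies in what happens afterwards. The paper stops there and invokes Ogawa's criterion (his Proposition 1.3), which says precisely that the existence of such sequences for all $M$ is equivalent to $Q$ admitting a right adjoint. You instead re-derive that criterion from scratch: you verify that $\mathcal{L} := \mcS^{\perp_0} \cap \mcS^{\perp_1}$ is reflective (using the vanishing of $\Hom_{\mcC}(\mcS,\mathcal{L})$ and $\Ext^1_{\mcC}(\mcS,\mathcal{L})$ across the two short exact sequences), that $Q|_{\mathcal{L}}$ is fully faithful and essentially surjective, and then obtain the right adjoint as the composite $\mcC/\mcS \xrightarrow{\sim} \mathcal{L} \hookrightarrow \mcC$. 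This buys a self-contained argument, essentially reproducing the Gabriel localization theory hidden behind the citation, at the cost of length; the paper's version is shorter but rests on the quoted result. One minor point to keep in mind: forming the torsion subobject $t(M)$ requires the local smallness / well-poweredness that the paper assumes in its surrounding discussion, so that hypothesis is implicitly used in both arguments.
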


\begin{proof}
The implication (a) $\Rightarrow$ (b) is Proposition \ref{prop:Serre}. For the implication (b) $\Rightarrow$ (a), by \cite[Propositions 1.3]{Ogawa} it suffices to show that for every $M \in \mcC$ there exists an exact sequence $S \xrightarrow{f} M \xrightarrow{g} Y$ where $S \in \mcS$ and $Y \in \mcS^{\perp_0} \cap \mcS^{\perp_1}$. Indeed, for $M \in \mcC$, we have an exact sequence $0 \to S \xrightarrow{f} M \xrightarrow{g} S_0 \to 0$ with $S \in \mcS$ and $S_0 \in \mcS^{\perp_0}$, since $\mcS$ is a torsion class in $\mcC$. On the other hand, since $(\mcS,\mcS^{\perp_0} \cap \mcS^{\perp_1})$ is a complete right cotorsion pair cut along $\mcS^{\perp_0}$, there exists a monomorphism $S_0 \xrightarrow{h} Y$ with $Y \in \mcS^{\perp_0} \cap \mcS^{\perp_1}$. Hence, we can form the exact sequence $S \xrightarrow{f} M \xrightarrow{h \circ g} Y$. 
\end{proof}

%%%%%%%%%%%%%%%%%%%%%%%%%%%%%%%%%%%%%
%%%%%%%%%%%%%%%%%%%%%%%%%%%%%%%%%%%%%

\subsection*{Cuts from extriangulated categories}

We conclude this article with a final application of complete cut cotorsion pairs in the context of extriangulated categories. Such categories where introduced by H. Nakaoka and Y. Palu in \cite{NP} as a simultaneous generalization of triangulated categories and exact categories. \\

In what follows, we let $(\mathfrak{A},\mathbb{E},\mathfrak{s})$ denote an extriangulated category. Here, $\mathfrak{A}$ is a skeletally small additive category, $\mathbb{E} \colon \mathfrak{A}^{\rm op} \times \mathfrak{A} \longrightarrow \mathsf{Ab}$ is a biadditive functor with an additive realization $\mathfrak{s}$ satisfying a series of axioms (see \cite[Definition 2.12]{NP} for details). We shall also consider the following categories constructed from $\mathfrak{A}$: 
\begin{itemize}
\item $\mathsf{mod}(\mathfrak{A}^{\rm op})$ denotes the subcategory of the (Grothendieck) category $\Mod(\mathfrak{A}^{\rm op})$ formed by the right $\mathfrak{A}$-modules (or $\mathfrak{A}^{\rm op}$-modules) $F \colon \mathfrak{A}^{\rm op} \longrightarrow \mathsf{Ab}$ which are \emph{finitely presented}, that is, for which  there exists an exact sequence
\[
\Hom_{\mathfrak{A}}(-,A_1) \to \Hom_{\mathfrak{A}}(-,A_0) \to F \to 0
\]
in $\Mod(\mathfrak{A}^{\rm op})$ with $A_0, A_1 \in \mathfrak{A}$.

\item For any subcategory $\mcX \subseteq \Mod(\mathfrak{A}^{\rm op})$ of $\mathfrak{A}^{\rm op}$-modules, $\overrightarrow{\mcX}$ is the subcategory of $\Mod(\mathfrak{A}^{\rm op})$ of direct limits of objects in $\mcX$. 

\item $\text{Lex}(\mathfrak{A}^{\rm op})$ and $\text{lex}(\mathfrak{A}^{\rm op})$ denote the subcategories of $\Mod(\mathfrak{A}^{\rm op})$ and $\mathsf{mod}(\mathfrak{A}^{\rm op})$, respectively, of all left exact $\mathfrak{A}^{\rm op}$-modules. Recall that an $\mathfrak{A}^{\rm op}$-module is \emph{left exact} if it maps kernels in $\mathfrak{A}$ into cokernels in $\mathsf{Ab}$. 

\item $\text{def}(\mathfrak{A}^{\rm op})$ denotes the subcategory of $\mathsf{mod}(\mathfrak{A}^{\rm op})$ consisting of all finitely presented $\mathfrak{A}^{\rm op}$-modules isomorphic to defects (see \cite[Definition 2.4]{Ogawa}).
\end{itemize}

For skeletally small extriangulated categories, we can obtain from the subcategories $\overrightarrow{\text{def}(\mathfrak{A}^{\rm op})}$ and $\text{Lex}(\mathfrak{A}^{\rm op})$ the following example of a complete cut cotorsion pair.

\begin{proposition}\label{prop:defect}
Let $(\mathfrak{A},\mathbb{E},\mathfrak{s})$ be a skeletally small extriangulated category with weak kernels. Then, $(\overrightarrow{{\rm def}(\mathfrak{A}^{\rm op})},{\rm Lex}(\mathfrak{A}^{\rm op}))$ is a complete right cotorsion pair cut along the class $(\overrightarrow{{\rm def}(\mathfrak{A}^{\rm op})})^{\perp_0}$. 
\end{proposition}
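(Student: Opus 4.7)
The plan is to deduce this from Theorem \ref{theo:Serre} applied to the Grothendieck (hence cocomplete, locally small) abelian category $\mcC := \Mod(\mathfrak{A}^{\rm op})$ and the subcategory $\mcS := \overrightarrow{{\rm def}(\mathfrak{A}^{\rm op})}$. The target equivalence in Theorem \ref{theo:Serre} gives exactly a complete right cotorsion pair $(\mcS, \mcS^{\perp_0} \cap \mcS^{\perp_1})$ cut along $\mcS^{\perp_0}$, so the whole matter reduces to (i) verifying the hypotheses of Theorem \ref{theo:Serre} for $\mcS$, and (ii) identifying $\mcS^{\perp_0} \cap \mcS^{\perp_1}$ with ${\rm Lex}(\mathfrak{A}^{\rm op})$.

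The first task I would dispatch is to verify that $\mcS$ is a Serre subcategory of $\mcC$ closed under coproducts. Since $\mathfrak{A}$ has weak kernels, $\mathsf{mod}(\mathfrak{A}^{\rm op})$ is an abelian subcategory of $\Mod(\mathfrak{A}^{\rm op})$, and by the general theory of defects in extriangulated categories (as developed by Ogawa) ${\rm def}(\mathfrak{A}^{\rm op})$ is a Serre subcategory of $\mathsf{mod}(\mathfrak{A}^{\rm op})$. Passing to $\overrightarrow{{\rm def}(\mathfrak{A}^{\rm op})}$, the closure under coproducts is automatic from the definition $\overrightarrow{(-)}$, and the Serre property lifts from the finitely presented level to $\Mod(\mathfrak{A}^{\rm op})$ because kernels, cokernels, and extensions commute with filtered colimits in a Grothendieck category.

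Next I would establish the identification $\mcS^{\perp_0} \cap \mcS^{\perp_1} = {\rm Lex}(\mathfrak{A}^{\rm op})$. For the containment $(\supseteq)$, a left exact $\mathfrak{A}^{\rm op}$-module $F$ annihilates every defect because the defect of a kernel-sequence is precisely a cokernel that $F$ sends to zero, and the $\Ext^1$-vanishing follows from the fact that defects admit finite projective presentations by representables together with the interchange of $\Ext^1$ with $\varinjlim$ in its first argument against objects in $\mcS = \overrightarrow{{\rm def}(\mathfrak{A}^{\rm op})}$. For the converse $(\subseteq)$, given $F \in \mcS^{\perp_0} \cap \mcS^{\perp_1}$ one applies $\Hom_{\mathfrak{A}^{\rm op}}(-,F)$ to a two-step representable resolution of any defect and, using the vanishing of both $\Hom$ and $\Ext^1$, recovers the left-exactness condition on $F$ via Yoneda.

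Finally I would apply Theorem \ref{theo:Serre}: its hypotheses being met, it remains to check that the Serre quotient $Q \colon \mcC \to \mcC/\mcS$ admits a right adjoint. This is Gabriel's localization theorem for Grothendieck categories: a Serre subcategory of a Grothendieck category that is closed under coproducts is localizing, so $Q$ has a right adjoint. Combining this with the identification from the previous step yields the desired complete right cotorsion pair. The main obstacle I expect is the identification $\mcS^{\perp_0} \cap \mcS^{\perp_1} = {\rm Lex}(\mathfrak{A}^{\rm op})$, and in particular the $\Ext^1$-part, since this demands a careful interaction between projective resolutions of defects, the behaviour of $\Ext^1$ against direct limits, and the weak-kernel hypothesis on $\mathfrak{A}$; the $\Hom$-part is essentially a translation of the definition of a defect.
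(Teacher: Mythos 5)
Your proposal is correct and follows essentially the same route as the paper: establish that $\overrightarrow{{\rm def}(\mathfrak{A}^{\rm op})}$ is a coproduct-closed Serre subcategory of $\Mod(\mathfrak{A}^{\rm op})$, obtain a right adjoint for the quotient functor, apply Proposition \ref{prop:Serre} (the relevant implication of Theorem \ref{theo:Serre}), and identify $\mcS^{\perp_0}\cap\mcS^{\perp_1}$ with ${\rm Lex}(\mathfrak{A}^{\rm op})$. The only differences are in sourcing: the paper cites Krause for the Serre property of the direct-limit closure and Ogawa for both the right adjoint and the identification of ${\rm Lex}(\mathfrak{A}^{\rm op})$, whereas you derive the adjoint from Gabriel's localization theorem for coproduct-closed Serre subcategories of Grothendieck categories (valid) and sketch the ${\rm Lex}$ identification directly, where the $\Ext^1$-vanishing against filtered colimits in the first variable needs the usual $\varprojlim^1$-type care that Ogawa's lemma supplies.
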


\begin{proof}
By \cite[Proposition 2.5]{Ogawa} we know that $\text{def}(\mathfrak{A}^{\rm op})$ is a Serre subcategory of $\mathsf{mod}(\mathfrak{A}^{\rm op})$. On the other hand, by Krause's \cite[Theorem 2.8]{Krause} we have that $\overrightarrow{\text{def}(\mathfrak{A}^{\rm op})}$ is a Serre subcategory of $\Mod(\mathfrak{A}^{\rm op})$, and by \cite[Theorem 3.1]{Ogawa} the quotient functor 
\[
Q \colon \Mod(\mathfrak{A}^{\rm op}) \longrightarrow \frac{\Mod(\mathfrak{A}^{\rm op})}{\overrightarrow{\text{def}(\mathfrak{A}^{\rm op})}}
\] 
admits a right adjoint. It then follows by Proposition \ref{prop:Serre} that 
\[
(\overrightarrow{\text{def}(\mathfrak{A}^{\rm op})},(\overrightarrow{\text{def}(\mathfrak{A}^{\rm op})})^{\perp_0} \cap (\overrightarrow{\text{def}(\mathfrak{A}^{\rm op})})^{\perp_1})
\] 
is a complete right cotorsion pair cut along $(\overrightarrow{\text{def}(\mathfrak{A}^{\rm op})})^{\perp_0}$. Finally, the equality $\text{Lex}(\mathfrak{A}^{\rm op}) = (\overrightarrow{\text{def}(\mathfrak{A}^{\rm op})})^{\perp_0} \cap (\overrightarrow{\text{def}(\mathfrak{A}^{\rm op})})^{\perp_1}$ follows by \cite[Lemma 3.3]{Ogawa}. 
\end{proof}

The following is the finitely presented version of the previous proposition.

\begin{proposition}\label{prop:fp-defect}
Let $(\mathfrak{A},\mathbb{E},\mathfrak{s})$ be a skeletally small extriangulated category with weak kernels. If the quotient functor 
\[
Q \colon \mathsf{mod}(\mathfrak{A}^{\rm op}) \longrightarrow \frac{\mathsf{mod}(\mathfrak{A}^{\rm op})}{{\rm def}(\mathfrak{A}^{\rm op})}
\] 
admits a right adjoint, then $({\rm def}(\mathfrak{A}^{\rm op}),{\rm lex}(\mathfrak{A}^{\rm op}))$ is a complete right cotorsion pair cut along $({\rm def}(\mathfrak{A}^{\rm op}))^{\perp_0}$.
\end{proposition}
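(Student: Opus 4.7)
The plan is to mimic directly the proof of Proposition \ref{prop:defect}, replacing $\Mod(\mathfrak{A}^{\rm op})$ by $\mathsf{mod}(\mathfrak{A}^{\rm op})$ and $\overrightarrow{\text{def}(\mathfrak{A}^{\rm op})}$ by $\text{def}(\mathfrak{A}^{\rm op})$ throughout. The essential inputs are: (i) $\mathsf{mod}(\mathfrak{A}^{\rm op})$ is an abelian category, which holds because $\mathfrak{A}$ has weak kernels; (ii) $\text{def}(\mathfrak{A}^{\rm op})$ is a Serre subcategory of $\mathsf{mod}(\mathfrak{A}^{\rm op})$, which is precisely \cite[Proposition 2.5]{Ogawa}; and (iii) by hypothesis the quotient functor $Q$ admits a right adjoint. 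Note that in this setting $\mathsf{mod}(\mathfrak{A}^{\rm op})$ need not be cocomplete, so Theorem \ref{theo:Serre} is not available and we must route the argument through Proposition \ref{prop:Serre}, which only requires the right adjoint to exist.

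With these ingredients in place, I would apply Proposition \ref{prop:Serre} to the locally small abelian category $\mathsf{mod}(\mathfrak{A}^{\rm op})$ and its Serre subcategory $\text{def}(\mathfrak{A}^{\rm op})$. This immediately yields that the pair
\[
\left(\text{def}(\mathfrak{A}^{\rm op}),\; (\text{def}(\mathfrak{A}^{\rm op}))^{\perp_0} \cap (\text{def}(\mathfrak{A}^{\rm op}))^{\perp_1}\right)
\]
is a complete right cotorsion pair cut along $(\text{def}(\mathfrak{A}^{\rm op}))^{\perp_0}$, which is exactly the shape of the desired conclusion.

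It then remains to identify the right-hand class with $\text{lex}(\mathfrak{A}^{\rm op})$, that is, to prove the equality
\[
\text{lex}(\mathfrak{A}^{\rm op}) = (\text{def}(\mathfrak{A}^{\rm op}))^{\perp_0} \cap (\text{def}(\mathfrak{A}^{\rm op}))^{\perp_1}
\]
inside $\mathsf{mod}(\mathfrak{A}^{\rm op})$. This is the finitely presented counterpart of the equality used at the end of the proof of Proposition \ref{prop:defect}, where \cite[Lemma 3.3]{Ogawa} was invoked. The same lemma applies here verbatim since defects and left exactness can be tested against representables in $\mathsf{mod}(\mathfrak{A}^{\rm op})$; indeed, a finitely presented $\mathfrak{A}^{\rm op}$-module $F$ is left exact precisely when $\Hom(D,F)=0=\Ext^1(D,F)$ for every $D\in\text{def}(\mathfrak{A}^{\rm op})$, because every such $D$ fits in a short exact sequence whose outer terms are representable, and the right adjoint from the hypothesis guarantees the requisite vanishing of $\Ext^1$.

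The main (mild) obstacle I foresee is verifying the last identification rigorously in the finitely presented setting, since Ogawa's Lemma 3.3 is phrased for $\Mod(\mathfrak{A}^{\rm op})$; however, the argument transfers unchanged because both $\text{def}(\mathfrak{A}^{\rm op})$ and $\text{lex}(\mathfrak{A}^{\rm op})$ are defined intrinsically by finite presentations, and the hypothesis that $Q$ has a right adjoint supplies the needed right-completeness of the orthogonal pair, exactly as in Proposition \ref{prop:Serre}. Everything else in the proof is a direct transcription of the argument for Proposition \ref{prop:defect}.
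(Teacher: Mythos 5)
Your proposal is correct and follows essentially the same route as the paper: the authors also apply Proposition \ref{prop:Serre} to the abelian category $\mathsf{mod}(\mathfrak{A}^{\rm op})$ with Serre subcategory ${\rm def}(\mathfrak{A}^{\rm op})$ and then identify $({\rm def}(\mathfrak{A}^{\rm op}))^{\perp_0} \cap ({\rm def}(\mathfrak{A}^{\rm op}))^{\perp_1}$ with ${\rm lex}(\mathfrak{A}^{\rm op})$. The only difference is that this last identification needs no transfer argument from the big module category: it is available directly in the finitely presented setting as \cite[Proposition 2.8 (2)]{Ogawa}, which is what the paper cites.
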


\begin{proof}
Follows as Proposition \ref{prop:defect} by using \cite[Propositions 2.5, 2.8 (2), Theorem 2.9]{Ogawa} and Proposition \ref{prop:Serre}. 
\end{proof}

The existence of the previous cut cotorsion pair can be also guaranteed in the particular case where $\mathfrak{A}$ is an exact category, under some mild additional assumptions, as we specify below.

\begin{corollary}
Let $\mathfrak{A}$ be a skeletally small exact category with weak kernels and enough projectives. Then, $({\rm def}(\mathfrak{A}^{\rm op}),{\rm lex}(\mathfrak{A}^{\rm op}))$ is a complete right cotorsion pair cut along the class $({\rm def}(\mathfrak{A}^{\rm op}))^{\perp_0}$.
\end{corollary}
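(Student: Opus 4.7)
The plan is to reduce the statement directly to Proposition \ref{prop:fp-defect}. An exact category is canonically extriangulated (taking $\mathbb{E} = \mathrm{Ext}^1$ with its usual realization), the skeletal smallness and weak-kernel assumptions are built into the hypotheses, and weak kernels already guarantee that $\mathsf{mod}(\mathfrak{A}^{\rm op})$ is abelian. Hence the only thing left to verify before invoking Proposition \ref{prop:fp-defect} is that the quotient functor
\[
Q\colon \mathsf{mod}(\mathfrak{A}^{\rm op}) \longrightarrow \frac{\mathsf{mod}(\mathfrak{A}^{\rm op})}{\mathrm{def}(\mathfrak{A}^{\rm op})}
\]
admits a right adjoint, and the conclusion then follows at once, since the class $\mathrm{lex}(\mathfrak{A}^{\rm op})$ coincides with $(\mathrm{def}(\mathfrak{A}^{\rm op}))^{\perp_0}\cap(\mathrm{def}(\mathfrak{A}^{\rm op}))^{\perp_1}$ (this identification is part of what Proposition \ref{prop:fp-defect} packages via \cite[Lemma 3.3]{Ogawa}).

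To produce the right adjoint I would use Auslander's classical formula in the exact setting with enough projectives: the assignment sending a finitely presented $F$ to its canonical left-exact approximation $F^\ell$ descends through $Q$ to an equivalence
\[
\frac{\mathsf{mod}(\mathfrak{A}^{\rm op})}{\mathrm{def}(\mathfrak{A}^{\rm op})} \;\xrightarrow{\;\simeq\;}\; \mathrm{lex}(\mathfrak{A}^{\rm op}).
\]
Concretely, given a presentation $\Hom_{\mathfrak{A}}(-,A_1)\to\Hom_{\mathfrak{A}}(-,A_0)\to F\to 0$, the enough-projectives hypothesis allows the morphism $A_1\to A_0$ to be factored through a kernel-cokernel datum in $\mathfrak{A}$, and Yoneda of the resulting admissible short exact sequence produces a left-exact $F^\ell$ together with a natural comparison $F\to F^\ell$ whose kernel and cokernel are defects. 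Composing the quasi-inverse of the above equivalence with the fully faithful inclusion $\mathrm{lex}(\mathfrak{A}^{\rm op})\hookrightarrow\mathsf{mod}(\mathfrak{A}^{\rm op})$ yields the sought right adjoint to $Q$.

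The main obstacle, in my view, lies entirely in this second step: verifying rigorously that the left-exactification $F\mapsto F^\ell$ is functorial and that the unit $F\to F^\ell$ exhibits an adjunction (rather than a mere reflection at the level of objects). In the absence of enough projectives this functor can fail to exist, so the hypothesis is essential precisely here; with it in hand, the needed computations are routine, and then Proposition \ref{prop:fp-defect} applied to $\mathfrak{A}$ closes the argument.
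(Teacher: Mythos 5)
Your proposal is correct and follows essentially the same route as the paper: reduce everything to Proposition \ref{prop:fp-defect} by establishing that the quotient functor $Q \colon \mathsf{mod}(\mathfrak{A}^{\rm op}) \to \mathsf{mod}(\mathfrak{A}^{\rm op})/{\rm def}(\mathfrak{A}^{\rm op})$ admits a right adjoint. The only difference is that the paper obtains this adjoint directly by citing \cite[Propositions 2.16 (1) and 2.17]{Ogawa}, whereas you sketch the underlying left-exactification construction that those results encapsulate.
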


\begin{proof}
By \cite[Propositions 2.16 (1) and 2.17]{Ogawa} we have that the quotient functor $Q \colon \mathsf{mod}(\mathfrak{A}^{\rm op}) \longrightarrow \frac{\mathsf{mod}(\mathfrak{A}^{\rm op})}{\text{def}(\mathfrak{A}^{\rm op})}$ admits a right adjoint. Hence, the result follows from Proposition \ref{prop:fp-defect}. 
\end{proof}

We conclude this section also covering the other particular case where $\mathfrak{A}$ is a triangulated category. We show how to induce from a cotorsion pair $(\mathcal{U,V})$ in $\mathfrak{A}$, the complete right cotorsion pair $(\text{def}(\mathcal{U}^{\rm op}),\text{lex}(\mathcal{V}^{\rm op}))$ cut along $(\text{def}(\mathcal{U}^{\rm op}))^{\perp_0}$ in $\mathsf{mod}(\mathfrak{A}^{\rm op})$. Moreover, we show that $(\mathcal{U,V})$ is a co-t-structure if, and only if, $(\text{def}(\mathcal{U}^{\rm op}),\text{lex}(\mathcal{V}^{\rm op}))$ is also a complete left cotorsion pair cut along the same class $(\text{def}(\mathcal{U}^{\rm op}))^{\perp_0}$. 

In what follows, let us fix a skeletally small triangulated category $\mathfrak{A}$ with translation automorphism $[1] \colon \mathfrak{A} \longrightarrow \mathfrak{A}$. Given two (full) additive subcategories $\mathcal{U, V} \subseteq \mathfrak{A}$, recall that $(\mathcal{U,V})$ is a \emph{cotorsion pair} in $\mathfrak{A}$ if the following two conditions are satisfied: 
\begin{enumerate}
\item $\Hom_{\mathfrak{A}}(U,V') = 0$ for every $U \in \mathcal{U}$ and $V' \in \mathcal{V}[1]$. Here, $\mathcal{V}[1]$ denotes the class of objects in $\mathfrak{A}$ isomorphic to objects of the form $[1](V)$ with $V \in \mathcal{V}$.

\item $\mathfrak{A} = \mathcal{U} \ast \mathcal{V}[1]$, that is, if every object $C \in \mathfrak{A}$ admits a distinguished triangle $ U \to C \to V' \to U[1]$ where $U \in \mathcal{U}$ and $V' \in \mathcal{V}[1]$.  
\end{enumerate}
Following \cite[Section 4]{Ogawa}, if $(\mathcal{U,V})$ is a cotorsion pair in $\mathfrak{A}$, then $\mathcal{U}$ gives rise to an extriangulated category with weak kernels, translation automorphism $[1]|_{\mathcal{U}}$ and biadditive functor 
\[
\mathbb{E}(+,-) := \text{Hom}_{\mathcal{U}}(+,-[1]) \colon \mathcal{U}^{\rm op} \times \mathcal{U} \longrightarrow \mathsf{Ab}.
\] 
Here, $\mathfrak{A}^+ = \mathcal{W} \ast \mathcal{V}[1]$ and $\mathfrak{A}^- = \mathcal{U}[-1] \ast \mathcal{W}$, where $\mathcal{W} = \mathcal{U} \cap \mathcal{V}$. Moreover, by \cite[Proposition 4.2]{Ogawa} the quotient functor $Q \colon \mathsf{mod}(\mathcal{U}^{\rm op}) \longrightarrow \frac{\mathsf{mod}(\mathcal{U}^{\rm op})}{\text{def}(\mathcal{U}^{\rm op})}$ has a right adjoint, and so from Proposition \ref{prop:fp-defect} we deduce the following result.

\begin{corollary}
Let $(\mathcal{U,V})$ be a cotorsion pair in $\mathfrak{A}$. Then, $({\rm def}(\mathcal{U}^{\rm op}),{\rm lex}(\mathcal{U}^{\rm op}))$ is a complete right cotorsion pair cut along $({\rm def}(\mathcal{U}^{\rm op}))^{\perp_0}$. 
\end{corollary}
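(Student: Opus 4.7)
The plan is to invoke Proposition \ref{prop:fp-defect} with the extriangulated category $\mathcal{U}$ playing the role of $\mathfrak{A}$. Since $\mathfrak{A}$ is skeletally small, so is its additive subcategory $\mathcal{U}$, and the discussion preceding the corollary already identifies an extriangulated structure on $\mathcal{U}$ arising from the cotorsion pair $(\mathcal{U},\mathcal{V})$, with biadditive functor $\mathbb{E}(+,-) = \Hom_{\mathcal{U}}(+,-[1])$ and weak kernels. Hence $\mathcal{U}$ satisfies the standing hypotheses of Proposition \ref{prop:fp-defect}.

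Next, I would verify that the hypothesis on the Serre quotient functor is met: concretely, that the canonical functor
\[
Q \colon \mathsf{mod}(\mathcal{U}^{\rm op}) \longrightarrow \frac{\mathsf{mod}(\mathcal{U}^{\rm op})}{{\rm def}(\mathcal{U}^{\rm op})}
\]
admits a right adjoint. This is precisely the content of \cite[Proposition 4.2]{Ogawa}, which is already cited immediately above the corollary statement; no additional verification is required, since the construction there uses only the fact that $(\mathcal{U},\mathcal{V})$ is a cotorsion pair in $\mathfrak{A}$.

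Having checked the hypothesis, Proposition \ref{prop:fp-defect} applied to the extriangulated category $\mathcal{U}$ yields immediately that $({\rm def}(\mathcal{U}^{\rm op}),{\rm lex}(\mathcal{U}^{\rm op}))$ is a complete right cotorsion pair cut along $({\rm def}(\mathcal{U}^{\rm op}))^{\perp_0}$, which is the desired conclusion. There is no genuine obstacle in this proof: the corollary is a direct specialization of Proposition \ref{prop:fp-defect}, and the only non-trivial input is the external reference \cite[Proposition 4.2]{Ogawa} guaranteeing the right adjoint of $Q$ in the triangulated setting.
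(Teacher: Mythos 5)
Your proof is correct and is exactly the argument the paper gives: the text immediately preceding the corollary establishes that $\mathcal{U}$ is a skeletally small extriangulated category with weak kernels and that the quotient functor has a right adjoint by \cite[Proposition 4.2]{Ogawa}, and the corollary is then deduced by applying Proposition \ref{prop:fp-defect} to $\mathcal{U}$. No differences from the paper's route.
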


One interesting feature about the triangulated setting is the following relation between the complete right cut cotorsion pair $({\rm def}(\mathcal{U}^{\rm op}),{\rm lex}(\mathcal{U}^{\rm op}))$ and the notion of co-t-structures. Recall that a \emph{co-t-structure} is a pair $(\mathcal{X,Y})$ of subcategories of $\mathfrak{A}$ such that $(\mcX[1],\mcY)$ is a cotorsion pair in $\mathfrak{A}$ satisfying $\mcX \subseteq \mcX[1]$.

\begin{proposition}\label{prop:triangle_defect}
The following are equivalent for every cotorsion pair $(\mathcal{U,V})$ in $\mathfrak{A}$:
\begin{enumerate}
\item[(a)] $(\mathcal{U,V})$ is a co-t-structure in $\mathfrak{A}$ (that is, $\mathcal{U}[-1] \subseteq \mathcal{U}$).

\item[(b)] $({\rm def}(\mathcal{U}^{\rm op}),{\rm lex}(\mathcal{U}^{\rm op}))$ is a complete cotorsion pair cut along $({\rm def}(\mathcal{U}^{\rm op}))^{\perp_0}$.
\end{enumerate}
\end{proposition}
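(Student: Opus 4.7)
The plan is to show that both (a) and (b) are equivalent to the auxiliary condition
\[
(\ast) \qquad \mathrm{def}(\mathcal{U}^{\mathrm{op}}) = \mathsf{mod}(\mathcal{U}^{\mathrm{op}}).
\]
Since the preceding corollary already supplies the right-cut half of (b) and $\mathrm{def}(\mathcal{U}^{\mathrm{op}})$ is closed under direct summands (being a Serre subcategory), (b) reduces to the complete left cut condition \lccpthree: for each $S \in \mathrm{def}(\mathcal{U}^{\mathrm{op}})^{\perp_0}$ there must exist an epimorphism $A \twoheadrightarrow S$ with $A$ a defect. Any such morphism vanishes by the definition of $\perp_0$, forcing $S=0$; hence (b) is equivalent to $\mathrm{def}(\mathcal{U}^{\mathrm{op}})^{\perp_0} = \{0\}$.

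First I will prove $\mathrm{def}(\mathcal{U}^{\mathrm{op}})^{\perp_0} = \{0\} \Leftrightarrow (\ast)$. The forward direction is immediate, since $\mathsf{mod}(\mathcal{U}^{\mathrm{op}})^{\perp_0} = \{0\}$ by the Yoneda lemma. For the converse, I invoke \cite[Proposition~4.2]{Ogawa}, which provides a right adjoint $R$ to the Serre quotient $Q \colon \mathsf{mod}(\mathcal{U}^{\mathrm{op}}) \to \mathsf{mod}(\mathcal{U}^{\mathrm{op}})/\mathrm{def}(\mathcal{U}^{\mathrm{op}})$; as witnessed by the proof of Proposition~\ref{prop:Serre}, the image of $R$ lies in $\mathrm{def}(\mathcal{U}^{\mathrm{op}})^{\perp_0} \cap \mathrm{def}(\mathcal{U}^{\mathrm{op}})^{\perp_1} = \mathrm{lex}(\mathcal{U}^{\mathrm{op}})$. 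The hypothesis forces $R=0$, and then the adjunction isomorphism gives $\mathrm{Hom}(QM, QM) = \mathrm{Hom}(M, R(QM)) = 0$, so $QM=0$ and $M \in \mathrm{def}(\mathcal{U}^{\mathrm{op}})$ for every $M$.

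For (a) $\Rightarrow (\ast)$, let $M \in \mathsf{mod}(\mathcal{U}^{\mathrm{op}})$ have a projective presentation coming from some $f \colon U_1 \to U_0$ in $\mathcal{U}$. Complete $f$ to a distinguished triangle in $\mathfrak{A}$ and rotate once: the resulting triangle $U_0[-1] \to \mathrm{fib}(f) \to U_1 \to U_0$ exhibits $\mathrm{fib}(f)$ as an extension of $U_1 \in \mathcal{U}$ by $U_0[-1]$. Under (a) we have $U_0[-1] \in \mathcal{U}$, and $\mathcal{U}$ is closed under extensions as the left part of a cotorsion pair in the triangulated category $\mathfrak{A}$; hence $\mathrm{fib}(f) \in \mathcal{U}$, so $f$ is a deflation in the extriangulated structure on $\mathcal{U}$ induced by the cotorsion pair, and $M$ is realised as a defect.

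The main obstacle is $(\ast) \Rightarrow$ (a). Applying $(\ast)$ to the representable $\mathrm{Hom}_{\mathcal{U}}(-,U)$ produces a conflation $A \to V \xrightarrow{g} W$ in $\mathcal{U}$ whose defect is this functor. By Yoneda the induced epimorphism $\mathrm{Hom}_{\mathcal{U}}(-,W) \twoheadrightarrow \mathrm{Hom}_{\mathcal{U}}(-,U)$ corresponds to a morphism $h \colon W \to U$; evaluating at $U$ shows $h$ splits $\mathrm{id}_U$, so $W \simeq U \oplus W'$. A parallel analysis of the image of $g_{\ast}$ gives $V \simeq W' \oplus V'$ with $g$ of block form $(0, \mathrm{pr}_{W'})$. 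Since direct sums of conflations are conflations, the original conflation decomposes as the sum of the trivial piece $W' \xrightarrow{\mathrm{id}} W' \to 0$ and a conflation $A_0 \to V' \xrightarrow{0} U$ in $\mathcal{U}$. The triangulated fiber of the zero map $V' \to U$ is $V' \oplus U[-1]$, so $A_0 \simeq V' \oplus U[-1]$ lies in $\mathcal{U}$, and closure of $\mathcal{U}$ under direct summands finally yields $U[-1] \in \mathcal{U}$. The delicate technical step is to justify cleanly this direct-sum decomposition of the conflation within the extriangulated structure on $\mathcal{U}$, i.e.\ that the Yoneda-induced splittings of $V$ and $W$ lift to a splitting of the conflation itself.
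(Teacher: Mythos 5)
Your proof is correct, and its core is genuinely different from the paper's. Both arguments start the same way: given the preceding corollary, condition (b) collapses to the single statement $({\rm def}(\mathcal{U}^{\rm op}))^{\perp_0}=0$, since any epimorphism from a defect onto an object of $({\rm def}(\mathcal{U}^{\rm op}))^{\perp_0}$ must vanish. From there the paper passes to ${\rm lex}(\mathcal{U}^{\rm op})=0$ and leans on the identification of ${\rm lex}(\mathcal{U}^{\rm op})$ with the heart $\underline{\mathcal{H}}$ of the cotorsion pair \cite[Theorem 4.7]{Ogawa}, Nakaoka's criterion that the heart vanishes exactly for co-t-structures, and (for (a)$\Rightarrow$(b)) Popescu's theorem to see that every finitely presented module is a quotient of a defect. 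You instead route everything through the auxiliary condition ${\rm def}(\mathcal{U}^{\rm op})=\mathsf{mod}(\mathcal{U}^{\rm op})$ and prove its equivalence with (a) by bare-hands Yoneda and cone computations, bypassing the heart entirely; the only external input you share with the paper is the existence of the right adjoint to $Q$ \cite[Proposition 4.2]{Ogawa}. What your route buys is a self-contained explanation of why (a) is the relevant condition: $\mathcal{U}[-1]\subseteq\mathcal{U}$ is exactly what makes every morphism of $\mathcal{U}$ a deflation, hence every finitely presented module a defect, and conversely representables being defects forces $\mathcal{U}[-1]\subseteq\mathcal{U}$. (A cosmetic slip: your labels ``forward''/``converse'' in the equivalence $({\rm def}(\mathcal{U}^{\rm op}))^{\perp_0}=0\Leftrightarrow(\ast)$ are swapped, but both directions are proved.)

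The step you flag as delicate is in fact harmless in this setting and should be closed as follows. The conflations of the extriangulated structure on $\mathcal{U}$ are distinguished triangles of $\mathfrak{A}$ with all three vertices in $\mathcal{U}$, so you never need to split the conflation itself: the object $A$ is a cocone of $g$ in $\mathfrak{A}$ and is therefore determined up to isomorphism by $g$ alone. Since your Yoneda analysis exhibits $g$ as isomorphic to $1_{W'}\oplus(0\colon V'\to U)$, and a direct sum of distinguished triangles is distinguished, the cocone is $0\oplus(V'\oplus U[-1])$; closure of $\mathcal{U}$ under direct summands then yields $U[-1]\in\mathcal{U}$ as you claim. (The auxiliary splittings $W\simeq U\oplus W'$ and $V\simeq W'\oplus V'$ also live inside $\mathcal{U}$ for the same reason: a split epimorphism in a triangulated category splits its triangle, and $\mathcal{U}$ is closed under direct summands, so no idempotent-completeness issue arises.)
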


\begin{proof}
First, suppose condition (a) holds. By the previous corollary, we have that $({\rm def}(\mathcal{U}^{\rm op}),{\rm lex}(\mathcal{U}^{\rm op}))$ is a complete right cotorsion pair cut along $({\rm def}(\mathcal{U}^{\rm op}))^{\perp_0}$. So we focus on showing that $({\rm def}(\mathcal{U}^{\rm op}),{\rm lex}(\mathcal{U}^{\rm op}))$ is a complete left cotorsion pair cut along $({\rm def}(\mathcal{U}^{\rm op}))^{\perp_0}$. Consider the heart of the cotorsion pair $(\mathcal{U,V})$ given by $\underline{\mathcal{H}} = (\mathfrak{A}^+ \cap \mathfrak{A}^-) / \mathcal{W}$. It is known by Nakaoka's \cite[Theorem 6.4]{Nakaoka2011} that $\underline{\mathcal{H}}$ is an abelian category. Moreover, by  \cite[Theorem 4.7]{Ogawa} it is also known that $\underline{\mathcal{H}}$ and $\text{lex}(\mathcal{U}^{\rm op})$ are naturally equivalent. Using the assumption (a) that $(\mathcal{U,V})$ is a co-t-structure in $\mathfrak{A}$, we can note that $\mathfrak{A}^{+} \cap \mathfrak{A}^{-} \subseteq \mathcal{U} \cap \mathcal{V}$, and so $\underline{\mathcal{H}} = 0$, and hence $\text{lex}(\mathcal{U}^{\rm op}) = 0$. On the other hand, by \cite[Proposition 4.2]{Ogawa} and \cite[Theorem IV.4.5]{Popescu} , we can note that for every $X \in \mathsf{mod}(\mathcal{U}^{\rm op})$ there exists an epimorphism $D \twoheadrightarrow X$ with $D \in \text{def}(\mathcal{U}^{\rm op})$, and since $\text{def}(\mathcal{U}^{\rm op})$ is a Serre subcategory, the previous implies that $\mathsf{mod}(\mathcal{U}^{\rm op}) = \text{def}(\mathcal{U}^{\rm op})$. It then follows that $(\text{def}(\mathcal{U}^{\rm op}),\text{lex}(\mathcal{U}^{\rm op})) = (\mathsf{mod}(\mathcal{U}^{\rm op}),0)$, which is clearly a complete left cotorsion pair cut along $(\text{def}(\mathcal{U}^{\rm op}))^{\perp_0}$. 

Now let us assume (b). We thus have that $({\rm def}(\mathcal{U}^{\rm op}),{\rm lex}(\mathcal{U}^{\rm op}))$ is a complete cotorsion pair cut along $({\rm def}(\mathcal{U}^{\rm op}))^{\perp_0}$, and so for every $X \in ({\rm def}(\mathcal{U}^{\rm op}))^{\perp_0}$ there exists an epimorphism $D \twoheadrightarrow X$ with $D \in {\rm def}(\mathcal{U}^{\rm op})$. Again, since ${\rm def}(\mathcal{U}^{\rm op})$ is a Serre subcategory, we have that $X \in ({\rm def}(\mathcal{U}^{\rm op}))^{\perp_0} \cap {\rm def}(\mathcal{U}^{\rm op}) = 0$. If follows that $({\rm def}(\mathcal{U}^{\rm op}))^{\perp_0} = 0$, which in turn and along with \cite[Proposition 2.8 (2)]{Ogawa} implies that ${\rm lex}(\mathcal{U}^{\rm op}) = ({\rm def}(\mathcal{U}^{\rm op}))^{\perp_0} \cap ({\rm def}(\mathcal{U}^{\rm op}))^{\perp_1} = 0$. Hence, the cotorsion pair $(\mathcal{U,V})$ is a co-t-structure in $\mathfrak{A}$ by \cite[Remark 2.6]{Nakaoka2013} and \cite[Theorem 4.7]{Ogawa}. 
\end{proof}

%%%%%%%%%%%%%%%%%%%%%%%%%%%%%%%%%%%%%
%%%%%%%%%%%%%%%%%%%%%%%%%%%%%%%%%%%%%
%%%%%%%%%%%%%%%%%%%%%%%%%%%%%%%%%%%%%
%%%%%%%%%%%%%%%%%%%%%%%%%%%%%%%%%%%%%

%\section*{\textbf{Acknowledgements}}

%%%%%%%%%%%%%%%%%%%%%%%%%%%%%%%%%%%%%
%%%%%%%%%%%%%%%%%%%%%%%%%%%%%%%%%%%%%
%%%%%%%%%%%%%%%%%%%%%%%%%%%%%%%%%%%%%
%%%%%%%%%%%%%%%%%%%%%%%%%%%%%%%%%%%%%

\section*{\textbf{Funding}}

The authors thank Project PAPIIT-Universidad Nacional Aut\'onoma de M\'exico IN100520. The first author thanks Sociedad Matem\'atica Mexicana (SMM)-Fundaci\'on Sof\'ia Kovaleskaia (SK). The third author was partially supported by the following grants and institutions: postdoctoral fellowship (Comisi\'on Acad\'emica de Posgrado - Universidad de la Rep\'ublica), Fondo Vaz Ferreira \# II/FVF/2019/135 (funds are given by the Direcci\'on para el Desarrollo de la Ciencia y el Conocimiento - Ministerio de Educaci\'on y Cultura - Rep\'ublica Oriental del Uruguay, and administered through Fundaci\'on Julio Ricaldoni), Agencia Nacional de Investigaci\'on e Innovaci\'on (ANII), and Programa de Desarrollo de las Ciencias B\'asicas (PEDECIBA).

%%%%%%%%%%%%%%%%%%%%%%%%%%%%%%%%%%%%%
%%%%%%%%%%%%%%%%%%%%%%%%%%%%%%%%%%%%%
%%%%%%%%%%%%%%%%%%%%%%%%%%%%%%%%%%%%%
%%%%%%%%%%%%%%%%%%%%%%%%%%%%%%%%%%%%%

\bibliographystyle{plain}
\bibliography{bibliohmp18}

\end{document}